\numberwithin{equation}{section}
\numberwithin{figure}{section}
\theoremstyle{plain}
\newtheorem{thm}
{\protect\theoremname}[section]
  \theoremstyle{definition}
  \theoremstyle{remark}
  \newtheorem{rem}{\protect\remarkname}
  \theoremstyle{plain}
  \newtheorem{lem}[thm]{\protect\lemmaname}
  \theoremstyle{plain}
  \newtheorem{prop}[thm]{\protect\propositionname}
  \theoremstyle{plain}
  \providecommand{\definitionname}{Definition}
  \providecommand{\lemmaname}{Lemma}
  \providecommand{\propositionname}{Proposition}
  \providecommand{\remarkname}{Remark}
\providecommand{\theoremname}{Theorem}
\providecommand{\corollaryname}{Corollary}
\title{A degenerate Arnold diffusion mechanism in the restricted 3-body problem}
\author[M. Guardia]{Marcel Guardia}
\address[MG]{Departament de Matem\`atiques i Inform\`atica, Universitat de Barcelona, Gran Via, 585, 08007 Barcelona, Spain  \& Centre de Recerca Matemàtica, Edifici C, Campus Bellaterra, 08193 Bellaterra, Spain}
\email{guardia@ub.edu}
\author[J. Paradela]{Jaime Paradela}
\address[JP]{ Departament de Matem\`atiques, Universitat Polit\`ecnica de Catalunya, Diagonal 647, 08028 Barcelona, Spain}
\email{jaime.paradela@upc.edu}
\author[T. M. Seara]{Tere M. Seara}
\address[TS]{Departament de Matem\`atiques, Universitat Polit\`ecnica de Catalunya, Diagonal 647, 08028 Barcelona, Spain \& Centre de Recerca Matemàtica, Edifici C, Campus Bellaterra, 08193 Bellaterra, Spain}
\email{tere.m-seara@upc.edu }
\begin{document}

\begin{abstract}
A major question in dynamical systems is to understand the mechanisms driving global instability in the 3-body problem (3BP), which models the motion of three  bodies under Newtonian gravitational interaction. The 3BP is called restricted if one of the bodies has zero mass and the other two, the primaries, have strictly positive masses $m_0,m_1$. We consider the  restricted planar elliptic 3-body problem (RPE3BP) where the primaries revolve in Keplerian ellipses. We prove that the RPE3BP exhibits topological instability: for any values of the masses $m_0,m_1$, except $m_0=m_1$, we build orbits along which the angular momentum  of the massless body  experiences an arbitrarily large variation provided the eccentricity of the orbit of the primaries is positive but small enough.

In order to prove this result we show that a degenerate Arnold diffusion mechanism, which moreover involves exponentially small phenomena, takes place in the RPE3BP. Our work extends the one  of Delshams, Kaloshin, de la Rosa, and Seara (2019) for the a priori unstable case $m_1/m_0\ll1$,  to the case of arbitrary masses  $m_0,m_1>0$, where the model displays features of the so-called \textit{a priori stable} setting.

\end{abstract}

\maketitle

\tableofcontents


\section{Introduction}

The $N$-body problem models the motion of $N$ bodies under mutual gravitational interaction. While the system is integrable for $N=2$, understanding its global dynamics for $N\geq 3$ is probably one of the oldest and most challenging questions in dynamical systems. A major achievement in this direction was the proof of the existence of a positive measure set of quasiperiodic motions in the $N$-body problem. This result was first established by Arnold in \cite{MR0170705}, who gave a master application of the KAM technique to the case of $3$ coplanar bodies. The proof was later extended to  case  $N\geq 3$  in the work of  F\'{e}joz and Herman \cite{MR2104595} (see also \cite{MR1364478,MR2836051}). On the other hand, Herman conjectured in his ICM address \cite{Herman1998} that the set of non-wandering points for the flow of the $N$-body problem is nowhere dense on every energy level for $N\geq 3$. This is in accordance with the general belief that the $N$-body problem, although strongly degenerate, displays the main features of a ``typical'' Hamiltonian system. This conjecture would imply topological instability for the $N$-body problem in a very strong sense.

The existence of topological instability in Hamiltonian systems was first investigated by Arnold in \cite{MR0163026},  where he constructed an example of a nearly integrable Hamiltonian in which this kind of behavior occurs. To that end, Arnold proposed a mechanism giving rise to unstable motions based on the existence of a transition chain of invariant tori: a sequence of invariant irrational tori which are connected by transverse heteroclinic orbits. This mechanism is nowadays called the \textit{Arnold mechanism}. Arnold verified that this mechanism takes place in a cleverly built model usually referred to as the Arnold model, and he  conjectured that topological instability is indeed a common phenomenon in the complement of integrable Hamiltonian systems \cite{MR0170705}.  Despite the enormous amount of research  (see for example  \cite{MR1986314, MR2153027, MR2184276, MR2393423, MR3646879, MR3653060, MR3649479, MR4298716, MR4160091} and the references therein), the Arnold diffusion phenomenon, and more generally the dynamics in the complement of the KAM tori set, is still poorly understood, and even more poorly for real-analytic or non-convex Hamiltonians.

In \cite{MR0163026}, Arnold conjectured that the mechanism of instability based on the existence of transition chains \textit{``is applicable to the general case (for example, to the problem of 3 bodies)''}. However, results concerning the existence of Arnold diffusion in the 3-body problem or related models are rather scarce (see  \cite{capinski2018arnold, MR3927089, https://doi.org/10.48550/arxiv.2210.11311,CFG} and also  \cite{MR3545968, MR3551192} for numerical-based results).

The 3-body problem is called ``restricted'' if one of the bodies has zero mass and the other two, the primaries, have strictly positive masses $m_0,m_1$. In this limit problem, the motion of the primaries is just a 2-body problem and the dynamics of the massless body is governed by the gravitational interaction with the primaries. In this work, we consider the case in which the primaries revolve around each other in Keplerian ellipses of eccentricity $\zeta\in (0,1)$  and the massless body moves on the same plane as the primaries. This model, usually known in the literature as the restricted planar elliptic 3-body problem (RPE3BP), is a $2+1/2$ degrees-of-freedom Hamiltonian system. For $\zeta=0$, i.e. for the restricted planar circular 3-body problem, the rotational symmetry prevents the existence of topological instability in nearly integrable settings, see \cite{MR3455155} and Remark \ref{rem:integrabilitycircular} below.

The goal of this paper is to prove that a \emph{degenerate Arnold diffusion mechanism} takes place in the RPE3BP:  we show that for any value of the masses   of the primaries ($m_0\neq m_1$), there exist orbits of the RPE3BP along which the angular momentum of the massless body experiences any predetermined drift provided the eccentricity of the orbits of the primaries is positive but small enough. Notice that the angular momentum is a conserved quantity in the 2-body problem, which can be seen as a limit problem of the restricted 3-body problem when $m_1/ m_0\to 0$.

To the best of our knowledge the first complete proof of existence of Arnold diffusion in celestial mechanics was obtained in \cite{MR3927089}, in which the authors showed the existence of topological instability in the RPE3BP. Nevertheless, this result was established under the strong hypothesis $m_1/m_0\ll1$ (see Section \ref{sec:Previousresults} for a more precise description of the setting). Under this condition, the problem falls in the so-called a priori unstable regime for the study of Arnold diffusion and can be analyzed by means of classical perturbation theory (see Section \ref{sec:aprioriunstablestable}). Our result extends the work in \cite{MR3927089}  to the case of \emph{arbitrary masses} $m_0,m_1>0$,  a setting in which the problem displays many features of the so-called \emph{a priori stable} case.


\subsection{Main result}\label{sec:Hamiltonianandeqsofmotion}

Fix a Cartesian reference system with origin at the center of mass of the primaries and choose units so that the total mass of the primaries is equal to $1$. In these coordinates, the primaries, which we denote by $q_0$ and $q_1$, move  along  Keplerian ellipses of eccentricity $\zeta\in (0,1)$ whose time parametrization reads
\[
q_0(t)=\mu \varrho(t)(\cos f(t),\sin f(t))\qquad\qquad q_1(t)=-(1-\mu)\varrho(t)(\cos f(t),\sin f(t)),
\] 
where $m_0=1-\mu$ and $m_1=\mu\in (0,1/2]$ are the masses of $q_0$ and $q_1$, $\varrho:\mathbb{T}\to \mathbb{R}$  ($\mathbb T=\mathbb R/ 2\pi \mathbb Z$) is the distance between the primaries 
and $f:\mathbb{T}\to\mathbb{T}$ is the so-called \textit{true anomaly} (see \cite{MR0005824}). The RPE3BP describes the motion  of a massless body $q\in \mathbb{R}^2$ in the gravitational field generated by the primaries and it is governed by the second order differential equations
\begin{equation}\label{eq: second order diff eq RP23BP}
\ddot{q}=(1-\mu)\frac{q-q_0(t)}{|q-q_0(t)|^3}+\mu \frac{q-q_1(t)}{|q-q_1(t)|^3}.
\end{equation}
It is a classical fact that the RPE3BP admits a Hamiltonian structure. Introducing $p$ and $E$ the conjugate momenta to $q$ and $t$, and the gravitational potential
\begin{equation}\label{eq:originalpotentiall}
U(q,t)=\frac{1-\mu}{|q-q_0(t)|}+\frac{\mu}{|q-q_1(t)|}
\end{equation}
the RPE3BP is a three degrees-of-freedom Hamiltonian system with respect to
\[
\mathcal{H}(q,p,t,E)=\frac{|p|^2}{2}-U(q,t)+E
\]
and the canonical symplectic structure in the extended phase space $T^* (\mathbb{R}^2\times \mathbb{T})$. The following is our main result.

\begin{thm}\label{thm:Maintheorem}
Let $G(q,p)=|q\wedge p|$ be the angular momentum of the massless body, let $\mu$ be the mass ratio and let $\zeta\in(0,1)$ be the eccentricity of the ellipse described by the primaries orbit. Then, for any $\mu\in (0,1/2)$,  there exists $G_*>0$ such that, for any $\zeta\in (0,G_*^{-3})$ and  any values $G_1,G_2$ satisfying
\[
G_*\leq G_1< G_2\leq \zeta^{-1/3},
\]
there exists $T>0$ and an orbit $\gamma$ of the RPE3BP for which 
\[
G\circ\gamma(0)\leq G_1\qquad\qquad\text{and}\qquad\qquad G_2\leq G\circ\gamma(T).
\]
\end{thm}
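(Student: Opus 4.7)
The plan is to realise the RPE3BP in the regime of large angular momentum $G\gg 1$ and small eccentricity $\epsilon$ as a nearly integrable Hamiltonian possessing a normally hyperbolic invariant cylinder whose stable and unstable manifolds split by an exponentially small amount, and then to build orbits that drift in $G$ by combining a scattering map along the cylinder with its inner dynamics.

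I start by working in Delaunay variables $(\ell,L,g,G)$ associated with the Kepler motion around the center of mass and expanding the potential $U(q,t)$ in powers of $|q|^{-1}$. Because the center of mass lies at the origin the dipole contribution vanishes identically, and the leading correction is the quadrupole
\[
U(q,t)=\frac{1}{|q|}+\frac{\mu(1-\mu)\varrho(t)^{2}}{2|q|^{3}}\bigl(3\cos^{2}(\phi-f(t))-1\bigr)+O(|q|^{-4}),
\]
with $(r,\phi)$ polar coordinates for $q$. After the canonical change to a synodic frame $g\mapsto g-f(t)$ and averaging over the fast Kepler mean anomaly $\ell$, one is left with a secular one degree of freedom Hamiltonian on $(g,G)$, with $L$ entering as a parameter. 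For $\epsilon=0$ this secular system is autonomous and, in the range $L\sim G\in [G_*,\infty)$, it admits a saddle-like equilibrium together with a homoclinic loop in the $(g,G)$ plane; reconstructing the dependence on $(\ell,t)$ produces a two dimensional normally hyperbolic invariant cylinder $\Lambda_{0}$ whose stable and unstable manifolds coincide along a three dimensional separatrix.

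For $\epsilon>0$ the cylinder $\Lambda_{0}$ persists as $\Lambda_{\epsilon}$ and its invariant manifolds generically split. Their separation is quantified by a Poincar\'e-Melnikov integral evaluated on the unperturbed homoclinic; because the secular motion on $\Lambda_{\epsilon}$ is much slower than the forcing at the orbital frequency of the primaries, this integral is exponentially small in the adiabatic parameter and must be estimated via complex time extension of the unperturbed separatrix, tracking its nearest singularities in imaginary time. The hypothesis $\mu\neq 1/2$, i.e.\ $m_{0}\neq m_{1}$, enters exactly here: when $m_{0}=m_{1}$ a swap symmetry between the primaries forces the leading Melnikov coefficient to vanish, whereas for $\mu\neq 1/2$ one obtains a non zero coefficient, hence transverse intersections between the invariant manifolds of a dense subfamily of tori of $\Lambda_{\epsilon}$. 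This produces a well defined scattering map whose action on $G$ has a definite sign along a chosen branch of homoclinics.

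To conclude, I combine the inner map on $\Lambda_{\epsilon}$, which restricts to a twist map in $(g,G)$ to which classical KAM theory applies so as to furnish a sufficiently dense collection of invariant tori, with the scattering map to produce a pseudo-orbit whose $G$ coordinate traverses $[G_{1},G_{2}]$; the twist condition together with the non-degeneracy of the $G$ component of the scattering map is what allows the $G$ value to be pushed through the whole interval. A shadowing argument tailored to the exponentially small splitting setting, along the lines of \cite{MR3927089} but without the benefit of a small parameter $m_1/m_0$ making the system a priori unstable, upgrades the pseudo-orbit to a true trajectory $\gamma$ of the RPE3BP. The bound $G_{2}\le \epsilon^{-1/3}$ is precisely what guarantees that the quadrupole expansion, the secular averaging and the Melnikov estimates remain uniformly valid along the entire diffusion path. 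The main technical obstacle, and the source of the word \emph{degenerate} in the title of the paper, is the quantitative control of the exponentially small splitting uniformly for $\mu\in(0,1/2)$: in contrast to the a priori unstable regime of \cite{MR3927089}, there is no small parameter providing an intrinsic hyperbolic splitting, so the hyperbolic skeleton driving diffusion has to be manufactured by the quadrupole interaction itself.
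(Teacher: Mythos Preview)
Your proposal misidentifies the invariant object and the diffusion mechanism, and both errors are structural rather than cosmetic.

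First, the cylinder. The paper does not use any secular saddle in $(g,G)$ coming from quadrupole averaging in Delaunay variables. The relevant object is the manifold at infinity $\mathcal{P}_\infty=\{r=\infty,\ y=0\}$ of parabolic motions, which is a \emph{topological} normally hyperbolic invariant cylinder (the linearised normal dynamics vanishes there; hyperbolicity is only in a degenerate, parabolic sense). Delaunay variables are ill-suited here because the diffusing orbits shadow parabolic (zero-energy) arcs, where Delaunay coordinates degenerate; the paper works instead in polar coordinates and uses the explicit parabolic homoclinic of the 2BP as the organising skeleton. Your proposed secular saddle and its homoclinic loop are not shown to exist, and nothing in the quadrupole expansion you write down produces them.

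Second, and more seriously, the inner dynamics on $\mathcal{P}_\infty$ is \emph{trivial}: every torus $\mathcal{T}_I\subset\mathcal{P}_\infty$ has the same frequency vector $(0,1)$, so $\mathcal{P}_\infty\cap\{t=0\}$ is a cylinder of fixed points of the Poincar\'e map. There is no twist and no KAM family to interleave with a single scattering map. This is precisely the ``degenerate'' feature in the title. The paper's mechanism is therefore not ``inner map plus one scattering map'' but an \emph{iterated function system of two scattering maps} $\mathbb{P}_\pm$, coming from two distinct homoclinic channels. The heart of the proof is (i) obtaining both scattering maps \emph{globally} on $\mathcal{P}_\infty^*$ despite the anisotropic exponentially-small splitting (this requires the Hamilton--Jacobi/generating-function machinery of Section~4, not a Melnikov integral alone), and (ii) showing that $\mathbb{P}_+$ and $\mathbb{P}_-$ share no common invariant curve, which is done by interpolating $\mathbb{P}_+$ by an autonomous Hamiltonian up to an exponentially small error and then checking $\{\mathcal{L}_+,\mathcal{L}_-\}\neq 0$ on a vertical strip. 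Your outline, which relies on inner twist and a single scattering map, cannot run on $\mathcal{P}_\infty$ and would have to be replaced entirely by this two-map argument.
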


\subsection{Previous results: Arnold diffusion and unstable motions in celestial mechanics}\label{sec:Previousresults}
A  number of works have shown the existence of different types of unstable motions in the 3-body problem or its restricted versions. For example, oscillatory orbits (orbits that leave every bounded region but return infinitely often to some fixed bounded region, see \cite{MR1509241}) and/or chaotic behavior in particular configurations of the restricted 3-body problem \cite{sitnikov1960existence, llibre1980oscillatory,bolotin2006symbolic,MR2350333,MR3455155,MR1829194,guardia2021symbolic,baldomá2023coorbital,https://doi.org/10.48550/arxiv.2207.14351, https://doi.org/10.48550/arxiv.2212.05684}. 

However, results concerning the existence of Arnold diffusion in the 3-body problem or related models are rather scarce. Some remarkable works are \cite{MR3545968,MR3551192,capinski2018arnold,MR3927089,https://doi.org/10.48550/arxiv.2210.11311,CFG}.  In \cite{MR3545968} and \cite{MR3551192}, the authors combine numerical with analytical techniques to study the existence of diffusion orbits in the restricted  3-body problem close to $L_1$ and along mean motion resonances respectively.  In \cite{capinski2018arnold} the authors give a computer assisted proof of the existence of Arnold diffusion in the restricted planar elliptic 3-body problem. Moreover, some very interesting features of the random behavior, such as convergence to a stochastic process, are studied. In the recent works \cite{https://doi.org/10.48550/arxiv.2210.11311,CFG}, the authors show that the Arnold diffusion mechanism takes place in the spatial 4-body problem.

Of major importance, and closely related to the setting of the present work, is the paper \cite{MR3927089} (see also \cite{MR1223448,MR1284416} for previous partial results). To the best of our knowledge it constituted the first complete analytic proof of Arnold diffusion in celestial mechanics.

\begin{thm}[Theorem 1 in \cite{MR3927089}]\label{thm:TereTheorem}
There exist  $G_*>0$ and $c>0$ such that,  for any eccentricity $\zeta\in (0,cG_*^{-1})$ and  any values $G_1,G_2$ satisfying
\[
G_*\leq G_1<G_2\leq c/\zeta,
\]
if the mass ratio satisfies
\begin{equation}\label{cond:expsmall}
\mu\ll \exp(-G_2^3/3),
\end{equation}
 there exists $T>0$ and an orbit $\gamma:[0,T]\to \mathbb{R}_+\times\mathbb{T}^2\times\mathbb{R}^3$ of the RPE3BP for which 
\[
G\circ\gamma(0)\leq G_1\qquad\qquad G_2\leq G\circ\gamma(T).
\]
\end{thm}

\subsection{The  \cite{MR3927089}-approach: A priori unstable versus a priori stable setting}\label{sec:aprioriunstablestable}

The proof of Theorem \ref{thm:Maintheorem} follows the same geometric approach as Theorem  \ref{thm:TereTheorem}. However,  the fact that in Theorem \ref{thm:Maintheorem} the parameter $\mu$ is not taken small changes radically the proof. 

We devote this section to explain the main steps of the approach in  \cite{MR3927089} and compare the regimes considered in the two mentioned theorems. Compactifying the phase space (see \cite{MR362403}), one can detect an invariant cylinder at infinity foliated by periodic orbits. It corresponds to the limit of orbits with asymptotic 0 radial velocity; that is, the limit of the so-called parabolic orbits. This cylinder is the key object  to construct the unstable orbits. The  main steps in this construction of the drifting orbits are the following:

\begin{enumerate}
    \item Prove that the cylinder at infinity  has stable and unstable invariant manifolds. Note that this is not obvious because the periodic orbits in the cylinder are degenerate (Floquet exponents are equal to 0).
    \item Prove that these invariant manifolds intersect transversally along two distinct homoclinic channels. The orbits in these channels are asymptotic in the past and in the future to (possibly different) periodic orbits in the cylinder.
    \item Construct a \textit{transition chain of periodic orbits}. That is, a sequence of periodic orbits connected by transverse heteroclinic orbits.
    \item Obtain an orbit which shadows, i.e. follows closely, the transition chain.
\end{enumerate}

Let us compare this construction with the classical diffusion mechanism introduced by Arnold in \cite{MR0163026} for the $2+1/2$ degrees-of-freedom Hamiltonian 
\[
H(q,p,\theta,I,t)=\frac{I^2}{2}+\frac{p^2}{2}+\varepsilon(\cos q-1)(1+\mu(\sin \theta+\cos t)).
\]
This model can be seen as a toy model describing the dynamics of a convex Hamiltonian on a neighbourhood of a simple resonance. Arnold's classical mechanism relies on the invariant cylinder $N=\{q=p=0\}$ which is foliated by partially hyperbolic invariant tori  among which irrational tori, i.e. non-resonant, are dense. For $\mu>0$ but sufficiently small with respect to $\varepsilon$, in fact $0<\mu <e^{-c/\varepsilon}$, $c>0$, Arnold shows that the unstable manifold of any torus intersects the stable manifold of any other torus which is sufficiently close. In this way he constructs a transition chain of irrational tori and then a true orbit shadowing  this chain. 

The degeneracies of the restricted 3-body problem make all the tori in the cylinder at infinity resonant. In particular, this cylinder is foliated by periodic orbits. Therefore, the approach above can be seen as a \textit{degenerate Arnold mechanism. }

\begin{rem}
    In what concerns the implementation of the Arnold mechanism along the aforementioned cylinder at infinity, the restricted 3-body problem actually presents two different kinds of  degeneracies. First, the fact that this cylinder is not normally hyperbolic (although as we will see later it possesses stable and unstable manifolds).  Second, the fact that the cylinder is foliated by periodic orbits instead of a family of invariant tori with a continuum of frequencies.
\end{rem}

It is well known  in Arnold diffusion that, even if a mechanism is expected to take place in a model, its proof may differ radically depending on the parameter range considered. Indeed, Arnold's original model has two parameters $\varepsilon$ and $\mu$. Unfolding  $\varepsilon$ breaks the Liouville-Arnold integrability  keeping the system integrable, while unfolding $\mu$ breaks integrability. Arnold's proof provides drift in action provided that the parameter which breaks integrability is exponentially small with respect to the one which only breaks Liouville-Arnold integrability, that is, $0<\mu <e^{-c/\varepsilon}$, $c>0$. Note that, proving the existence Arnold diffusion in the very same model when both parameters are small but independent \emph{is still an open problem}. In fact, it is also open if they satisfy a power-like relation $\mu = \mathcal{O}(\varepsilon^n$), for $n \ge 0$. 

Following \cite{CherchiaG94} these two regimes  are often referred to as: 
\begin{itemize}
\item A priori unstable when  the parameter breaking integrability is taken exponentially small with respect to the other one. The  most well known example is Arnold's model with $0<\mu <e^{-c/\varepsilon}$.
\item  A priori stable when both  parameters are taken small but of the same order.
\end{itemize}

One should remark that, when studying the existence of diffusion along a simple resonance of a nearly integrable Hamiltonian system, one obtains a normal form which generalizes Arnold's model but in which the aforementioned parameters satisfy, in general, a power-like relation. 
Therefore understanding the a priori stable regime is of major importance.

\begin{rem}
Although the study of the a priori stable regime has seen important advances (in a cusp-residual sense, see \cite{MR4298716}) in the smooth category (and  especially for 3 degrees-of-freedom Hamiltonians), none of these results have been extended to the case of real-analytic a priori stable Hamiltonians.  Indeed, to the best of our knowledge, all known examples of real-analytic Hamiltonians exhibiting Arnold diffusion concern two-parameter unfoldings of an integrable system and diffusion is obtained when the parameter breaking integrability is exponentially small with respect to the parameter which, while keeping the system integrable, only breaks Arnold-Liouville integrability. Therefore, according to the above classification (which follows \cite{CherchiaG94}) these systems do not fall into the a priori stable setting.

Moreover, we want to stress that there are no results which guarantee that a given one-parameter family of Hamiltonian systems undergoes Arnold diffusion, neither  in the smooth category nor in the analytic one. This shows the enormous difficulty of proving the existence of this phenomenon in  concrete models. In fact, the first example of a real-analytic a priori stable system exhibiting topological instability was recently constructed by B. Fayad in \cite{MR4495839}. The techniques are however different from the Arnold diffusion mechanism. 
\end{rem}

In the model considered in the present paper, the mass ratio $\mu$ is the parameter breaking integrability (see \cite{llibre1980oscillatory}). In order to work in a close to integrable regime (even when $\mu$ is not small) we study the \textit{singular perturbation framework} corresponding to the region of the phase space where the angular momentum $G$ is large.  Although it does not make sense to consider $G=\infty$, one can introduce a proper scaling (c.f. Section \ref{sec:scaling}) on the region $G\geq G_*$ such that, letting $G_*\to\infty$ can be related to the break up of Liouville-Arnold integrability of the associated normal form (see Remark \ref{rem:scalingremark}).

The fundamental difference between Theorems \ref{thm:Maintheorem} and \ref{thm:TereTheorem} is the parameter range considered. The paper \cite{MR3927089} deals with an a priori unstable setting (see \eqref{cond:expsmall} in Theorem \ref{thm:TereTheorem}) whereas the present paper deals with an a priori stable regime. Note that the degeneracies of the restricted 3-body problem make the a priori stable regime very particular. This is explained in Section \ref{sec:degenerateHamiltonians} where we refer to it as a \emph{strongly degenerate a priori stable model.}

Let us explain here how the a priori stable setting makes  the approach detailed above hard to implement to prove Theorem \ref{thm:Maintheorem}.

Step (1), the existence of the stable/unstable invariant manifolds of the periodic orbits at infinity is a consequence of \cite{MR2030148} and applies to both range of parameters.
Step (4) is a consequence of a lambda Lemma (often also called inclination Lemma) proven in \cite{https://doi.org/10.48550/arxiv.2207.14351} and also applies in both parameter ranges. 

On the contrary the proof of Steps (2) and (3) is radically different. This is where this paper presents the most important novelties, which are explained in Section \ref{sec:proofmainthm}. Indeed, the main bulk of the present paper is to analyze the stable and unstable invariant manifolds of the cylinder and the geometry of their intersections. The reason is that the transversality between these manifolds is exponentially small with respect to the angular momentum $G$. Namely, it is a beyond all orders phenomenon and, therefore, any perturbative technique which relies on expanding the parameterization of the invariant manifolds in inverse powers of the angular momentum $1/G$ fails.

In the a priori unstable regime, when the mass ratio $\mu$ is taken small, one can perform power expansions in this parameter. Then, assuming that it is exponentially small with respect to the threshold of angular momentum considered, one can apply classical perturbative arguments (that is a suitable version of Melnikov theory, see \cite{MR3846872,MR3927089}), to study the intersections between the invariant manifolds of the cylinder.

On the contrary, in the a priori stable regime, the mass ratio $\mu$ is just a fixed parameter and therefore the mentioned approach cannot be applied. This falls into what is usually called \emph{exponentially small splitting of invariant manifolds}.

The analysis of exponentially small splitting  has drawn major attention in the past decades due to its relevance for the study of instability mechanisms in real-analytic Hamiltonian systems. Remarkable progress has been made in a number of works in low dimensional models (just to cite a few works, see \cite{MR923969,MR1204314,Gelfreich94,Gelfreich97a,Treshev97,MR1669417,MR2025418,MartinSS11,MR2981260,MR3007729}). In higher dimension, results are much more scarce (see \cite{MR1478530,MR1841877,MR1964346} where the exponentially small splitting between the stable and unstable manifolds of a partially hyperbolic  invariant torus is investigated). However, the tools developed in these works concern only the existence homoclinic orbits to a given invariant torus and do not suffice to establish the existence of (large) transition chains.

To the best of our knowledge, the present work is the first in which  a (large) transition chain is built in the presence of exponentially small phenomena without any assumption on the relation between the parameters involved.

\subsection{Strongly degenerate nearly integrable Hamiltonian systems}\label{sec:degenerateHamiltonians}

As already explained in the previous section, the Arnold diffusion regime that leads to the drift in angular momentum in Theorem \ref{thm:Maintheorem} is a priori stable and strongly degenerate in certain aspects. 

Let us be more specific. To simplify the exposition we focus on 2 and a half degrees-of-freedom Hamiltonians. Consider a  Hamiltonian system which is close to a Liouville-Arnold integrable Hamiltonian. In action-angle coordinates it is of the form
\[
H(\theta,I)=H_0(I)+\varepsilon H_1(\theta,I,t),\qquad \theta\in\mathbb{T}^2, \, I\in U\subset\mathbb{R}^2,\, t\in\mathbb{T},\qquad 0 <\varepsilon\ll 1.
\]
Then, depending on the properties of $H_0$ and $H_1$ one can classify the model as follows:
\begin{enumerate}
    \item \emph{Non-degenerate} a priori stable Hamiltonian systems
    \item  \emph{Properly degenerate} a priori stable Hamiltonian systems
    \item \emph{Strongly degenerate} a priori stable Hamiltonian systems.
\end{enumerate}

\emph{Non-degenerate} models refer to those with $H_0$ presenting some non-degeneracy, for instance steepness (one could consider more strict non-degeneracy conditions such as convex $H_0$). Non-degenerate Hamiltonians are those Arnold's conjecture refers too (see \cite{MR3646879,MR4298716, MR3653060,GideaM22} for results in the finite regularity class and convexity assumptions on $H_0$).  To understand Arnold diffusion for such Hamiltonians, one must deal with two regimes: simple resonances (regions in action space where there exist one resonant relation among frequencies $\omega(I)=(\nabla H_0(I),1)$) and double resonances (two resonant relations). Indeed, the diffusing orbits ``travel along'' simple resonances  and cross infinitely many double resonances. Those double resonances can be classified as strong when the two resonant relations have ``comparable strength'' and weak when one is much stronger than the other. 

We call \emph{properly degenerate}  those Hamiltonians such that $H_0$ does not depend on one of  the actions. This is typical in celestial mechanics models due to the degeneracies of the 2-body problem (see, for instance \cite{MR3551192,https://doi.org/10.48550/arxiv.2210.11311,CFG}). In this case there is a trivial resonance relation. If one fixes another one, one can look for solutions drifting along it.

Finally, the third class is what we refer to as \emph{strongly degenerate} Hamiltonian systems. They are such that $H_0$ presents the same degeneracy as in the properly degenerate case and that $H_1$ also presents a degeneracy in such a way that the normally hyperbolic cylinder that appears at simple resonance has very particular resonant dynamics: all orbits are periodic. Namely, there exists a simple resonance foliated by double resonances. This is the setting of the present paper. 

Indeed,  in the first class, one expects to have a ``network'' of normally hyperbolic cylinders which cross weak double resonances but break up at the strong ones. In the second class, one expects that there exists a ``large'' cylinder whose dynamics is close to integrable and has multiple time scales. The geometric features of the third class are similar to those of the second one but with the extra degeneracy that the cylinder dynamics is fully resonant.

The tools developed in the present paper  to prove Theorem \ref{thm:Maintheorem} apply to ``typical'' strongly degenerate Hamiltonian systems. An example of such type is  a degenerate version of the Arnold model (see \cite{MR0163026}) 
\begin{equation}\label{eq:degenerateArnold}
H(\varphi_1,I_1,\varphi_2,I_2,t;\varepsilon,\mu)=H_0(I_1)+ \mu H_1 (\varphi_1,\varphi_2,I_2,t;\varepsilon),\quad\qquad (\varphi_1,I_1)\in \mathbb{T}\times\mathbb{R},\ (\varphi_2,I_2)\in \mathbb{T}\times\mathbb{R},\ t\in\mathbb{T}
\end{equation}
where
\[
H_0(I_1)=\frac{I_1^2}{2},\qquad\qquad H_1 (\varphi_1,\varphi_2,I_2,t;\varepsilon)=\varepsilon (\cos \varphi_1-1)\left(1+\frac{I_2^2}{2}+\sin \varphi_2+\cos t\right).
\]
Observe that, as for the classical Arnold model, for any $\mu,\varepsilon\geq 0$,
\[
N=\{(\varphi_1,I_1,\varphi_2,I,t)\in \mathbb{T}\times\mathbb{R}\times\mathbb{T}\times\mathbb{R}\times\mathbb{T}\colon \varphi_1=I_1=0\}
\]
is a normally hyperbolic invariant cylinder. However, for this degenerate model, it is foliated by fully resonant 
tori with frequencies
\[
\omega(I)=(0,1)
\]
and therefore, the tori are foliated by periodic orbits.

\subsection{Main tools for the proof of Theorem \ref{thm:Maintheorem}}

In this work we introduce a set of techniques to analyze the Arnold diffusion phenomenon in\emph{ strongly degenerate} a priori stable Hamiltonians (according to the classification introduced in the previous section). As already mentioned, the main player in the geometric mechanism for diffusion in this setting is a normally hyperbolic invariant cylinder which is foliated by fully resonant tori. The main ideas are:
\begin{itemize}
    \item By exploiting the symplectic features of the problem, we develop a formalism for the analysis of the splitting between the invariant manifolds of pairs of different fully resonant partially hyperbolic tori. As we explain in Section \ref{sec: The intersection problem} the splitting between these manifolds is heavily anisotropic.
    \item Our analysis on the existence of intersections between the invariant manifolds of pairs of resonant tori allows us to later recover information about the global geometry of the intersection between the invariant manifolds of the cylinder. Rather remarkably, we show that, even if their splitting angle is exponentially small, the invariant manifolds of the cylinder intersect transversally along two different homoclinic  manifolds, both of which contain an annulus (a bounded cylinder). Namely, the homoclinic manifolds, also called homoclinic channels, are global.
    \item  We obtain a detailed asymptotic analysis of the so-called scattering maps (associated to both homoclinic channels). In particular, we are able to obtain asymptotic formulas for the difference between scattering maps associated to different homoclinic channels.
\end{itemize}

These ideas will be explained in full detail in Section \ref{sec:proofmainthm}. For the sake of clarity the techniques are tailored to the problem at hand, but we have tried to make the exposition conceptual enough so the core ideas can be clearly extrapolated.

\begin{rem}
    The second item is of relevance in \emph{strongly degenerate} systems since, as we will see in Section \ref{sec:proofmainthm}, implies the existence of two scattering maps defined globally on an annulus. This is crucial to overcome the fact that the inner dynamics on the cylinder is fully resonant (the identity map for the time-one map).

    We expect that understanding the global geometry of the intersection between the invariant manifolds of the cylinder is considerably more involved in less degenerate models. At the level of the Arnold diffusion mechanism, this might be however less relevant as in these systems one can also rely on the inner dynamics to move along the cylinder.
\end{rem}

We believe that the techniques developed in this paper are of high relevance to the study of Arnold diffusion also in properly degenerate a priori stable systems. The reason is that, for these systems, the KAM tori lying in the normally hyperbolic manifold associated to a simple resonance are ``almost resonant'' in the sense that the dynamics in one of the angles is much slower than for the other one. Therefore, we expect that the splitting between the invariant manifolds  of these tori can be analyzed by the techniques of the present paper. As already mentioned, these systems appear naturally in celestial mechanics. One interesting model to consider is the 3-body problem near mean motion resonances.

The general case, namely non-degenerate systems, seems at the moment out of reach. Our techniques could be used to obtain a first order approximation of the splitting potential (see Section \ref{sec:nonexactlagrangian}), associated to a pair of nonresonant (KAM) invariant tori,  in terms of an explicit fast oscillatory integral. However, the so-called problem of small exponents (see \cite{MR1841877,MR1478530}) makes it difficult to obtain a lower bound for this integral and, hence, for the splitting angles.

One important result commonly used in  the proofs of existence of Arnold diffusion is Moeckel's theorem \ref{thm:originalmoeckel}, which gives sufficient conditions for an iterated systems of two maps to have drifting orbits. Roughly speaking, the main condition in Moeckel's theorem is that the two maps do not share any invariant curve. But to check this condition in a given example can be extremely difficult. In this paper, we provide a new quantitative Theorem  \ref{thm:scattmapmain} which gives explicit checkable conditions ensuring that the requirements of Moeckel's theorem \ref{thm:originalmoeckel} are satisfied and, therefore, the two maps do not share any invariant curve. We think this result, which is independent of the rest of the paper and can be applied to a general setting, can be an important tool to show the existence of  Arnold diffusion in other models.

\subsection{Organization of the article}

 Section \ref{sec:proofmainthm} contains the core of the proof of Theorem \ref{thm:Maintheorem}. More concretely, Section \ref{sec: The intersection problem} renders the main ideas behind the proof of the first main ingredient: identifying a (topological) normally hyperbolic invariant manifold ``at infinity'' $\mathcal{P}_\infty$, and analyzing  the geometry of the intersection between its stable and unstable invariant manifolds. The proofs of the results in this section are postponed to Section \ref{sec:generatingfunctsinvmanifolds}. 
 
 Section \ref{sec:proofthm24} is devoted to the construction of two global \textit{scattering maps} on $\mathcal{P}_\infty$ and their asymptotic analysis. In particular, we check that they satisfy the conditions of Theorem \ref{thm:scattmapmain} and consequently we prove the non-existence of common invariant curves for the scattering maps. The rather technical proofs of the results in these sections are deferred to Sections \ref{sec:proofgeneratingscatt} and  \ref{sec:asymptformscattmap}.
 Section \ref{sec:abstractresultscattmaps}, which is independent of the rest of the paper,  deals with the proof of Theorem \ref{thm:scattmapmain}.

 Appendix \ref{sec:perturbregime}  contains some relevant information on the 2-body problem and Appendix \ref{sec:Melnikov} contains a detailed study of the perturbative potential and the associated Melnikov potential.

Througout the rest of the paper we fix a value  $\mu\in (0,1/2)$.

\subsection*{Acknowledgements}

The authors want to thank the referees for valuable suggestions and help to substantially improve  the final version of the
manuscript. This project has received funding from the European Research Council (ERC) under the European Union's Horizon 2020 research and innovation programme (grant agreement No 757802).  This work is part of the grant PID-2021-122954NB-100 funded by MCIN/AEI/10.13039/501100011033 and ``ERDF A way of making Europe''.   M.G. and T.M.S are supported by the Catalan Institution for Research and Advanced Studies via an ICREA Academia Prize 2019. This work is also supported by the Spanish State Research Agency, through the Severo Ochoa and Mar\'{i}a de Maeztu Program for Centers and Units of Excellence in R\&D (CEX2020-001084-M).



\section{Proof of the main theorem}\label{sec:proofmainthm}

We introduce the (exact symplectic)  change to polar coordinates $(r,y,\alpha,G,t,E)\mapsto(q,p,t,E)$ where $q=(r\cos\alpha,r\sin\alpha)$ and  $(y,G)$ are the conjugate momenta to $(r,\alpha)$. In this coordinate system, the RPE3BP is a Hamiltonian system on the (extended) phase space (rigourously one should exclude collisions, but, since our analysis is performed far from collisions, we abuse notation and we refer to $M_{\mathrm{pol}}$ as phase space) 
\begin{equation}\label{eq:phasespacepolar}
(r,\alpha,t,y,G,E)\in\mathbb{R}_+\times\mathbb{T}^2\times\mathbb{R}^3\equiv M_{\mathrm{pol}}
\end{equation}
with Hamiltonian function
\begin{equation}\label{eq: Hamiltonian in polar coordinates}
H_{\mathrm{pol}}(r,\alpha,t,y,G,E)=\frac{y^2}{2}+\frac{G^2}{2r^2}-V_{\mathrm{pol}}(r,\alpha,t)+E,\qquad\qquad  V_{\mathrm{pol}}(r,\alpha,t)=U(r\cos\alpha,r\sin\alpha,t),
\end{equation}
where $U$ is the potential in \eqref{eq:originalpotentiall}. Due to the fact that  $V_{\mathrm{pol}}(r,\alpha,t)\to0$ as $r\to\infty$, 
 there exists an invariant manifold ``at infinity''. To describe this manifold properly,  we make use of  McGehee's compactification of the phase space $r=2/x^2$. Then, it is an easy computation to check that in McGehee's coordinates $(x,\alpha,t,y,G,E)$ 
\begin{equation}\label{eq: the infinity manifold}
\mathcal{P}_\infty=\left\{(0,\alpha,t,0,G,0)\colon (\alpha,t)\in\mathbb{T}^2, G\in\mathbb{R}\right\}
\end{equation}
is an invariant submanifold contained in the zero energy level $\{H_{\mathrm{pol}}=0\}$. One may also check that $\mathcal P_\infty$ is foliated by periodic orbits with zero Floquet exponents. Therefore, the dynamics around $\mathcal P_\infty$, falls into the \emph{strongly degenerate} case according to the classification introduced in Section \ref{sec:degenerateHamiltonians} (with the extra degeneracy of having zero exponents which, as we will see, can be dealt with by means of classical results and will not introduce extra difficulties). As $\mathcal P_\infty$ is the union of periodic orbits, the following lemma is a direct consequence of Theorem 2.1 of \cite{MR3583476} (see also \cite{MR362403,MR2030148}).


\begin{lem}\label{lem:lemacilindro}
    Let $\mathcal P_\infty$ be the 3-dimensional invariant manifold introduced in \eqref{eq: the infinity manifold}. Then, the stable and unstable sets \begin{equation}\label{eq:invmanifoldspolar}
\begin{split}
W^{\mathrm{u}}(\mathcal{P}_\infty)=&\{ z\in \{H_{\mathrm{pol}}=0\} \colon \exists z'\in \mathcal{P}_\infty\ \text{ for which }\ \lim_{\tau\to -\infty}|\phi^\tau(z)-\phi^\tau(z')|=0\}\\
W^{\mathrm{s}}(\mathcal{P}_\infty)=&\{ z\in  \{H_{\mathrm{pol}}=0\} \colon \exists z'\in \mathcal{P}_\infty\ \text{ for which }\ \lim_{\tau\to\infty}|\phi^\tau(z)-\phi^\tau(z')|=0\}.\\
\end{split}
\end{equation}
    are 4-dimensional  immersed submanifolds, which are $C^\infty$ everywhere an real-analytic on the complement of $\{x=0\}$.
\end{lem}

It is also possible to show that a suitable version of the Lambda lemma holds near $\mathcal P_\infty$ (see \cite{MR3583476}). Together, these results show that the flow on a neighborhood of the invariant manifold $\mathcal{P}_\infty$ behaves, at the topological level, as the flow around a normally hyperbolic invariant cylinder. 

The invariant submanifold $\mathcal{P}_\infty$ together with its stable and unstable manifolds constitute the main players involved in the geometric mechanism, outlined in Section \ref{sec:aprioriunstablestable}, leading to the main result in Theorem \ref{thm:Maintheorem}. Theorems \ref{thm:intersectionmain}, \ref{thm:scattmapmain} and \ref{thm:Moeckel} below, together with a well known result by Moeckel \cite{MR1884898} (Theorem \ref{thm:originalmoeckel} below), constitute all the steps needed to prove the existence of this geometric mechanism. Although this mechanism was already implemented in \cite{MR3927089} for $\mu\ll 1$, extending this construction for fixed $\mu\in (0,1/2)$ is a major challenge.

In Theorem \ref{thm:intersectionmain}, we study the geometry of the intersection between the stable and unstable manifolds of the invariant submanifold $\mathcal{P}_\infty$. To that end, it is convenient to introduce the Poincar\'{e} map (recall that the Hamiltonian \eqref{eq: Hamiltonian in polar coordinates} is $2\pi$-periodic in $t$)
\begin{equation}\label{eq:poincaremap}
P:\{t=0\}\to \{t=2\pi\}
\end{equation}
induced by the flow of the Hamiltonian \eqref{eq: Hamiltonian in polar coordinates} on the section $\{t=0\}$ and denote by
\[
\mathcal{P}_\infty^*=\mathcal{P}_\infty\cap\{t=0\}\cong \mathbb{T}\times\mathbb{R}.
\]
It is important to remark that $\mathcal P^*_\infty$ is foliated by fixed points of the map $P$.
\begin{thm}\label{thm:intersectionmain}
Fix any $\mu\in(0,1/2)$, any  $G_*\gg 1$ and any $R>0$. Then, for sufficiently small eccentricity $0\leq\zeta<(G_*+R)^{-3}$, there exist, at least, two different real-analytic  submanifolds $\Gamma_\pm\subset W^{\mathrm u}(\mathcal{P}_\infty^*)\pitchfork W^{\mathrm s}(\mathcal{P}_\infty^*)$, which are diffeomorphic to the compact annulus $\mathbb{T}\times[0,1]$ and whose $G$-projection  covers the interval $[G_*,G_*+R]$.
\end{thm}
The proof of Theorem \ref{thm:intersectionmain} is outlined in Section \ref{sec: The intersection problem}. It is the most challenging step in our construction and constitutes the foremost source of novelties of the present work. The main difficulty is that, in order to work in a nearly integrable regime (without assuming $\mu$ small) we consider the singular perturbation framework obtained by restricting to the region of the phase space in which the angular momentum is large, namely $G\geq G_*$ for some $G_*\gg1$. Thus, we  analyze the portion of the invariant manifolds $W^{\mathrm{u,s}}(\mathcal{P}_\infty^*)$ which lies on this region.  We will show in Section \ref{sec:scaling} that, in this region of the phase space, the Hamiltonian is $\mathcal{O}(\mu G_*^{-4})$ (the precise power in $G_*$ is not relevant for the ongoing discussion) close to integrable. However, there exist  different time scales, whose ratio is proportional to $G_*^3$. This results in the fact that the angle (see \cite{MR802878}) satisfies
\[
\angle \left(T_{\Gamma_\pm}W^{\mathrm{u}}(\mathcal P^*_\infty),T_{\Gamma_\pm}W^{\mathrm{s}}(\mathcal P^*_\infty)\right)\leq \mu\exp(-cG_*^3),
\]
for some $c>0$. This makes rather difficult to check that the splitting of these invariant manifolds is actually non-zero and even more challenging, to say something about the geometry of the intersection. Indeed, any classical perturbation technique using  $1/G_*$ as parameter fails to detect the splitting between these manifolds.  Theorem \ref{thm:intersectionmain} is, nevertheless, much easier to establish if one also considers $\mu\ll1$ so one can make use of classical perturbation theory in this parameter. This was the approach used in \cite{MR3927089}, where Theorem \ref{thm:intersectionmain} was proved under the assumption $\mu\ll \exp(-cG^3_*)$, reminiscent of Arnold's trick in \cite{MR0163026}. In the present work, where $\mu\in(0,1/2)$ is fixed, we cannot rely on classical perturbation theory but instead use rather delicate singular perturbation theory techniques and exploit the symplectic features of the problem, in particular, the Hamilton-Jacobi formalism for Lagrangian submanifolds. While our proof is, for the sake of clarity, adapted to the problem at hand, we believe that our ideas are of interest in the more general framework of properly degenerate a priori stable nearly-integrable real-analytic Hamiltonian systems (see Section \ref{sec:degenerateHamiltonians}).

The homoclinic manifolds $\Gamma_\pm$   obtained in Theorem \ref{thm:intersectionmain} are composed of  heteroclinic points connecting (possibly different) points in $\mathcal{P}_\infty^*$. Although the $G$-projection of the collection of heteroclinic points is large, the jumps in the $G$ coordinate along each heteroclinic orbit are small (bounded by an inverse power of $G_*$). In order to complete the proof of Theorem \ref{thm:Maintheorem}, we look for a (possibly very large) chain of points in $\mathcal P_\infty^*$ connected by heteroclinic orbits along which the value of $G$ increases. 

To formalize the study of the outer dynamics associated to these heteroclinic connections, we make use of the so-called \textit{scattering maps} (see \cite{MR1737988,MR2184276,MR2383896}), which encode the dynamics along the heteroclinic orbits passing through $\Gamma_\pm$. In the present setting, they can be introduced  as follows. Denote by $\phi^\tau_{H_\mathrm{pol}}$ the time $\tau$ flow associated to the Hamiltonian \eqref{eq: Hamiltonian in polar coordinates}. Then, associated to each homoclinic channel $\Gamma_{\pm}$, we can define the \textit{backward wave map} 
\begin{equation}\label{eq:restrwavemaps}
\begin{aligned}
\Omega_\pm^{\mathrm{u}}:\Gamma_\pm &\to \mathcal{P}_\infty^*\\
z &\mapsto (\alpha_\pm^{\mathrm{u}},G_\pm^{\mathrm{u}})=\lim_{\tau\to -\infty} (\alpha\circ\phi^\tau_{H_\mathrm{pol}}(z), G \circ\phi^\tau_{H_\mathrm{pol}}(z))
\end{aligned}
\end{equation}
and the \textit{forward wave map}
\begin{equation}\label{eq:restrwavemapsford}
\begin{aligned}
\Omega_\pm^{\mathrm{s}}:\Gamma_\pm &\to \mathcal{P}_\infty^*\\
z &\mapsto (\alpha_\pm^{\mathrm{s}},G_\pm^{\mathrm{s}})=\lim_{\tau\to + \infty} (\alpha\circ\phi^\tau_{H_\mathrm{pol}}(z), G \circ\phi^\tau_{H_\mathrm{pol}}(z)),
\end{aligned}
\end{equation}
which are diffeomorphisms onto their images. Notice that $\alpha$ and $G$ are constants of motion in $\mathcal{P}_\infty$ and therefore, these limits are well defined. Finally,  the  \textit{scattering maps}, are  given by
\begin{equation}\label{eq:defnscatteringmaps}
\begin{array}{rcl}
\mathbb{P}_\pm =\Omega_{\pm}^{\mathrm{s}}\circ\left(\Omega^{\mathrm{u}}_\pm\right)^{-1}: \mathbb{A}_\pm &\longrightarrow &\mathcal{P}_\infty^*\\
(\alpha^{\mathrm{u}},G^{\mathrm{u}})&\longmapsto& (\alpha_\pm^{\mathrm{s}},G_\pm^{\mathrm{s}}).\
\end{array}
\end{equation}
where $\mathbb{A}_\pm=\Omega_\pm^{\mathrm{u}}(\Gamma_\pm)$ is their domain of definition. 

As it will be shown in Theorem \ref{thm:Moeckel}, both scattering maps share a common domain of definition $\mathbb A\subset\mathbb A_+\cap \mathbb A_-$ which is diffeomorphic to an annulus. Then, a suitable version of the Lambda Lemma (see Proposition \ref{prop:shadowing} below) reduces the study of the existence of a drifting  orbit, to the study of the existence of drifting orbits for the iterated function system defined by $\{\mathbb{P}_+,\mathbb{P}_-\}$ on $\mathbb{A}$. To obtain the latter we will rely on the next result by Moeckel   (see also \cite{MR2322183} and \cite{GideaM22} for more general statements), which gives a necessary and sufficient condition for the existence of drifting orbits of an iterated function system of maps of the annulus.

\begin{thm}[Theorem 1 in \cite{MR1884898}]\label{thm:originalmoeckel}
Let $\phi$ be an iterated function system constructed from two $\mathcal C^r$ twist maps $g_+,g_-$ of the annulus $A=\mathbb{T}\times [a,b]$ which fix the boundary of $A$. Then, $\phi$ does not drift from $a'>a$ to $b'<b$ if and only if $g_+$ and $g_-$ have a common essential invariant curve on $\mathbb{T}\times[a',b']\subset A$.
\end{thm}

\begin{rem}
    The assumption that the map fixes the boundary  is superfluous if one is only interested in orbits which drift from $a+2\delta$ to $b-2\delta$ (for a fixed positive $\delta>0$). Indeed one could modify the map on $A\setminus (\mathbb T\times([a,a+\delta]\cup[b-\delta,b]))$ so that it fixes the boundary without affecting the dynamics on the region $\mathbb{T}\times [a+\delta,b-\delta]$.
\end{rem}

It is in general very hard to check that two given maps do not share common essential invariant curves. In the problem at hand, this condition is actually not difficult to verify if one assumes that $\mu\ll 1$ is sufficiently small. Indeed, it is enough to consider a first order Taylor expansion in $\mu$ of the scattering maps $\mathbb{P}_\pm$ (see \cite{MR3927089}). However, checking the non-existence of common invariant curves is much more challenging for fixed $\mu\in(0,1/2)$, since the Taylor expansions in $1/G$ (notice that there is no other perturbative parameter) of the maps $\mathbb{P}_\pm$ coincide up to any order.

In Theorem \ref{thm:scattmapmain} below,  we give sufficient conditions for a pair of close to identity twist maps of the annulus to do not share common essential invariant curves. The key point of this result is that allows us to treat cases when the Taylor expansions of both maps in the perturbative parameter coincide.

Before giving the precise statement of Theorem \ref{thm:scattmapmain}, let us give a heuristic idea and identify the main quantities involved. We consider two  twist area preserving maps $g_\pm$ (see equation \eqref{eq:firstcondmapmoeckel}), which are $\varepsilon$-close to the identity  and $\delta(\varepsilon)$-close to integrable twist maps with frequency $\varepsilon\omega$ (and we assume $\delta(\varepsilon)\ll\varepsilon$). 

The fact that these 2-dimensional twist maps are $\varepsilon$-close to the identity allows us to deduce that, for any of them, there exist  one degree-of-freedom Hamiltonians $H_\pm$ such that their time-one maps are exponentially close 
(in fact $\mathcal{O}(\delta(\varepsilon)\exp(-c/\varepsilon))$- close) to the maps $g_\pm$.  

Using this we can provide estimates for the following to quantities: 
\begin{itemize}
\item An upper bound for the size of  the gaps between the KAM curves of $g_\pm$.  
%
\item A lower bound for the angle between the foliations given by the level sets of the Hamiltonians $H_+$ and $H_-$. 
\end{itemize}
Then, if we show that as $\varepsilon\to 0$,
\[
\frac{\text{size of the gaps between KAM curves}}{\text{angle between leaves of the foliations}}\to 0,
\]
there cannot exist common invariant curves for the maps $g_\pm$. This condition is exactly  condition \eqref{eq:suffcondmoeckel} below, which  only requires exponentially small transversality between the foliations of $H_\pm$.

In the statement of Theorem \ref{thm:scattmapmain} we make use of the following notation. Given a complex $\rho$-neighborhood $A_\rho$ of an annulus $A=\mathbb{T}\times[a,b]$ and $f:A_\rho\to\mathbb{C}$,  we write 
\[
f=\mathcal{O}_\rho (\varepsilon)
\]
if there exists $C>0$ independent of $\varepsilon$ such that, as $\varepsilon \to 0$ we have $|f|\leq C \varepsilon$ uniformly on $A_\rho$. We say that 
\[
f=o_\rho(\varepsilon)
\]
if 
\[
\lim_{\varepsilon\to 0} \frac{|f|}{\varepsilon}=0
\]
uniformly on $A_\rho$.
\begin{thm}\label{thm:scattmapmain}
Let $\rho>0$ and let $g_+,g_-$ be two real-analytic exact symplectic twist maps which depend also analytically on a parameter $\varepsilon$. Assume that there exists $\rho>0$ such that for any $\varepsilon$ sufficiently small the maps $g_\pm$ are defined in a $\rho$-neighborhood in $(\mathbb{C}/2\pi\mathbb{Z})\times\mathbb{C}$ of the annulus $A=\mathbb{T}\times[a,b]$ and  are of the form
\begin{equation}\label{eq:firstcondmapmoeckel}
g_\pm:(\alpha,G)\mapsto (\alpha+\varepsilon\  \omega(G;\varepsilon),\ G+\delta(\varepsilon)r(\alpha,G))+o_\rho(\delta(\varepsilon)),
\end{equation}
 for 
 \[
 \inf_{G\in[a,b]}|\omega(G;0)|\neq 0,\qquad\qquad r=\mathcal{O}_\rho(1),
 \]
 and $\delta(\varepsilon)>0$ satisfying $\lim_{\varepsilon\to 0} \delta (\varepsilon)/\varepsilon=0$. Let 
 \[
 f(\alpha,G)=(-\delta(\varepsilon)r(\alpha,G),\ \varepsilon\omega(G;\varepsilon))
 \]
 and assume that there exists $\eta(\varepsilon)>0$ and a vertical strip $\mathbb{I}=(\alpha_1,\alpha_2)\times[a,b]$ on which ($\langle\cdot,\cdot\rangle$ denotes the standard scalar product on $\mathbb{R}^2$)
\begin{equation}\label{eq:secondcondmapmoeckel}
\inf_{(\alpha,G)\in\mathbb{I}} |\langle f,\ g_+-g_-\rangle (\alpha,G)|\geq \eta(\varepsilon)>0
\end{equation}
and, for all $(\alpha,G)\in \mathbb I$,
\begin{equation}\label{eq:suffcondmoeckel2}
\lim_{\varepsilon\to 0} \frac{ \delta (\varepsilon) |(g_+-g_-)(\alpha,G)| }{\eta(\varepsilon)}\to 0.
\end{equation}
Then, if there exists $c>0$ independent of $\varepsilon$ such that  
\begin{equation}\label{eq:suffcondmoeckel}
\lim_{\varepsilon\to 0} \ \frac{\sqrt{\delta(\varepsilon)\exp(-c/\varepsilon)}}{
\varepsilon \inf_{G\in[a,b]} |\omega'(G;\varepsilon)|\  \eta(\varepsilon)}\to 0,
\end{equation}
the maps $g_+$ and $g_-$ share no common essential invariant curve on $A_{\varepsilon}=\mathbb{T}\times[a+\varepsilon^{1/4}, b-\varepsilon^{1/4}]$.
\end{thm}

   The proof of Theorem \ref{thm:scattmapmain} is carried out in Section \ref{sec:abstractresultscattmaps}. Recall that the main ingredient in the proof is the construction of two Hamiltonians $H_\pm$ whose time-one maps approximate the maps $g_\pm$ up to exponentially small remainder. Provided \eqref{eq:suffcondmoeckel2} is satisfied, we show that  $\langle f,g_+-g_-\rangle$ gives an asymptotic formula for the angle between the foliations $H_\pm=\text{const}$ bounded by bellow by $\eta (\varepsilon)$. If this angle is much bigger than the gaps between the KAM tori of the maps, whose size is upper bounded 
   \[
   \frac{\sqrt{\delta(\varepsilon)\exp(-c/\varepsilon)}}{\inf_{G\in[a,b]}|\varepsilon |\omega'(G,\varepsilon)|}
   \]
   then $g_\pm$ do not share common essential invariant curves. It is in this way that we obtain  condition \eqref{eq:suffcondmoeckel}.

 The main ideas involved in the construction of the Hamiltonians $H_\pm$  are the combination of an interpolation argument for close to integrable real-analytic exact symplectic maps due to Kuksin and P\"{o}schel \cite{MR1279392}  with a standard averaging technique for one fast-frequency systems.
 Our following result shows that the scattering maps $\mathbb{P}_\pm$ satisfy the hypotheses of Theorem \ref{thm:scattmapmain}. As a corollary of Theorems \ref{thm:originalmoeckel} and \ref{thm:scattmapmain}, we obtain a drifting orbit of the iterated function system of the scattering maps $\mathbb{P}_+$ and $\mathbb{P}_-$.

\begin{rem}
    The perturbative regime in which we analyze the restricted 3-body problem corresponds to looking at the region of the phase space in which the angular momentum $G$ is sufficiently large.  

    In Theorem \ref{thm:Moeckel} below we look at a compact but large piece of the cylinder $\mathcal P_\infty^*$ whose $G$-projection covers an interval $[G_*,G_*+R]$ with $G_*\gg 1$ and any $R>0$. Since $R$ might be much larger than $G_*$, in order to make use of Theorem \ref{thm:scattmapmain}, which was formulated in terms of a perturbative parameter $\varepsilon$, we split the interval $[G_*,G_*+R]$ into several intervals of the form $[G_0,G_0+1]\subset [G_*,G_*+R]$ and use  $\varepsilon=G_0^{-4}$ as perturbative parameter.
\end{rem}
 
\begin{thm}\label{thm:Moeckel}
Fix any $\mu\in(0,1/2)$ any $G_*\gg 1$ and any $R>0$.  Then, for  $0\leq \zeta\leq (G_*+R)^{-3}$ the scattering maps $\mathbb{P}_\pm$ in \eqref{eq:defnscatteringmaps} are well defined, real-analytic and exact symplectic on an annulus $\mathbb{A}\subset\mathbb{A}_+\cap \mathbb{A}_-\subset\mathcal{P}_\infty^*$ whose $G$-projection covers the interval $[G_*,G_*+R]$.

Moreover, there exists $\rho>0$ such that, for any $G_0\in [G_*,G_*+R]$, on the complex $\rho$-neighborhood of each horizontal strip $\mathbb{T}\times(G_0,G_0+1)\subset \mathbb{A}$,  the scattering maps $\mathbb{P}_\pm$ are twist maps of the form \eqref{eq:firstcondmapmoeckel} with $a=G_0, b=G_0+1$
\begin{equation}\label{eq:firstmoeckelcondscatt}
\varepsilon=G_0^{-4},\qquad\qquad \delta(\varepsilon)=\zeta\  \varepsilon^{5/4},\qquad\qquad |\omega(G,\varepsilon)|\gtrsim 1\qquad\qquad|\omega'(G;\varepsilon)|\gtrsim\varepsilon^{1/4},
\end{equation}
and satisfy \eqref{eq:suffcondmoeckel} and \eqref{eq:suffcondmoeckel2} on $\mathbb{I}=(\pi/8,\pi/4)\times(G_0,G_0+1)$ with $\eta(\varepsilon)$ in \eqref{eq:secondcondmapmoeckel} given by  
\begin{equation}\label{eq:secondmoeckelcondscatt}
\eta(\varepsilon(G_0))=\zeta\exp(-G_0^3/3).
\end{equation}
Therefore, for any $0<\zeta\leq (G_*+R)^{-3}$, there exists $N\in\mathbb{N}$ and a sequence
\[
\{ (i_k,z_k)\}_{1\leq k\leq N}\subset \left( \{+,-\} \times \mathcal{P}_\infty^* \right)^N\qquad\qquad z_{k+1}=\mathbb{P}_{i_k}(z_k)
\]
such that 
\[
\pi_G(z_N)-\pi_G(z_1)\geq R/2.
\]
\end{thm}

Note that in many settings the scattering maps are not globally defined and, in particular, they may have monodromy when the angle $\alpha$ makes a full turn. This is not the case in the present paper as stated in Theorem \ref{thm:Moeckel}.

Theorem \ref{thm:Moeckel} is proved in Section \ref{sec:proofthm24}. The most difficult part is to establish a lower bound for the difference between both scattering maps. Indeed,  they are associated to a homoclinic intersection with exponentially small transversality, so their difference is exponentially small. Key to establish a lower bound (actually, an asymptotic expression) are the geometric ideas developed in the proof of Theorem \ref{thm:intersectionmain}, which allow us to construct generating functions for the scattering maps whose asymptotics, as well as their difference, are well approximated by certain explicit integrals, the so-called  Melnikov potential (see \eqref{eq: DefinitionredMelnikovPotential}).

\begin{rem}\label{rem:transversalitygaps}
Notice that the lower bound \eqref{eq:secondmoeckelcondscatt} on the angle  $\gtrsim \exp(-G_0^3/3)$ between  the foliations associated to the scattering maps (see the discussion below Theorem \ref{thm:scattmapmain}) is of much larger size than the gaps$\lesssim \exp(-cG_0^{-4})$ between the KAM curves of each of the scattering maps and therefore condition \eqref{eq:suffcondmoeckel} is satisfied. The different exponents are obtained as follows: First, the exponent on \eqref{eq:secondmoeckelcondscatt} comes from the ratio between time scales which is of order $G_0^{-3}$. On the other hand, the exponent in the upper estimate for the KAM gaps is related to the distance from the scattering maps to the identity, which is of order $G_0^{-4}$.
\end{rem}

Finally, the proof of Theorem \ref{thm:Maintheorem}  is completed by standard shadowing results (see Figure  \ref{fig:shadowing}). Let $(\alpha,G)\in \mathcal{P}_\infty^*$ be a parabolic fixed point of the Poincar\'{e} map $P$ in \eqref{eq:poincaremap} and denote by $W_{\alpha,G}^{\mathrm{u},\mathrm{s}}$ its stable and unstable manifolds.  For a number $\delta>0$  and a point $p\in\{t=0\}$  denote by $B_\delta(p)\subset \{t=0\}$ the ball of radius $\delta$ centered at $p$ in the Poincar\'{e} section. The following  shadowing result for parabolic fixed points, proved in \cite{MR3583476} fits our purposes.

\begin{prop}[Proposition 2 in \cite{MR3583476}] \label{prop:shadowing}
Let $N\in\mathbb{N}\cup \{\infty\}$ and let $\{(\alpha_k,G_k)\}_{1\leq k\leq N}$ be a family of  fixed points  in $\mathcal{P}_\infty^*$  for the Poincar\'{e} map $P$ such that, for all $1\leq k\leq N$,   $W^{\mathrm{u}}_{\alpha_k,G_k}$ intersects transversally $W^{\mathrm{s}}(\mathcal{P}_\infty^*)$ at a point $p_k\in W^{\mathrm{s}}_{\alpha_{k+1},G_{k+1}}$. Then, for any sequence $\{\delta_k\}_{k\geq 1}$ with $\delta_k>0$, there exists a point $z\in B_{\delta_1}(\alpha_0,G_0)$ and two sequences  $\{n_k\}_{1\leq k\leq N}, \{\tilde{n}_k\}_{1\leq k\leq N}, \subset \mathbb{N}$ with $n_{k}< \tilde{n}_k<n_{k+1}< \tilde{n}_{k+1}$ such that $P^{n_k}(z)\in B_{\delta_k}( \alpha_k,G_k)$ and $P^{\tilde{n}_k}(z)\in B_{\delta_k}( p_k)$ for all $1\leq k\leq N$.

\end{prop}\vspace{0.4cm}

Let $\{z_k\}_{1\leq k\leq N}=\{(\alpha_k,G_k)\}_{1\leq k\leq N} \subset \mathcal{P}_\infty^*$ be the sequence of fixed points for the Poincar\'{e} map $P$ given in Theorem \ref{thm:Moeckel} and apply Proposition \ref{prop:shadowing} with $\delta_k>0$ small enough. The proof of Theorem \ref{thm:Maintheorem}  is complete.

\begin{figure}
\centering
\includegraphics[scale=0.5]{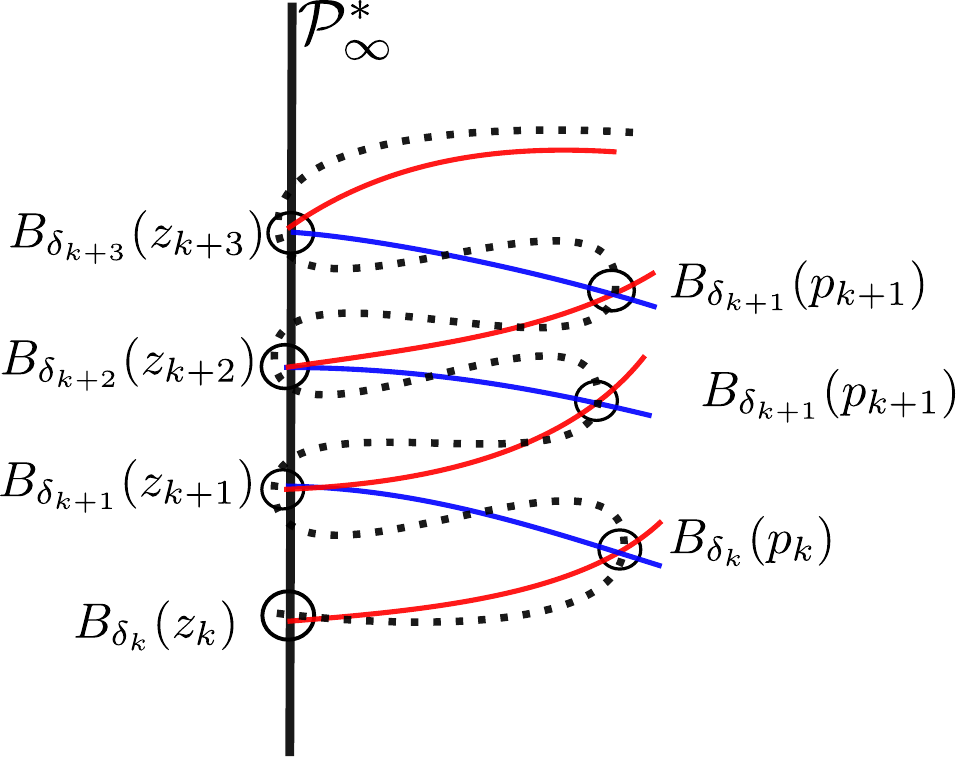}
\caption{A true orbit (dotted curve) of the RPE3BP which shadows the pseudo orbit $\{z_k\}_{1\leq k\leq N}$ obtained in Theorem \ref{thm:Moeckel}. The pseudo orbit is contained in a union of segments of stable and unstable manifolds manifolds associated to parabolic fixed points $\{z_k\}_{0\leq k\leq N}\subset \mathcal P_\infty^*$ of the Poincar\'{e} map $P$. }\label{fig:shadowing}
\end{figure}

\vspace{1cm}


\subsection{Proof of Theorem \ref{thm:intersectionmain}}\label{sec: The intersection problem}
The proof of Theorem \ref{thm:intersectionmain} concentrates most of the novelties of this work. Before entering into the details of our construction, we believe it is worth to explain conceptually why all the previous techniques available fail to  give a proof of Theorem \ref{thm:intersectionmain}. For that matter, consider a real-analytic Hamiltonian system of the form 
\begin{equation}\label{eq:modelhamiltonian}
H(q,p,\theta,I,t,E)=H_0(q,p,I)+\varepsilon^{-1/2} E+\mu H_1(q,p,\theta,I,t),
\end{equation}
where $(q,p)$, $(\theta,I)$ and $(t,E)$ are pairs of conjugated variables. 

\begin{rem}
We will see in Section \ref{sec:scaling} that, close to $\mathcal{P}_\infty$, the Hamiltonian of the RPE3BP is of the form \eqref{eq:modelhamiltonian} after a suitable scaling. Also \eqref{eq:modelhamiltonian} appears naturally as a (scaled) normal form for properly degenerate a priori stable Hamiltonians (see Section \ref{sec:degenerateHamiltonians}) close to a (simple) resonance.
\end{rem}
Suppose that 
\[
N=\{q=p=0\}
\]
is a  normally hyperbolic invariant manifold for $H_0+\varepsilon^{-1/2} E$ whose stable and unstable manifolds coincide along a homoclinic manifold $W^{\mathrm{h}}(N)$ and that, for some $q=q_*$ the section $\{q=q_*\}$ intersects transversally the manifold $W^{\mathrm{h}}(N)$. Moreover, assume that $\nabla H_1$ vanishes on $N$ and that the 3-dimensional submanifold
\begin{equation}\label{eq:Ncalig}
\mathcal N=N\cap\{E=0\}
\end{equation}
is contained in a single energy level of the Hamiltonian $H_0+\mu H_1$.

\begin{rem}
    The last assumption is highly non generic but happens in the strongly degenerate setting introduced in Section \ref{sec:degenerateHamiltonians}. Some explicit examples of this phenomenon are given by the invariant manifold for the degenerate Arnold model in Section \ref{sec:degenerateHamiltonians} or the invariant manifold $\mathcal{P}_\infty$ for the RPE3BP.
\end{rem}

A natural approach to study the existence of transverse intersections between the perturbed invariant manifolds $W_\mu^{\mathrm{u,s}} (N)$ is to consider the section $\Sigma=\{z+\lambda \nabla  H_0(z)\colon z\in W^{\mathrm{h}}(N)\cap\{q=q_*\},\ \lambda\in\mathbb{R}\}$ and look for graph parametrizations of  $W^{\mathrm{u,s}}_\mu(N)\cap \Sigma$ (in coordinates $(z,\lambda)$ on $\Sigma$) of the form $\Gamma^{\mathrm{u,s}}(z)=(z,\lambda^{\mathrm{u,s}}(z))$. Then,  $
\hat{d}(z)=\lambda^{\mathrm{u}}(z)-\lambda^{\mathrm{s}}(z)$, which is proportional to $d(z)= H_0(\Gamma^{\mathrm{u}}(z))- H_0(\Gamma^{\mathrm{u}}(z))
$, measures the distance between these manifolds. Classical first order perturbation theory, yields then the existence of an explicit function $M_\varepsilon(z)$ such that 
\begin{equation}\label{eq:poincaremelnikovexp}
d(z)=\mu M_\varepsilon(z)+\mathcal{O}(\mu^2).
\end{equation}
However, $M_\varepsilon$, given by a fast oscillatory integral (notice that $\dot t=1/\sqrt\varepsilon$), will have in general size $M_\varepsilon \sim \exp(-c_*/\sqrt\varepsilon)$ (for some $c_*>0$), and the expansion \eqref{eq:poincaremelnikovexp} becomes meaningless unless $\mu\ll \exp(-c_*/\sqrt\varepsilon)$. 

Taking advantage of the fast oscillation of the variable $t$, one can, nevertheless, obtain more accurate estimates for the $\mathcal{O}(\mu^2)$ terms in \eqref{eq:poincaremelnikovexp}. Indeed, standard averaging arguments (see Neishtadt \cite{MR802878}) show that there exists a $\mu\sqrt{\varepsilon}$-close to identity transformation $z=\Psi(\hat z)$, defined in a complex neighborhood of $W^{\mathrm{h}}(N)$, coinciding with the identity on $N$ (this is a consequence of the fact that $\nabla H_1=0$ on $N$), for which, for some $c\leq c_*$,
\[
H\circ\Psi(\hat q,\hat p,\hat \theta,\hat I, \hat t,\hat E)=K(\hat q,\hat p,\hat \theta,\hat I)+\varepsilon^{-1/2}\hat E+\mathcal{O}\left(\mu \sqrt{\varepsilon}\exp(-c/\sqrt{\varepsilon})\right).
\]
A simple counting dimension argument now shows that, for the flow of the (autonomous) Hamiltonian $K$, the stable and unstable manifolds of $N$  coincide along a homoclinic manifold which we denote by $W^{\mathrm{h}}_K(N)=W^{\mathrm{u,s}}_K(N)$. Indeed, the manifold $\mathcal{N}\cap\{t=0\}$ (recall the definition of $\mathcal{N}$ in \eqref{eq:Ncalig}), is a 2-dimensional invariant cylinder which is contained in a single energy level of the autonomous Hamiltonian $K$. Thus, their 3-dimensional stable and unstable manifolds cannot intersect transversally in this 3-dimensional energy level.

If we now build the section $\Sigma_K=\{\hat z+\hat\lambda \nabla K(\hat z)\colon \hat z\in W^{\mathrm{h}}_K(N)\cap\{\hat q=q_*\},\ \hat\lambda\in\mathbb{R}\}$ and look for graph parametrizations of  $W^{\mathrm{u,s}}_\mu(N)\cap \Sigma_K$ (in coordinates $(\hat z,\hat\lambda)$ on $\Sigma_K$) of the form $\hat \Gamma^{\mathrm{u,s}}(\hat z)=(\hat z,\hat\lambda^{\mathrm{u,s}}(\hat z))$, we observe that the distance between these manifolds, given by $\hat\lambda^{\mathrm{u}}(\hat z)-\hat\lambda^{\mathrm{s}}(\hat z)$ is bounded above by
\[
\widehat{d}(\hat z)=|K(\widehat\Gamma^{\mathrm{u}}(\hat z))-K(\widehat\Gamma^{\mathrm{u}}(\hat z))|\leq \mu\sqrt{\varepsilon}\exp(-c/\sqrt{\varepsilon}).
\]
Hence,
\begin{equation}\label{eq:poincaremelnikovexp2}
\widehat{d}(\hat z)=\mu M_\varepsilon(z)+\mathcal{O}\big(\mu\sqrt\varepsilon\exp(-c/\sqrt{\varepsilon})\big).
\end{equation}
Still, \eqref{eq:poincaremelnikovexp2} does not constitute an asymptotic expression unless we are able to show that $c=c_*$, a task that seems rather complicated if one does not perform a deeper analysis of the perturbed invariant manifolds and incorporates into the argument more geometric features of the problem.

Coming back to Arnold's original idea, instead of trying to study the invariant manifolds of the normally hyperbolic invariant manifold, we will \textit{focus} our attention on the \textit{invariant tori} inside it and analyze their stable and unstable manifolds. These invariant manifolds have codimension two inside each energy surface so, a priori, it seems much more complicated to  investigate the existence of transverse intersections between them. The key point behind this shift of approach is the fact that the invariant manifolds of the invariant tori are Lagrangian and, therefore, one can exploit more symplectic features of the problem to analyze them.

A very conceptual geometric framework to study the intersections between the invariant manifolds of these partially hyperbolic invariant tori has been developed by Lochak, Marco and Sauzin in \cite{MR1841877}  and \cite{MR1964346} (in a  more general setting than that of \eqref{eq:modelhamiltonian}). Their ideas rely on the Hamilton-Jacobi formalism for Lagrangian submanifolds: namely, the stable and unstable manifolds of these invariant tori can be parametrized as graphs of the differential of certain scalar functions $S^{\mathrm{u,s}}$ over the configuration space. Given a homoclinic orbit $\gamma$ (which can, in general, be obtained by variational methods) to a given torus,  the authors derive upper bounds for the splitting angles between the tangent spaces of the stable and unstable manifolds at $\gamma$, in terms of the Hessian matrix of the difference $\Delta S=S^{\mathrm{u}}-S^{\mathrm{s}}$.

From this connection, one of the main observations in \cite{MR1964346} is that the upper bounds for the splitting angles strongly depend on the arithmetic properties of the frequency vectors of the invariant torus under consideration. 
Roughly speaking, the upper bounds of both angles are  exponentially small for irrational tori but, 
for resonant tori,  there are exponentially small splitting angles (associated to the  fast dynamics) and polynomially small splitting angles (associated to the slow resonant dynamics).

However, notice that, given a homoclinic orbit $\gamma$ to a torus,  to deduce the existence of heteroclinic orbits to sufficiently close partially hyperbolic invariant  tori, one would need a lower bound on the splitting angles. 

The quantitative study of these angles for irrational tori is, in general, very complicated, and has only been established in very particular models and for very particular frequency vectors \cite{MR1478530}. In these works, the authors consider strongly irrational (Diophantine) frequency vectors since these are the ones associated to KAM tori and see that both angles have exponentially small lower bounds.

But, as our system is strongly degenerate, the  
invariant cylinder $\mathcal{P}_\infty$ is foliated by strongly resonant tori. In consequence, as pointed above, the splitting is heavily anisotropic (see the discussion after Theorem \ref{thm:mainthmcriticalpoints}).
In this case, by exploiting the Hamilton-Jacobi formalism together with the analysis of complex extensions of the invariant manifolds, we are able to obtain a lower bound for the splitting angles at homoclinic orbits. 

Although from this lower bound one could indeed deduce the existence of transverse heteroclinic intersections between sufficiently close tori in $\mathcal{P}_\infty$, this approach would yield a statement much weaker than Theorem \ref{thm:intersectionmain}. Indeed, this indirect approach only guarantees that the homoclinic manifolds contain exponentially thin vertical strips instead of an annulus (see Remark \ref{rem:ift}). 

Instead of taking this indirect approach, the main novelty towards the proof of Theorem \ref{thm:intersectionmain} is to look directly at the existence of heteroclinic orbits between different resonant tori in $\mathcal{P}_\infty$. 

\begin{rem}\label{rem:ift}
At a homoclinic orbit to a given torus, one can define two splitting angles. Loosely speaking, they measure the transversality (at the homoclinic orbit) in the direction tangent to the cylinder and the transversality in the direction of the unperturbed energy. The size of the transversality in the direction tangent to the cylinder, which in the problem at hand is polynomial, gives an idea of the distance between the tori that can be connected by heteroclinic orbits. Let us explain why the indirect approach described above (to deduce the existence of heteroclinics from the existence of homoclinics) only allows us to connect tori which are exponentially close.

As the invariant manifolds of invariant tori have codimension two inside the fixed energy level, to construct heteroclinic orbits between a pair of invariant tori one needs to measure the splitting between their manifolds along two independent directions. In order to do this using perturbative techniques, it is necessary to locate which is the direction of exponentially small splitting for the given pair of tori. Then, one can define the distance between the manifolds along this direction and another (suitable) independent direction. 

Once these two directions are identified, one can prove that the  Melnikov vector is the leading term in the approximation of the distance in these two directions. Finally, zeros of the distance  correspond to heteroclinic orbits between the pair of invariant tori.

The reason why the indirect method described above fails to connect tori separated at polynomial distances is that the suitable directions  for measuring the splitting depend heavily on the pair of tori considered. 
Namely, in the indirect method we always work with  the directions adapted to measure the splitting of the invariant manifolds of the same torus. However, these directions change if the distance between the tori is not exponentially close. 
The key of the direct method proposed in this work is to understand which are the suitable directions, found as a new system of coordinates, to measure the splitting according to the pair of given invariant tori. 
\end{rem}

For the sake of clarity, from now on, the discussion is adapted to the problem at hand, since, developing a general theory of splitting for invariant manifolds is way beyond the scope of this paper. We however believe that our techniques can be of interest to analyze  more general models including \eqref{eq:modelhamiltonian}, in the framework of \emph{properly degenerate} a priori stable Hamiltonians. A brief discussion  can be found in Section \ref{sec:degenerateHamiltonians}.

We now present the proof of Theorem \ref{thm:intersectionmain}. It is divided in several steps. First, we introduce a suitable scaling in Section \ref{sec:scaling}, which puts the system in the form \eqref{eq:modelhamiltonian}. Then we introduce the Hamilton-Jacobi formalism in Section \ref{sec:HJformalism}, where we obtain Lagrangian graph parametrizations of the stable and unstable manifolds of the invariant tori contained in $\mathcal{P}_\infty$. Finally, in Section \ref{sec:nonexactlagrangian}, we deduce the proof of Theorem \ref{thm:intersectionmain} from the existence of manifolds of critical points of a certain scalar function related to the Lagrangian graph parametrizations of the invariant manifolds.

In the remaining of Section \ref{sec: The intersection problem}  it is convenient to come back to the  original polar coordinates $(r,\alpha,t,y,G,E)$ (instead of using McGehee's coordinates). We abuse notation and denote by $\mathcal{P}_\infty$ the invariant submanifold in \eqref{eq: the infinity manifold} described in polar coordinates.

\subsubsection{Scaling in a neighborhood of an invariant torus}\label{sec:scaling} 
We fix an invariant torus 
\begin{equation}\label{def:torus}
\mathcal{T}_{G_0}=\mathcal{P}_\infty\cap\{G=G_0\}\qquad \text{with}\qquad G_0\gg 1.
\end{equation}
In order to look for heteroclinic orbits between pairs of tori contained in a neighborhood of $\mathcal{T}_{G_0}$, it is to convenient introduce the conformally symplectic scaling
\begin{equation}\label{eq:scaling}
\eta_{G_0}:(\tilde r,\tilde \alpha,t,\tilde y,\tilde G,\tilde E)\mapsto (G_0^2\tilde r, \tilde \alpha, t, G_0^{-1}\tilde y,G_0\tilde G, G_0\tilde E).
\end{equation}
After time reparametrization (multiplying by $G_0^3)$, the scaled system is Hamiltonian with respect to the area form $\mathrm{d}\tilde y\wedge \mathrm d\tilde r+\mathrm{d}\tilde G\wedge \mathrm d\tilde \alpha+\mathrm{d}\tilde E\wedge \mathrm d t$ and the Hamiltonian 
\begin{equation}\label{eq:scaledhamiltonian}
\widetilde H(\tilde r,\tilde \alpha,t,\tilde y,\tilde G,\tilde E;G_0)=H_0(\tilde r,\tilde y,\tilde G)+G_0^3 \tilde E+ \widetilde V(\tilde r,\tilde\alpha,t;G_0)
\end{equation}
with 
\[
H_0(\tilde r,\tilde y,\tilde G)=\frac{\tilde y^2}{2}+\frac{\tilde G^2}{2\tilde r^2}-\frac{1}{\tilde r}
\]
 being the (integrable) Hamiltonian of the 2-body problem and, as we will see in Lemma \ref{lem:boundspotential} (see also Appendix \ref{sec:Melnikov}), 
 \[
 \widetilde V(\tilde r,\tilde\alpha,t;G_0)= G_0^2V_{\mathrm{pol}}(G_0^2\tilde r, \tilde \alpha, t)-\frac{1}{\tilde r}=\mathcal{O}(\mu G_0^{-4}).
 \]
One can observe that the Hamiltonian \eqref{eq:scaledhamiltonian} has the same structure as the one in \eqref{eq:modelhamiltonian} where now $G_0^{-3}$ plays the role of $\varepsilon^{-1/2}$ and $\mu G_0^{-4}$ plays the role of $\mu$.

\begin{rem}\label{rem:scalingremark}
    If we do not reparametrize the time we obtain the Hamiltonian 
    \[
    H\circ\phi_{G_0}= E + G_0^{-3} H_0+ \mathcal{O}(\mu G_0^{-7}).
    \]
    Then, for large values of $G_0$, one can consider the integrable Hamiltonian $\widetilde H_0=E+G_0^{-3} H_0$  as a perturbation of the Liouville-Arnold integrable Hamiltonian $E$ (see Section \ref{sec:aprioriunstablestable}).
\end{rem}
\subsubsection{The Hamilton-Jacobi formalism}\label{sec:HJformalism}
In this section we explain how to use the Hamilton-Jacobi formalism to obtain Lagrangian graph parametrizations of the invariant manifolds associated to any invariant torus sufficiently close to $\mathcal{T}_{G_0}$ (which in the scaled coordinates corresponds to $\widetilde G=1$). One of the features of our approach is that, uniformly, for all the tori in some neighborhood of $\mathcal{T}_{G_0}$, we  treat their stable/unstable manifolds as small perturbations of the (same) unperturbed homoclinic manifold to $\mathcal{T}_{G_0}$. As we will see in Section \ref{sec:nonexactlagrangian}, this choice is extremely convenient for comparing the parametrizations of the invariant manifolds associated to different invariant tori in a neighborhood of $\mathcal{T}_{G_0}$.

Let us now make precise the discussion above. We consider a torus $\mathcal{T}_{G^\star}$ contained in  a small neighborhood of $\mathcal{T}_{G_0}$. Due to their Lagrangian character, (part of) its stable and unstable manifolds can be parameterized in terms of generating functions which are solutions to a Hamilton-Jacobi equation. 

\begin{rem}
    The global unperturbed homoclinic manifold does not admit a graph parametrization in the original $(r,\alpha,t,y,G,E)$ coordinates (neither in the scaled ones). It is for this reason that we introduce two generating functions $T_{\mathrm{h}}^{\mathrm{u,s}}$, each of them associated to a piece of the global homoclinic manifold. Later, we introduce a new coordinate system \eqref{eq:changebasis} in which the unperturbed homoclinic manifold does admit a global parametrization.
\end{rem}

Since, for large $G_0$, the Hamiltonian $\widetilde H$ in \eqref{eq:scaledhamiltonian} is $\mathcal{O}(G_0^{-4})$ close to $H_0+G_0^{3}\widetilde E$, we write $\tilde z=(\tilde r,\tilde \alpha, t)$ and look for a generating function (of the invariant manifolds of $\mathcal T_{G^\star}$ in scaled coordinates) of the form 
$   T^{\mathrm{u,s}}_{\mathrm{h}}(\tilde z)+\widetilde T^{\mathrm{u,s}}(\tilde z;G^\star)$, 
where $T^{\mathrm{u,s}}_{\mathrm{h}}=T^{\mathrm{u,s}}_{\mathrm{h}}(\tilde r)$ are the (unique) solutions to the unperturbed Hamilton-Jacobi equation 
\[
H_0(\tilde r,\partial_{\tilde r}T^{\mathrm{u,s}}_{\mathrm{h}},1)=0,
\]
satisfying $\partial_{\tilde r} T^{\mathrm{u}}_{\mathrm{h}}<0$ and $\partial_{\tilde r} T^{\mathrm{s}}_{\mathrm{h}}>0$. Then, $\widetilde T^{\mathrm{u,s}}$ are uniquely determined by the Hamilton-Jacobi equation
\begin{equation}\label{eq:firstHJ}
\widetilde H\big(\tilde z,\ \tilde{\boldsymbol\delta} +\mathrm{d} (T^{\mathrm{u,s}}_{\mathrm{h}}+\widetilde T^{\mathrm{u,s}})(\tilde z;G^\star)\big)=0, \qquad\qquad\lim_{r\to\infty} \mathrm{d} \widetilde T^{\mathrm{u,s}}(\tilde z;G^\star)=0
\end{equation}
where $\tilde{\boldsymbol\delta} =(0,\tilde \delta,0)=(0,G^\star/G_0,0)$ accounts for the cohomology class of the torus $\mathcal{T}_{G^\star}$ when working in the scaled coordinates \eqref{eq:scaling}. Notice that $\widetilde T^{\mathrm{u,s}}$ yield graph parametrizations of the local stable/unstable manifolds of the torus $\mathcal{T}_{G^\star}$ that are of the form 
\[
\tilde y=\partial_{\tilde r}(T^{\mathrm{u,s}}_{\mathrm{h}}+ \widetilde T^{\mathrm{u,s}})(\tilde r,\tilde\alpha,t), \quad\qquad \tilde G=\tilde\delta+\partial_{\tilde \alpha}(T^{\mathrm{u,s}}_{\mathrm{h}}+ \widetilde T^{\mathrm{u,s}})(\tilde r,\tilde\alpha,t), \quad\qquad \tilde E=\partial_{t} (T^{\mathrm{u,s}}_{\mathrm{h}}+ \widetilde T^{\mathrm{u,s}})(\tilde r,\tilde\alpha,t).
\]

Instead of studying directly  solutions to \eqref{eq:firstHJ}, following Sauzin (see \cite{MR1841877}), it is convenient to make an extra change of variables. The idea of this change of variables, is to introduce new coordinates, for which the unperturbed dynamics in the homoclinic manifold are given by a linear translation on the base. To that end, let 
\begin{equation}\label{eq:changebasis}
\tilde r=r_{\mathrm{h}}(u),\qquad\qquad\tilde\alpha=\beta+\alpha_{\mathrm{h}}(u),
\end{equation}
where $r_{\mathrm{h}}(u),\alpha_{\mathrm{h}}(u)$ are the time parametrization of the unperturbed homoclinic manifold to the torus $\mathcal T_{G_0}$ (which in the scaled coordinates corresponds to $\tilde G=1$) (see Appendix \ref{sec:perturbregime}). 
Then, we consider the change of variables $\phi_{\mathrm{h}}:(u,\beta,t)\mapsto (\tilde r,\tilde\alpha,t)$ in the base, and denote by 
\begin{equation}\label{eq:Mathieutransf}
\Phi_{\mathrm{h}}:(u,\beta,t,Y,J,E)\mapsto (\tilde r,\tilde \alpha,t,\tilde y,\tilde G,\tilde E)
\end{equation}
its associated Mathieu transformation composed with the translation $\tilde G=1+J$ (see Section \ref{sec:continuationunstable}). The coordinate transformation \eqref{eq:Mathieutransf} is well defined except at $u=0$ since $y_{\mathrm{h}}(0)=0$ (see Lemma \ref{lem:uperturbedhomoclinic}). In particular, for any $u_0>0$, it is well defined on any domain of the form $(u,\beta,t)\in (-\infty,-u_0)\times\mathbb{T}^2$, or of the form $(u,\beta,t)\in (u_0,\infty)\times\mathbb{T}^2$ so we will be able to obtain parametrizations of the local stable and unstable manifolds in these  coordinates.

In the new coordinate system, the unperturbed homoclinic manifold can be entirely parametrized as a graph. Namely, there exists a function $T_{\mathrm{h}}:\mathbb{R}\to\mathbb{R}$ such that it coincides with $T^{\mathrm{u}}_{\mathrm{h}}\circ\phi_{\mathrm{h}}$ for $u\leq 0$ and with $T^{\mathrm{s}}_{\mathrm{h}}\circ\phi_{\mathrm{h}}$ for $u\geq 0$. In view of this, the generating functions which parametrize the (local) stable and unstable manifolds of the torus $\mathcal{T}_{G^\star}$, can be written in the form $
T_{\mathrm{h}}(u)+ T^{\mathrm{u,s}}(z,G^\star)$ (here $z=(u,\beta,t)$),
where $T^{\mathrm{u,s}}$ are uniquely determined by the Hamilton-Jacobi equation 
\begin{equation}\label{eq:HJfinal}
H\big(z,\ \boldsymbol\delta +\mathrm{d} (T_{\mathrm{h}}+ T^{\mathrm{u,s}})(z;G^\star)\big)=0,\qquad\qquad\text{with}\qquad\qquad H=\widetilde H\circ\Phi_{\mathrm{h}},
\end{equation}
$\boldsymbol \delta=(0,\delta,0)$ with $\delta=\tilde G^\star-1=(G^\star-G_0)/G_0$ and the associated asymptotic conditions at infinity
\begin{equation}\label{eq:bcHJ}
\lim_{u\to -\infty}\mathrm{d} T^{\mathrm{u}}(u,\beta,t)=0,\qquad\qquad \lim_{u\to +\infty}\mathrm{d} T^{\mathrm{s}}(u,\beta,t)=0.
\end{equation}
\begin{rem}
To check that $T^{\mathrm{u,s}}$ satisfy the asymptotic conditions \eqref{eq:bcHJ}, we have used that $\mathrm{d}\alpha_h/\mathrm{d}u$ vanishes for $u\to\pm\infty$, (see Lemma \ref{lem:uperturbedhomoclinic}).
\end{rem}

The upshot  of introducing the change of variables $\phi_h$, is that the partial differential equation \eqref{eq:HJfinal} now reads
\[
(1+A^{\mathrm{u,s}})\partial_u T^{\mathrm{u,s}}+ B^{\mathrm{u,s}} \partial_\beta T^{\mathrm{u,s}} +G_0^3 \partial_t T^{\mathrm{u,s}}-V=0
\]
for certain (explicit) $A^{\mathrm{u,s}},B^{\mathrm{u,s}}=\mathcal{O}(|\mathrm{d} T^{\mathrm{u,s}}|, |\delta|)$ and 
\begin{equation}\label{eq:dfnperturbativepotential}
V(u,\beta,t;G_0)=\widetilde V( r_{\mathrm{h}}(u),\beta+\alpha_h(u),t;G_0),
\end{equation}
where $\widetilde V$ is defined in \eqref{eq:scaledhamiltonian}. That is, the partial differential operator is, up to non-linearities and small terms, of constant coefficients. On the other hand, since $V=\mathcal{O}(G_0^{-4})$ one expects the non-linear terms to be small. In this setting, it is not difficult to prove that given $u_0>0$, for sufficiently large $G_0$, the initial value problems \eqref{eq:HJfinal}, \eqref{eq:bcHJ}  admit unique solutions defined on domains of the form $(u,\beta,t)\in (-\infty,-u_0)\times\mathbb{T}^2$ and $(u,\beta,t)\in (u_0,\infty)\times\mathbb{T}^2$ respectively. 
The existence of solutions to these initial value problems implies the existence of parametrizations of the local unstable
\begin{equation}\label{eq:stableparam}
\begin{aligned}
\mathcal{W}^{\mathrm{u}}_{G^\star}:(-\infty,-u_0)\times\mathbb{T}^2&\longrightarrow W^{\mathrm{u}}(\mathcal{T}_{G^\star})\\
z&\longmapsto (z,\boldsymbol\delta+\mathrm{d}(T_{\mathrm{h}}+T^{\mathrm{u}})(z))
\end{aligned}
\end{equation}
and of the local stable manifolds
\begin{equation}\label{eq:unstableparam}
\begin{aligned}
\mathcal{W}^{\mathrm{s}}_{G^\star}:(u_0,\infty)\times\mathbb{T}^2&\longrightarrow W^{\mathrm{s}}(\mathcal{T}_{G^\star})\\
z&\longmapsto (z,\boldsymbol\delta+\mathrm{d}(T_{\mathrm{h}}+T^{\mathrm{s}})(z)).
\end{aligned}
\end{equation}
In the following proposition we show that, as long as the torus $\mathcal{T}_{G^{\star}}$ is sufficiently close to $\mathcal{T}_{G_0}$, the generating functions $T^{\mathrm{u,s}}$ in  \eqref{eq:HJfinal} can be continued, so that the parameterizations \eqref{eq:stableparam}, \eqref{eq:unstableparam}, can be defined over a common domain. To make precise what we mean by sufficiently close we introduce the interval
\begin{equation}\label{eq:Gintervaltori}
\mathbb{G}(G_0)=(G_0-\zeta G_0^{-4}, G_0+\zeta G_0^{-4}).
\end{equation}

\begin{prop}\label{prop:extensiondecaygenfunctions}
 Let $G_0$ be large enough, let $0\leq \zeta\leq G_0^{-3}$ and  $G^{\star}\in\mathbb{G}(G_0)$. Then, the functions $T^{\mathrm{u,s}}$ satisfying \eqref{eq:HJfinal} and \eqref{eq:bcHJ} admit a unique analytic continuation to certain domains of the form $(u,\beta,t)\in R^{\mathrm{u,s}}\times\mathbb{T}^2$ where $R^{\mathrm{u,s}}\subset \mathbb{R}$ are such that $R^{\mathrm{u}}\cap R^{\mathrm{s}}$ is a non-empty open interval. Moreover,
\[
| \mathrm d {T}^{\mathrm{u,s}}(u,\beta,t)| \lesssim G_0^{-4} \qquad\qquad \forall (u,\beta,t)\in R^{\mathrm{u,s}}\times\mathbb{T}^2
\]
and 
\[
T^{\mathrm{u}}(u,\beta,t)=\mathcal{O}(|u|^{-1/3})\quad\text{as}\ u\to-\infty\qquad\text{and}\qquad T^{\mathrm{s}}(u,\beta,t)=\mathcal{O}(|u|^{-1/3})\quad\text{as}\ u\to +\infty.
\]
\end{prop}

\begin{rem}\label{rem:dominios}
Ideally, one would try to extend the unstable parameterization to  $R^{\mathrm{u}}=(-\infty,\tilde{u}_0]$ and the stable one to $R^{\mathrm{s}}=[-\tilde{u}_0,\infty)$ for some $\tilde{u}_0>0$ so $0\in R^{\mathrm{u}}\cap R^{\mathrm{s}}$. However, for technical reasons, we are not able to define the parameterizations \eqref{eq:stableparam} at $u=0$. Yet, we can extend  $T^{\mathrm{u}}$ to a complex domain $R^{\mathrm{u}}$ (which does not contain the point $u=0$) and such that $R^{\mathrm{u}}\cap R^{\mathrm{s}}$ is a non-empty open interval (see Section \ref{sec:Extensionflow}).
\end{rem}

 The existence of real-analytic solutions to the Hamilton-Jacobi equation describing parameterizations of the invariant manifolds of partially hyperbolic invariant tori (in perturbative settings) has been considered in a number of works (see, for example, \cite{ MR1841877} and the references therein). However, as far as we know, due to the appearance of certain unbounded operator, in all previous works solutions to this equation were constructed in an implicit way. Namely, the approach was to prove the existence of vector parameterizations of the invariant manifolds and, a posteriori, to deduce the existence of solutions to the associated Hamilton-Jacobi equation. The main novelty in the proof of Proposition \ref{prop:extensiondecaygenfunctions}, presented in Section \ref{sec:generatingfunctsinvmanifolds}, is that  we are able to prove directly the existence of solutions to \eqref{eq:HJfinal} making use of a Newton-like iterative construction.

\subsubsection{A non-exact Lagrangian intersection problem}\label{sec:nonexactlagrangian}
We are now ready to study the existence of heteroclinic orbits connecting (possibly different) tori in a neighborhood of $\mathcal{T}_{G_0}$ and obtain a proof of Theorem \ref{thm:intersectionmain}.

Consider two tori $\mathcal T_{G^{\mathrm{u}}}, \mathcal T_{G^{\mathrm{s}}}\subset \mathcal{P}_\infty$ (see \eqref{def:torus}) contained in a sufficiently small neighborhood of $\mathcal{T}_{G_0}$. More concretely $G^{\mathrm{u}}, G^{\mathrm{s}} \in \mathbb{G}(G_0)$ where $\mathbb{G}(G_0)$  is defined in \eqref{eq:Gintervaltori}. Our goal is to show that the unstable manifold of the torus $\mathcal T_{G^{\mathrm{u}}}$ intersects transversally the stable manifold of the torus $\mathcal T_{G^{\mathrm{s}}}$. Observe that Proposition \ref{prop:extensiondecaygenfunctions} implies the existence of parametrizations of these manifolds  $W^{\mathrm{u}}(\mathcal T_{G^{\mathrm{u}}})$ and $W^{\mathrm{s}}(\mathcal T_{G^{\mathrm{s}}})$ of the form \eqref{eq:stableparam} and \eqref{eq:unstableparam}, which are defined on a common domain. Hence, proving the existence of transverse heteroclinic connections between these tori boils down to showing the existence of non-degenerate solutions of the equation
\begin{equation}\label{eq:zerostransverse}
(\boldsymbol{\delta}^{\mathrm{u}}-\boldsymbol{\delta}^{\mathrm{s}})+\mathrm{d} (T^{\mathrm{u}}-T^{\mathrm{s}})(z;G^{\mathrm{u}},G^{\mathrm{s}})=0.
\end{equation}
Now, notice that, if we define
\begin{equation}\label{eq:defngeneratingfunctmanif}
 S^{\mathrm{u,s}}(z;G^{\mathrm{u,s}})=\langle\boldsymbol\delta^{\mathrm{u,s}},z\rangle+(T_h+T^{\mathrm{u,s}})(z;G^{\mathrm{u,s}}),\qquad  z=(u,\beta,t),
 \end{equation}
 and let
 \begin{equation}\label{eq:diffgenfunct}
 \Delta S(z;G^{\mathrm{u}},G^{\mathrm{s}})=S^{\mathrm{u}}(z;G^{\mathrm{u}})-S^{\mathrm{s}}(z;G^{\mathrm{s}}),
 \end{equation}
 then, the existence of non-degenerate solutions to \eqref{eq:zerostransverse}, is equivalent to the existence of non-degenerate critical points of $z\mapsto \Delta S$. 

The function $\Delta S$ in \eqref{eq:diffgenfunct} generalizes the notion of splitting potential, which was first introduced in \cite{MR1274762} (see also \cite{MR1766491}) for the homoclinic case, to the heteroclinic case. However, we point out that, for the heteroclinic case, i.e. $\boldsymbol\delta^{\mathrm{u}}\neq \boldsymbol\delta^{\mathrm{s}}$, $\Delta S$ is not $2\pi$- periodic in $\beta$, what reflects the non-exact nature of the problem and excludes the possibility (at least in a straightforward manner) of applying topological methods such as Ljusternik-Schnirelman theory to prove the existence of critical points of \eqref{eq:diffgenfunct}, as is usually done in the homoclinic case $\boldsymbol\delta^{\mathrm{u}}=\boldsymbol\delta^{\mathrm{s}}$ (see \cite{MR1274762}).

In Theorem \ref{thm:mainthmcriticalpoints} below we establish the existence of two manifolds of critical points for the function $\Delta S$ defined in \eqref{eq:diffgenfunct}. The main ingredient, contained in Proposition \ref{prop:approxgenfunctionbyMelnikov}, is the approximation of $\Delta S$ by the  \textit{Melnikov potential} defined in \eqref{eq:defnMelnikovPotential}. 
Roughly speaking, the proof of Proposition \ref{prop:approxgenfunctionbyMelnikov} is based on the observation that $\Delta S$, defined in \eqref{eq:diffgenfunct}, belongs to the kernel of a partial differential operator which is close to the linear partial differential operator with constant coefficients $\mathcal L= \partial_u+G_0^3 \partial_t$, which encodes the dynamics along the unperturbed homoclinic \eqref{eq:changebasis} (see Section \ref{sec:differencegenfunctionsmanifolds} and, in particular, Lemma  \ref{lem:exponsmallness}). To state the result in Proposition \ref{prop:approxgenfunctionbyMelnikov}, for $\rho>0$ we denote by $\mathbb{G}_\rho(G_0)$ the complex $\rho$-neighborhood of the interval $\mathbb{G}(G_0)$ in \eqref{eq:Gintervaltori}.

\begin{rem}
    The use of complex domains in the variable $G$ is needed to provide quantitative estimates for the complex extension of the scattering maps in Section \ref{sec:proofthm24}.
\end{rem}

\begin{prop}\label{prop:approxgenfunctionbyMelnikov}
Let $\Delta S (u,\beta,t;G^{\mathrm{u}},G^{\mathrm{s}})$ be the function defined in \eqref{eq:diffgenfunct} and let $0<v_1<v_2$ be two fixed real numbers. There exists $\rho>0$  such that, for $G_0$ large enough, and $0\leq \zeta< G_0^{-3}$, then, for any $G^{\mathrm{u}},G^{\mathrm{s}}\in \mathbb{G}_\rho(G_0)$,  there exist an analytic (real-analytic if $G^{\mathrm{u}},G^{\mathrm{s}}\in\mathbb{R}$) close to the identity local change of variables 
\[
\begin{aligned}
\Phi(\cdot;\ G^{\mathrm{u}},G^{\mathrm{s}}):[v_1,v_2]\times \mathbb{T}^2_\rho&\longrightarrow &\mathbb{C}\times \mathbb{T}^2_{2\rho}\\
(v,\theta,t)&\longmapsto& (u,\beta,t)
\end{aligned}
\]
and an analytic (real-analytic if $G^{\mathrm{u}},G^{\mathrm{s}}\in\mathbb{R}$)  function 
\[
\begin{aligned}
\Delta \mathcal{S}(\cdot;G^{\mathrm{u}},G^{\mathrm{s}}):\mathbb{T}^2_\rho&\longrightarrow &\mathbb{C}\\
(\sigma,\theta)&\longmapsto& \Delta \mathcal{S}(\sigma,\theta;G^{\mathrm{u}},G^{\mathrm{s}}),
\end{aligned}
\]
such that 
\begin{equation}\label{eq:deltascali}
\Delta \mathcal{S} (t-G_0^3v,\theta;G^{\mathrm{u}},G^{\mathrm{s}})=\Delta S \circ \Phi (v,\theta,t;G^{\mathrm{u}},G^{\mathrm{s}}).
\end{equation}
Moreover, if we define the Melnikov potential
\begin{equation}\label{eq:defnMelnikovPotential}
L(\sigma,\theta; G_0)=\int_{\mathbb{R}} V(s,\theta,\sigma +G_0^3 s;G_0) \mathrm{d}s,
\end{equation} 
where $V$ is defined in \eqref{eq:dfnperturbativepotential}, there exists a constant  $C>0$ independent of $G_0$ and $\zeta$ such that the following estimates are satisfied
\begin{itemize}
\item $\displaystyle\left|\Delta \mathcal{S}(\sigma,\theta)-(\delta^{\mathrm{u}}-\delta^{\mathrm{s}})\theta -L(\sigma,\theta)\right| \leq G_0^{-7}$.
\item $\displaystyle\left|\Delta \mathcal{S}^{[l]}(\theta)-L^{[l]}(\theta)\right|\lesssim (CG_0)^{-4+3|l|/2} \exp(- |l|G_0^3)/3),$
 \end{itemize}
 (here $h^{[l]}$ denotes the $l$-th Fourier coefficient of a $2\pi$-periodic function $\sigma\mapsto h(\sigma)$).
 \end{prop}

Proposition \ref{prop:approxgenfunctionbyMelnikov} is proved in Section \ref{sec:generatingfunctsinvmanifolds}, where we perform the analytic continuation of the stable and unstable generating functions $S^{\mathrm{u},\mathrm{s}}$ in \eqref{eq:defngeneratingfunctmanif} up to a common domain where we can study their difference $\Delta S=S^{\mathrm{u}}-S^{\mathrm{s}}$. The core of Proposition \ref{prop:approxgenfunctionbyMelnikov} is, using that $\Delta \mathcal{S}$, defined in \eqref{eq:deltascali} is $2\pi$-periodic in $\sigma$, obtain an approximation of its harmonics  in terms of the Melnikov potential $L$ in \eqref{eq:defnMelnikovPotential}. Since, as is proven in Section \ref{sec:generatingfunctsinvmanifolds}, $L$ possesses  non-degenerate critical points, a quantitative application of the implicit function theorem yields the  next theorem. Its proof  is deferred to Section \ref{sec:generatingfunctsinvmanifolds}.

\begin{thm}\label{thm:mainthmcriticalpoints}
There exists $\rho_*>0$ such that, for $G_0$ large enough, and $0\leq \zeta\leq G_0^{-3}$, then, for all $G^{\mathrm{u}}\in\mathbb{G}_{\rho_*}(G_0)$, there exist two real analytic functions
\[
(\theta,G^{\mathrm{u}}) \mapsto ( \sigma_\pm(\theta,G^{\mathrm{u}}), \tilde{G}^{\mathrm{s}}_\pm (\theta,G^{\mathrm{u}}))
\]
such that 
\[
\partial_\sigma \Delta \mathcal{S}(\sigma_\pm(\theta,G^{\mathrm{u}}),\theta; G^{\mathrm{u}},\tilde{G}^{\mathrm{s}}_\pm(\theta,G^{\mathrm{u}}))=0 \qquad\qquad \partial_\theta \Delta \mathcal{S}(\sigma_\pm(\theta,G^{\mathrm{u}}),\theta; G^{\mathrm{u}},\tilde{G}^{\mathrm{s}}_\pm(\theta,G^{\mathrm{u}}))=0.
\]
Moreover, the determinant of the Hessian matrix of the function $(\sigma,\theta)\mapsto \Delta \mathcal{S}(\sigma,\theta;G^{\mathrm{u}},G^{\mathrm{s}})$ evaluated at $(\sigma,\theta; G^{\mathrm{u}},G^{\mathrm{s}})=(\sigma_\pm(\theta,G^{\mathrm{u}}),\theta;G^{\mathrm{u}},\tilde{G}^{\mathrm{s}}_\pm (\theta,G^{\mathrm{u}}))$ is different from zero for all $(\theta,G^{\mathrm{u}})\in \mathbb{T}_{\rho_*}\times \mathbb{G}_{\rho_*}(G_0)$.
\end{thm}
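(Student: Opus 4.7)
The plan is to regard $\partial_\sigma \Delta\mathcal{S} = 0$ and $\partial_\theta \Delta\mathcal{S} = 0$ as a system of two equations in the two unknowns $(\sigma, I^{\mathrm{s}})$, with $(\theta, I^{\mathrm{u}})$ playing the role of parameters, and to solve it by the implicit function theorem starting from an explicit Melnikov seed. Differentiating the identity supplied by Proposition \ref{prop:approxgenfunctionbyMelnikov} one obtains
\[
\partial_\sigma \Delta \mathcal{S} = \partial_\sigma L(\sigma,\theta;I_{\mathrm{m}},\epsilon) + \mathcal{E}_\sigma, \qquad \partial_\theta \Delta \mathcal{S} = (I^{\mathrm{u}} - I^{\mathrm{s}}) + \partial_\theta L(\sigma,\theta;I_{\mathrm{m}},\epsilon) + \mathcal{E}_\theta,
\]
where, by the Fourier-coefficient by Fourier-coefficient estimate in Proposition \ref{prop:approxgenfunctionbyMelnikov}, the errors $\mathcal{E}_\sigma, \mathcal{E}_\theta$ are, harmonic by harmonic in $\sigma$, polynomially smaller in $1/|I_{\mathrm{m}}|$ than the corresponding Melnikov contribution. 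This refinement---as opposed to the coarser $O(|I_{\mathrm{m}}|^{-7})$ bound, which by itself would be useless since the whole splitting is exponentially small in $I_{\mathrm{m}}^3$---is the essential perturbative input.

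From the explicit computation of $L$ carried out in Appendix \ref{sec:Melnikov}, I would extract two qualitative features. First, the zero $\sigma$-harmonic $L^{[0]}(\theta; I_{\mathrm{m}}, \epsilon)$ is, to leading order in $\epsilon$, independent of $\theta$: this is the imprint of the rotational symmetry of the underlying circular problem recalled in Remark \ref{rem:nodiffusioncircular}. Second, the dominant $\sigma$-oscillation has the form
\[
A(\theta; I_{\mathrm{m}},\epsilon)\cos\sigma + B(\theta; I_{\mathrm{m}}, \epsilon)\sin\sigma, \qquad |A| + |B| \sim \mu(1-\mu)\epsilon\exp(-\mathrm{Re}(I_{\mathrm{m}}^3)/3),
\]
with the remaining $\sigma$-harmonics at least a factor $\exp(-\mathrm{Re}(I_{\mathrm{m}}^3)/3)$ smaller, and with non-trivial $\theta$-dependence of the coefficients $A, B$ inherited from the $e^{\pm 2i\theta}$ quadrupole modes of the potential $V$. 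Consequently, for each $(\theta, I_{\mathrm{m}}) \in \Lambda_{\rho_*}$, the function $\sigma \mapsto \partial_\sigma L$ has exactly two non-degenerate simple zeros $\sigma_\pm^0(\theta, I_{\mathrm{m}})$ in $\mathbb{T}$, differing by $\pi$ at leading order, at which $|\partial_\sigma^2 L| \asymp \sqrt{A^2 + B^2}$.

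Taking $(\sigma_\pm^0(\theta, I^{\mathrm{u}}), I^{\mathrm{u}})$ as the initial approximation, the Jacobian of the system with respect to $(\sigma, I^{\mathrm{s}})$ is upper triangular to leading order, with diagonal entries $\partial_\sigma^2 L$ (exponentially small but non-vanishing) and $-1 + O(\partial_{I_{\mathrm{m}}}\partial_\theta L)$, hence invertible with effective inverse norm $1/|\partial_\sigma^2 L|$. A quantitative implicit function theorem, applied in the complex domain $\Lambda_{\rho_*}$ so as to deliver analytic solutions (real-analytic on real arguments by reality of the equations), then produces the claimed functions $\sigma_\pm(\theta, I^{\mathrm{u}})$ and $\tilde I^{\mathrm{s}}_\pm(\theta, I^{\mathrm{u}})$. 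Reading off the $\theta$-equation gives $\tilde I^{\mathrm{s}}_\pm - I^{\mathrm{u}} = \partial_\theta L + O(\mathcal{E}_\theta) \sim \mu(1-\mu)\epsilon\exp(-\mathrm{Re}(I_{\mathrm{m}}^3)/3)$, which sits comfortably inside the hypothesis $|I^{\mathrm{u}} - I^{\mathrm{s}}| \leq \epsilon|I_{\mathrm{m}}|^{-4}$ of Proposition \ref{prop:approxgenfunctionbyMelnikov}, so the whole construction is self-consistent. For the Hessian, the same estimates show that the Hessian of $\Delta\mathcal{S}$ in $(\sigma, \theta)$ at the critical point coincides at leading order with that of $L$; a direct substitution of $L \sim L^{[0]}(I_{\mathrm{m}}) + A\cos\sigma + B\sin\sigma$ evaluated at $\sigma_\pm^0$ yields a determinant of order $A^2 + B^2$ multiplied by a non-zero factor originating from the eccentricity-induced $\theta$-dependence of $A$ and $B$, which establishes non-degeneracy. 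The main obstacle is conceptual rather than technical: reconciling the exponentially small size of everything in sight with the claim that the Melnikov prediction survives the perturbation, and this is precisely what the harmonic-wise comparison of Proposition \ref{prop:approxgenfunctionbyMelnikov} was designed to achieve.
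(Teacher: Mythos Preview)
Your overall strategy---treat $(\partial_\sigma\Delta\mathcal S,\partial_\theta\Delta\mathcal S)=0$ as an implicit system in $(\sigma,I^{\mathrm s})$ with $(\theta,I^{\mathrm u})$ as parameters, seed it with the Melnikov potential, and appeal to the harmonic-by-harmonic bounds of Proposition~\ref{prop:approxgenfunctionbyMelnikov}---is exactly the paper's. But the scalings you assign to the key objects are wrong, and this is not cosmetic: with your numbers the implicit function argument does not close.

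The decisive mistake is your claim that the leading $\sigma$-harmonic has amplitude $\sim\mu(1-\mu)\epsilon\exp(-\mathrm{Re}(I_{\mathrm m}^3)/3)$. From Lemma~\ref{lem:mainMelnikov} the dominant first harmonic is $2L_{1,1}\cos(\sigma-\theta)$ with $L_{1,1}\sim (1-2\mu)|I_{\mathrm m}|^{-1/2}\exp(-I_{\mathrm m}^3/3)$: it comes from the \emph{circular} problem and carries no factor of $\epsilon$. Proposition~\ref{prop:approxgenfunctionbyMelnikov} bounds the error in the first harmonic by $|I_{\mathrm m}|^{-5/2}\exp(-I_{\mathrm m}^3/3)$, which is safely smaller than $L_{1,1}$ but would swamp your alleged $\epsilon$-sized main term whenever $\epsilon\lesssim |I_{\mathrm m}|^{-5/2}$---and the theorem must hold for all $\epsilon\in(0,I_*^{-3})$. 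A second, related error: you conclude $\tilde I_\pm^{\mathrm s}-I^{\mathrm u}\sim\epsilon\exp(-I_{\mathrm m}^3/3)$. In fact $\partial_\theta L$ is dominated by $\partial_\theta L^{[0]}\sim L_{0,1}\sin\theta\sim\epsilon |I_{\mathrm m}|^{-5}$, which is polynomially, not exponentially, small; this is precisely the anisotropy the paper emphasizes and what allows heteroclinics between tori that are $\sim\epsilon|I_{\mathrm m}|^{-5}$ apart. Your Hessian computation inherits the same confusion: the determinant is not of order $A^2+B^2$ but $\sim L_{1,1}\cdot\epsilon|I_{\mathrm m}|^{-5}$, a product of one exponentially small and one polynomially small factor.

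The paper organizes the bookkeeping by first locating the zeros $\sigma_{\pm,\mathrm{circ}}(\theta,I^{\mathrm u})$ of $\partial_\sigma\Delta\mathcal S_{\mathrm{circ}}$ (this fixes the $\sigma$-equation, independent of $\epsilon$), and then takes as seed $(\tilde\sigma_\pm,\hat I_\pm^{\mathrm s})=(\sigma_{\pm,\mathrm{circ}},\,I^{\mathrm u}+\partial_\theta(L-L_{\mathrm{circ}}))$; the Jacobian in $(\sigma,I^{\mathrm s})$ is then
\[
A_\pm=\begin{pmatrix}\pm 2\mu(1-\mu)L_{1,1}&0\\0&-1\end{pmatrix}+\text{lower order},
\]
and Lemma~\ref{lem:summaryerrorsapprox} supplies $\epsilon$-weighted bounds on the elliptic remainders that make the correction step go through. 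If you redo your argument with the correct sizes $|L^{[1]}|\sim|I_{\mathrm m}|^{-1/2}e^{-I_{\mathrm m}^3/3}$ and $|\partial_\theta L^{[0]}|\sim\epsilon|I_{\mathrm m}|^{-5}$, you will recover the paper's proof.
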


Before analyzing the consequences of  Theorem \ref{thm:mainthmcriticalpoints} it  is worth pointing out some remarks. The first one is that,  in the proof of Proposition \ref{prop:approxgenfunctionbyMelnikov}, carried out in Section  \ref{sec:generatingfunctsinvmanifolds}, we will see that 
\[
\max_{\theta\in\mathbb{T}}|\partial_\sigma \Delta \mathcal{S}(\sigma,\theta;G^{\mathrm{u}},G^{\mathrm{s}})|\sim G_0^{-1/2} \exp(-G_0^3/3)\qquad\qquad \max_{\theta\in\mathbb{T}}|\partial_\theta \Delta \mathcal{S}(\sigma,\theta;G^{\mathrm{u}},G^{\mathrm{s}})|\sim \zeta\  G_0^{-5}.
\] 
That is, as was anticipated in Section \ref{sec: The intersection problem}, the splitting between the stable and unstable manifolds of nearby tori is strongly anisotropic. Namely, the splitting is exponentially small in the direction conjugated to the fast angle $t$, while it is only polynomially small in the direction conjugated to the slow angle $\theta$.

\begin{rem}
    The splitting coordinates $(v,\theta,t)$ are close to the original coordinates $(u,\beta,t)$. However, the change of coordinates from one system to the other depends non trivially on the pair $G^{\mathrm{u}},G^{\mathrm{s}}$.
    \end{rem}

\subsubsection{Completion of the proof of Theorem \ref{thm:intersectionmain}}
The manifolds of critical points of $\Delta\mathcal{S}$ (and therefore of $\Delta S$) obtained in Theorem \ref{thm:mainthmcriticalpoints} are of the form $(\theta,G^{\mathrm{u}})\in\mathbb{T}\times \mathbb{G}(G_0)$ with $\mathbb G(G_0)$  of the form \eqref{eq:Gintervaltori}, in particular, they are diffeomorphic to an annulus. The proof of Theorem \ref{thm:intersectionmain} is complete since, given any $R$, if $0\leq \zeta \leq (G_{*}+R)^{-3}$ we can cover the interval $(G_*, G_*+ R)$ by a sufficiently large number of overlapping intervals $\mathbb G (G_i)$ in order to  construct the homoclinic manifolds $\Gamma_\pm$.

\subsection{Proof of Theorem \ref{thm:Moeckel}}\label{sec:proofthm24}
We now give the proof of Theorem \ref{thm:Moeckel}. It is divided in several steps. First, in Section \ref{sec:constrscattmaps}, we obtain a suitable parameterization of the homoclinic manifolds $\Gamma_\pm$ obtained in Theorem \ref{thm:intersectionmain}. To verify \eqref{eq:firstmoeckelcondscatt} in Theorem \ref{thm:Moeckel}, in Section \ref{sec:qualitativpropertiesscatt} we obtain an asymptotic formula for the scattering maps and check that the leading order term, given by an explicit integral (the reduced Melnikov potential) has the desired form. 
In Section \ref{sec: Analysis of the scattering maps} we construct a suitable generating function for the scattering maps.  Finally in Section \ref{sec:proofthm24}, using these generating functions,  we check that using the choice of parameters \eqref{eq:firstmoeckelcondscatt} and \eqref{eq:secondmoeckelcondscatt} in Theorem \ref{thm:Moeckel} one can show that the hypotheses \eqref{eq:secondcondmapmoeckel}, \eqref{eq:suffcondmoeckel2} and \eqref{eq:suffcondmoeckel} in  Theorem \ref{thm:scattmapmain} hold and therefore one can apply this theorem to  complete the proof of Theorem \ref{thm:Moeckel}.
%
The key point in our construction of the generating functions (of the scattering maps) is that we relate them to the splitting potential $\Delta S$ in \eqref{eq:diffgenfunct}. This is crucial to obtain an asymptotic formula for the difference between the scattering maps.

\subsubsection{The homoclinic manifolds $\Gamma_\pm$}\label{sec:constrscattmaps}
From now on we fix $G_*\gg 1$, $R>0$ and assume that $0\leq\zeta \leq (G_*+R)^{-3}$ so that Theorem \ref{thm:intersectionmain} ensures the existence of two transverse homoclinic manifolds $\Gamma_\pm$ to $\mathcal P_\infty^*$, diffeomorphic to an annulus and whose $G$ projection covers the interval $[G_*,G_*+R]$. As explained in Section \ref{sec:proofmainthm}, the existence of these homoclinic manifolds ensures the existence of the scattering maps $\mathbb{P}_\pm$ in \eqref{eq:defnscatteringmaps}.

In order to describe the scattering maps $\mathbb P_\pm$, we turn our attention to Theorem \ref{thm:mainthmcriticalpoints}, where the homoclinic manifolds $\Gamma_\pm$ are described as manifolds  of non-degenerate critical points of the function $(\sigma,\theta)\mapsto \Delta \mathcal{S}$.  The key idea behind the proof of Theorem \ref{thm:mainthmcriticalpoints} has been the construction of a bespoke coordinate system for the analysis of each intersection problem:  notice that the change of variables  $\Phi$ introduced for studying the intersection between the invariant manifolds $W^{\mathrm{u}}_{G^{\mathrm{u}}}$ and $ W^{\mathrm{s}}_{G^{\mathrm{s}}}$ depends  on the actions $G^{\mathrm{u},\mathrm{s}}$. Therefore, Theorem \ref{thm:mainthmcriticalpoints} describes the set of heteroclinic orbits using different coordinate systems. Still, in order to analyze the properties of the scattering maps, we need a unified description of the asymptotic dynamics along the families of heteroclinic orbits.

The easy solution to this annoyance is to obtain a parametrization of  the homoclinic channels $\Gamma_\pm$ in the original polar coordinates \eqref{eq:phasespacepolar}. First, we notice that, in Section \ref{sec:scaling} we artificially introduced a perturbative parameter $G_0$. From now on, in order to get rid of this parameter, we will set $G_0=G^{\mathrm{u}}$ and will consider  $G^{\mathrm{u}}\gg 1$.  Now, let $\Phi$ be the change of variables of Theorem \ref{thm:mainthmcriticalpoints}, let $\sigma_\pm(\theta,G^{\mathrm{u}})$ and  $\tilde{G}^{\mathrm{s}}_\pm(\theta,G^{\mathrm{u}})$ be the functions obtained in that theorem (after setting $G_0=G^{\mathrm{u}}$) and for $t=0$, define (recall that $\sigma=t-(G^{\mathrm{u}})^3$v)
\begin{equation}\label{eq:definitionPhi+-}
\Phi_\pm(\theta,G^{\mathrm{u}})= \Phi (-(G^{\mathrm{u}})^{-3}\sigma_\pm(\theta,G^{\mathrm{u}}),\theta,0;G^{\mathrm{u}}, \tilde{G}^{\mathrm{s}}_\pm(\theta,G^{\mathrm{u}})).
\end{equation}
Then, the homoclinic manifolds $\Gamma_\pm\subset M_{\mathrm{pol}}$ (where $M_{\mathrm{pol}}$ is the phase space in polar coordinates) can be parametrized as follows
\begin{equation}\label{eq:paramhomoclinicmanifold}
\Gamma_\pm =\bigg\{(r,\alpha,0,y,G,E)=\eta_{G^{\mathrm{u}}}\circ\Phi_{\mathrm{h}}\circ \mathcal{W}_{G^{\mathrm{u}}}^{\mathrm{u}}\circ \Phi_\pm(\theta,G^{\mathrm{u}})=\eta_{G^{\mathrm{u}}}\circ\Phi_{\mathrm{h}}\circ \mathcal{W}_{G^{\mathrm{s}}}^{\mathrm{s}}\circ \Phi_\pm(\theta,G^{\mathrm{u}}),\ (\theta,G^{\mathrm{u}})\in \mathbb{T}\times [G_*,G_*+R]\} \bigg\}.
\end{equation}
with $\eta_{G^{\mathrm{u}}}$ being the scaling in \eqref{eq:scaling}, $\Phi_{\mathrm h}$ being the change of coordinates in  \eqref{eq:Mathieutransf}, $\mathcal W^{\mathrm{u,s}}_{G^{\mathrm{u,s}}}$ being of the form \eqref {eq:stableparam}, \eqref{eq:unstableparam} and defined over the common domain in Proposition \ref{prop:extensiondecaygenfunctions}.

We notice at this point that, our construction of the homoclinic channels gives much more information about the dynamics of the scattering map in the action component $G$ than in the angle component $\alpha$. Namely, using the parametrization \eqref{eq:paramhomoclinicmanifold} of the homoclinic manifold $\Gamma_\pm$ and writing $x=x_\pm(\theta,G^{\mathrm{u}})$ for a point $x_\pm\in\Gamma_\pm$, the wave maps satisfy
\begin{equation}\label{eq:usefulexpressionswavemaps}
\begin{split}
\Omega^{\mathrm{u}}_\pm(x_\pm(\theta,G^{\mathrm{u}}))= &(\alpha^{\mathrm{u}}_\pm(\theta,G^{\mathrm{u}}), G^{\mathrm{u}})=\lim_{\tau\to -\infty} (\alpha\circ\phi^\tau_{H_\mathrm{pol}}(x_\pm (\theta,G^{\mathrm{u}})),G^{\mathrm{u}})\\
\Omega^{\mathrm{s}}_\pm(x_\pm(\theta,G^{\mathrm{u}}))=& (\varphi_\pm^{\mathrm{s}}(\theta,G^{\mathrm{u}}),G_\pm^{\mathrm{s}}(\theta,G^{\mathrm{u}}))=\lim_{\tau\to + \infty} (\alpha\circ\phi^\tau_{H_\mathrm{pol}}(x_\pm(\theta,G^{\mathrm{u}})), \tilde{G}^{\mathrm{s}}_\pm(\theta,G^{\mathrm{u}}))\\
\end{split}
\end{equation}
so, up to composing with the close to identity transformation $(\Omega_\pm^{\mathrm{u}})^{-1}$, the projection of the scattering map in the direction of the action $G$ is given by the function $\tilde{G}^{\mathrm{s}}_\pm$ obtained in Theorem \ref{thm:mainthmcriticalpoints} and which is determined implicitely in terms of $\Delta \mathcal{S}=\Delta S\circ \Phi$ by the system of equations
\begin{equation}\label{eq:implicitscattmap}
\partial_\theta \Delta \mathcal{S}(\sigma_\pm(\theta,G^{\mathrm{u}}),\theta; G^{\mathrm{u}},\tilde{G}^{\mathrm{s}}_\pm(\theta,G^{\mathrm{u}}))=0\qquad\qquad
\partial_\theta \Delta \mathcal{S}(\sigma_\pm(\theta,G^{\mathrm{u}}),\theta; G^{\mathrm{u}},\tilde{G}^{\mathrm{s}}_\pm(\theta,G^{\mathrm{u}}))=0.
\end{equation}
However, the existence of a direct link between the generating functions which parametrize the invariant manifolds of the tori in $\mathcal{T}_G\subset \mathcal{P}_\infty$, and the angular component of the wave maps, and consequently of the scattering maps, is not clear at the moment.

If one is only interested in obtaining an asymptotic formula for the scattering maps, expressions \eqref{eq:usefulexpressionswavemaps} and \eqref{eq:implicitscattmap}, along with the verification of certain non-degeneracy properties of the  Melnikov function $L$ defined in \eqref{eq:defnMelnikovPotential} and a standard symplecticity argument, are enough. This is the content of Section \ref{sec:qualitativpropertiesscatt}.

However, proving an asymptotic formula for the difference between these maps is much more demanding since, as already explained, the Taylor expansion (in $1/G$) of both maps coincides up to any order. In Section \ref{sec: Analysis of the scattering maps} we establish a direct relationship between two quatities:
the difference  of the  generating functions  associated to the invariant manifolds of a pair of invariant tori $\mathcal{T}_{G^{\mathrm{u}}},\mathcal{T}_{G^{\mathrm{s}}}$, defined as $\Delta S (\cdot\ ; G^{\mathrm{u}},G^{\mathrm{s}})$  in \eqref{eq:diffgenfunct}, and the angular dynamics (i.e. the projection of the dynamics along the angle $\alpha$) along the heteroclinic orbit in $\Gamma_\pm$ which connects the tori $\mathcal{T}_{G^{\mathrm{u}}},\mathcal{T}_{G^{\mathrm{s}}}$. This connection will prove to be crucial in Section \ref{sec:transversalityscattmaps} to obtain asymptotic formulas for the difference between the scattering maps.

\subsubsection{Qualitative and asymptotic properties of the scattering maps}\label{sec:qualitativpropertiesscatt}

In this section we show that \eqref{eq:secondmoeckelcondscatt} in Theorem \ref{thm:Moeckel} holds (notice that this condition is part of the ingredients needed to verify \eqref{eq:secondcondmapmoeckel}). 
The link established between the scattering maps $\mathbb{P}_\pm$  and the difference $\Delta S$ between the generating functions associated to the invariant manifolds of pairs of invariant tori provides very rich information about the qualitative and quantitative properties of $\mathbb{P}_\pm$. This is the content of Theorem \ref{thm:mainthmscattmaps} in which we sum up the qualitative properties and state a global asymptotic formula for $\mathbb{P}_\pm$ in terms of the \textit{reduced  Melnikov potentials} (here we define $\tilde{\sigma}_+(\alpha)=\alpha$, and $\tilde{\sigma}_-(\alpha)=\alpha+\pi$)
\begin{equation}\label{eq: DefinitionredMelnikovPotential}
\mathcal{L}_\pm(\alpha, G)=\int_{\mathbb{R}} G\  V(s,\alpha, \tilde{\sigma}_\pm(\alpha) +G^{3}s;G) \mathrm{d}s
\end{equation}
where $V(u,\beta,t;G)$ is the potential introduced in \eqref{eq:dfnperturbativepotential} (see Remark \ref{rem:defnmelnikovGfactor}). For our construction, it will also be important to introduce the reduced Melnikov potential associated to the circular problem
\begin{equation}\label{eq: DefinitionredMelnikovPotentialCircular}
\mathcal{L}_{\pm,\mathrm{circ}}(G)=\int_{\mathbb{R}} G\   V_{\mathrm{circ}}(s, \tilde{\sigma}_\pm(\alpha)-\alpha+G^{3}s;G) \mathrm{d}s
\end{equation}
where $V_{\mathrm{circ}}(u,t-\beta;G)=V|_{\zeta=0}(u,\beta,t;G)$.
Then, in Theorem \ref{thm:mainthmscattmaps2}, we establish an asymptotic formula for the difference between the scattering maps $\mathbb{P}_+$ and $\mathbb{P}_-$.

\begin{rem}\label{rem:defnmelnikovGfactor}
    The appearance of the factor $G$ in the integrand of the expression \eqref{eq: DefinitionredMelnikovPotential}
 for $\mathcal{L}_\pm$ is just a consequence of how we have defined the perturbative potential $V$ in \eqref{eq:dfnperturbativepotential}. Indeed, given the (time parametrization of the)  homoclinic orbit 
    \[
    r(u)=G^2r_{\mathrm{h}}(u;G),\qquad\qquad\alpha(u)=\alpha+\alpha_{\mathrm{h}}(u;G),\qquad\qquad t(u)=t+u
    \]
    making use of Lemma \ref{lem:uperturbedhomoclinic} we have that 
    \begin{align*}
    \int_{\mathbb{R}} \bigg(V_{\mathrm{pol}}(G^2r_{\mathrm{h}}(u;G),\alpha+&\alpha_{\mathrm{h}}(u;G),t+u)-\frac{1}{G^2r_{\mathrm{h}}(u;G)} \bigg)\mathrm{d}u\\
    =&\int_{\mathbb{R}} \left( V_{\mathrm{pol}}(G^2r_{\mathrm{h}}(G^{-3}u;1),\alpha+\alpha_{\mathrm{h}}(G^{-3}u;1),t+u)-\frac{1}{G^2r_{\mathrm{h}}(G^{-3}u;1)}\right)\mathrm{d}u\\
    =&\int_{\mathbb{R}}G^3\widetilde V(\tilde{r}_{\mathrm{h}}(u),\alpha+\alpha_{\mathrm{h}}(u),t+G^3u;G)\mathrm{d}u\\
=&\int_{\mathbb{R}}G\ V(u,\alpha,t+G^3u;G)\mathrm{d}u.\\
    \end{align*}
\end{rem}

\begin{thm}\label{thm:mainthmscattmaps}
Let $\mathbb{A}=\mathbb{T}\times[G_*,G_*+R]\subset \mathcal{P}_\infty^*$.  
Then, the scattering maps $\mathbb{P}_\pm: \mathbb{A} \to \mathcal{P}_\infty^*$ defined in \eqref{eq:defnscatteringmaps} are exact symplectic and real-analytic. Moreover, there exists $\rho_*>0$ such that the maps $\mathbb{P}_\pm$ admit an analytic extension to $\mathbb{A}_{\rho_*}$ and for all $(\alpha,G)\in \mathbb{A}_{\rho_*}$
\begin{equation}\label{eq:estimatesmaintermscattmap}
\mathbb{P}_\pm=(\mathrm{Id}+\mathcal{J}\nabla \mathcal{L}_\pm) +\left(\mathcal{O}(|G|^{-7}), \mathcal{O}(\zeta |G|^{-11/2}) \right),
\end{equation}
where $\top$ denotes transpose and $\mathcal{L}_\pm$  has been defined in \eqref{eq: DefinitionredMelnikovPotential} and $\mathcal{J}=\begin{pmatrix}
0&-1\\
1&0
\end{pmatrix}$. Moreover, the vectors $\mathcal J\nabla \mathcal{L}_\pm$ are of the form
\[
\mathcal J\nabla \mathcal{L}_\pm=\left( \omega(G) +\mathcal{O}(|G|^{-7}),\  \zeta \ r(\alpha,G) +\mathcal{O} (\zeta |G|^{-7})\right)
\]
with 
\[
\omega(G)= -\mu(1-\mu)\frac{3\pi}{2G^4},\qquad\qquad r(\alpha,G)=\mu(1-\mu)(1-2\mu) \frac{15\pi }{8 G^5}\sin\alpha.
\]
\end{thm}

This result is proved in Section \ref{sec:asymptformscattmap}. Notice that \eqref{eq:estimatesmaintermscattmap} does not guarantee that \eqref{eq:firstmoeckelcondscatt} in Theorem \ref{thm:Moeckel} holds with $\varepsilon =G_*^{-4}$ and $\delta(\varepsilon)=\zeta\ \varepsilon^{5/4}$ since, a priori, there might be error terms in the approximation \eqref{eq:estimatesmaintermscattmap}  which do not contain $\zeta$. 
In order to take care of this subtlety we study the scattering maps for $\zeta=0$. 
To that end, we introduce $\mathbb{A}_{\mathrm{circ}}=\mathbb{T}\times[G_*,\infty)\subset \mathcal{P}_\infty$ and denote by
\begin{equation}\label{eq:scatteringmapcircular}
\mathbb{P}_{\pm,\mathrm{circ}}:\mathbb{A}_{\mathrm{circ}} \to \mathcal{P}_{\infty}^*
\end{equation}
the scattering map \eqref{eq:defnscatteringmaps} associated to the case $\zeta=0$, which corresponds to the circular problem (RPC3BP). 
The following result is an immediate corollary of Theorem \ref{thm:mainthmscattmaps}.

\begin{lem}\label{lem:lemmascattmapcirc}
The scattering map $\mathbb{P}_{\pm,\mathrm{circ}}$, associated to the circular case $\zeta=0$,  is of the form
\[
\mathbb{P}_{\pm,\mathrm{circ}}(\alpha,G)=(\alpha+\omega_{\mathrm{circ}}(G), G)
\]
Moreover, for all $(\alpha,G)\in \mathbb A_{\mathrm{circ}}^* $, we have 
\[
\omega_{\mathrm{circ}}(G)=\partial_{G} \mathcal{L}_{\pm,\mathrm{circ}} (G)+ \mathcal{O}(|G|^{-7})=\omega(G)+\mathcal{O}(|G|^{-7}),
\]
where $\mathcal{L}_{\pm,\mathrm{circ}}$ has been defined in \eqref{eq: DefinitionredMelnikovPotentialCircular}.
\end{lem}

\begin{rem}\label{rem:integrabilitycircular}
The integrability of the scattering map of the circular problem ($\zeta=0$) is a consequence of the conservation of the Jacobi constant (see \cite{MR3455155,MR3583476}). 
\end{rem}

Combining Theorem \ref{thm:mainthmscattmaps} and Lemma \ref{lem:lemmascattmapcirc}, a standard application of Schwarz's lemma shows that
\[
\mathbb{P}_\pm(\alpha,G)=(\alpha+\omega_{\mathrm{circ}}(G),\  G+\zeta\  r(\alpha,G))+\mathcal{O}(\zeta |G|^{-7}).
\]
from where \eqref{eq:firstmoeckelcondscatt} is immediate.

\subsubsection{A generating function for the scattering maps}\label{sec: Analysis of the scattering maps}

It is indeed quite natural to expect a direct relationship between the family of generating functions $S^{\mathrm{u}},S^{\mathrm{s}}$ in \eqref{eq:defngeneratingfunctmanif} and the scattering maps in \eqref{eq:defnscatteringmaps}. However, as far as the authors know, this connection had only  been established up to first order using the so called Melnikov potential (see \cite{MR2383896}). In Theorem  \ref{prop:generatingfunctscattmap} below we show how  $S^{\mathrm{u}}, S^{\mathrm{s}}$ \emph{completely determine} the scattering maps. 

Even if in \eqref{eq:estimatesmaintermscattmap} we have already obtained an asymptotic expression for the scattering maps, this asymptotic expression is not enough to distinguish them. Actually their difference is exponentially small in $1/G^3$. In order to detect this difference it is key to relate the scattering maps to the generating functions $S^{\mathrm{u,s}}$ not only up to first order, so we do capture exponentially small effects (see also Remark \ref{rem:verticalstriprestriction}).

To do so, we first need to look at the manifolds of critical points of the function $(\sigma,\theta)\mapsto \Delta \mathcal{S}(\sigma,\theta;G^{\mathrm{u}},G^{\mathrm{s}})$ in a different way from that in Theorem \ref{thm:mainthmcriticalpoints}. This is the content of the following proposition, which will be proved together with Theorem \ref{thm:mainthmcriticalpoints} in Section \ref{sec:generatingfunctsinvmanifolds}.

\begin{prop}\label{prop:sndthmcriticalpoints}
Let $\Delta \mathcal{S}$ be the function defined in Theorem \ref{thm:mainthmcriticalpoints}. Then, there exists a constant $c>0$ (depending only on $\mu$) such that, for every pair of actions
\begin{equation}\label{eq:domainactions}
(G^{\mathrm{u}},G^{\mathrm{s}})\in \mathcal{R}_G\equiv \left\{ (G^{\mathrm{u}},G^{\mathrm{s}})\in (G_*,G_*+R)\times(G_*,G_*+R) \colon  |G^{\mathrm{s}}-G^{\mathrm{u}}|< c\zeta/(G^{\mathrm{u}})^{5}\right \},
\end{equation}
one can find real-analytic  functions
\[
(G^{\mathrm{u}},G^{\mathrm{s}})\mapsto (\hat{\sigma}_\pm(G^{\mathrm{u}},G^{\mathrm{s}}),\hat{\theta}_\pm (G^{\mathrm{u}},G^{\mathrm{s}})),
\]
such that 
\[
\partial_\sigma \Delta \mathcal{S}(\hat{\sigma}_\pm(G^{\mathrm{u}}, G^{\mathrm{s}}),\hat{\theta}_\pm (G^{\mathrm{u}},G^{\mathrm{s}}); G^{\mathrm{u}},G^{\mathrm{s}})=0 \qquad\qquad \partial_\theta \Delta \mathcal{S}(\hat{\sigma}_\pm(G^{\mathrm{u}},G^{\mathrm{s}}),\hat{\theta}_\pm (G^{\mathrm{u}},G^{\mathrm{s}}); G^{\mathrm{u}},G^{\mathrm{s}})=0.
\]
\end{prop}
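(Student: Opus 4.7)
The plan is to solve the critical point system
\[
\partial_\sigma\Delta\mathcal{S}(\sigma,\theta;I^{\mathrm{u}},I^{\mathrm{s}})=0,\qquad \partial_\theta\Delta\mathcal{S}(\sigma,\theta;I^{\mathrm{u}},I^{\mathrm{s}})=0,
\]
regarding $(I^{\mathrm{u}},I^{\mathrm{s}})$ as the independent parameters, in contrast with Theorem \ref{thm:mainthmcriticalpoints}, where the roles were played by $(\theta,I^{\mathrm{u}})$. The key inputs are Proposition \ref{prop:approxgenfunctionbyMelnikov}, which gives the asymptotic approximation of $\Delta\mathcal{S}$ by the Melnikov potential $L$, and the explicit asymptotic expression for $L$ that will be established in Appendix \ref{sec:Melnikov}.

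Set $I_{\mathrm{m}}=(I^{\mathrm{u}}+I^{\mathrm{s}})/2$, split the Melnikov potential into its $\sigma$-average and its purely oscillatory parts,
\[
L(\sigma,\theta;I_{\mathrm{m}})=\bar L(\theta;I_{\mathrm{m}})+\widetilde L(\sigma,\theta;I_{\mathrm{m}}),
\]
and write $\Delta\mathcal{S}=(I^{\mathrm{u}}-I^{\mathrm{s}})\theta+L+R$ with $R$ controlled by Proposition \ref{prop:approxgenfunctionbyMelnikov}. The harmonic-by-harmonic bound in that proposition shows that, on the complex domain $\Lambda_\rho$, the $l=\pm1$ Fourier modes of $\widetilde L$ dominate every other oscillatory contribution (in particular, the oscillatory part of $R$) by a factor that is a positive power of $|I_{\mathrm{m}}|$. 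Hence the reduced equation $\partial_\sigma\widetilde L(\sigma,\theta;I_{\mathrm{m}})=0$ has exactly two non-degenerate zeros on $\mathbb{T}$, and these persist under the smaller perturbations: applying the implicit function theorem at fixed $(\theta,I^{\mathrm{u}},I^{\mathrm{s}})$ produces two analytic functions $\sigma^{\#}_{\pm}(\theta;I^{\mathrm{u}},I^{\mathrm{s}})$ with $\partial_\sigma\Delta\mathcal{S}(\sigma^{\#}_{\pm},\theta;I^{\mathrm{u}},I^{\mathrm{s}})=0$ for every $(\theta,I^{\mathrm{u}},I^{\mathrm{s}})$ in a suitable complex neighbourhood of the domain under consideration.

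Substituting $\sigma=\sigma^{\#}_{\pm}$ into the remaining $\theta$-equation and isolating the linear term in $\theta$ reduces the problem to
\[
I^{\mathrm{u}}-I^{\mathrm{s}} \;=\; -\,\partial_\theta\bar L(\theta;I_{\mathrm{m}}) \;+\; \mathcal{E}_\pm(\theta;I^{\mathrm{u}},I^{\mathrm{s}}),
\]
where the error $\mathcal{E}_\pm$ collects the exponentially small contribution from $\widetilde L$ (evaluated on the branch $\sigma^\#_\pm$) together with $\partial_\theta R$. The asymptotics of $L$ developed in Appendix \ref{sec:Melnikov} yield, at the leading polynomial order in $1/I_{\mathrm{m}}$,
\[
\partial_\theta\bar L(\theta;I_{\mathrm{m}},\epsilon) \;=\; -\,\mu(1-\mu)(1-2\mu)\,\frac{15\pi\,\epsilon}{16\,I_{\mathrm{m}}^{5}}\,\sin\theta \;+\; (\text{strictly smaller terms}),
\]
so that $\theta\mapsto-\partial_\theta\bar L$ is, modulo lower order corrections, a scaled sine function. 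In particular, restricted to each of the half-circles $[0,\pi]$ and $[\pi,2\pi]$, it is monotone and covers the open interval
\[
\Bigl(-\mu(1-\mu)(1-2\mu)\,\tfrac{15\pi\,\epsilon}{16\,I_{\mathrm{m}}^{5}},\;\mu(1-\mu)(1-2\mu)\,\tfrac{15\pi\,\epsilon}{16\,I_{\mathrm{m}}^{5}}\Bigr).
\]
Replacing $I_{\mathrm{m}}$ by $I^{\mathrm{u}}$ in the bound (a $1+o(1)$ correction, since $|I^{\mathrm{u}}-I_{\mathrm{m}}|\ll|I^{\mathrm{u}}|$ on $\mathcal{R}_I$) matches the range of $I^{\mathrm{u}}-I^{\mathrm{s}}$ allowed in \eqref{eq:domainactions}. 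For any $(I^{\mathrm{u}},I^{\mathrm{s}})\in\mathcal{R}_I$ the intermediate value theorem supplies two solutions $\hat\theta_\pm(I^{\mathrm{u}},I^{\mathrm{s}})$ of the reduced $\theta$-equation, one on each half-circle, and $\hat\sigma_\pm(I^{\mathrm{u}},I^{\mathrm{s}})=\sigma^{\#}_{\pm}\bigl(\hat\theta_\pm(I^{\mathrm{u}},I^{\mathrm{s}});I^{\mathrm{u}},I^{\mathrm{s}}\bigr)$ completes the construction.

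The main obstacle is quantitative: one must verify that the error $\mathcal{E}_\pm$ does not shrink the effective range of $-\partial_\theta\bar L$ below the threshold set by $\mathcal{R}_I$. The strict inequality in the definition of $\mathcal{R}_I$ and the freedom to choose $I_*$ large enough are precisely what provide the room needed to apply the intermediate value theorem uniformly on $\mathcal{R}_I$; as a byproduct, the branches $\hat\sigma_\pm,\hat\theta_\pm$ inherit smoothness away from the two extrema of $\partial_\theta\bar L$, where the monotonicity of the leading asymptotic breaks down and only the continuous extension is preserved.
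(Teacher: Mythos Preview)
Your approach is essentially correct and close in spirit to the paper's, but two points deserve correction.

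First, a numerical slip: the leading term of $\partial_\theta\bar L$ has amplitude $\mu(1-\mu)(1-2\mu)\tfrac{15\pi\epsilon}{8\,I_{\mathrm{m}}^{5}}$, not $\tfrac{15\pi\epsilon}{16\,I_{\mathrm{m}}^{5}}$ (see equation~\eqref{eq:CPderivativesMelnikov} in the proof of Theorem~\ref{thm:mainthmcriticalpoints}). The bound in the definition of $\mathcal{R}_I$ is therefore \emph{half} the natural range of $-\partial_\theta\bar L$, so that $|\sin\theta|<1/2$ and $|\cos\theta|>\sqrt{3}/2$ at every relevant $\theta$. This built-in factor-of-two margin is precisely what absorbs the error $\mathcal{E}_\pm$ uniformly on $\mathcal{R}_I$; your final paragraph about loss of smoothness near the extrema is unwarranted once the coefficient is fixed.

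Second, your $\pm$ labeling is not the paper's. The paper solves the $2\times 2$ system $(\partial_\sigma\Delta\mathcal{S},\partial_\theta\Delta\mathcal{S})=0$ directly by the implicit function theorem around the single approximate seed $\tilde\theta(I^{\mathrm{u}},I^{\mathrm{s}})=\arcsin\bigl(\tfrac{8(I^{\mathrm{u}})^5(I^{\mathrm{s}}-I^{\mathrm{u}})}{15\pi\epsilon\,\mu(1-\mu)(1-2\mu)}\bigr)$ together with the two $\sigma$-seeds $\tilde\sigma_\pm=\sigma_{\pm,\mathrm{circ}}(\tilde\theta,I^{\mathrm{u}})$; the $\pm$ index thus tracks the two $\sigma$-branches (the two homoclinic channels $\Gamma_\pm$), and both $\hat\theta_+,\hat\theta_-$ lie near the same principal-branch $\tilde\theta$. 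Your convention instead pairs the $\pm$ with two half-circles in $\theta$. For the bare statement of the proposition either convention produces valid critical points, but the paper's choice is what makes Theorem~\ref{prop:generatingfunctscattmap} work: $\mathtt{S}_\pm$ must be built from the critical points lying on the channel $\Gamma_\pm$, which is determined by the $\sigma$-branch, not by a choice of $\theta$-arc. Your two-step reduction (solve for $\sigma$ first, then $\theta$) is a perfectly good alternative to the paper's one-shot $2$D implicit function theorem, provided you keep the $\pm$ attached to the $\sigma^{\#}_\pm$ branch and pick (say) the principal $\arcsin$ solution for $\theta$ in both cases.
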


Proposition  \ref{prop:sndthmcriticalpoints} provides, in some sense, a more natural way to look for the critical points of the function $\Delta \mathcal{S}$ than the one in Theorem \ref{thm:mainthmcriticalpoints}: We fix a sufficiently close (but not necessarily exponentially close) pair of actions $(G^{\mathrm{u}},G^{\mathrm{s}})$ and look at the values of the angles $(\sigma,\theta)$ for which there exists a critical point of $(\sigma,\theta)\mapsto \Delta \mathcal{S}$. Next theorem gives the connection between the generating functions associated to the invariant manifolds and the scattering maps.

\begin{thm}\label{prop:generatingfunctscattmap}
Let $(G^{\mathrm{u}},G^{\mathrm{s}})\in\mathcal{R}_G$  where $\mathcal{R}_G$ is the domain defined in \eqref{eq:domainactions}, let  $\hat{\sigma}_\pm(G^{\mathrm{u}},G^{\mathrm{s}}),\hat{\theta}_\pm (G^{\mathrm{u}},G^{\mathrm{s}})$ be the functions obtained in Proposition \ref{prop:sndthmcriticalpoints} and define
\begin{equation}\label{eq:difngenfunctactions}
\mathtt{S}_\pm(G^{\mathrm{u}},G^{\mathrm{s}})= G^{\mathrm{u}}\Delta \mathcal{S}(\hat{\sigma}_\pm(G^{\mathrm{u}}, G^{\mathrm{s}}),\hat{\theta}_\pm (G^{\mathrm{u}},G^{\mathrm{s}}); G^{\mathrm{u}},G^{\mathrm{s}}).
\end{equation} 
 Then, for all $(G^{\mathrm{u}},G^{\mathrm{s}})\in\mathcal{R}_G$, the angles
\begin{equation}\label{eq:generatingfunct}
\alpha_\pm^{\mathrm{u}}(G^{\mathrm{u}},G^{\mathrm{s}})=\partial_{G^{\mathrm{u}}} \mathtt{S}_\pm (G^{\mathrm{u}},G^{\mathrm{s}}) \qquad\qquad \alpha_\pm^{\mathrm{s}}(G^{\mathrm{u}},G^{\mathrm{s}})=-\partial_{G^{\mathrm{s}}} \mathtt{S}_\pm(G^{\mathrm{u}},G^{\mathrm{s}})
\end{equation}
satisfy 
\[
\mathbb{P}_\pm (\alpha^{\mathrm{u}}_\pm(G^{\mathrm{u}},G^{\mathrm{s}}),G^{\mathrm{u}})= (\alpha^{\mathrm{s}}_\pm(G^{\mathrm{u}},G^{\mathrm{s}}),G^{\mathrm{s}}).
\]
Namely, $\mathtt{S}_\pm$ is a generating function for the scattering map $\mathbb{P}_\pm$ defined in \eqref{eq:defnscatteringmaps}.
\end{thm}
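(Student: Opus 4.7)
The plan is to reduce the generating-function identity to two self-contained pieces: a chain-rule reduction at the critical point that eliminates the angular variables, and a Hamilton--Jacobi computation on the asymptotic tori that identifies the remaining derivatives with the wave-map angles.

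\emph{Step 1 (chain rule at the critical point).} I start from $\mathtt{S}_\pm(I^{\mathrm{u}},I^{\mathrm{s}})=\Delta\mathcal{S}(\hat\sigma_\pm,\hat\theta_\pm;I^{\mathrm{u}},I^{\mathrm{s}})$ and differentiate in $I^{\mathrm{u}}$ and $I^{\mathrm{s}}$. Because $(\hat\sigma_\pm,\hat\theta_\pm)$ are critical points of $(\sigma,\theta)\mapsto\Delta\mathcal{S}$ by Proposition~\ref{prop:sndthmcriticalpoints}, the cross terms $\partial_\sigma\Delta\mathcal{S}\cdot\partial_{I^{\mathrm{u,s}}}\hat\sigma_\pm$ and $\partial_\theta\Delta\mathcal{S}\cdot\partial_{I^{\mathrm{u,s}}}\hat\theta_\pm$ vanish, so $\partial_{I^{\mathrm{u,s}}}\mathtt{S}_\pm=\partial_{I^{\mathrm{u,s}}}\Delta\mathcal{S}|_{(\hat\sigma_\pm,\hat\theta_\pm)}$. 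Since by \eqref{eq:deltascali} the function $\Delta\mathcal{S}$ depends on $(v,t)$ only through $\sigma=t-I_{\mathrm{m}}^3 v$, vanishing of $\partial_\sigma\Delta\mathcal{S}$ and $\partial_\theta\Delta\mathcal{S}$ at the critical point translates to $\nabla_{(u,\beta,t)}\Delta S=0$ at the corresponding heteroclinic intersection $q_*$. This kills the chain-rule contribution from the $(I^{\mathrm{u}},I^{\mathrm{s}})$-dependence of $\Phi$ and reduces matters to $\partial_{I^{\mathrm{u,s}}}\mathtt{S}_\pm=\partial_{I^{\mathrm{u,s}}}\Delta S|_{q_*}$.

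\emph{Step 2 (pull-back to polar coordinates).} Next I transfer the computation to polar variables using $\eta_{I_{\mathrm{m}}}$. Lemma~\ref{lem:Idepchangeofvarssymp} gives $\eta_{I_{\mathrm{m}}}^{*}\lambda_{\mathrm{pol}}-\lambda=d\Psi_{I_{\mathrm{m}}}$ for an explicit primitive $\Psi_{I_{\mathrm{m}}}$ depending on $I^{\mathrm{u}},I^{\mathrm{s}}$ only through $I_{\mathrm{m}}$. If $\widetilde\Sigma^{\mathrm{u}}(r,\alpha,t;I^{\mathrm{u}})$ and $\widetilde\Sigma^{\mathrm{s}}(r,\alpha,t;I^{\mathrm{s}})$ denote the (multi-valued) generating functions of $W^{\mathrm{u}}_{I^{\mathrm{u}}}$ and $W^{\mathrm{s}}_{I^{\mathrm{s}}}$ in polar variables, then $S^{\mathrm{u,s}}=\widetilde\Sigma^{\mathrm{u,s}}\circ\eta_{I_{\mathrm{m}}}-\Psi_{I_{\mathrm{m}}}$ and the common $\Psi_{I_{\mathrm{m}}}$ cancels in $\Delta S=\Delta\widetilde\Sigma\circ\eta_{I_{\mathrm{m}}}$. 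At the heteroclinic point $\eta_{I_{\mathrm{m}}}(q_*)$ we have $\nabla_{(r,\alpha,t)}\Delta\widetilde\Sigma=0$ (the intersection condition in polar variables), so chain-rule terms from the $I_{\mathrm{m}}$-dependence of $\eta_{I_{\mathrm{m}}}$ also drop. Using that $\widetilde\Sigma^{\mathrm{u}}$ is independent of $I^{\mathrm{s}}$ and vice versa, I obtain
\[
\partial_{I^{\mathrm{u}}}\mathtt{S}_\pm=\partial_{I^{\mathrm{u}}}\widetilde\Sigma^{\mathrm{u}}\big|_{\eta_{I_{\mathrm{m}}}(q_*)},\qquad \partial_{I^{\mathrm{s}}}\mathtt{S}_\pm=-\partial_{I^{\mathrm{s}}}\widetilde\Sigma^{\mathrm{s}}\big|_{\eta_{I_{\mathrm{m}}}(q_*)}.
\]

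\emph{Step 3 (Hamilton--Jacobi on the asymptotic tori).} Differentiating the Hamilton--Jacobi equation $H_{\mathrm{pol}}(q,\nabla_q\widetilde\Sigma^{\mathrm{u}})=0$ with respect to the parameter $I^{\mathrm{u}}$ yields $\nabla_p H_{\mathrm{pol}}\cdot\nabla_q(\partial_{I^{\mathrm{u}}}\widetilde\Sigma^{\mathrm{u}})=0$, so $\partial_{I^{\mathrm{u}}}\widetilde\Sigma^{\mathrm{u}}$ is a first integral of the Hamiltonian flow on $W^{\mathrm{u}}_{I^{\mathrm{u}}}$. On $\mathcal{T}_{I^{\mathrm{u}}}$ we have $y=E=0$ and $G=I^{\mathrm{u}}$, so integrating $d\widetilde\Sigma^{\mathrm{u}}|_{\mathcal{T}_{I^{\mathrm{u}}}}=I^{\mathrm{u}}\,d\alpha$ gives $\widetilde\Sigma^{\mathrm{u}}=I^{\mathrm{u}}\alpha+C^{\mathrm{u}}(I^{\mathrm{u}})$ on the torus. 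Evaluating the first integral at the past asymptote of the orbit through $q_*$ and invoking the definition \eqref{eq:restrwavemaps} of $\Omega^{\mathrm{u}}_\pm$ produces $\partial_{I^{\mathrm{u}}}\widetilde\Sigma^{\mathrm{u}}|_{\text{het pt}}=\varphi^{\mathrm{u}}_\pm+(C^{\mathrm{u}})'(I^{\mathrm{u}})$; the parallel argument on $W^{\mathrm{s}}_{I^{\mathrm{s}}}$ yields $\partial_{I^{\mathrm{s}}}\widetilde\Sigma^{\mathrm{s}}|_{\text{het pt}}=\varphi^{\mathrm{s}}_\pm+(C^{\mathrm{s}})'(I^{\mathrm{s}})$. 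Absorbing the single-variable constants $C^{\mathrm{u,s}}$ into the normalisation of the generating functions fixed by the defects $\delta^{\mathrm{u,s}}$ of \eqref{eq:defect} will yield the identities \eqref{eq:generatingfunct}, and the property $\mathbb{P}_\pm(\varphi^{\mathrm{u}}_\pm,I^{\mathrm{u}})=(\varphi^{\mathrm{s}}_\pm,I^{\mathrm{s}})$ then follows from the exact symplecticity of the scattering maps.

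\emph{Main obstacle.} The hardest part is the bookkeeping: I must verify that the two successive chain-rule cancellations (through $\Phi$ in Step~1 and through $\eta_{I_{\mathrm{m}}}$ in Step~2) genuinely take place at the heteroclinic intersection, and reconcile the Hamilton--Jacobi integration constants $C^{\mathrm{u,s}}$ with the non-exactness defects $\delta^{\mathrm{u,s}}$ that make $\widetilde\Sigma^{\mathrm{u,s}}$ multi-valued in $\alpha$, so that the normalisation \eqref{eq:defngeneratingfunctmanif} of $S^{\mathrm{u,s}}$ produces exactly the signs and linear terms required by \eqref{eq:generatingfunct}.
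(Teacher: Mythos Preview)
Your strategy is genuinely different from the paper's. The paper proves the identity via the \emph{primitive function} of the exact symplectic map $\mathbb{P}_\pm$: one defines a renormalised action integral $P_\pm=\int\big(i_{X_{H_{\mathrm{pol}}}}\lambda_{\mathrm{pol}}-Q'(\tau)\big)\,d\tau$ along the heteroclinic orbit (the subtraction is forced because parabolic orbits decay only polynomially, so the bare integral diverges), deforms the integration path into the complex $\tau$-plane to avoid the singularity of $\eta_{I_{\mathrm{m}}}$ at $u=0$, and then evaluates $P_\pm=\mathtt{S}_\pm+I^{\mathrm{s}}\varphi^{\mathrm{s}}_\pm-I^{\mathrm{u}}\varphi^{\mathrm{u}}_\pm$ directly from the generating functions $S^{\mathrm{u,s}}$. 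The identities \eqref{eq:generatingfunct} then drop out of $dP_\pm=I^{\mathrm{s}}\,d\varphi^{\mathrm{s}}_\pm-I^{\mathrm{u}}\,d\varphi^{\mathrm{u}}_\pm$. Your Hamilton--Jacobi route (differentiate in the torus parameter, observe that $\partial_I\widetilde\Sigma$ is a first integral of the flow on the Lagrangian, and read off the asymptotic angle) is the classical dual of this construction, since $\tfrac{d}{d\tau}\Sigma=i_X\lambda$ along orbits; the two are related by integration by parts.

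The gap is that your sketch does not address the two analytic obstructions that are precisely the content of the theorem in this parabolic setting, and which the paper treats explicitly. First, the polar generating function $\widetilde\Sigma^{\mathrm{u}}$ is not single-valued over $(r,\alpha,t)$ along the whole heteroclinic orbit: the intersection point $q_*$ lies at real $u>0$ (inside $D_{\kappa_2}$), the past asymptote is at $u\to-\infty$, and at the turning point $u=0$ one has $y_{\mathrm{h}}(0)=0$, so $W^{\mathrm{u}}_{I^{\mathrm{u}}}$ fails to be a graph over the polar base there. Propagating your first integral through this caustic is not automatic; the paper bypasses it by complex contour deformation, and you would need an analogous device. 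Second, $\widetilde\Sigma^{\mathrm{u}}$ itself diverges like $r^{1/2}$ at the torus, so the formula ``$\widetilde\Sigma^{\mathrm{u}}|_{\mathcal{T}_{I^{\mathrm{u}}}}=I^{\mathrm{u}}\alpha+C^{\mathrm{u}}(I^{\mathrm{u}})$'' is not literally true; only after checking that the divergent part is $I^{\mathrm{u}}$-independent can one conclude that $\partial_{I^{\mathrm{u}}}\widetilde\Sigma^{\mathrm{u}}$ has a finite limit $\alpha+(C^{\mathrm{u}})'$, and this is exactly the renormalisation by $Q'(\tau)$ in the paper. Your Steps~1--2 (chain-rule cancellations at the critical point) are correct, but the ``bookkeeping'' you flag as the main obstacle is secondary: the caustic at $u=0$ and the renormalisation at infinity are the real work.
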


The rather slow decay of parabolic motions and the fact that the parametrizations \eqref{eq:stableparam} and \eqref{eq:unstableparam} are not defined at $u=0$ introduce certain technicalities in the  proof of Theorem \ref{prop:generatingfunctscattmap}. For this reason, the proof is deferred to Section \ref{sec:proofgeneratingscatt}.

\subsubsection{The difference and transversality between the scattering maps}\label{sec:transversalityscattmaps}

We finally provide an asymptotic formula for the difference  $\mathbb{P}_+-\mathbb{P}_-$ and verify \eqref{eq:secondmoeckelcondscatt} in Theorem \ref{thm:Moeckel}. The proof relies on proving an asymptotic formula for the difference between the scattering maps (see Theorem \ref{thm:mainthmscattmaps2}) and the verification of certain non-degeneracy condition of the leading order terms in this asymptotic formula (see Lemma \ref{lem:nondegdiffmelnikov}).

With the intention of clarifying the statement of Theorem \ref{thm:mainthmscattmaps2}, the recalling of some notation is in order. Let  $\Phi_\pm$ be the maps defined in \eqref{eq:definitionPhi+-}, let $\Omega_\pm^{\mathrm{u}}$ be the wave maps defined in \eqref{eq:restrwavemaps},  denote by $\Xi_\pm$ be the maps
\begin{equation}\label{eq:mapsXi+-}
(G^{\mathrm{u}}, G^{\mathrm{s}})\mapsto \Xi_\pm(G^{\mathrm{u}}, G^{\mathrm{s}})= (\hat{\theta}_\pm(G^{\mathrm{u}},G^{\mathrm{s}}),G^{\mathrm{u}})
\end{equation}
obtained  in  Proposition  \ref{prop:sndthmcriticalpoints},  let $\mathtt{S}_\pm$  be the generating functions obtained in Proposition \ref{prop:generatingfunctscattmap}  and consider the function $\tilde{G}^{\mathrm{s}}_\pm(\theta,G^{\mathrm{u}})$ obtained in Theorem \ref{thm:mainthmcriticalpoints}. Define also the vertical strip
\begin{equation}\label{eq:verticalstrip}
\mathbb{I}= \{\pi/8\leq \alpha^{\mathrm{u}}\leq \pi/4\}\times [G_*,G_*+R].
\end{equation}

\begin{thm}\label{thm:mainthmscattmaps2}
The restriction  $\mathbb{P}_\pm|_{\mathbb{I}}:\mathbb{I}\longrightarrow \mathcal{P}_\infty^*$ of the scattering maps $\mathbb{P}_\pm$ to $\mathbb{I}$ can be computed as
\[
(\alpha^{\mathrm{u}},G^{\mathrm{u}}) \longmapsto (\alpha^{\mathrm{u}}- (\partial_{G^{\mathrm{u}}} \mathtt{S}_\pm+ \partial_{G^{\mathrm{s}}} \mathtt{S}_\pm)\circ(\Omega_\pm^{\mathrm{u}}\circ \Phi_{\mathrm{h}}\circ\Phi_\pm\circ \Xi_\pm)^{-1}(\alpha^{\mathrm{u}},G^{\mathrm{u}}),\ \ \tilde{G}^{\mathrm{s}}_\pm \circ (\Omega_\pm^{\mathrm{u}}\circ \Phi_{\mathrm{h}}\circ \Phi_\pm)^{-1}(\alpha^{\mathrm{u}},G^{\mathrm{u}})).
\]
Moreover, for all $(\alpha,G)\in\mathbb{I}$,
\begin{equation}\label{eq:estimatesdifferencescattmaps}
 \mathbb{P}_+-\mathbb{P}_- =\mathcal{J} \nabla (\mathcal{L}_+-\mathcal{L}_-) + \exp(-G^3/3)\left(\mathcal{O}( G^{-1/2}),\ \mathcal{O}(\zeta\ G^{-5/2} ) \right),
\end{equation}
where $\mathcal L_\pm$ have been defined in \eqref{eq: DefinitionredMelnikovPotential}.
\end{thm}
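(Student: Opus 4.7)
The explicit formula for $\mathbb{P}_\pm|_{\mathcal{P}_{\infty,\mathrm{vert}}^*}$ is essentially a bookkeeping consequence of Proposition~\ref{prop:generatingfunctscattmap}. Combining $\varphi^{\mathrm{u}}_\pm=\partial_{I^{\mathrm{s}}}\mathtt{S}_\pm$ and $\varphi^{\mathrm{s}}_\pm=-\partial_{I^{\mathrm{u}}}\mathtt{S}_\pm$ yields $\varphi^{\mathrm{s}}_\pm-\varphi^{\mathrm{u}}_\pm=-(\partial_{I^{\mathrm{u}}}+\partial_{I^{\mathrm{s}}})\mathtt{S}_\pm$. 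The composition $\Omega^{\mathrm{u}}_\pm\circ\eta_{I_{\mathrm{m}}}\circ\Phi_\pm\circ\Xi_\pm$ sends the pair of asymptotic actions $(I^{\mathrm{u}},I^{\mathrm{s}})$ to the backward asymptotic $(\varphi^{\mathrm{u}}_\pm(I^{\mathrm{u}},I^{\mathrm{s}}),I^{\mathrm{u}})$ of the corresponding heteroclinic orbit through $\Gamma_\pm$; inverting it expresses $(I^{\mathrm{u}},I^{\mathrm{s}})$ in terms of the given $(\varphi^{\mathrm{u}},I^{\mathrm{u}})$ and, after substitution, produces the first coordinate of the formula. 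For the second coordinate one uses the equivalent presentation $\tilde{I}^{\mathrm{s}}_\pm\circ(\Omega^{\mathrm{u}}_\pm\circ\eta_{I_{\mathrm{m}}}\circ\Phi_\pm)^{-1}$, noting that $\Xi_\pm$ only relabels variables. Well-definedness of the inverses on the vertical strip $\mathcal{P}_{\infty,\mathrm{vert}}^*$ is inherited from the uniform control on $\sigma_\pm$ and $\tilde{I}^{\mathrm{s}}_\pm$ provided by Theorem~\ref{thm:mainthmcriticalpoints}.

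The asymptotic estimate \eqref{eq:estimatesdifferencescattmaps} is considerably more delicate, since the individual remainders in \eqref{eq:estimatesmaintermscattmap} are only polynomially small and a naive subtraction would fall short of the required exponential smallness. The plan is to compute $\mathbb{P}_+-\mathbb{P}_-$ directly from the generating-function presentation just derived and to exploit the harmonic-by-harmonic control in Proposition~\ref{prop:approxgenfunctionbyMelnikov}: every nonzero Fourier coefficient of $\Delta\mathcal{S}-L$ in the $\sigma$ variable is exponentially small in $\mathrm{Re}(I_{\mathrm{m}}^3)$. Since $L$ is dominated in $\sigma$ by its first harmonic (as confirmed a posteriori by Lemma~\ref{lem:mainpropMelnikov}), its two critical points in $\sigma$ are approximately $\pi$ apart, so that $\hat{\sigma}_+$ and $\hat{\sigma}_-$ also differ approximately by $\pi$. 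When forming the difference
\begin{equation*}
\mathtt{S}_+-\mathtt{S}_-=\Delta\mathcal{S}(\hat{\sigma}_+,\hat{\theta}_+;I^{\mathrm{u}},I^{\mathrm{s}})-\Delta\mathcal{S}(\hat{\sigma}_-,\hat{\theta}_-;I^{\mathrm{u}},I^{\mathrm{s}}),
\end{equation*}
the $\sigma$-mean part of $\Delta\mathcal{S}$ cancels and only the oscillatory harmonics contribute; the same mechanism, combined with implicit-function bounds for $\hat{\sigma}_\pm,\hat{\theta}_\pm$, transfers the exponential smallness to the derivatives $\partial_{I^{\mathrm{u}}}(\mathtt{S}_+-\mathtt{S}_-)$, $\partial_{I^{\mathrm{s}}}(\mathtt{S}_+-\mathtt{S}_-)$ and $\partial_\theta(\mathtt{S}_+-\mathtt{S}_-)$.

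Translating back through the explicit formula, the angle component of $\mathbb{P}_+-\mathbb{P}_-$ becomes $(\partial_{I^{\mathrm{u}}}+\partial_{I^{\mathrm{s}}})(\mathtt{S}_+-\mathtt{S}_-)$ composed with close-to-identity coordinate transitions, while the action component $\tilde{I}^{\mathrm{s}}_+-\tilde{I}^{\mathrm{s}}_-$ is recovered from $\partial_\theta(\mathtt{S}_+-\mathtt{S}_-)$ via the implicit function theorem. The leading-order identification with $\mathcal{J}\nabla(\mathcal{L}_+-\mathcal{L}_-)$ comes from replacing $\sigma$ by $\tilde{\sigma}_\pm(\varphi^{\mathrm{u}})$ in \eqref{eq: DefinitionredMelnikovPotential}, which turns the values of $L$ at the critical $\sigma$'s into the reduced Melnikov potentials $\mathcal{L}_\pm$. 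The \emph{main obstacle} is the sharp tracking of the polynomial prefactors multiplying $\exp(-(I^{\mathrm{u}})^3/3)$: the $(I^{\mathrm{u}})^{-1/2}$ bound in the angle direction and the $\epsilon(I^{\mathrm{u}})^{-5/2}$ bound in the action direction are read off, respectively, from the leading-order asymptotics of $\partial_{I^{\mathrm{u}}}(\mathcal{L}_+-\mathcal{L}_-)$ and $\partial_{\varphi^{\mathrm{u}}}(\mathcal{L}_+-\mathcal{L}_-)$ in Lemma~\ref{lem:mainpropMelnikov}, the extra factor $\epsilon$ in the action direction reflecting that the $\epsilon=0$ problem is integrable and that the $\varphi^{\mathrm{u}}$-dependent part of the perturbative potential $V$ is of order $\epsilon$.
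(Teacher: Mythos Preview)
Your outline for the explicit formula is fine, and your idea that the exponential smallness of $\mathbb{P}_+-\mathbb{P}_-$ ultimately stems from the harmonic-by-harmonic bounds of Proposition~\ref{prop:approxgenfunctionbyMelnikov} is correct. However, there is a genuine gap in how you pass from the generating-function level to the actual difference $\mathbb{P}_+-\mathbb{P}_-$.

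You write that the angle component becomes ``$(\partial_{I^{\mathrm{u}}}+\partial_{I^{\mathrm{s}}})(\mathtt{S}_+-\mathtt{S}_-)$ composed with close-to-identity coordinate transitions''. But the two scattering maps are expressed through \emph{different} transitions, namely $(\Omega^{\mathrm{u}}_+\circ\eta_{I_{\mathrm{m}}}\circ\Phi_+\circ\Xi_+)^{-1}$ versus $(\Omega^{\mathrm{u}}_-\circ\eta_{I_{\mathrm{m}}}\circ\Phi_-\circ\Xi_-)^{-1}$. Writing $A_\pm$ for these, one has
\[
\varphi^{\mathrm{s}}_+-\varphi^{\mathrm{s}}_-=-\big((\partial_{I^{\mathrm{u}}}+\partial_{I^{\mathrm{s}}})\mathtt{S}_+\big)\circ A_+^{-1}+\big((\partial_{I^{\mathrm{u}}}+\partial_{I^{\mathrm{s}}})\mathtt{S}_-\big)\circ A_-^{-1},
\]
and after adding and subtracting one gets a cross term of the form $\big((\partial_{I^{\mathrm{u}}}+\partial_{I^{\mathrm{s}}})\mathtt{S}_-\big)\circ A_-^{-1}-\big((\partial_{I^{\mathrm{u}}}+\partial_{I^{\mathrm{s}}})\mathtt{S}_-\big)\circ A_+^{-1}$. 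Since each $A_\pm$ is only \emph{polynomially} close to the identity (of order $|I^{\mathrm{u}}|^{-29/8}$, coming from the straightening map $\Psi_\infty$ of the unstable foliation), this term would a priori be only polynomially small. The paper closes this gap via a separate lemma (Lemma~6.1 in Section~\ref{sec: the difference between the wave maps}) showing that the backward wave maps $\widetilde{\Omega}_\pm=\Omega^{\mathrm{u}}_\pm\circ\eta_{I_{\mathrm{m}}}\circ\Phi_\pm$ satisfy $|\Upsilon_+-\Upsilon_-|\lesssim |I^{\mathrm{u}}|^{-5/8}\exp(-\mathrm{Re}((I^{\mathrm{u}})^3)/3)$. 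The argument is not immediate: one must show that the straightening function $\Upsilon$ for the unstable foliation decomposes as an element of $\mathrm{Ker}\,\mathcal{L}$ plus terms proportional to $\partial_v\Delta\mathcal{S}$ and $\partial_\theta\Delta\mathcal{S}$, which vanish at the critical points $(\sigma_\pm,\tilde{I}^{\mathrm{s}}_\pm)$. Only then does the exponential-decay lemma apply to the part that survives.

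A second, smaller issue: your cancellation argument for the $\sigma$-mean of $\Delta\mathcal{S}$ in $\mathtt{S}_+-\mathtt{S}_-$ assumes the two evaluations occur at the same $\theta$ and the same $(I^{\mathrm{u}},I^{\mathrm{s}})$, but $\hat{\theta}_+\neq\hat{\theta}_-$ (and after composing with $\Xi_\pm^{-1}$ the actions $\tilde{I}^{\mathrm{s}}_\pm$ differ as well). The paper sidesteps this by first using criticality to write $\partial_{I^{\mathrm{u}}}\mathtt{S}_\pm+\partial_{I^{\mathrm{s}}}\mathtt{S}_\pm=(\partial_{I^{\mathrm{u}}}\Delta S+\partial_{I^{\mathrm{s}}}\Delta S)\circ\Phi_\pm\circ\Xi_\pm$ (the chain-rule terms in $\sigma,\theta$ drop out), and then controlling the nonzero Fourier modes of $\partial_{I^{\mathrm{u,s}}}(\Delta S-\tilde{L})$ directly via Proposition~\ref{prop:approxgenfunctionbyMelnikov} and a Cauchy estimate in $I^{\mathrm{u,s}}$, which costs the expected factor $(I^{\mathrm{u}})^2$ and yields the $(I^{\mathrm{u}})^{-1/2}$ prefactor. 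This is Proposition~\ref{prop:Asymptoticformulasgenfunct}; your sketch should be rerouted through this computation rather than through a raw subtraction of $\mathtt{S}_+$ and $\mathtt{S}_-$.
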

The proof of Theorem is postponed until Section \ref{sec:asymptformscattmap}.
\begin{rem}
Notice that to state Theorem \ref{thm:mainthmscattmaps2}, we have considered the vertical strip $\mathbb{I}\subset\mathbb{A}$. This is due to the fact that the maps 
\[
(G^{\mathrm{u}},G^{\mathrm{s}})\to \Omega_\pm^{\mathrm{u}}\circ \Xi_\pm (G^{\mathrm{u}},G^{\mathrm{s}})= (\alpha^{\mathrm{u}}_\pm(G^{\mathrm{u}},G^{\mathrm{s}}), G^{\mathrm{u}}) 
\]
are not invertible everywhere on $\mathbb{A}$. However, it is easy to check from Theorem \ref{thm:mainthmscattmaps} that $\mathbb{I}\subset \mathrm{Dom}\ ( \Omega_\pm^{\mathrm{u}}\circ \Xi_\pm)^{-1}$. This will be enough for our purposes.
\end{rem}

\begin{rem}\label{rem:verticalstriprestriction}
We point out that \eqref{eq:estimatesdifferencescattmaps} does not mean that $\mathbb{P}_\pm$ are approximated by $\mathcal{L}_\pm$ up to an exponentially small remainder. This is a subtle point in our argument: as we saw in Theorem \ref{thm:mainthmscattmaps} (see \eqref{eq:estimatesmaintermscattmap}), there are non-exponentially small, i.e. polynomially small, errors in the approximation of $\mathbb{P}_\pm$ by $\mathcal{L}_\pm$. What we prove in Theorem \ref{thm:mainthmscattmaps2} is that these errors \emph{are the same for both approximations} of $\mathbb{P}_+$ and $\mathbb{P}_-$. 
\end{rem}

Using the estimates in Theorem \ref{thm:mainthmscattmaps}, together with the asymptotic expression \eqref{eq:estimatesdifferencescattmaps}, for all $(\alpha,G)\in \mathbb{I}$ we have
\[
\begin{split}
\langle \mathcal{J} (\mathbb{P}_+- \mathrm{Id}), (\mathbb{P}_--\mathbb{P}_+) \rangle=& ( \partial_\alpha \mathcal{L}_+ +\mathcal{O}(\zeta G^{-7})) (\partial_G (\mathcal{L}_+-\mathcal{L}_-)+ \mathcal{O}( G^{1/2} \exp(-G^3/3)))\\
&+( \partial_G \mathcal{L}_+ +\mathcal{O}(G^{-7}))(-\partial_\alpha (\mathcal{L}_+-\mathcal{L}_-)+ \mathcal{O}(\zeta G^{-3/2}\exp(-G^3/3)))\\
=&\{ \mathcal{L}_+,\mathcal{L}_- \}+ \mathcal{O}(\zeta G^{-9/2}\exp(-G^3/3)).
\end{split}
\]
In view of the asymptotic expansion above, the proof of Theorem \ref{thm:Moeckel}  follows after checking the following non-degeneracy condition between the reduced Melnikov potentials, proved in  Appendix \ref{sec:Melnikov}. 
\begin{lem}\label{lem:nondegdiffmelnikov}
Let $\mathcal{L}_\pm$ be the reduced Melnikov potentials defined in \eqref{eq: DefinitionredMelnikovPotential}.Then, for all $\alpha\in\mathbb{T}$ and  all $G\in\mathbb{R}$ sufficiently large, their Poisson bracket $\{ \mathcal L_+,\mathcal L_-\}$ admits the asymptotic expansion
\[
\{ \mathcal L_+,\mathcal L_-\}(\alpha,G)= -\mu^2(1-\mu)^2 9 (2\pi)^{3/2} \zeta \ G^{-5/2} \exp(-G^3/3)\ (\sin\alpha+\mathcal{O}(G^{-1})),
\]
where $\mu\in (0,1/2)$ is the mass ratio between the primaries and $\zeta\in (0,1)$ is the eccentricity of the primaries orbit.
\end{lem}
Lemma \ref{lem:nondegdiffmelnikov} implies that $\eta(\varepsilon)$ (which was defined in \eqref{eq:secondcondmapmoeckel} in Theorem \ref{thm:scattmapmain} and is related to the transversality between the level curves of the Hamiltonians which approximate the scattering maps)  satisfies the estimate \eqref{eq:secondmoeckelcondscatt}. This estimate together with the choices in \eqref{eq:firstmoeckelcondscatt} complete the proof of Theorem \ref{thm:Moeckel}.


\begin{figure}\label{fig:transversalityinvariantcurves}
\centering
\includegraphics[scale=0.9]{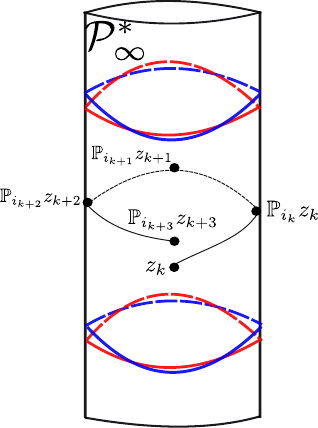}
\caption{The invariant curves of the map $\mathbb{P}_+$ (in red) intersect transversally the invariant curves of the map $\mathbb{P}_-$ (in blue). We also sketch a segment of a diffusive orbit for the iterated function system generated by the maps $\mathbb{P}_\pm$.}
\end{figure}


\section{The generating functions of the invariant manifolds}\label{sec:generatingfunctsinvmanifolds}

In this section we provide the proof of  Propositions \ref{prop:extensiondecaygenfunctions} and \ref{prop:approxgenfunctionbyMelnikov} and show how the latter readily implies Theorem \ref{thm:mainthmcriticalpoints}.  First we explain the change of variables $\Phi_{\mathrm{h}}$ in \eqref{eq:Mathieutransf} and show the existence of real analytic solutions $T^{\mathrm{u,s}}(u,\beta,t;G^{\mathrm{u}},G^{\mathrm{s}},\zeta)$ to the Hamilton-Jacobi equation \eqref{eq:HJfinal} with asymptotic initial conditions given by \eqref{eq:bcHJ} on certain complex domains of the form $D^{\mathrm{u},\mathrm{s}}\times \mathbb{T}^2$ (see Remark \ref{eq:remarkdomainsdef} for a precise definition of these domains), which satisfy
\[
D^{\mathrm{u}}\cap D^{\mathrm{s}}\neq \emptyset,\qquad\qquad \text{and}\qquad\qquad  \left( (-\infty,-u_0]\cup [u_1,u_2]\right) \times \mathbb{T}^2 \subset D^{\mathrm{u}}\times\mathbb{T}^2,\qquad [u_0,\infty)\times \mathbb{T}^2 \subset D^{\mathrm{s}}\times \mathbb{T}^2
\]
for some real values $u_0<u_1<u_2$.  This is the content of Sections \ref{sec:continuationunstable} and \ref{sec:Extensionflow}. Then, in Section \ref{sec:differencegenfunctionsmanifolds} we study the difference 
$\Delta S$ introduced in \eqref{eq:diffgenfunct}
on the complex domain $(D^{\mathrm{u}}\cap D^{\mathrm{s}})\times\mathbb{T}^2$ and show that $\Delta S$ is approximated uniformly in $(D^{\mathrm{u}}\cap D^{\mathrm{s}})\times\mathbb{T}^2$ by
\[
\Delta S\sim \langle \boldsymbol\delta^{\mathrm{u}}-\boldsymbol\delta ^{\mathrm{s}},z\rangle+ \tilde{L}
\]
where $\tilde{L}$ is the \textit{Melnikov potential} defined by
\begin{equation}\label{eq:defnMelnikovPot}
\tilde{L}(u,\beta,t; G_{0},\zeta)=\int_{\mathbb{R}} V(s, \beta, t-G_{0}^3u +G_{0}^3 s;G_{0},\zeta) \mathrm{d}s.
\end{equation}

\begin{rem}
The function $\tilde{L}$ satisfies 
\[
\tilde{L}(u,\beta,t; G_{0},\zeta)=L(t-G_{0}^3 u,\beta;G_{0},\zeta)
\]
where $L(\sigma,\beta;G_{0},\zeta)$ was defined in \eqref{eq:defnMelnikovPotential}. The introduction of \eqref{eq:defnMelnikovPot} is just a matter of convenience for the forthcoming sections.
\end{rem}

Finally, we prove that the existence of nondegenerate critical points of the function 
\[
z\mapsto  \langle \boldsymbol\delta^{\mathrm{u}}-\boldsymbol\delta ^{\mathrm{s}},z\rangle+ \tilde{L}(z;G_0,\zeta)
\]
implies the existence of nondegenerate critical points of the function $z\mapsto \Delta S$.

\subsection{From the circular to the elliptic problem}

As pointed out in the introduction, for $\zeta=0$ and $\mu>0$, which corresponds to the circular problem (RPC3BP), the system is already non-integrable  since there exist transverse intersections between the stable and unstable manifolds of all the tori $\mathcal{T}_G\subset \mathcal{P}_\infty$ with $G$ sufficiently large (see \cite{MR3455155}). However, for $\zeta=0$, due to the conservation of the Jacobi constant, there do not exist heteroclinic connections between different tori $\mathcal{T}_G,\mathcal{T}_{G'}\subset \mathcal{P}_\infty$. In Theorem \ref{thm:mainthmcriticalpoints} we prove that for $\zeta>0$ there do exist heteroclinic connections between sufficiently close $\mathcal{T}_G,\mathcal{T}_{G'}\subset\mathcal{P}_\infty$. As explained at the beginning of  Section \ref{sec:generatingfunctsinvmanifolds} this result will be proved by approximating the difference $\Delta S$ by the Melnikov potential $L$. In this approximation there are  errors coming from the circular part of the perturbation and errors exclusive of the elliptic part. For this reason, in order to obtain asymptotic formulas for the scattering maps associated to the aforementioned heteroclinic intersections, in the case $\mu,\zeta>0$, it is necessary to keep track of the $\zeta$ dependent part in the generating functions $T^{\mathrm{u,s}}$. To that end, we denote by (see \cite{MR3455155})
\begin{equation}\label{eq:generatingcircular}
T_{\mathrm{circ}}^{\mathrm{u,s}}(u,t-\beta;G_{0})=T^{\mathrm{u,s}}(u,\beta,t;G_{0},G_{0},0),
\end{equation}
the generating functions associated to the invariant manifolds of the invariant torus $\mathcal{T}_{G_{0}}\subset \mathcal{P}_\infty$
for the circular problem ($\zeta=0$), let 
\begin{equation}\label{eq:potentialcircular}
V_{\mathrm{circ}}(u,t-\beta;G_{0})=V(u,\beta,t;G_{0},0)
\end{equation}
and introduce the Melnikov potential associated to the circular problem
\begin{equation}\label{eq:Melnikovcircular}
\tilde{L}_{\mathrm{circ}}(u,t-\beta;G_{0})=\tilde{L}(u,\beta,t;G_{0},0).
\end{equation}

\subsection{Analytic continuation of the unstable generating function}\label{sec:continuationunstable}

We define the change of variables $\Phi_{\mathrm{h}}:(u,\beta,t,Y,J,E)\mapsto(\tilde r,\tilde\alpha,t,\tilde y,\tilde G,\tilde E)$ given by 
\[
\tilde r=r_{\mathrm{h}}(u),\qquad \tilde\alpha=\beta+\beta_{\mathrm{h}}(u),\qquad\tilde y=y_{\mathrm{h}}+y^{-1}_{\mathrm{h}}(u)\left(Y-r_{\mathrm{h}}^{-2}J\right),\qquad\tilde G=1+J,
\]
which is the composition of the Mathieu transformation associated to the change $\phi_{\mathrm{h}}$ on the base defined in \eqref{eq:changebasis} and a translation in the $\tilde G$ coordinate. It is now an straightforward computation to check that the Hamiltonian $H=\widetilde H\circ\Phi$ satisfies 
\begin{equation}\label{eq:FinaHamiltonian}
    H(u,\beta,t,Y,J,E)=Y+G_0^3 E+\frac{1}{2y_{\mathrm{h}}^2}(Y-r_{\mathrm{h}}^{-2}J)Y-\frac{1}{2y_{\mathrm{h}}^2r_{\mathrm{h}}^2}(Y-2r_{\mathrm{h}}^{-1}J)J
\end{equation}
Consider now the domain (see Figure \ref{fig:DominisOuter2} and Remark \ref{rem:dominios})
\begin{equation}\label{eq:dominios1}
D^{\mathrm{u}}_\kappa=\{ u\in \mathbb{C}\colon |\operatorname{Im} u|\leq 1/3- \kappa G_{0}^{-3}-\tan\beta_1 \operatorname{Re}u,\ |\operatorname{Im} u|\geq 1/6+\kappa G_{0}^{-3}-\tan\beta_2 \operatorname{Re}u \},
\end{equation}
where $\beta_1,\beta_2\in (0,\pi/2)$, $\beta_1<\beta_2$ and $\kappa>0$ is a given constant. It is clear that for $G_{0}$ large enough $D^{\mathrm{u}}_\kappa$ is non-empty. The role of the parameter $\kappa$ is to  shrink the domain $D^{\mathrm{u}}_\kappa$ when, in Sections \ref{sec:Extensionflow} and  \ref{sec:differencegenfunctionsmanifolds},  we introduce close to identity changes of variables and make use of Cauchy estimates.

 In this section we prove the existence of positive constants $\kappa, \rho$ and $\sigma$  such that for any pair $G^{\mathrm{u}},G^{\mathrm{s}}\in \mathbb{G}_{\rho}(G_0)$, where $\mathbb{G}_{\rho}(G_0)$ was introduced in \eqref{eq:Gintervaltori}, there exists a unique real analytic solution to the Hamilton-Jacobi equation \eqref{eq:HJfinal}, which, in view of the expression \eqref{eq:FinaHamiltonian} for $H$, reads
\begin{equation}\label{eq:firstHJeq}
H(z, \boldsymbol\delta^{\mathrm{u}}+\mathrm{d}T^{\mathrm{u}})=(1+A^{\mathrm{u}}(z,T^{\mathrm{u}}))\partial_u T^{\mathrm{u}}+ B^{\mathrm{u}}(z,T^{\mathrm{u}})\partial_\beta T^{\mathrm{u}}+ G_{0}^3 \partial_t T^{\mathrm{u}}+(\delta^{\mathrm{u}})^2y_{\mathrm{h}}^{-2}r_{\mathrm{h}}^{-3}-V(z)=0
\end{equation}
with asymptotic condition $\lim_{\operatorname{Re}u\to -\infty} T^{\mathrm{u}}=0$ in the complex domain $(u,\beta,t)\in D^{\mathrm{u}}_\kappa\times\mathbb{T}_\rho\times\mathbb{T}_\sigma$ and where
\begin{equation}\label{eq:definitionAusBus}
A^{\mathrm{u}}= \frac{1}{2y_{\mathrm{h}}^{2}}  \left(\partial_u T^{\mathrm{u}}- r_{\mathrm{h}}^{-2}(2\delta^{\mathrm{u}}+\partial_\beta T^{\mathrm{u}})\right) \qquad\qquad    B^{\mathrm{u}}=- \frac{1}{2y_{\mathrm{h}}^{2}r_{\mathrm{h}}^{2}}\big(\partial_u T^{\mathrm{u}}-2r_{\mathrm{h}}^{-1}(2\delta^{\mathrm{u}}+ \partial_\beta T^{\mathrm{u}})\big),
\end{equation}
The existence of $T^{\mathrm{s}}$ solving the corresponding Hamilton-Jacobi equation on $D^{\mathrm{s}}\times\mathbb{T}_\rho\times\mathbb{T}_\sigma$ with $D^{\mathrm{s}}_\kappa=\{u\in\mathbb{C}\colon -u\in D^{\mathrm{u}}_\kappa\}$ is obtained by a completely analogous argument.

\begin{figure}\label{fig:DominisOuter2}
\centering
\includegraphics[scale=0.50]{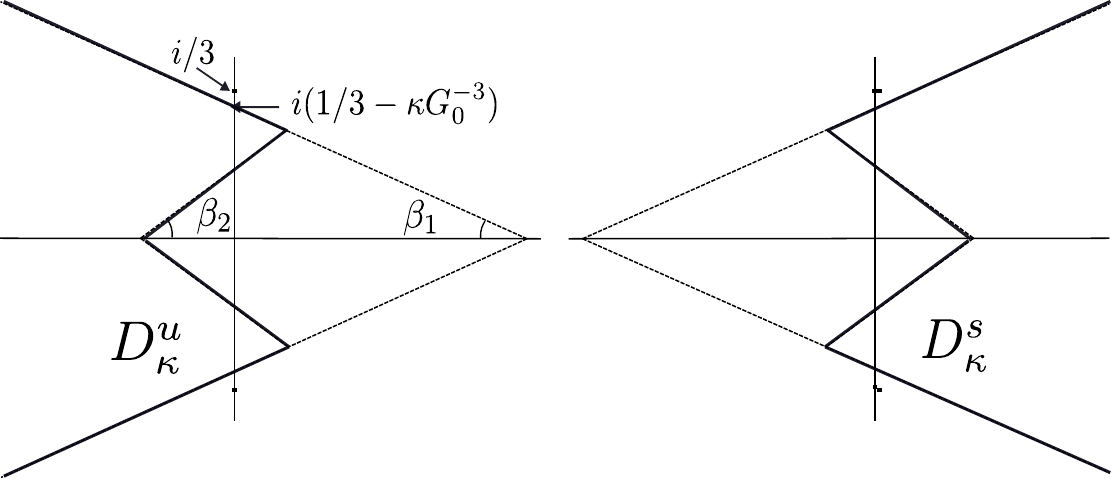}
\caption{The complex domains $D^{\mathrm{u}}_\kappa$  and $D^{\mathrm{s}}_\kappa$.}
\end{figure}

\begin{rem}
The use of different widths for the strips in the angles $\beta$ and $t$ is only a technical issue. The solution to the Hamilton-Jacobi equation \eqref{eq:firstHJeq} will be obtained by means of a Newton method in which the size of the strip $\mathbb{T}_\rho$ for the angle $\beta$ is reduced at each iteration while  the size of the strip $\mathbb{T}_\sigma$ for the $t$ variable can be kept constant. For this reason, in all the forthcoming notation, we omit the dependence on $\sigma$ and only emphasize the dependence on $\rho$.

The width of the strip of analyticity in the angle $\beta$ is taken to be the same than the width of the complex neighborhood for the parameter $G_{0}$. This is an arbitrary choice to avoid introducing more notation.
\end{rem}\vspace{0.3cm}

Let $\eta,\nu$ be positive real constants. We now introduce the family of Banach spaces of sequences of  analytic functions in which we will look for solutions to \eqref{eq:firstHJeq}
\begin{equation}
\mathcal{Z}_{\eta,\nu,\rho}=\left\{ h=\{h^{[l]}\}_{l\in\mathbb{Z}}\colon\   h^{[l]}: D^{\mathrm{u}}_\kappa \times \mathbb{T}_\rho\to \mathbb{C}\  \text{is analytic for all } l\in \mathbb{Z}\ \text{and}\ \lVert h\rVert_{\eta,\nu,\rho}<\infty\right\}
\end{equation}
where $\lVert \cdot \rVert_{\eta,\nu,\rho}$ is the Fourier sup norm
\[
\lVert h \rVert_{\eta,\nu,\rho}=\sum_{l\in\mathbb{Z}} \lVert h^{[l]} \rVert_{\eta,\nu,\rho,l} e^{|l| \sigma}
\]
defined by 
\[
\begin{split}
 \lVert h \rVert_{\eta,\nu,\rho,l}=&\sup_{(u,\beta)\in (D^{\mathrm{u}}_\kappa\cap\{\operatorname{Re}(u)\leq -1\})\times \mathbb{T}_\rho} \left| u^\eta  h^{[l]} (u,\beta)\right|\\
&+\sup_{(u,\beta)\in (D^{\mathrm{u}}_\kappa\cap\{\operatorname{Re}(u)\geq -2\})\times \mathbb{T}_\rho} \left| (u-i/3)^{\nu+l/2} (u+i/3)^{\nu-l/2} h^{[l]} (u,\beta)\right|.
\end{split}
\]
It will also be convenient for us to introduce the Banach spaces
\begin{equation}\label{eq:unstableBanachspace}
\mathcal{X}_{\eta,\nu,\rho}=\left\{ h=\{h^{[l]}\}_{l\in\mathbb{Z}}\ \colon\  h^{[l]}: D^{\mathrm{u}}_\kappa \times \mathbb{T}_\rho\to \mathbb{C}\  \text{is analytic for all } l\in \mathbb{Z}\ \text{and}\  \llbracket h\rrbracket_{\eta,\nu,\rho}<\infty\right\}.
\end{equation}
where 
\begin{equation}\label{eq:bracketnorm}
\llbracket h \rrbracket_{\eta,\nu,\rho}=\lVert  h \rVert_{\eta,\nu,\rho}+\lVert  \partial_u h \rVert_{\eta+1,\nu+1,\rho}.
\end{equation}

\begin{rem}\label{rem:formalseries}
It is straightforward to check that the elements of $\mathcal{Z}_{\eta,\nu,\rho}$ and $\mathcal{X}_{\eta,\nu,\rho}$ can be identified with Fourier series
\[
h(u,\beta,t)=\sum_{l\in\mathbb{Z}} h^{[l]}(u,\beta) e^{ilt}
\]
which, for a given $u\in D^{\mathrm{u}}$, converge on the strip
\[
\mathbb{T}_\sigma (u)=\left\{ t\in \mathbb{C}/ 2\pi\mathbb{Z}\colon \left|\operatorname{Im}(t)-\frac{1}{2} \left(\ln( |u-i/3|)- \ln(|u+i/3|)\right)\right|\leq \sigma\right\}.
\]
That is, they yield well defined functions for $(u,\beta)\in D^{\mathrm{u}}_\kappa \times \mathbb{T}_\rho$ and $t\in \mathbb{T}_\sigma (u)$. Alternatively one can think of the elements of $\mathcal{X}_{\eta,\nu,\rho}$ as formal Fourier series on the strip $\mathbb{T}_\sigma$ (see \cite{MR3455155} and  \cite{https://doi.org/10.48550/arxiv.2207.14351}). 
 \end{rem}

\begin{rem}\label{rem:realanaliticitybanachspace}
In the case $G^{\mathrm{u}},G^{\mathrm{s}}\in\mathbb{R}$, one can replace analytic by real analytic in the definition of $\mathcal{Z}_{\eta,\nu,\rho}$ and $\mathcal{X}_{\eta,\nu,\rho}$.
\end{rem}\vspace{0.3cm}

In the following lemma we list some properties of the spaces $\mathcal{X}_{\eta,\nu,\rho}$ which will be useful. The proof is straightforward.

\begin{lem}\label{lem:techlemmaspaces}
The following statements hold:
\begin{itemize}
\item (Graded algebra property) For any $g\in \mathcal{X}_{\eta,\nu,\rho}$ and $f\in\mathcal{X}_{\eta',\nu',\rho}$, their product satisfies $gf\in\mathcal{X}_{\eta+\eta',\nu+\nu',\rho}$.
\item Let $h\in \mathcal{X}_{\eta,\nu,\rho}$. Then, for $\eta'<\eta$ and $\nu'<\nu$ we have $h\in\mathcal{X}_{\eta',\nu',\rho}$ and
\[
\lVert h\rVert_{\eta',\nu',\rho} \leq  G_{0}^{3(\nu-\nu')} \lVert h\rVert_{\eta,\nu,\rho}
\]
\item Let $h\in \mathcal{X}_{\eta,\nu,\rho}$. Then for any $0<\delta<\rho$ we have that $\partial_\beta h\in \mathcal{X}_{\eta,\nu,\rho-\delta}$ and
\[
\lVert \partial_\beta h\rVert_{\eta,\nu,\rho-\delta} \leq \delta ^{-1} \lVert h\rVert_{\eta,\nu,\rho}.
\]
\end{itemize}
\end{lem}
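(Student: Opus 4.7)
The plan is to treat each of the three items separately by direct computation at the level of Fourier coefficients, using only convolution in the index $l$, trivial weight comparisons, and the standard Cauchy estimate on complex strips. Since the $\llbracket \cdot \rrbracket$ norm is simply the sum of two $\lVert \cdot \rVert$ norms (the latter applied to $\partial_u h$ in higher weight), once each statement is established for the $\lVert \cdot \rVert$ norm, the corresponding statement for $\llbracket \cdot \rrbracket$ will follow by repeating the argument on $\partial_u h$.

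For the graded algebra property, I would first show $gf \in \mathcal{Z}_{\eta+\eta',\nu+\nu',\rho}$ with the submultiplicative bound. Writing the Fourier coefficients of the product as the convolution $(gf)^{[l]} = \sum_{k} g^{[k]} f^{[l-k]}$, the key observation is that the weight factorizes exactly under this convolution: on $\mathrm{Re}(u)\le -1$ one has $u^{\eta+\eta'} = u^{\eta}\cdot u^{\eta'}$, and on $\mathrm{Re}(u)\ge -2$
\[
(u-i/3)^{(\nu+\nu')+l/2}(u+i/3)^{(\nu+\nu')-l/2} = \bigl[(u-i/3)^{\nu+k/2}(u+i/3)^{\nu-k/2}\bigr]\bigl[(u-i/3)^{\nu'+(l-k)/2}(u+i/3)^{\nu'-(l-k)/2}\bigr]
\]
for every $k\in\mathbb{Z}$. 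Combined with the triangle-type inequality $e^{|l|\sigma}\le e^{|k|\sigma}e^{|l-k|\sigma}$, summing over $k$ and $l$ gives $\lVert gf\rVert_{\eta+\eta',\nu+\nu',\rho}\le \lVert g\rVert_{\eta,\nu,\rho}\lVert f\rVert_{\eta',\nu',\rho}$. For the $\llbracket\cdot\rrbracket$ norm I would then apply Leibniz, $\partial_u(gf)=(\partial_u g)f + g(\partial_u f)$, observing that each summand lies in $\mathcal{Z}_{\eta+\eta'+1,\nu+\nu'+1,\rho}$ by the same argument.

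For the weight-comparison inequality, on the region $\mathrm{Re}(u)\le -1$ one has $|u|\ge 1$, so the extra factor $|u|^{\eta'-\eta}$ is $\le 1$ since $\eta'<\eta$; no power of $|I_{\mathrm{m}}|$ appears from this region. On the region $\mathrm{Re}(u)\ge -2$ the ratio of weights equals $|(u-i/3)(u+i/3)|^{\nu'-\nu}$, and this must be controlled from above. Here the key geometric fact about $D^{\mathrm{u}}_\kappa$ is that its definition forces $|u\mp i/3|\ge c\kappa|I_{\mathrm{m}}|^{-3}$ for the singularity near which $u$ lies, while the other singularity is at distance bounded below by a fixed $O(1)$ constant (the two points $\pm i/3$ cannot be simultaneously close to $u$). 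Thus $|(u-i/3)(u+i/3)|\gtrsim \kappa|I_{\mathrm{m}}|^{-3}$ and, since $\nu'-\nu<0$, the bound $|I_{\mathrm{m}}|^{3(\nu-\nu')}$ follows after absorbing universal constants. Applying the same argument to $\partial_u h$ yields the inequality in the $\mathcal{X}$ norm.

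The Cauchy estimate is the most routine step. For each $l$, the function $\beta\mapsto h^{[l]}(u,\beta)$ is holomorphic on the strip $\mathbb{T}_\rho$; the standard one-variable Cauchy bound gives $|\partial_\beta h^{[l]}(u,\beta)|\le \delta^{-1}\sup_{\beta'\in\mathbb{T}_\rho}|h^{[l]}(u,\beta')|$ for every $\beta\in\mathbb{T}_{\rho-\delta}$. Since the weights in the definition of $\lVert \cdot\rVert_{\eta,\nu,\rho,l}$ depend only on $u$, multiplying through and taking the supremum in $u$ preserves the factor $\delta^{-1}$, and summing against $e^{|l|\sigma}$ gives the estimate for $\lVert \cdot\rVert$. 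The same estimate applied to $\partial_u h$, together with commutativity $\partial_\beta\partial_u = \partial_u\partial_\beta$, yields the corresponding bound for $\llbracket\cdot\rrbracket$. None of these three items presents a real obstacle; the only point deserving care is the distance estimate $|u\mp i/3|\gtrsim \kappa|I_{\mathrm{m}}|^{-3}$ in item 2, which is precisely what the parameter $\kappa$ in the definition of $D^{\mathrm{u}}_\kappa$ was introduced to guarantee.
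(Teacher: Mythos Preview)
Your proposal is correct and follows exactly the natural approach the paper has in mind; the paper simply states that the proof is straightforward and omits it. The only minor remark is that in item 2 the clean $\leq$ in the statement should really be read as $\lesssim$ (with a constant depending on $\kappa,\beta_1,\beta_2$), consistent with the paper's conventions elsewhere, and your observation about absorbing the $\kappa$-dependent factor is precisely this.
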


We also state the following  lemma, which will be useful to deal with compositions in the angular variable $\beta$. The proof can be found in \cite{MR3455155}.

\begin{lem}\label{lem:compositionangularvariabletechnical}
Let $h\in \mathcal{X}_{\eta,\nu,\rho}$ and let $g_i\in\mathcal{X}_{0,0,\rho'}$ with $\rho>\rho'$, $i=1,2,$ and 
\[
\lVert g_i\rVert_{0,0,\rho'}\leq \frac{\rho-\rho'}{2}.
\]
 Write $h\circ(\mathrm{Id}+g_i)(u,\beta,t)=h(u,\beta+g_i(u,\beta,t),t)$. Then, $h\circ(\mathrm{Id}+g_i)\in\mathcal{X}_{\eta,\nu,\rho'}$ with 
\[
\lVert h\circ (\mathrm{Id}+g_i) \rVert_{\eta,\nu,\rho'}\lesssim \lVert h \rVert_{\eta,\nu,\rho}.
\]
Moreover, for $f= h\circ (\mathrm{Id}+g_2)- h\circ (\mathrm{Id}+g_1)$ we have 
\[
\lVert f \rVert_{\eta,\nu,\rho'}\lesssim (\rho-\rho')^{-1} \lVert h \rVert_{\eta,\nu,\rho} \lVert g_2-g_1\rVert_{0,0,\rho'}.
\]
\end{lem}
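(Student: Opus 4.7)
The plan is to get both estimates from a single toolkit: the graded algebra property and the Cauchy-type $\beta$-derivative estimate of Lemma \ref{lem:techlemmaspaces}, coupled with a Taylor expansion of $h$ in its second slot. First I would check well-posedness: interpreting $g_i\in\mathcal{X}_{0,0,\rho'}$ as an analytic function on the complex domain of Remark \ref{rem:formalseries}, the Fourier-type norm dominates the sup norm, so $|g_i(u,\beta,t)|\leq \lVert g_i\rVert_{0,0,\rho'}\leq (\rho-\rho')/2$ pointwise on $D^{\mathrm{u}}_\kappa\times\mathbb{T}_{\rho'}\times\mathbb{T}_\sigma(u)$. This gives $\beta+g_i(u,\beta,t)\in\mathbb{T}_{(\rho+\rho')/2}\subset\mathbb{T}_\rho$, so $h\circ(\mathrm{Id}+g_i)$ is analytic in the relevant complex domain.

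For the first bound I would Taylor-expand $h$ in its second argument,
\[
h\bigl(u,\beta+g_i(u,\beta,t),t\bigr)=\sum_{k\geq 0}\frac{g_i(u,\beta,t)^k}{k!}\,(\partial_\beta^k h)(u,\beta,t).
\]
Applying Cauchy's integral formula on the circle $|\zeta-\beta|=\rho-\rho'$ (which amounts to iterating the third item of Lemma \ref{lem:techlemmaspaces}) yields
\[
\lVert\partial_\beta^k h\rVert_{\eta,\nu,\rho'}\leq \frac{k!}{(\rho-\rho')^k}\,\lVert h\rVert_{\eta,\nu,\rho},
\]
while the graded algebra property (together with $\lVert g_i^k\rVert_{0,0,\rho'}\leq \lVert g_i\rVert_{0,0,\rho'}^k$) produces
\[
\Bigl\lVert \tfrac{1}{k!}\,g_i^k\,\partial_\beta^k h\Bigr\rVert_{\eta,\nu,\rho'}\leq \Bigl(\tfrac{\lVert g_i\rVert_{0,0,\rho'}}{\rho-\rho'}\Bigr)^k\,\lVert h\rVert_{\eta,\nu,\rho}.
\]
Summing the resulting geometric series, whose ratio is at most $1/2$, gives $\lVert h\circ(\mathrm{Id}+g_i)\rVert_{\eta,\nu,\rho'}\leq 2\,\lVert h\rVert_{\eta,\nu,\rho}$.

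For the Lipschitz estimate I would set $g_s=(1-s)g_1+sg_2$, which by convexity still satisfies $\lVert g_s\rVert_{0,0,\rho'}\leq (\rho-\rho')/2$, and invoke the fundamental theorem of calculus
\[
f=\int_0^1 (g_2-g_1)\cdot(\partial_\beta h)\bigl(u,\beta+g_s(u,\beta,t),t\bigr)\,ds.
\]
Repeating the Taylor-expansion argument of the previous paragraph with $h$ replaced by $\partial_\beta h$, and using the Cauchy bound $\lVert\partial_\beta^{k+1}h\rVert_{\eta,\nu,\rho'}\leq (k+1)!(\rho-\rho')^{-k-1}\lVert h\rVert_{\eta,\nu,\rho}$, yields
\[
\sup_{s\in[0,1]}\lVert (\partial_\beta h)\circ(\mathrm{Id}+g_s)\rVert_{\eta,\nu,\rho'}\leq \frac{\lVert h\rVert_{\eta,\nu,\rho}}{\rho-\rho'}\sum_{k\geq 0}(k+1)\bigl(\tfrac12\bigr)^k=\frac{4\,\lVert h\rVert_{\eta,\nu,\rho}}{\rho-\rho'},
\]
after which one last invocation of the graded algebra property for the product with $g_2-g_1$ closes the estimate.

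The only genuine subtlety is to avoid invoking the first estimate of the lemma as a black box inside the Lipschitz step: doing so would require fitting $g_s$ into an intermediate strip strictly smaller than $\rho$, leaving too little room given the borderline hypothesis $\lVert g_i\rVert_{0,0,\rho'}\leq (\rho-\rho')/2$. Expanding $(\partial_\beta h)\circ(\mathrm{Id}+g_s)$ directly in the same Taylor series used for $h$ keeps the strip bookkeeping trivial and makes the smallness hypothesis do only the work it can actually do.
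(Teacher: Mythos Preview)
The paper does not give its own proof of this lemma; it simply cites \cite{MR3455155}. Your Taylor-expansion approach---expand $h$ in $\beta$, bound $\partial_\beta^k h$ by Cauchy on a circle of radius $\rho-\rho'$, multiply by $g_i^k$ via the graded algebra property, and sum the resulting geometric series---is exactly the standard technique for such composition estimates in analytic norms and is essentially what the cited reference does. The Lipschitz part via the interpolation $g_s=(1-s)g_1+sg_2$ and direct expansion of $(\partial_\beta h)\circ(\mathrm{Id}+g_s)$ is likewise standard and correctly avoids the strip-bookkeeping trap you flag.

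One small caveat: your pointwise step ``the Fourier-type norm dominates the sup norm, so $|g_i|\le\lVert g_i\rVert_{0,0,\rho'}$ on $D^{\mathrm{u}}_\kappa\times\mathbb{T}_{\rho'}\times\mathbb{T}_\sigma(u)$'' is morally right but deserves a line of justification, because the $l$-dependent weights $(u-i/3)^{l/2}(u+i/3)^{-l/2}$ in the norm are precisely tuned to the $u$-dependent strip $\mathbb{T}_\sigma(u)$; the cancellation between the growth allowed in $g_i^{[l]}$ near $u=\pm i/3$ and the size of $e^{ilt}$ on $\mathbb{T}_\sigma(u)$ is what makes this work. Alternatively, since the elements of $\mathcal{X}_{\eta,\nu,\rho}$ are formal Fourier series in $t$ (Remark~\ref{rem:formalseries}), one can simply \emph{define} the composition by the Taylor series and bypass the pointwise issue entirely---your norm estimates on the Taylor terms then do all the work. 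Either way the argument closes.
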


The choice of the functional space for solving \eqref{eq:firstHJeq} is motivated by the following result proved in  Appendix \ref{sec:Melnikov}.
\begin{lem}\label{lem:boundspotential}
Fix $\kappa>0$ and $\sigma>0$. Then, there exist $\rho_0>0$ such that, for $G_0$ large enough and $0\leq \zeta\leq  G_0^{-3}$ the perturbative potential $V(v,\beta,t;G_{0})$ defined in \eqref{eq:dfnperturbativepotential} satisfies $V\in \mathcal{X}_{2,3/2,\rho_0}$. Moreover
\[
\lVert V \rVert_{2,3/2,\rho_0}\lesssim G_{0}^{-4}
\]
and
\[
\lVert V-V_{\mathrm{circ}} \rVert_{2,3/2,\rho_0}\lesssim \zeta G_{0}^{-4}
\]
where $V_{\mathrm{circ}}$ was defined in \eqref{eq:potentialcircular}.
\end{lem}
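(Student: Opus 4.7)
The plan is to perform a multipole (Legendre polynomial) expansion of $V_{\mathrm{pol}}$ in powers of $1/r$, substitute $r = I_{\mathrm{m}}^2 r_{\mathrm{h}}(u)$, and estimate each resulting term in the Banach space norm. First I would write
\[
V_{\mathrm{pol}}(r,\alpha,t) = \sum_{n \geq 0} \frac{c_n(\mu)\,\varrho(t)^n}{r^{n+1}} P_n\bigl(\cos(\alpha - f(t))\bigr),
\]
where $P_n$ are the Legendre polynomials and $c_n(\mu) = (1-\mu)\mu^n + \mu(-(1-\mu))^n$. Crucially, $c_0 = 1$ and $c_1 = 0$ (the dipole vanishes because the center of mass is at the origin). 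Substituting $r = I_{\mathrm{m}}^2 r_{\mathrm{h}}(u)$ and $\alpha = \beta + \alpha_{\mathrm{h}}(u)$, the $n=0$ contribution to $I_{\mathrm{m}}^3 V_{\mathrm{pol}}$ is precisely $I_{\mathrm{m}}/r_{\mathrm{h}}(u)$, which cancels against the subtracted term in \eqref{eq:dfnperturbativepotential}. Thus
\[
V(u,\beta,t;I_{\mathrm{m}},\epsilon) = \sum_{n \geq 2} \frac{c_n(\mu)\,\varrho(t)^n}{I_{\mathrm{m}}^{2n-1}\, r_{\mathrm{h}}(u)^{n+1}}\,P_n\bigl(\cos(\beta + \alpha_{\mathrm{h}}(u) - f(t))\bigr),
\]
whose leading quadrupole term ($n=2$) is of size $\mu(1-\mu)|I_{\mathrm{m}}|^{-3}$ and whose remaining multipoles contribute at order $|I_{\mathrm{m}}|^{-5}$ or smaller.

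The core estimate reduces to controlling $r_{\mathrm{h}}(u)^{-(n+1)}$ in the $\lVert\cdot\rVert_{2,3/2,\rho_0,l}$ seminorms. By Lemma \ref{lem:uperturbedhomoclinic}, $r_{\mathrm{h}}(u)\sim u^{2/3}$ as $\mathrm{Re}\,u\to\pm\infty$, so $r_{\mathrm{h}}^{-3}\sim u^{-2}$ is exactly absorbed by the weight $u^2$ on $\{\mathrm{Re}\,u\leq -1\}$; near the branch points $u=\pm i/3$, $r_{\mathrm{h}}^{-3}\sim (u\mp i/3)^{-3/2}$, absorbed by the weight $(u-i/3)^{3/2}(u+i/3)^{3/2}$ in the $l=0$ seminorm. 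For the higher Fourier modes in $t$, the dependence enters through the real analytic functions $\varrho(t)$ and $f(t)-t$ which, by standard Kepler analyticity (and using $\epsilon\in(0,I_*^{-3})$ small), extend to a complex strip of width $\sigma>0$. The skewed weight $(u-i/3)^{\nu+l/2}(u+i/3)^{\nu-l/2}$ is precisely tailored to the singularity pattern of the product $\varrho(t)^n P_n(\cos(\beta+\alpha_{\mathrm{h}}(u)-f(t)))$ on the tilted strip $\mathbb{T}_\sigma(u)$ described in Remark \ref{rem:formalseries}; a direct Fourier expansion in $t$ yields coefficients $V^{[l]}(u,\beta)$ that decay like $e^{-|l|\sigma}$ while picking up exactly the skewed $(u-i/3)^{l/2}(u+i/3)^{-l/2}$ behavior, so the factor $e^{|l|\sigma}$ in the definition of $\lVert\cdot\rVert_{\eta,\nu,\rho}$ is absorbed and summability holds. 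Summing the geometric series in $n$ (after factoring out $|I_{\mathrm{m}}|^{-3}$) yields $\lVert V\rVert_{2,3/2,\rho_0}\lesssim |I_{\mathrm{m}}|^{-3}$.

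The bound on $V-V_{\mathrm{circ}}$ follows by the same term-by-term strategy: since $V_{\mathrm{circ}}$ corresponds to setting $\epsilon=0$, that is, $\varrho(t)\equiv 1$ and $f(t)\equiv t$, the differences $\varrho(t)^n-1$ and $\cos(\beta+\alpha_{\mathrm{h}}(u)-f(t))-\cos(\beta+\alpha_{\mathrm{h}}(u)-t)$ are both $\mathcal{O}(\epsilon)$ uniformly on the relevant strips (using that the true anomaly is real analytic in $\epsilon$ with $f|_{\epsilon=0}=t$ and $\varrho|_{\epsilon=0}=1$), which delivers the extra factor of $\epsilon$ in the estimate.

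The main obstacle is the interplay between the $u$-dependence of the $t$-strip $\mathbb{T}_\sigma(u)$ and the non-symmetric, $l$-dependent weights $(u-i/3)^{\nu+l/2}(u+i/3)^{\nu-l/2}$: proving that the Fourier slices $V^{[l]}(u,\beta)$ really satisfy these bespoke weighted bounds requires locating the complex singularities of $V$ jointly in $(u,t)$ (they sit at $u=\pm i/3$ combined with the Kepler singularities of $\varrho,f$) and verifying that the tilted strip $\mathbb{T}_\sigma(u)$ avoids them. Once the joint analyticity domain has been correctly identified, the rest of the argument is careful bookkeeping term by term in the multipole expansion.
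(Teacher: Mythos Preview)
Your multipole expansion has a genuine gap near the branch points $u=\pm i/3$. From Lemma~\ref{lem: propunperthomoclinic} one has $e^{i\alpha_{\mathrm{h}}}=(\tau-i)/(\tau+i)$, so on $D^{\mathrm{u}}_\kappa$ the factor $|e^{\pm i\alpha_{\mathrm{h}}(u)}|$ reaches size $|I_{\mathrm{m}}|^{3/2}$ at the tips, and hence the argument $x=\cos(\beta+\alpha_{\mathrm{h}}(u)-f(t))$ of your Legendre polynomials is of order $|I_{\mathrm{m}}|^{3/2}$ there. The generating series $\sum_n P_n(x)z^n$ has radius of convergence $e^{-|\mathrm{Im}\arccos x|}\sim |I_{\mathrm{m}}|^{-3/2}$, while $|z|=|\varrho|/|I_{\mathrm{m}}^2 r_{\mathrm{h}}|\sim |I_{\mathrm{m}}|^{-1/2}$ near the tips: the multipole series \emph{diverges} pointwise on the part of $D^{\mathrm{u}}_\kappa$ closest to $\pm i/3$. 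Consequently your sentence ``the core estimate reduces to controlling $r_{\mathrm{h}}(u)^{-(n+1)}$'' is not correct: the factor $e^{\pm ik\alpha_{\mathrm{h}}}$ hidden inside $P_n(\cos\theta)=\sum_{|k|\le n}p_{n,k}e^{ik\theta}$ is just as singular as $r_{\mathrm{h}}^{-n}$ at the tips, and without it the weighted bound for the $n$-th term is off by a factor $|I_{\mathrm{m}}|^{3(n-2)/2}$. One can in principle rescue the argument by computing each $V_n^{[l]}$ on the real axis, continuing analytically, and checking that $\sum_n\lVert V_n\rVert_{2,3/2,\rho_0}$ still converges (the ratio turns out to be $\sim e^{\sigma}|I_{\mathrm{m}}|^{-1/2}$), but this is substantially more work than your sketch suggests and is not what the paper does.

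The paper avoids the Legendre expansion altogether. It keeps $U$ in the factored form \eqref{eq:Potentialtauvars}, passes to the eccentric anomaly $\xi$ via Kepler's equation, and then \emph{shifts the $\xi$-contour by $\alpha_{\mathrm{h}}(\tau)$}. After this shift, \eqref{eq:anomaliesens} shows that the dangerous combinations $\varrho\,e^{\pm i(\beta+\alpha_{\mathrm{h}}-f)}$ are uniformly bounded on the whole domain (the large $e^{\pm i\alpha_{\mathrm{h}}}$ cancels against the prefactor in $\varrho e^{\mp if}$, and the only leftover is $\epsilon e^{\pm i\alpha_{\mathrm{h}}}\lesssim |I_{\mathrm{m}}|^{-3/2}$, which is where the hypothesis $\epsilon<I_*^{-3}$ is used). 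One can then Taylor-expand $(1+x)^{-1/2}$ with $|x|\lesssim |I_{\mathrm{m}}|^{-1/2}$ uniformly; the order-$0$ and order-$1$ terms cancel for the same center-of-mass reason you identified, leaving the bound $|V^{[l]}|\lesssim |I_{\mathrm{m}}|^{-3}|r_{\mathrm{h}}|^{-3}|e^{-il\alpha_{\mathrm{h}}}|e^{-|l|\tilde\sigma}$. The factor $e^{-il\alpha_{\mathrm{h}}(\tau)}$ produced by the contour shift is exactly what matches the skewed weight $(u-i/3)^{l/2}(u+i/3)^{-l/2}$. The estimate for $V-V_{\mathrm{circ}}$ is then a one-line application of Schwarz's lemma in $\epsilon$ rather than a term-by-term comparison.
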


We now state the main result in this section.

\begin{thm}\label{thm:stableparam}
Let $\kappa,\sigma>0$ and $\rho_0>0$ as in Lemma \ref{lem:boundspotential}. Then, there exist $\rho\in (0,\rho_0)$ such that, for $G_0$ large enough, $0\leq \zeta\leq G_0^{-3}$, and any pair $G^{\mathrm{u}},G^{\mathrm{s}}\in\mathbb{G}_
\rho(G_0)$,   there exists  $T^{\mathrm{u}}\in \mathcal{X}_{1/3,1/2,\rho}$ solution to the Hamilton-Jacobi equation \eqref{eq:firstHJeq} such that 
\[
\lVert T^{\mathrm{u}} \rVert_{1/3,1/2,\rho}\lesssim G_{0}^{-4} \qquad\qquad\text{and}\qquad\qquad \lVert T^{\mathrm{u}}- L^{\mathrm{u}}\rVert_{1/3,1,\rho}\lesssim G_{0}^{-8},
\]
where $L^{\mathrm{u}}$ is the unstable half Melnikov potential
\begin{equation}\label{eq:halfstableMelnikov}
L^{\mathrm{u}} (u,\beta,t; G_{0},\zeta)= \int_{-\infty}^0 V(u+s,\beta, t+G_{0}^3s ; G_{0},\zeta) \mathrm{d}s.
\end{equation}
Moreover, 
\[
\lVert T^{\mathrm{u}}-T^{\mathrm{u}}_{\mathrm{circ}}-(L^{\mathrm{u}}-L^{\mathrm{u}}_{\mathrm{circ}})\rVert_{1/3,1,\rho} \lesssim \zeta G_{0}^{-8},
\]
where $T^{\mathrm{u}}_{\mathrm{circ}}$ is defined in \eqref{eq:generatingcircular} and $L^{\mathrm{u}}_{\mathrm{circ}}(u,t-\beta;G_{0})=L^{\mathrm{u}} (u,\beta,t; G_{0},0)$.
\end{thm}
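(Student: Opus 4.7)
\textbf{Proof plan for Theorem \ref{thm:stableparam}.}
The strategy is to reformulate \eqref{eq:firstHJeq} as a fixed--point equation for the constant--coefficient transport operator
\[
\mathcal{L}\, T = \partial_u T + I_{\mathrm{m}}^3 \partial_t T,
\]
whose formal right--inverse compatible with the asymptotic condition at $-\infty$ is, harmonic by harmonic,
\[
\mathcal{G}[f]^{[l]}(u,\beta) = \int_{-\infty}^{0} f^{[l]}(u+s,\beta)\, e^{i l I_{\mathrm{m}}^3 s}\, \mathrm{d}s.
\]
Observe that $L^{\mathrm{u}} = \mathcal{G}[V]$, so our first task is to prove that $\mathcal{G}:\mathcal{X}_{\eta+1,\nu+1,\rho}\to\mathcal{X}_{\eta,\nu,\rho}$ is bounded for $\eta>0$, $\nu\geq 0$. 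The decay $|u|^{-(\eta+1)}$ at real infinity and the structure of $D^{\mathrm{u}}_\kappa$ (which ensures characteristics $s\mapsto u+s$ stay inside $D^{\mathrm{u}}_\kappa$ for $s\leq 0$) give convergence of the $l=0$ piece; for $l\neq 0$ one deforms the contour to a path on which the oscillatory phase becomes a decaying exponential, using that the complex strip of analyticity in $t$ tilts with $u$ exactly as dictated by the factor $(u-i/3)^{\nu+l/2}(u+i/3)^{\nu-l/2}$ appearing in the norm (see Remark \ref{rem:formalseries}). Combined with Lemma \ref{lem:boundspotential}, this yields $\lVert L^{\mathrm{u}}\rVert_{1,1/2,\rho_0}\lesssim |I_{\mathrm{m}}|^{-3}$ and $\lVert L^{\mathrm{u}}-L^{\mathrm{u}}_{\mathrm{circ}}\rVert_{1,1/2,\rho_0}\lesssim \epsilon|I_{\mathrm{m}}|^{-3}$.

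The full equation rewrites as the fixed point
\[
T = \mathcal{G}\!\left[V - A^{\mathrm{u}}(T)\,\partial_u T - B^{\mathrm{u}}(T)\,\partial_\beta T\right],
\]
where $A^{\mathrm{u}},B^{\mathrm{u}}$ are quadratic in $\nabla T$ with a prefactor of order $|I_{\mathrm{m}}|^{-1}$, and the $r_{\mathrm{h}},y_{\mathrm{h}}$ coefficients in \eqref{eq:definitionAusBus} are bounded on $D^{\mathrm{u}}_\kappa$ by the singularity bookkeeping built into the norm $\llbracket\cdot\rrbracket_{\eta,\nu,\rho}$. Starting with $T_0 = L^{\mathrm{u}}$, I set $T_{n+1}=T_n+\Delta_n$ where $\Delta_n$ solves the linearisation of the residual equation, applying $\mathcal{G}$ harmonic by harmonic. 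Using the graded algebra and derivative estimates of Lemma \ref{lem:techlemmaspaces}, together with Lemma \ref{lem:compositionangularvariabletechnical} to handle composition in $\beta$, each Newton step halves the strip loss along a geometric sequence $\rho_n = \rho_0 - \sum_{k<n} 2^{-k}\delta$ and the quadratic convergence controls the residual by the square of the previous correction. Since $\mathcal{F}(T_0)$ is already of order $|I_{\mathrm{m}}|^{-7}$ (because $A^{\mathrm{u}}(L^{\mathrm{u}})\partial_u L^{\mathrm{u}}\sim |I_{\mathrm{m}}|^{-1}\cdot (|I_{\mathrm{m}}|^{-3})^{2}\cdot |I_{\mathrm{m}}|^{-3}$ up to the singular factors absorbed in the norm), the iteration converges to a limit $T^{\mathrm{u}}\in\mathcal{X}_{1/3,1/2,\rho}$ with $\rho=\rho_0/2$ and $\lVert T^{\mathrm{u}}\rVert_{1/3,1/2,\rho}\lesssim |I_{\mathrm{m}}|^{-3}$.

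Once $T^{\mathrm{u}}$ exists, subtracting $\mathcal{L} L^{\mathrm{u}}=V$ from the full equation gives
\[
\mathcal{L}(T^{\mathrm{u}} - L^{\mathrm{u}}) = -A^{\mathrm{u}}(T^{\mathrm{u}})\partial_u T^{\mathrm{u}} - B^{\mathrm{u}}(T^{\mathrm{u}})\partial_\beta T^{\mathrm{u}},
\]
whose right--hand side lies in $\mathcal{X}_{4/3,2,\rho}$ with norm $\lesssim |I_{\mathrm{m}}|^{-7}$ by the previous estimate. Applying $\mathcal{G}$ yields $\lVert T^{\mathrm{u}}-L^{\mathrm{u}}\rVert_{1/3,1,\rho}\lesssim |I_{\mathrm{m}}|^{-7}$. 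For the elliptic--circular comparison, the difference $D:=T^{\mathrm{u}}-T^{\mathrm{u}}_{\mathrm{circ}}-(L^{\mathrm{u}}-L^{\mathrm{u}}_{\mathrm{circ}})$ satisfies a linear equation whose inhomogeneity is the difference of the two nonlinear remainders, and one estimates $T^{\mathrm{u}}-T^{\mathrm{u}}_{\mathrm{circ}} = \mathcal{O}(\epsilon|I_{\mathrm{m}}|^{-3})$ by running the same Newton argument for the equation satisfied by $T^{\mathrm{u}}-T^{\mathrm{u}}_{\mathrm{circ}}$, whose forcing is $V-V_{\mathrm{circ}}$ bounded by $\epsilon|I_{\mathrm{m}}|^{-3}$ via Lemma \ref{lem:boundspotential}. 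Multilinear expansion of each nonlinear term then produces the gain $\epsilon|I_{\mathrm{m}}|^{-7}$.

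The main obstacle is the first step: although $\partial_u$ is trivially inverted by integration along characteristics, the coupling with $I_{\mathrm{m}}^3\partial_t$ produces the oscillatory kernel $e^{ilI_{\mathrm{m}}^3 s}$ that is not absolutely integrable along the real $s$--axis, so naively $\mathcal{G}$ is unbounded as noted in the introduction. Proving that $\mathcal{G}$ is well defined and bounded on the graded scale $\mathcal{X}_{\eta,\nu,\rho}$ requires exploiting the $u$--dependent tilt of the complex strip $\mathbb{T}_\sigma(u)$ and the precise asymptotic structure of the separatrix near the singularities $u = \pm i/3$ from Lemma \ref{lem:uperturbedhomoclinic}; this is the novelty of working directly with the generating function rather than with a vector parametrisation, and it is what makes the Newton scheme (with its quadratic convergence absorbing the strip losses) indispensable.
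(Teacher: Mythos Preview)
Your plan has the right architecture (rewrite as $\mathcal{L}T = V - \text{quadratic}$, invert via $\mathcal{G}$, iterate in a scale of spaces), and your treatment of the a posteriori estimates $T^{\mathrm{u}}-L^{\mathrm{u}}$ and $T^{\mathrm{u}}-T^{\mathrm{u}}_{\mathrm{circ}}-(L^{\mathrm{u}}-L^{\mathrm{u}}_{\mathrm{circ}})$ is essentially what the paper does. But you have misidentified the main obstacle, and this leaves a genuine gap in the iterative step.

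The boundedness of $\mathcal{G}:\mathcal{X}_{\eta,\nu,\rho}\to\mathcal{X}_{\eta-1,\nu-1,\rho}$ is \emph{not} the difficulty: this is Lemma~\ref{lem:techlemoperator}, quoted from prior work, and it is proved by a contour argument much as you sketch. The actual ``unbounded operator'' alluded to in the introduction is the linearisation $DH(T_n)$ itself, because of the term $B_n\,\partial_\beta$ in \eqref{eq:linearoperator}. When you write ``$\Delta_n$ solves the linearisation of the residual equation, applying $\mathcal{G}$ harmonic by harmonic'', you are tacitly replacing $DH(T_n)$ by $\mathcal{L}$. If you do that, the step is only quasi-Newton and the new residual contains $B_n\partial_\beta\Delta_n$; since $\partial_\beta$ costs a Cauchy factor $\delta_n^{-1}$ and your schedule $\delta_n=2^{-n}\delta$ makes this blow up like $2^n$, the scheme does not close. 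If instead you attempt to solve $DH(T_n)[\Delta_n]=-H(T_n)$ by a fixed point using $\mathcal{G}$ as approximate inverse, the same $\delta_n^{-1}$ appears in the contraction constant at every inner step.

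The paper's way out is the missing ingredient in your plan: at each Newton step one first constructs a close-to-identity change of variables $\Psi_{n+1}(u,\varphi,t)=(u,\varphi+\psi_{n+1},t)$ that \emph{eliminates} the $\partial_\beta$ term from the linearised operator (Lemma~\ref{lem:straighteninglinoperator}), reducing it to $(1+A_n\circ\Psi_{n+1})\partial_u+I_{\mathrm{m}}^3\partial_t$. The residual $A_n$-piece acts on $\partial_u$, which is already controlled inside the $\llbracket\cdot\rrbracket$ norm without strip loss, so the inner fixed point is a genuine contraction. This straightening is where Lemma~\ref{lem:compositionangularvariabletechnical} is actually used, and it forces the specific loss schedule $\delta_n=\varepsilon_n^{1/8}$, $\varepsilon_{n+1}=\varepsilon_n^{3/2}$ rather than a geometric one. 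Without this step (or an equivalent device), your Newton iteration does not converge.
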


The proof of Theorem \ref{thm:stableparam} will be accomplished by a Newton iterative scheme. That is, we obtain $T^{\mathrm{u}}$ as the limit of an iterative process $T^{\mathrm{u}}=\lim_{n\to \infty} T_n$ where $T_0=0$ and the $n$-th step is obtained as the solution to the linear equation
\begin{equation}\label{eq:linearizedHJ}
 H(T_{n-1})+DH(T_ {n-1})[T_{n}-T_{n-1}]=0
\end{equation}
where, by abuse of notation we have written (and will write in the forthcoming sections)
\[
H(T_n)=H(z, \boldsymbol\delta^{\mathrm{u}}+\mathrm{d} T_n(z))
\]
to refer to \eqref{eq:HJfinal}. One can check that the linearized operator $DH(T)[\cdot]$ reads
\begin{equation}\label{eq:linearoperator}
\begin{split}
DH(T)[\cdot]= &\left(1+ y_{\mathrm{h}}^{-2}\left(\partial_u T-r_{\mathrm{h}}^{-2}(\delta^{\mathrm{u}}+\partial_\beta T)\right)\right) \partial_u [\cdot] \\
&-r_{\mathrm{h}}^{-2}y_{\mathrm{h}}^{-2}\left(\partial_u T-2r_{\mathrm{h}}^{-1}(\delta^{\mathrm{u}}+\partial_\beta T)\right) \partial_\beta [\cdot] +G_{0}^{3} \partial_t [\cdot],
\end{split}
\end{equation}
where we recall that  
\[
\delta^{\mathrm{u}}(G^{\mathrm{u}}-G_0)/G_0.
\]
Since $H$ is quadratic in $\nabla T$  the second differential of $H$ is a bilinear operator and the error we accomplish at the step $n$ is 
\begin{equation}\label{eq:errorstepn}
H(T_n)= D^2 H \left[\Delta T_{n}, \Delta T_{n}\right]= y_{\mathrm{h}}^{-2}\left(\left( \partial_u \Delta T_{n} \right)^2 -2r_{\mathrm{h}}^{-2} \partial_u \Delta T_{n}\partial_\beta \Delta T_{n} +  2r_{\mathrm{h}}^{-1} \left(\partial_\beta \Delta T_{n}\right)^2  \right)
\end{equation}
where we have introduced the notation $\Delta T_{n}= T_n-T_{n-1}$.

In the proof of Theorem \ref{thm:stableparam} we treat $DH(T)[\cdot]$ as a small perturbation of the constant coefficients linear operator 
\begin{equation}\label{eq:linoperatorconst}
\mathcal{L}[\cdot]=(\partial_u+G_{0}^3\partial_t)[\cdot].
\end{equation}
The next technical lemma, proved in \cite{MR2726524},  shows the existence of a right inverse for $\mathcal{L}$ on the functional space $\mathcal{X}_{\eta,\nu,\rho}$ with $\eta>1$.

\begin{lem}\label{lem:techlemoperator}
Let $\mathcal{L}$ be the operator defined in \eqref{eq:linoperatorconst}. Then, for any $\eta>1$ there exists an operator $\mathcal{G}:\mathcal{X}_{\eta,\nu,\rho}\to \mathcal{X}_{\eta-1,\nu,\rho}$, given by 
\begin{equation}
\mathcal{G}(h) (u,\beta,t)=\int_{-\infty}^0 h(u+s,\beta,t+G_{0}^3 s) \mathrm{d}s,
\end{equation}
 such that $\mathcal{L}\circ\mathcal{G}=\mathrm{Id}:\mathcal{X}_{\eta,\nu,\rho}\to \mathcal{X}_{\eta,\nu,\rho}$. Moreover, for any $h\in \mathcal{X}_{\eta,\nu,\rho}$ with $\eta,\nu>1$ the following estimates hold
\[
\lVert \mathcal{G}(h) \rVert_{\eta-1,\nu-1,\rho}\lesssim \lVert h\rVert_{\eta,\nu,\rho}\qquad\qquad \text{and}\qquad\qquad \lVert \partial_u \mathcal{G}(h) \rVert_{\eta,\nu,\rho}\lesssim \lVert h\rVert_{\eta,\nu,\rho}.
\]
\end{lem}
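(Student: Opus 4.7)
The formula for $\mathcal{G}$ is the variation-of-constants right-inverse of the constant coefficient transport operator $\mathcal{L}=\partial_u+I_{\mathrm{m}}^3\partial_t$: along each real characteristic $s\mapsto(u+s,\beta,t+I_{\mathrm{m}}^3 s)$ the action of $\mathcal{L}$ becomes plain differentiation in $s$, so integrating from $-\infty$ to $0$ yields the unique primitive vanishing at $-\infty$. The geometry of the domain $D_\kappa^{\mathrm{u}}$ in \eqref{eq:dominios1} is tailored so that $(u+s,\beta)\in D_\kappa^{\mathrm{u}}\times\mathbb{T}_\rho$ for every $s\in(-\infty,0]$ whenever $(u,\beta)$ does, and the pointwise bound $|h(u+s,\beta,t+I_{\mathrm{m}}^3 s)|\lesssim|u+s|^{-\eta}$ inherited from $h\in\mathcal{Z}_{\eta,\nu,\rho}$, combined with $\eta>1$, secures absolute convergence. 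Commuting $\mathcal{L}$ with the integral and applying the fundamental theorem of calculus then gives
\begin{equation*}
\mathcal{L}\mathcal{G}(h)(u,\beta,t)=\int_{-\infty}^{0}\frac{\mathrm{d}}{\mathrm{d}s}h(u+s,\beta,t+I_{\mathrm{m}}^3 s)\,\mathrm{d}s=h(u,\beta,t),
\end{equation*}
the boundary term at $s=-\infty$ vanishing by the same decay.

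I would establish the first norm estimate harmonic by harmonic, using the Fourier representation of Remark \ref{rem:formalseries}: each mode satisfies
\begin{equation*}
\mathcal{G}(h)^{[l]}(u,\beta)=\int_{-\infty}^{0}h^{[l]}(u+s,\beta)\,e^{ilI_{\mathrm{m}}^3 s}\,\mathrm{d}s.
\end{equation*}
In the tail region $\mathrm{Re}(u)\le -1$ one majorizes $|h^{[l]}(u+s,\beta)|\le \lVert h^{[l]}\rVert_{\eta,\nu,\rho,l}|u+s|^{-\eta}$ and exploits $|u+s|\gtrsim|\mathrm{Re}(u+s)|$ along the characteristic to reduce the task to the elementary integral $\int_{-\infty}^{0}|\mathrm{Re}(u)+s|^{-\eta}\,\mathrm{d}s\lesssim|\mathrm{Re}(u)|^{-(\eta-1)}/(\eta-1)$, which encodes the loss $\eta\leadsto\eta-1$. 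In the singular region $\mathrm{Re}(u)\ge -2$ I would split the path of integration at $\mathrm{Re}(u+s)=-3/2$; the unbounded portion is absorbed by the bound just derived, while on the compact complementary segment the weight $|u+s\pm i/3|^{\nu\pm l/2}$ is integrable because $\nu>1$ and contributes precisely the loss $\nu\leadsto\nu-1$. Summing over $l$ with the weight $e^{|l|\sigma}$ yields $\lVert\mathcal{G}(h)\rVert_{\eta-1,\nu-1,\rho}\lesssim\lVert h\rVert_{\eta,\nu,\rho}$.

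The estimate on $\partial_u\mathcal{G}(h)$ follows by commuting $\partial_u$ with the integral defining $\mathcal{G}$: one has $\partial_u\mathcal{G}(h)=\mathcal{G}(\partial_u h)$, and the hypothesis $h\in\mathcal{X}_{\eta,\nu,\rho}$ places $\partial_u h$ in $\mathcal{Z}_{\eta+1,\nu+1,\rho}$ by the very definition \eqref{eq:bracketnorm}, so the first estimate applied to $\partial_u h$ with exponents $(\eta+1,\nu+1)$ delivers the required control at $(\eta,\nu)$. A more illuminating derivation, which avoids explicit reference to $\partial_u h$, starts from $\mathcal{L}\circ\mathcal{G}=\mathrm{Id}$ rewritten as $\partial_u\mathcal{G}(h)=h-I_{\mathrm{m}}^3\partial_t\mathcal{G}(h)$, and integrates $\mathcal{G}(h)^{[l]}$ by parts against the oscillatory factor $e^{ilI_{\mathrm{m}}^3 s}$ for $l\neq 0$; the resulting identity $ilI_{\mathrm{m}}^3\mathcal{G}(h)^{[l]}=h^{[l]}-\mathcal{G}(\partial_u h)^{[l]}$ closes the loop.

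The main technical challenge is the bookkeeping across the two regions defining $\lVert\cdot\rVert_{\eta,\nu,\rho,l}$: one must match the singular weight $|u\pm i/3|^{\nu\pm l/2}$ on the compact piece and the polynomial weight $|u|^\eta$ on the tail at their common boundary, with constants uniform in $l$ and in the parameter $|I_{\mathrm{m}}|\in\Lambda_{\rho,I}$. The opening angles $\beta_1<\beta_2$ and the buffer $\kappa|I_{\mathrm{m}}|^{-3}$ appearing in \eqref{eq:dominios1} are chosen precisely so that the real characteristics of $\mathcal{L}$ stay strictly inside $D_\kappa^{\mathrm{u}}$ at distance of order $\kappa|I_{\mathrm{m}}|^{-3}$ from the singularities $u=\pm i/3$, which is what makes the matching of the two bounds uniform.
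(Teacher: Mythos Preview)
The paper does not give its own proof of this lemma: it is quoted verbatim from the cited reference \cite{MR2726524} (see also the remark immediately following the lemma). Your sketch is exactly the standard argument used there and in \cite{MR3455155}: work Fourier mode by mode, use backward invariance of $D_\kappa^{\mathrm{u}}$ under the real characteristics $s\mapsto u+s$ of $\mathcal{L}$, split the integration into the tail region (controlled by the $|u|^\eta$ weight, giving the loss $\eta\to\eta-1$) and a bounded segment near $u=\pm i/3$ (controlled by the $|u\mp i/3|^{\nu\pm l/2}$ weight, giving the loss $\nu\to\nu-1$), and recover $\partial_u\mathcal{G}(h)$ from the identity $\partial_u\mathcal{G}(h)^{[l]}=h^{[l]}-ilI_{\mathrm m}^3\,\mathcal{G}(h)^{[l]}$.

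One small point is worth flagging. The second estimate is stated with $\lVert h\rVert_{\eta,\nu,\rho}$ (the $\mathcal{Z}$-norm) on the right, not $\llbracket h\rrbracket_{\eta,\nu,\rho}$. Your first route, $\partial_u\mathcal{G}(h)=\mathcal{G}(\partial_u h)$ followed by the first estimate at indices $(\eta+1,\nu+1)$, only gives a bound by $\lVert\partial_u h\rVert_{\eta+1,\nu+1,\rho}\le\llbracket h\rrbracket_{\eta,\nu,\rho}$, which is formally weaker. To get the estimate genuinely in terms of $\lVert h\rVert$ one needs, for each $l\neq 0$, the sharper oscillatory bound $|lI_{\mathrm m}^3|\,\lVert\mathcal{G}(h)^{[l]}\rVert_{\eta,\nu,\rho,l}\lesssim\lVert h^{[l]}\rVert_{\eta,\nu,\rho,l}$ with \emph{no} loss of index; this is what your ``more illuminating derivation'' alludes to, and in the cited reference it is obtained by a contour argument exploiting the factor $e^{ilI_{\mathrm m}^3 s}$ directly rather than by passing through $\partial_u h$. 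For the applications in the paper either version suffices, since the inputs to $\mathcal{G}$ always lie in the $\mathcal{X}$-spaces.
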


\subsubsection{First step of the Newton scheme}\label{sec:firststepiterative}
The iterative scheme proposed above defines the function $T_1$ as the solution to the linearized equation
\begin{equation}\label{eq:firststepnewton}
H(0)+DH(0)[T_1]=0.
\end{equation}
Instead, it will be convenient to modify the first step of the iterative process and define $T_1$ as the solution to 
\begin{equation}\label{eq:firststepnewton2}
\mathcal{L} T_1=-H(0),
\end{equation}
where $\mathcal{L}$ is the constant coefficients linear operator defined in \eqref {eq:linoperatorconst}. Using Lemma \ref{lem:techlemoperator} we can rewrite \eqref{eq:firststepnewton} as 
\begin{equation}\label{eq:T_1}
T_1=-\mathcal{G}(H(0)).
\end{equation}
The properties of the unperturbed homoclinic stated in Lemma  \ref{lem:uperturbedhomoclinic}, Lemma  \ref{lem:boundspotential} for the potential $V$ and the hypothesis $|\delta^{\mathrm{u}}_\beta|\lesssim \zeta G_{0}^{-5}$  imply that (here $\rho_0$ is the constant given in Lemma \ref{lem:boundspotential})
\begin{equation}\label{eq:defeps0}
\lVert H(0)\rVert_{4/3,3/2,\rho_0}= \lVert r_{\mathrm{h}}^{-3}y_{\mathrm{h}}^{-2} (\delta^{\mathrm{u}}_\beta)^2 -V \rVert_{4/3,3/2,\rho_0}\lesssim G_{0}^{-4}\equiv  \varepsilon_0.
\end{equation}
Therefore,  it follows from Lemma \ref{lem:techlemoperator} that $T_1\in \mathcal{X}_{1/3,1/2,\rho_0}$ with
\begin{equation}\label{eq:estimateS0}
\llbracket T_1 \rrbracket_{1/3,1/2,\rho_0} \lesssim \varepsilon_0.
\end{equation}
The error in this first approximation is given by
\[
H(T_1)= D^2 H[T_1,T_1]+ \left( DH(T_1)-\mathcal{L}\right)[T_1].
\] 
Using  Lemma \ref{lem:uperturbedhomoclinic}, Lemma \ref{lem:techlemmaspaces}, and the expressions \eqref{eq:linearoperator} and \eqref{eq:errorstepn}, we obtain that for $0<\delta_0<\rho_0$
\[
\begin{split}
\lVert  D^2 H[T_1,T_1] \rVert_{4/3,3/2,\rho_0-\delta_0}\lesssim G_{0}^{3/2} & \left(\lVert \partial_u T_1 \rVert^2_{4/3,3/2,\rho_0}+  \lVert \partial_u T_1 \rVert_{4/3,3/2,\rho_0} \lVert \partial_\beta T_1\rVert_{1/3,1/2,\rho_0-\delta_0} \right.\\
+&\left.  G_{0}^{-3/2} \lVert \partial_\beta T_1\rVert^2_{4/3,3/2,\rho_0-\delta_0} \right)
\end{split}
\]
and 
\[
\lVert  \left( DH(T_1)-\mathcal{L}\right)[T_1]\rVert_{4/,3/2,\rho_0-\delta_0} \lesssim|\delta^{\mathrm{u}}_\beta| \lVert \partial_\beta T_1 \rVert_{1/3,1/2,\rho_0-\delta_0}
\]
Then, it follows from the estimate \eqref{eq:estimateS0} for $T_1$, the hypothesis $|\delta^{\mathrm{u}}_\beta|\lesssim \zeta G_{0}^{-5}$ and the third and fourth items in Lemma \ref{lem:techlemmaspaces}, that
\[
\lVert H(T_1) \rVert_{4/3,3/2,\rho_0-\delta_0}\lesssim G_{0}^{3/2}\varepsilon_0^2\left(1+ \delta_0^{-1}+ G_{0}^{-3/2} \delta_0^{-2} \right)\lesssim G_{0}^{3/2} \varepsilon_0^2 \delta_0^{-2},
\]
where $\varepsilon_0$ was defined in \eqref{eq:defeps0}. We now take 
\[
\delta_0\equiv \varepsilon_0^{1/16}\qquad\text{and define}\qquad \rho_1= \rho_0-\delta_0.
\]
Therefore,
\[
\lVert H(T_1) \rVert_{4/3,3/2,\rho_1}\lesssim G_{0}^{3/2} \varepsilon_0^{1/4} \varepsilon_0^{3/2}\leq \varepsilon_0^{3/2}\equiv \varepsilon_1.
\]


\subsubsection{The iterative argument}\label{sec:iterativeargument}
Throughout this section, the symbol $a\lesssim b$ means that there exists $C>0$ which does not depend on the step $n$ and $G_{0}$ such that $a\leq Cb$.

Through the Newton iteration scheme, at the step $n+1$, we have to solve the linearized equation
\[
H(T_n)+DH(T_n)[\varDelta T_{n+1}]=0\qquad\qquad \varDelta T_{n+1}=T_{n+1}-T_n.
\]
For that, we have to invert the linear operator $DH(T_n)[\cdot]$ defined in \eqref{eq:linearoperator}. Since this operator has a non-zero coefficient multiplying $\partial_\beta$  we must find a change of variables 
\begin{equation}\label{eq:changeangleconstantcoeff}
(u,\beta,t) =\Psi_{n+1}(u,\varphi,t)\equiv(u,\varphi+\psi_{n+1}(u,\varphi,t),t),
\end{equation}
in which the linearized operator does not involve partial derivatives with respect to the new angle $\varphi$. Define 
\begin{equation}\label{eq:AnBn}
A_n=\frac{1}{2y_{\mathrm{h}}^{2}}   \left(\partial_u T_n-r_{\mathrm{h}}^{-2}(2\delta^{\mathrm{u}}+ \partial_\beta T_n)\right) \qquad\qquad    B_n= -\frac{1}{2y_{\mathrm{h}}^{2}r_{\mathrm{h}}^2}\big( \partial_u T_n-2r_{\mathrm{h}}^{-1} \big(2\delta^{\mathrm{u}}+\partial_\beta T_n\big)\big)
\end{equation}
Then, one can check that, if one considers a change of variables \eqref{eq:changeangleconstantcoeff} with $\psi_{n+1}$ solving (here $\mathcal{L}$ is the  operator \eqref{eq:linoperatorconst})
\[
 \mathcal{L}\psi_{n+1}=B_{n}\circ \Psi_{n+1}-(A_{n}\circ \Psi_{n+1} ) \partial_v \psi_{n+1},
\]
the following equations determining an unknown function $h$ are equivalent
\begin{equation}\label{eq:equivalencelineareqs}
DH(T_{n})[h]+H(T_{n})=0\quad\quad\Longleftrightarrow \quad\quad (1+A_{n}\circ\Psi_{n+1})\partial_u (h\circ\Psi_{n+1})+ G_{0}^3 \partial_t (h\circ\Psi_{n+1}) +H(T_{n})\circ\Psi_{n+1}=0.
\end{equation}
Indeed, if we denote by $\Delta\widetilde{T}_{n+1}$ the solution of the second equation in \eqref{eq:equivalencelineareqs}, then 
\[
\Delta T_{n+1}= \Delta\widetilde{T}_{n+1}\circ \Psi_{n+1}^{-1}
\]
solves the first equation in \eqref{eq:equivalencelineareqs}. As a consequence, we define 
\[
T_{n+1}=T_n+\Delta\widetilde{T}_{n+1}\circ \Psi_{n+1}^{-1}
\]
and
\[
\widetilde{T}_{n+1}=T_{n+1}\circ \Psi_{n+1}=\widetilde{T}_n\circ\Psi_n^{-1}\circ\Psi_{n+1}+\Delta \tilde{T}_{n+1}.
\]
In order to state the inductive hypothesis, we define now the constants 
\begin{equation}
\varepsilon_n= \varepsilon_{n-1}^{3/2}\qquad\qquad \delta_n= \varepsilon_n^{1/16}\qquad\qquad \rho_n= \rho_{n-1}-2\delta_{n-1}.
\end{equation}

Notice that it follows (taking $\varepsilon_0$ small enough) from this definition that $\rho_n\geq\rho_0/2\ \  \forall n\in\mathbb{N}\cup\{\infty\}$. Suppose that:
\begin{itemize}
\item (H1) There exists a family of functions $\{\widetilde{T}_i \}_{1\leq i\leq n}\subset \mathcal{X}_{1/3,1/2,\rho_{i-1}}$ and a family of close to identity maps $\Psi_1=\mathrm{Id}$ and $\left\{\Psi_i\right\}_{2\leq i\leq n}$ with  $\Psi_i=\mathrm{Id}+\psi_i$,  $\psi_i \in \mathcal{X}_{2/3,1/2,\rho_{i-1}}$, such that 
\[
\mathcal{L}\psi_{i+1}=B_{i}\circ \Psi_{i+1}-(A_{i}\circ \Psi_{i+1}) \partial_u \psi_{i+1},
\]
where now $A_i,B_i$ are written in terms of $\tilde{T}_i$ 
\[
\begin{split}
A_i= &\frac{1}{2y_{\mathrm{h}}^{2}} \left(\partial_u (\widetilde{T}_i\circ \Psi_i^{-1} )-r_{\mathrm{h}}^{-2} (2\delta^{\mathrm{u}}+ \partial_\beta (\widetilde{T}_i\circ \Psi_i^{-1})) \right)\\
B_i= -&\frac{1}{2r_{\mathrm{h}}^{2}y_{\mathrm{h}}^{2}}\big(\partial_u (\widetilde{T}_i\circ \Psi_i^{-1} )-2r_{\mathrm{h}}^{-1} (2\delta^{\mathrm{u}}+  \partial_\beta (\widetilde{T}_i\circ \Psi_i^{-1}) )\big).\\
\end{split}
\]
\item (H2) The functions $\psi_i$ satisfy (see Remark \ref{rem:compositioninductivemaps} below)
\[
\llbracket \psi_{i+1}-\psi_i \rrbracket_{2/3,1/2,\rho_{i}} \lesssim  \delta_{i-1}^{15}.
\]
\item (H3) The functions $\widetilde{T}_i$ satisfy
\[
 \llbracket \widetilde{T}_{i+1}-\widetilde{T}_{i}\circ \Psi_{i}^{-1}\circ \Psi_{i+1}\rrbracket_{1/3,1/2,\rho_{i}}\lesssim \varepsilon_{i},
\]
and 
\[
\lVert H(\widetilde{T}_{i+1}\circ \Psi_{i+1}^{-1})\circ \Psi_{i+1} \rVert_{4/3,3/2,\rho_{i+1}}\lesssim \varepsilon_{i+1}.
\]
\end{itemize}

\begin{rem}\label{rem:compositioninductivemaps}
Hypothesis (H2) can be rephrased as 
\[
\llbracket \Psi_{i}^{-1}\circ \Psi_{i+1}-\mathrm{Id} \rrbracket_{2/3,1/2,\rho_i}\lesssim \delta_{i-1}^{15}.
\]
\end{rem}\vspace{0.3cm}

\noindent We claim that, under these hypotheses, there exists a map $\Psi_{n+1}=\mathrm{Id}+\psi_{n+1}$, $\psi_{n+1}\in\mathcal{X}_{2/3,1/2,\rho_{n}}$ solving 
\[
\mathcal{L}\psi_{n+1}=B_{n}\circ \Psi_{n+1}-(A_n\circ \Psi_{n+1}) \partial_u \psi_{n+1}
\]
with 
\[
\llbracket \psi_{n+1}-\psi_n\rrbracket_{2/3,1/2,\rho_n}\lesssim \delta_{n-1}^{15}
\]
and $\widetilde{T}_{n+1}\in \mathcal{X}_{1/3,1/2,\rho_{n}}$ such that 
\[
 \llbracket \widetilde{T}_{n+1}-\widetilde{T}_n\circ \Psi_{n}^{-1}\circ \Psi_{n+1}\rrbracket_{1/3,1/2,\rho_{n}}\lesssim \varepsilon_n 
\]
for which
\[
\lVert H(\widetilde{T}_{n+1}\circ \Psi_{n+1}^{-1}) \circ \Psi_{n+1} \rVert_{4/3,3/2,\rho_{n+1}}\lesssim \varepsilon_{n+1}.
\]
The first step towards the proof of the inductive claim is to look for the change of variables  $\Psi_{n+1}$.

\begin{lem}\label{lem:straighteninglinoperator}
Assume that (H1), (H2) and (H3) hold for all $1\leq i\leq n$ . Then, there exists $\Psi_{n+1}=\mathrm{Id}+\psi_{n+1}$, $\psi_{n+1}\in \mathcal{X}_{2/3,1/2,\rho_{n}}$ such that 
\[
\mathcal{L}\psi_{n+1}=B_n\circ \Psi_{n+1}-(A_n\circ\Psi_{n+1} ) \partial_v \psi_{n+1}
\]
with $\llbracket \Psi_{n+1}-\Psi_n \rrbracket_{2/3,1/2,\rho_{n}}\lesssim  \delta^{15}_{n-1}$.
\end{lem}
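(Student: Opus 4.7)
The plan is to reformulate the equation for $\psi_{n+1}$ as a fixed point problem using the right inverse $\mathcal{G}$ from Lemma \ref{lem:techlemoperator}, and solve it by contraction in a small ball around $\psi_n$ in $\mathcal{X}_{2/3,1/2,\rho_n}$. Applying $\mathcal{G}$ to both sides of the equation transforms it into $\psi_{n+1} = F_n(\psi_{n+1})$, where
\[
F_n(\psi) := \mathcal{G}\bigl(B_n\circ(\mathrm{Id}+\psi) - (A_n\circ(\mathrm{Id}+\psi))\,\partial_u \psi\bigr).
\]
The inductive hypothesis (H1) at step $n-1$ states precisely that $\psi_n$ is a fixed point of the analogous operator $F_{n-1}$ built from $A_{n-1}, B_{n-1}$ in place of $A_n, B_n$.

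First I would derive a priori bounds on the coefficients. Telescoping the bound in (H3) together with the initial estimate \eqref{eq:estimateS0} yields $\lVert \widetilde{T}_i\circ\Psi_i^{-1}\rVert_{1/3,1/2,\rho_i+\delta_i}\lesssim \varepsilon_0$ for all $i\leq n$. Combined with the singularity structure of $r_{\mathrm{h}}, y_{\mathrm{h}}$ from Lemma \ref{lem:uperturbedhomoclinic}, the graded algebra property of Lemma \ref{lem:techlemmaspaces}, and the standing hypothesis $|\delta^{\mathrm{u}}_\beta|\lesssim \epsilon |I_{\mathrm{m}}|^{-4}$, these propagate to
\[
\lVert A_n\rVert_{2/3,1/2,\rho_n+\delta_n}\lesssim |I_{\mathrm{m}}|^{-4},\qquad \lVert B_n\rVert_{2,3/2,\rho_n+\delta_n}\lesssim |I_{\mathrm{m}}|^{-4}.
\]
Using Lemma \ref{lem:compositionangularvariabletechnical} to control the composition in $\beta$, Lemma \ref{lem:techlemmaspaces} for the product with $\partial_u\psi$, and Lemma \ref{lem:techlemoperator} to gain one unit in each weight upon applying $\mathcal{G}$, a routine Lipschitz estimate then shows that $F_n$ is $\tfrac{1}{2}$-Lipschitz on balls of sufficiently small radius in $\mathcal{X}_{2/3,1/2,\rho_n}$ centered at $\psi_n$, provided $I_*$ is large enough.

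The key remaining step is to bound $\llbracket F_n(\psi_n)-\psi_n\rrbracket_{2/3,1/2,\rho_n}$. Using $\psi_n = F_{n-1}(\psi_n)$ from (H1),
\[
F_n(\psi_n)-\psi_n = \mathcal{G}\bigl((B_n-B_{n-1})\circ\Psi_n - ((A_n-A_{n-1})\circ\Psi_n)\,\partial_u \psi_n\bigr).
\]
By (H3) the function $\widetilde{T}_n - \widetilde{T}_{n-1}\circ\Psi_{n-1}^{-1}\circ\Psi_n$ has $\llbracket\cdot\rrbracket_{1/3,1/2,\rho_{n-1}}$-norm bounded by $\varepsilon_{n-1}$. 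Plugging this into the explicit formulae \eqref{eq:AnBn} for $A_n,B_n$ and paying a strip loss of size $\delta_{n-1}$ through the third item of Lemma \ref{lem:techlemmaspaces} to extract $\partial_\beta$, we obtain
\[
\lVert A_n-A_{n-1}\rVert_{2/3,1/2,\rho_n}+\lVert B_n-B_{n-1}\rVert_{2,3/2,\rho_n}\lesssim |I_{\mathrm{m}}|^{-1}\,\varepsilon_{n-1}\,\delta_{n-1}^{-1}.
\]
Applying $\mathcal{G}$ then yields $\llbracket F_n(\psi_n)-\psi_n\rrbracket_{2/3,1/2,\rho_n}\lesssim |I_{\mathrm{m}}|^{-1}\,\varepsilon_{n-1}\,\delta_{n-1}^{-1}=|I_{\mathrm{m}}|^{-1}\,\delta_{n-1}^7$, since $\varepsilon_{n-1}=\delta_{n-1}^8$. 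The Banach fixed point theorem then produces the desired $\psi_{n+1}$ with the claimed estimate.

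The main technical hurdle is the bookkeeping of the three indices $(\eta,\nu,\rho)$ through compositions, products, and the inversion $\mathcal{G}$: one must ensure that the widths $\delta_{n-1}$ lost at each Cauchy-type estimate are absorbed by the super-geometric gain $\varepsilon_n=\varepsilon_{n-1}^{3/2}$ of the Newton scheme. The choice $\delta_n = \varepsilon_n^{1/8}$ (as fixed in Section \ref{sec:iterativeargument}) is precisely what balances these two effects.
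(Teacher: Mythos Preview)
Your approach is essentially the same as the paper's: both set up a contraction mapping via $\mathcal{G}$, identify the key ``initial error'' term as $\mathcal{G}\bigl((B_n-B_{n-1})\circ\Psi_n - ((A_n-A_{n-1})\circ\Psi_n)\,\partial_u\psi_n\bigr)$, bound it using (H3) with a strip loss $\delta_{n-1}$, and close with the Banach fixed point theorem. The only cosmetic difference is that the paper writes the fixed point equation for the increment $\Delta\psi_{n+1}=\psi_{n+1}-\psi_n$ rather than for $\psi_{n+1}$ itself; also note that the paper places $B_n-B_{n-1}$ in $\mathcal{Y}_{5/3,3/2}$ rather than $\mathcal{Y}_{2,3/2}$ (the $r_{\mathrm h}^{-1}\partial_\beta T_n$ term decays only like $u^{-5/3}$), but this does not affect your argument since $\eta=5/3>1$ is still in the domain of $\mathcal{G}$.
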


\begin{proof}
Throughout the proof we will use the first part of Lemma \ref{lem:compositionangularvariabletechnical}, which deals with compositions in the angular variable, without mentioning. We also define $\tilde{\rho}_{n}=\rho_{n-1}-\delta_{n-1}$ to avoid lengthy notation. Since $\mathcal{L}$ is linear, we can write 
\[
\begin{split}
\mathcal{L}(\psi_{n+1}-\psi_n)=&B_{n}\circ \Psi_{n+1}- B_{n-1}\circ\Psi_n-(A_n\circ\Psi_{n+1} ) \partial_u \psi_{n+1}+(A_{n-1}\circ\Psi_{n} ) \partial_u \psi_{n}\\
=&B_{n}\circ \Psi_{n+1}-B_{n}\circ \Psi_{n}+( B_{n}- B_{n-1})\circ\Psi_n-((A_n-A_{n-1})\circ\Psi_{n+1} ) \partial_u \psi_{n+1}\\
&-(A_{n-1}\circ\Psi_{n+1} -A_{n-1}\circ\Psi_{n}) \partial_u \psi_{n+1}-(A_{n-1}\circ\Psi_n) \partial_u (\psi_{n+1}-\psi_n)
\end{split}
\]
which, by the mean value theorem,  can be rewritten as the fixed point equation 
\[
 \Delta \psi_{n+1} =\mathcal{G} ( F(\Delta \psi_{n+1}))
\]
in a Banach space $\mathcal{X}_{\eta,\nu,\rho}$ for suitable $\eta,\nu,\rho$ to be chosen, where $\Delta \psi_{n+1} =\psi_{n+1}-\psi_n$, $\mathcal{G}$ is the operator introduced  in Lemma \ref{lem:techlemoperator} and 
\[
\begin{split}
F (\Delta \psi_{n+1})=& \Delta \psi_{n+1} \int_{0}^{1} \partial_{\beta} B_{n}\circ\left(\mathrm{Id}+s\Delta \psi_{n+1} \right)\mathrm{d}s+( B_{n}- B_{n-1})\circ\Psi_n\\
&-((A_n-A_{n-1})\circ (\Psi_{n}+\Delta \psi_{n+1}) ) \partial_u ( \psi_n+ \Delta\psi_{n+1}) \\ 
&-\Delta \psi_{n+1} \partial_u (\psi_{n}+ \Delta\psi_{n+1}) \int_0^1 \partial_\beta A_n \circ \left( \mathrm{Id}+s\Delta \psi_{n+1}\right) \mathrm{d}s -(A_{n-1}\circ\Psi_n) \partial_u \Delta \psi_{n+1}.
\end{split}
\]
We obtain $\Delta \psi_{n+1}$ by an standard application of the fixed point theorem for Banach spaces. To that end we first bound the term
\[
F(0)=(B_n-B_{n-1})\circ\Psi_n-(A_n-A_{n-1})\circ \Psi_n \partial_u \psi_{n}.
\]
We observe that 
\[
\begin{split}
\partial_u \left(\widetilde{T}_n\circ \Psi_n^{-1}- \widetilde{T}_{n-1}\circ \Psi_{n-1}^{-1}\right)=& \partial_u \left(\widetilde{T}_n- \widetilde{T}_{n-1}\circ \Psi_{n-1}^{-1}\circ\Psi_n\right)\circ \Psi_n^{-1}\\
&+ \partial_\varphi\left(\widetilde{T}_n- \widetilde{T}_{n-1}\circ \Psi_{n-1}^{-1}\circ\Psi_n\right)\circ \Psi_n^{-1}\ \  \partial_u \Psi_n^{-1}\\
\end{split}
\]
and
\[
\begin{split}
\partial_\beta \left(\widetilde{T}_n\circ \Psi_n^{-1}- \widetilde{T}_{n-1}\circ \Psi_{n-1}^{-1}\right)=& \partial_\varphi \left(\widetilde{T}_n- \widetilde{T}_{n-1}\circ \Psi_{n-1}^{-1}\circ\Psi_n\right)\circ \Psi_n^{-1} \ \ \partial_\beta \Psi_n^{-1}.\\
\end{split}
\]
Therefore, taking into account that (for the case $n=1$ notice that $\psi_1=0$)
\[
\llbracket \psi_n\rrbracket_{2/3,1/2,\rho_{n-1}}\leq \llbracket \psi_2 \rrbracket_{2/3,1/2,\rho_{n-1}}+\sum_{i=3}^{n} \llbracket \psi_i -\psi_{i-1}\rrbracket_{2/3,1/2,\rho_{n-1}}\lesssim \llbracket \psi_2\rrbracket_{2/3,1/2,\rho_{n-1}}\lesssim \delta_0^{15},
\]
 it is easy to show that the inductive hypothesis implies
\[
\begin{split}
\left \lVert y_{\mathrm{h}}^{-2}r_{\mathrm{h}}^{-2}  \partial_u \left(\widetilde{T}_n\circ \Psi_n^{-1}- \widetilde{T}_{n-1}\circ \Psi_{n-1}^{-1}\right) \circ\Psi_n \right\rVert_{5/3,3/2, \tilde{\rho}_n}\lesssim & \llbracket \widetilde{T}_{n}-\widetilde{T}_{n-1}\circ \Psi_{n-1}^{-1}\circ \Psi_{n}\rrbracket_{1/3,1/2,\rho_{n-1}}\lesssim \varepsilon_{n-1}\\
\end{split}
\]
and
\[
\begin{split}
\left\lVert y_{\mathrm{h}}^{-2} r_{\mathrm{h}}^{-3} \partial_\beta \left(\widetilde{T}_n\circ \Psi_n^{-1}- \widetilde{T}_{n-1}\circ \Psi_{n-1}^{-1}\right) \circ\Psi_n \right\rVert_{5/3,3/2, \tilde{\rho}_n}\lesssim &\delta_{n-1}^{-1} \llbracket  \widetilde{T}_{n}-\widetilde{T}_{n-1}\circ \Psi_{n-1}^{-1}\circ\Psi_n\rrbracket_{1/3,1/2,\rho_{n-1}}\\
\lesssim & \varepsilon_{n-1}\delta_{n-1}^{-1}.
\end{split}
\]
Thus,  from the definition of $B_n$ in \eqref{eq:AnBn},
\[
\lVert ( B_{n}- B_{n-1})\circ\Psi_n\rVert_{5/3,3/2,\tilde{\rho}_n} \lesssim \varepsilon_{n-1}\delta_{n-1}^{-1}= \delta_{n-1}^{15}.
\]
Taking into account the definition of $A_n$ in \eqref{eq:AnBn}, a similar computation shows that 
\[
\begin{split}
\lVert ( A_{n}-A_{n-1})\circ\Psi_n \partial_u \psi_n \rVert_{5/3,3/2,\tilde{\rho}_n} \lesssim& \lVert ( A_{n}- A_{n-1})\circ\Psi_n \rVert_{2/3,1/2,\tilde{\rho}_n} \lVert \partial_u \psi_n \rVert_{1,1,\rho_n} \\ 
 \lesssim &  G_{0}^{3/2} \lVert ( A_{n}- A_{n-1})\circ\Psi_n \rVert_{2/3,1/2,\tilde{\rho}_n}  \llbracket \psi_n \rrbracket_{2/3,1/2,\rho_n} \\ 
\lesssim & G_{0}^{3/2} \varepsilon_{n-1}\delta_{n-1}^{-1}\llbracket \psi_n \rrbracket_{2/3,1/2,\rho_n}\lesssim G_{0}^{-9/4} \delta_{n-1}^{15}.
\end{split}
\]
Therefore,
\[
\begin{split}
\lVert F(0) \rVert_{5/3,3/2, \tilde{\rho}_{n}}= &\lVert (B_{n}-B_{n-1})\circ\Psi_n -(A_n-A_{n-1})\circ \Psi_n \partial_u \psi_n\rVert_{5/3,3/2,\rho_n}\\
\lesssim & \varepsilon_{n-1}\delta_{n-1}^{-1} =  \delta_{n-1}^{15},
\end{split}
\]
and it follows from Lemma \ref{lem:techlemoperator} that 
\[
\lVert \mathcal{G}(F(0))\rVert_{2/3, 1/2,\tilde{\rho}_n }\lesssim \delta_{n-1}^{15}.
\]
We notice that, since $A_n,B_n$ depend linearly on $T_n$,
\[
\begin{split}
 \lVert \partial_\beta A_{n} \rVert_{2/3,1/2,\tilde{\rho}_n-\delta_{n-1}}\leq & \lVert \partial_\beta A_1\rVert_{2/3,1/2,\tilde{\rho}_1-\delta_0}+ \sum_{i=2}^{n}  \lVert \partial_\beta  (A_{i}-A_{i-1})\rVert_{2/3,1/2,\tilde{\rho}_i-\delta_{i-1}}\\
\leq & \delta_0^{-1} \lVert \partial_\beta A_1\rVert_{2/3,1/2,\tilde{\rho}_1}+  \sum_{i=2}^{n} \delta_i^{-1} \lVert A_i-A_{i-1} \rVert_{2/3,1/2,\tilde{\rho}_i}  \\
\lesssim &\varepsilon_0 \delta_0^{-1}+ \sum_{i=1}^{n-1} \varepsilon_i\delta_i^{-1}  \lesssim \varepsilon_0 \delta_0^{-1}=G_{0}^{-15/4},
\end{split}
\]
and the same computation shows that 
\[
\lVert \partial_\beta B_{n} \rVert_{5/3,3/2,\tilde{\rho}_n-\delta_{n-1}} \lesssim \varepsilon_0 \delta_0^{-1}=G_{0}^{-15/4}.
\]
Take now any $\Delta\psi,\Delta\psi^*\in B(\delta_{n-1}^{15}) \subset \mathcal{X}_{2/3,1/2,\rho_{n}}$. From the fundamental theorem of calculus, it follows that 
\[
\begin{split}
F(\Delta \psi^*)-F(\Delta \psi)= & (\Delta\psi^*-\Delta\psi)\int_{0}^1 \partial_\beta B_n\circ\left(\mathrm{Id}+s(\Delta \psi^*-\Delta\psi) \right)\mathrm{d}s -A_n\circ(\Psi_n+\Delta\psi) \partial_u (\Delta\psi^*-\Delta\psi)\\
&-(\Delta\psi^*-\Delta\psi) \partial_u  (\psi_n+\Delta\psi^*) \int_{0}^1 \partial_\beta A_n\circ\left(\mathrm{Id}+s(\Delta \psi^*-\Delta\psi) \right).
\end{split}
\]
Using the previous estimates, Lemma \ref{lem:techlemmaspaces} and the second part of Lemma \ref{lem:compositionangularvariabletechnical}, we obtain that (recall that $\rho_n= \rho_{n-1}-2\delta_{n-1}=\tilde{\rho}_n-\delta_{n-1}$)
\[
\begin{split}
\lVert  (\Delta\psi^*-\Delta\psi)\int_{0}^1 \partial_\beta B_n\circ\left(\mathrm{Id}+s(\Delta \psi^*-\Delta\psi) \right)\mathrm{d}s\rVert_{5/3,3/2,\rho_n}\lesssim&G_{0}^{3/2} \lVert \partial_\beta B_n \rVert_{5/3,3/2,\rho_n} \lVert \Delta \psi-\Delta \psi^*\rVert_{2/3,1/2,\rho_n}\\
\lesssim &G_{0}^{-9/4}\lVert \Delta \psi-\Delta \psi^*\rVert_{2/3,1/2,\rho_n}.\\
\end{split}
\]
Similar computations show that 
\[
\lVert  F(\psi)-F(\psi^*)  \rVert_{5/3,3/2,\rho_{n}}\\
\lesssim  G_{0}^{-9/4}\lVert \psi-\psi^* \rVert_{2/3,1/2,\rho_{n}}.
\]
Finally, from Lemma \ref{lem:techlemoperator},
\[
\begin{split}
\lVert \mathcal{G}\left( F(\psi)-F(\psi^*) \right) \rVert_{2/3,1/2,\rho_{n}}\lesssim& \lVert  F(\psi)-F(\psi^*)  \rVert_{5/3,3/2,\rho_{n}}\\
\lesssim &  G_{0}^{-9/4}\lVert \psi-\psi^* \rVert_{2/3,1/2,\rho_{n}}.\\
\end{split}
\]
Then, the proof of the lemma follows from a direct application of the fixed point theorem in the ball of radius $C \delta^{15}_{n-1}$ (for some large enough $C$) centered at the origin of the Banach space $\mathcal{X}_{2/3,1/2,\rho_{n}}$.
\end{proof}

We now complete the proof of the inductive claim for $\tilde{T}_{n+1}$.
\begin{prop}
The equation 
\begin{equation}\label{eq:lineareqprooffinal}
(1+A_n\circ\Psi_{n+1})\partial_u (\Delta \widetilde{T}_{n+1})+ G_{0}^3 \partial_t (\Delta \widetilde{T}_{n+1}) +H(T_n)\circ\Psi_{n+1}=0
\end{equation}
admits a unique solution $\Delta \widetilde{T}_{n+1}\in \mathcal{X}_{1/3,1/2,\rho_{n}}$ such that 
\[
\lVert \Delta \widetilde{T}_{n+1} \rVert_{1/3,1/2,\rho_n}\lesssim \varepsilon_n.
\]
Moreover, the function
\[
\widetilde{T}_{n+1}= T_{n}\circ \Psi_n^{-1}\circ \Psi_{n+1} +\Delta \widetilde{T}_{n+1}
\]
satisfies
\[
\lVert H(\widetilde{T}_{n+1}\circ \Psi_{n+1}^{-1})\circ \Psi_{n+1} \rVert_{1/3,1/2,\rho_{n+1}}\lesssim \varepsilon_{n+1}.
\]
\end{prop}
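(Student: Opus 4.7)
The plan is to solve the linear equation \eqref{eq:lineareqprooffinal} as a small perturbation of the constant-coefficient operator $\mathcal{L}$ introduced in \eqref{eq:linoperatorconst}, then to estimate the new Hamilton--Jacobi error by exploiting the quadratic structure of $H$ in $\nabla T$ together with the equivalence of linearized equations in \eqref{eq:equivalencelineareqs}. The first step parallels Section \ref{sec:firststepiterative} and the construction of $\Psi_{n+1}$ in Lemma \ref{lem:straighteninglinoperator}; the second step is morally a Taylor expansion but requires careful bookkeeping of the strip widths in the angle $\beta$.

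For the first step, I would rewrite \eqref{eq:lineareqprooffinal} as
\[
\mathcal{L}(\Delta \widetilde{T}_{n+1})=-H(T_n)\circ\Psi_{n+1}-(A_n\circ\Psi_{n+1})\,\partial_u\Delta \widetilde{T}_{n+1},
\]
and seek $\Delta\widetilde{T}_{n+1}$ as a fixed point of the map $h\mapsto -\mathcal{G}(H(T_n)\circ\Psi_{n+1})-\mathcal{G}((A_n\circ\Psi_{n+1})\partial_u h)$ in a ball of $\mathcal{X}_{1/3,1/2,\rho_n}$. The source term is under control: hypothesis (H3) at level $n$ gives $\lVert H(T_n)\circ\Psi_n\rVert_{4/3,3/2,\rho_n}\lesssim \varepsilon_n$, and since by (H2) the map $\Psi_n^{-1}\circ\Psi_{n+1}$ is an $\mathcal{O}(|I_{\mathrm{m}}|^{-1}\delta_{n-1}^7)$ perturbation of the identity, Lemma \ref{lem:compositionangularvariabletechnical} transfers this bound to $H(T_n)\circ\Psi_{n+1}$; Lemma \ref{lem:techlemoperator} then produces the desired $\varepsilon_n$-estimate on $\mathcal{G}(H(T_n)\circ\Psi_{n+1})$ in the $\lVert\cdot\rVert_{1/3,1/2,\cdot}$ norm. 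The operator $h\mapsto \mathcal{G}((A_n\circ\Psi_{n+1})\partial_u h)$ is a strict contraction because telescoping (H3) forces $A_n\circ\Psi_{n+1}$ to be uniformly $\mathcal{O}(|I_{\mathrm{m}}|^{-1})$. The Banach fixed-point theorem then yields existence, uniqueness, and $\llbracket \Delta\widetilde{T}_{n+1}\rrbracket_{1/3,1/2,\rho_n}\lesssim \varepsilon_n$.

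For the second step, the decisive observation is that $\Psi_{n+1}$ was constructed, via the equivalence \eqref{eq:equivalencelineareqs}, precisely so that $\Delta T_{n+1}:=\Delta\widetilde{T}_{n+1}\circ\Psi_{n+1}^{-1}$ is an \emph{exact} solution of the Newton linearization $H(T_n)+DH(T_n)[\Delta T_{n+1}]=0$. Since $H$ is at most quadratic in $\nabla T$, Taylor's theorem together with \eqref{eq:errorstepn} produces $H(T_{n+1})=\tfrac{1}{2}D^{2}H[\Delta T_{n+1},\Delta T_{n+1}]$. Bounding the quadratic form using the graded algebra property of Lemma \ref{lem:techlemmaspaces}, the Cauchy estimate $\lVert \partial_\beta\Delta T_{n+1}\rVert_{1/3,1/2,\rho_n-\delta_n}\lesssim \delta_n^{-1}\varepsilon_n$, and the same arithmetic as in Section \ref{sec:firststepiterative} yields
\[
\lVert H(T_{n+1})\rVert_{4/3,3/2,\rho_{n+1}}\lesssim |I_{\mathrm{m}}|^{1/2}\varepsilon_n^{2}\delta_n^{-2}\lesssim \varepsilon_n^{3/2}=\varepsilon_{n+1},
\]
where the last inequality uses $\delta_n=\varepsilon_n^{1/8}$ and the smallness of $|I_{\mathrm{m}}|^{1/2}\varepsilon_n^{1/4}$ (valid since $\varepsilon_0=|I_{\mathrm{m}}|^{-3}$ and $\{\varepsilon_n\}$ is decreasing). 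A last application of Lemma \ref{lem:compositionangularvariabletechnical} to compose with $\Psi_{n+1}$ then gives the stated bound on $H(\widetilde{T}_{n+1}\circ\Psi_{n+1}^{-1})\circ\Psi_{n+1}$.

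The most delicate point will be the budgeting of analyticity loss in the $\beta$-direction: one must pay $\delta_n$ both for the Cauchy estimate on $\partial_\beta \Delta T_{n+1}$ and, implicitly, for the compositions with $\Psi_{n+1}$, while still beating the factor $\delta_n^{-2}$ by means of the super-convergence $\varepsilon_n\to \varepsilon_n^{3/2}$. The dyadic choice $\rho_{n+1}=\rho_n-2\delta_n$ with $\delta_n=\varepsilon_n^{1/8}$ closes this budget comfortably, exactly mirroring the initial step already treated in Section \ref{sec:firststepiterative}.
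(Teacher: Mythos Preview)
Your proposal is correct and follows essentially the same approach as the paper: you recast \eqref{eq:lineareqprooffinal} as the fixed-point equation $\Delta\widetilde{T}_{n+1}=-\mathcal{G}\big(H(T_n)\circ\Psi_{n+1}+(A_n\circ\Psi_{n+1})\partial_u\Delta\widetilde{T}_{n+1}\big)$, control the source term via (H3) transported through $\Psi_n^{-1}\circ\Psi_{n+1}$ by Lemma \ref{lem:compositionangularvariabletechnical}, and get the contraction from the smallness of $A_n\circ\Psi_{n+1}$ (the paper obtains the sharper bound $\lVert A_n\circ\Psi_{n+1}\rVert_{0,0,\rho_n}\lesssim |I_{\mathrm{m}}|^{-17/8}$ by telescoping the increments of $A_i$ as in the proof of Lemma \ref{lem:straighteninglinoperator}, but your $\mathcal{O}(|I_{\mathrm{m}}|^{-1})$ suffices). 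The error bound via the exact quadratic remainder $H(T_{n+1})=D^{2}H[\Delta T_{n+1},\Delta T_{n+1}]$ and the arithmetic $|I_{\mathrm{m}}|^{1/2}\varepsilon_n^{2}\delta_n^{-2}\lesssim\varepsilon_{n+1}$ are exactly what the paper does.
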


\begin{proof}
Again, throughout the proof we will use the first part of Lemma \ref{lem:compositionangularvariabletechnical}, which deals with compositions in the angular variable, without mentioning. We rewrite \eqref{eq:lineareqprooffinal} as the affine fixed point equation for $\Delta \tilde{T}_{n+1}$
\[
\Delta \widetilde{T}_{n+1}=- \mathcal{G}\left(H(\widetilde{T}_n\circ \Psi_n^{-1})\circ\Psi_{n+1}- (A_n\circ\Psi_{n+1}) \partial_u (\Delta \widetilde{T}_{n+1})\right)
\] 
where $\mathcal{G}$ is the operator  introduced  in Lemma \ref{lem:techlemoperator}. The existence of a fixed point $\Delta \widetilde{T}_{n+1}\in \mathcal{X}_{1/3,1/2,\rho_n}$ with 
\[
\llbracket \Delta \widetilde{T}_{n+1} \rrbracket_{1/3,1/2,\rho_n}\lesssim \varepsilon_n
\]
is easily completed using the properties of $\mathcal{G}$ in Lemma \ref{lem:techlemoperator} and  the estimates
\[
\lVert ( H(\widetilde{T}_n\circ \Psi_{n}^{-1}))\circ \Psi_{n+1} \rVert_{4/3,3/2,\rho_{n}}\lesssim \varepsilon_n\quad\quad \lVert A_n\circ\Psi_{n+1} \rVert_{0,0,\rho_n}\lesssim G_{0}^{3/2}\lVert A_n\circ\Psi_{n+1} \rVert_{2/3,1/2,\rho_n}\lesssim  G_{0}^{-9/4},
\]
which are obtained from the inductive hypothesis after writing
\[
\begin{split}
(H(\widetilde{T}_n\circ \Psi_{n}^{-1}))\circ \Psi_{n+1}=&(H(\widetilde{T}_n\circ \Psi_{n}^{-1}))\circ \Psi_{n}+\left((H(\widetilde{T}_n\circ \Psi_{n}^{-1}))\circ \Psi_{n+1}-(H(\widetilde{T}_n\circ \Psi_{n}^{-1}))\circ \Psi_{n}\right),
\end{split}
\]
and using the estimate for $\llbracket \psi_{n+1}-\psi_n\rrbracket_{1/3,1/2,\rho_{n}}$ given in Lemma \ref{lem:straighteninglinoperator}. In order to prove the estimate for the error, it follows from our construction that 
\[
H(\widetilde{T}_{n+1}\circ\Psi_{n+1}^{-1})= D^{2} H\left[ \Delta \widetilde{T}_{n+1}\circ \Psi_{n+1}^{-1},\Delta 
\widetilde{T}_{n+1}\circ \Psi_{n+1}^{-1} \right].
\]
The proof is completed in a straightforward manner from expression \eqref{eq:errorstepn}, the estimate for $\llbracket \Delta \tilde{T}_{n+1} \rrbracket_{1/3,1/2,\rho_n}$ and the estimate for $\llbracket \psi_{n+1}-\psi_n\rrbracket_{1/3,1/2,\rho_{n}}$ given in Lemma \ref{lem:straighteninglinoperator}.
\end{proof}\vspace{0.3cm}

We can now conclude the proof of Theorem \ref{thm:stableparam}.

\begin{proof}[Proof of Theorem \ref{thm:stableparam}]
Notice that the function $T_1$ obtained in \eqref{eq:T_1}  and the map $\Psi_1=\mathrm{Id}$ satisfy the inductive hypothesis assumed at the beginning of Section \ref{sec:iterativeargument}. Therefore, for all $n\in\mathbb{N}$, we can find maps $\Psi_n=\mathrm{Id}+\psi_n$ with $\psi_{n} \in \mathcal{X}_{1/3,1/2,\rho_{n-1}}$ satisfying 
\[
\llbracket \psi_{n+1}-\psi_n\rrbracket_{1/3,1/2,\rho_{n}}\lesssim \delta_{n-1}^{15}
\]
 and functions  $\widetilde{T}_n\in\mathcal{X}_{1/3,1/2,\rho_{n-1}}$ such that 
\[
\Vert \widetilde{T}_{n+1}-\widetilde{T}_n\circ \Psi_{n}^{-1}\circ \Psi_{n+1} \rVert_{1/3,1/2,\rho_n}\lesssim \varepsilon_n \qquad\qquad\left\lVert( H(\widetilde{T}_n \circ \Psi_n^{-1})) \circ \Psi_{n}\right\rVert_{4/3,3/2,\rho_n}\lesssim \varepsilon_{n+1}=\varepsilon_0^{(3/2)^{n+1}}.
\]
Then, $\Psi_n$ converges uniformly on $\mathcal{X}_{1/3,1/2,\rho_0/4}$  to an analytic change of coordinates
\begin{equation}\label{eq:finalstraightening}
\Psi_\infty =\mathrm{Id}+\psi_\infty\qquad\qquad \llbracket \psi_{\infty}\rrbracket_{1/3,1/2,\rho_{n}}\lesssim \delta_0^{15}=G_{0}^{-15/4}
\end{equation}
and  the sequence $\left\{T_n\right\}_{n\in\mathbb{N}}$ defined by 
\[
T_n=\widetilde{T}_{n}\circ \Psi_n^{-1}
\]
converges uniformly to an analytic function   $T\in\mathcal{X}_{1/3,1/2,\rho_0/4}$ such that
\[
\lVert T \rVert_{1/3,1/2,\rho_0/4}\lesssim \lVert T_1 \rVert_{1/3,1/2,\rho/4} +\sum_{n=1}^{\infty} \lVert T_{n+1}-T_n \rVert_{1/3,1/2,\rho/4}\lesssim \varepsilon_0 
\]
and
\[
\lVert H(T)\rVert_{4/3,3/2,\rho_0/4} =\lim_{n\to\infty}\lVert H(T_n)\rVert _{4/3,3/2,\rho_0/4}=0.
\]
This  proves the existence of a solution $T\in\mathcal{X}_{1/3,1/2,\rho_0/4}$ to the Hamilton-Jacobi equation \eqref{eq:firstHJeq}. Moreover, recalling the definition of the half Melnikov potential $L^{\mathrm{u}}$ in  \eqref{eq:halfstableMelnikov},  we have
\[
T^{\mathrm{u}}-L^{\mathrm{u}}= \mathcal{G} \left( \frac{1}{2 y_{\mathrm{h}}^2 } (\partial_u T^{\mathrm{u}}-r_{\mathrm{h}}^{-2} \partial_\beta T^{\mathrm{u}})^2+ \frac{1}{2r_{\mathrm{h}}^2 }((\delta^{\mathrm{u}}_\beta)^2+(\partial_\beta T^{\mathrm{u}})^2)\right)
\]
Therefore, using that $|\delta^{\mathrm{u}}_\beta|\lesssim \zeta G_{0}^{-5}$ and $\lVert T^{\mathrm{u}}\rVert_{1/3,1/2,\rho_0/4}\leq G_0^{-4}$, one easily obtains that 
\[
\lVert T^{\mathrm{u}}-L^{\mathrm{u}} \rVert_{1/3,1,\rho_0/8}\lesssim G_{0}^{-8}.
\]
We set $\rho=\rho_0/8$. Now we prove the estimate for the difference $T^{\mathrm{u}}-T^{\mathrm{u}}_{\mathrm{circ}}$.  The function $T_{\mathrm{circ}}^{\mathrm{u}}$ satisfies (compare \eqref{eq:firstHJeq})
\[
(1+A_{\mathrm{circ}}^{\mathrm{u}})\partial_u  T_{\mathrm{circ}}^{\mathrm{u}}+ B_{\mathrm{circ}}^{\mathrm{u}} \partial_\beta T_{\mathrm{circ}}^{\mathrm{u}}+ G_{0}^3\partial_t T_{\mathrm{circ}}^{\mathrm{u}}- V_{\mathrm{circ}}=0,
\]
with 
\begin{equation}\label{eq:defnABcircular}
A_{\mathrm{circ}}^{\mathrm{u}}=\frac{1}{2y_{\mathrm{h}}^2} \left(\partial_u T_{\mathrm{circ}}^{\mathrm{u}}-r_{\mathrm{h}}^{-2}\partial_\beta T_{\mathrm{circ}}^{\mathrm{u}} \right)\qquad\qquad B_{\mathrm{circ}}^{\mathrm{u}}=-\frac{1}{2y_{\mathrm{h}}^2 r_{\mathrm{h}}^2} \left(\partial_u T_{\mathrm{circ}}^{\mathrm{u}}-2r_{\mathrm{h}}^{-1}\partial_\beta T_{\mathrm{circ}}^{\mathrm{u}} \right).
\end{equation}
Using that (see Lemma \ref{lem:boundspotential}) $\lVert V-V_{\mathrm{circ}}\rVert_{2,3/2,\rho_0}\lesssim \zeta G_{0}^{-4}$  and, by hypothesis, $|\delta^{\mathrm{u}}_\beta|\lesssim \zeta  G_{0}^{-5}$, one easily obtains that 
\[
\lVert T^{\mathrm{u}}-T_{\mathrm{circ}}^{\mathrm{u}}\rVert_{1/3,1/2,\rho}\lesssim \zeta G_{0}^{-4}.
\]
This estimate implies that 
\[
\lVert A ^{\mathrm{u}}-A_{\mathrm{circ}}^{\mathrm{u}} \rVert_{2/3,1/2,\rho}\lesssim \zeta G_{0}^{-4}\qquad\qquad \lVert B^{\mathrm{u}}- B_{\mathrm{circ}}^{\mathrm{u}} \rVert_{2,3/2,\rho}\lesssim \zeta G_{0}^{-4},
\]
and we obtain that 
\[
\lVert T^{\mathrm{u}}-T^{\mathrm{u}}_{\mathrm{circ}}-(L^{\mathrm{u}}-L^{\mathrm{u}}_{\mathrm{circ}})\rVert_{1/3,1,\rho} \lesssim \zeta G_{0}^{-8},
\]
as was to be shown.
\end{proof}


\subsection{Extension of the parametrization by the flow}\label{sec:Extensionflow}

Theorem \ref{thm:stableparam} provides the existence of  a Lagrangian graph parametrization $\mathcal{W}^{\mathrm{u}}$ of the form \eqref{eq:stableparam} of the unstable manifold of the invariant torus $\mathcal{T}_{G^{\mathrm{u}}}$,   on the domain $(u,\beta,t)\in  D^{\mathrm{u}}_\kappa\times \mathbb{T}_\rho\times \mathbb{T}_\sigma$. As already discussed in Remark \ref{rem:dominios} (see also Section \ref{sec:differencegenfunctionsmanifolds} below), to study the difference between $W^{\mathrm{u}}_{G^{\mathrm{u}}}$ and $W^{\mathrm{s}}_{G^{\mathrm{s}}}$ we need to extend their parametrizations  to a common domain containing a subset of the real line. This is (at least in a direct manner) not possible using the parametrizations 
\eqref{eq:stableparam} since $y_{\mathrm{h}}(0)=0$.

\begin{figure}\label{fig:DominisFlux-Tilde}
\centering
\includegraphics[scale=0.60]{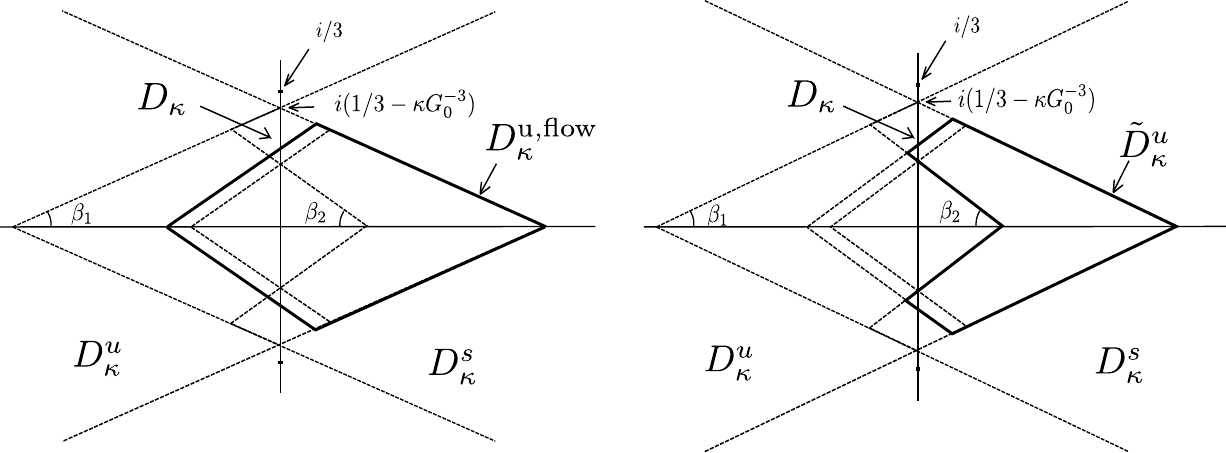}
\caption{The domains  $ D_{\kappa_1}^{\mathrm{u},\mathrm{flow}}$ and  $\tilde{D}_{\kappa_2}^{\mathrm{u}}$ defined in \eqref{eq:flowdomain} and \eqref{eq:tildedomain}.}
\end{figure}

We sketch the simple solution to this technical issue. The details can be found in \cite{MR3455155}: since in polar coordinates $(r,\alpha,t,y,G,E)$ the vector field associated to the Hamiltonian  \eqref{eq: Hamiltonian in polar coordinates} is not singular (except at $r=0$), we look for a different parametrization $\mathtt{W}^{\mathrm{u}}(\tilde{u},\tilde{\beta},t)$ of the unstable manifold in polar coordinates
\begin{equation}\label{eq:parametrizationflow}
\begin{split}
\mathtt{W}^{\mathrm{u}}(\tilde{u},\tilde{\beta},t)=\big\{ (r,\alpha,t,y,G,E)=&(G_{0}^{2}r_{\mathrm{h}}(\tilde{u})+R_{\mathrm{flow}}(\tilde{u},\tilde{\beta},t),\tilde{\beta}+\alpha_{\mathrm{h}}(\tilde{u})+\Omega_{\mathrm{flow}}(\tilde{u},\tilde{\beta},t),t, G_{0}^{-1}\\
& y_{\mathrm{h}}(\tilde{u})+ Y_{\mathrm{flow}}(\tilde{u},\tilde{\beta},t, G_{0}+J_{\mathrm{flow}}(\tilde{u},\tilde{\beta},t),E_{\mathrm{flow}}(\tilde{u},\tilde{\beta},t) ) \big\}
\end{split}
\end{equation}
such that 
\begin{equation}\label{eq:invarianceparametrizationflow}
\phi^{\mathrm{s}} (\mathtt{W}^{\mathrm{u}}(\tilde{u},\tilde{\beta},t))=\mathtt{W}^{\mathrm{u}}(\tilde{u}+s,\tilde{\beta},t+G_{0}^3s)
\end{equation}
where $\phi^{\mathrm{s}}_{\mathrm{pol}}$ is the time $s$ flow generated by the Hamiltonian \eqref{eq: Hamiltonian in polar coordinates}. Notice that this extension is a rather standard procedure since we will consider domains which are at distance order $\sim 1$ from the singularities $u=\pm i/3$.

Let $\mathcal{W}^{\mathrm{u}}$ be the Lagrangian graph parametrization associated to the generating function $T^{\mathrm{u}}$ obtained in Theorem  \ref{thm:stableparam}. The first step is to  perform a change of variables $h$ of the form 
\begin{equation}\label{eq:changeofvarsflow}
(u,\beta,t)=h(\tilde{u},\tilde{\beta},t)=(\tilde{u}+h_u(\tilde{u},\tilde{\beta},t),\tilde{\beta}+h_\beta(\tilde{u},\tilde{\beta},t),t)
\end{equation}
such that the parametrization  $\phi_{\mathrm{h}}\circ \mathcal{W}^{\mathrm{u}}\circ h$ is of the form \eqref{eq:parametrizationflow} and satisfies \eqref{eq:invarianceparametrizationflow}. This is the content of Lemma \ref {lem:firstlemmaflow} below. Second, we use the flow $\phi^{\mathrm{s}}_{\mathrm{pol}}$ to extend this parametrization  to a domain $(\tilde{u},\tilde{\beta},t)\in D^{u,\mathrm{flow}}_{\kappa_1}\times\mathbb{T}_{\rho_1}\times \mathbb{T}_{\sigma_1}$ (for suitable $\kappa_1>\kappa, \rho_1<\rho, \sigma_1<\sigma$ ) where 
\begin{equation}\label{eq:flowdomain}
D_{\kappa_1}^{\mathrm{u},\mathrm{flow}}= \{ \tilde{u}\in\mathbb{C}\colon |\operatorname{Im}  \tilde{u}|\leq -\tan \beta_1 \operatorname{Re}u+1/3-\kappa_1  G_{0}^{-3},\ |\operatorname{Im}\tilde{u}|\leq \tan\beta_2 \operatorname{Re}\tilde{u}+1/6+ \kappa_ 1G_{0}^{-3} \}.
\end{equation}
This domain contains $\tilde{u}=0$, is at distance $\sim \mathcal{O}(1)$ from $u=\pm i/3$, and satisfies $D_{\kappa_1}^{\mathrm{u},\mathrm{flow}}\cap D^{\mathrm{u}}_\kappa\neq \emptyset$ , $D_{\kappa_1}^{\mathrm{u},\mathrm{flow}}\cap D^{\mathrm{s}}\cap\mathbb{R}\neq \emptyset$ (see Figure \ref{fig:DominisFlux-Tilde}).

\begin{lem}\label{lem:firstlemmaflow}
Let $\lVert \cdot \rVert_{0,0,\rho}$ be as in \eqref{eq:unstableBanachspace} but referred to the domain $D_{\kappa_1}^{\mathrm{u},\mathrm{flow}}\cap D^{\mathrm{u}}_\kappa$. Then,  on the overlapping domain $(D_{\kappa_1}^{\mathrm{u},\mathrm{flow}}\cap D^{\mathrm{u}}_\kappa)\times\mathbb{T}_{\rho_1}\times \mathbb{T}_{\sigma_2}$, there exists an analytic change of coordinates $h$ of the form \eqref{eq:changeofvarsflow} such that 
\[
\lVert h_u\rVert _{0,0,\rho}\lesssim G_{0}^{-4}\qquad\qquad \lVert h_\beta\rVert_{0,0,\rho}\lesssim G_{0}^{-5/2},
\]
and for which the parametrization $
 \phi_{\mathrm{h}}\circ \mathcal{W}^{\mathrm{u}}\circ h: (D_{\kappa_1}^{\mathrm{u},\mathrm{flow}}\cap D^{\mathrm{u}}_\kappa)\times\mathbb{T}_{\rho_1}\times \mathbb{T}_{\sigma_2}\to \mathbb{C}^6 $ is of the form \eqref{eq:parametrizationflow} and satisfies \eqref{eq:invarianceparametrizationflow}.

\end{lem}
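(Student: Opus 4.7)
The plan is to derive the equation that $h$ must satisfy, solve it as a fixed point by inverting the transport operator, and then convert the weighted estimates into the sup-norm bounds claimed.

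Paragraph 1 (derivation of the equation). The change $\eta_{I_{\mathrm{m}}}$ intertwines (up to the time reparametrization by $I_{\mathrm{m}}^3$) the Hamiltonian flow of $H_{\mathrm{pol}}$ and the flow of $H$ in \eqref{eq: Hamiltonian in perturbative coordinates}. On $W^{\mathrm{u}}_{I^{\mathrm{u}}}$, parametrized by $\mathcal{W}^{\mathrm{u}}(q)=(q,\delta^{\mathrm{u}}+\nabla T^{\mathrm{u}}(q))$, the induced vector field in $(u,\beta,t)$ is $(1+A^{\mathrm{u}},B^{\mathrm{u}},I_{\mathrm{m}}^3)$ with $A^{\mathrm{u}},B^{\mathrm{u}}$ as in \eqref{eq:definitionAusBus}, evaluated at $\delta^{\mathrm{u}}+\nabla T^{\mathrm{u}}(q)$. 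Requiring \eqref{eq:invarianceparametrizationflow} amounts to asking that $h$ conjugate this vector field to the constant one $(1,0,I_{\mathrm{m}}^3)$. Writing $h=(\tilde u+h_u,\tilde\beta+h_\beta,t)$, this yields the system of transport equations
\begin{align*}
\mathcal{L} h_u &= A^{\mathrm{u}}\circ h, \\
\mathcal{L} h_\beta &= B^{\mathrm{u}}\circ h,
\end{align*}
where $\mathcal{L}=\partial_{\tilde u}+I_{\mathrm{m}}^3\partial_t$ is the operator of \eqref{eq:linoperatorconst}. Uniqueness of $h$ (ensuring that it agrees with the original graph parametrization near the invariant torus, so that the two parametrizations coincide on the overlap) is fixed by imposing the asymptotic condition $h_u,h_\beta\to 0$ as $\mathrm{Re}(\tilde u)\to -\infty$.

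Paragraph 2 (fixed point). Using the right inverse $\mathcal{G}$ of $\mathcal{L}$ from Lemma \ref{lem:techlemoperator}, the system is recast as the fixed-point equation
\[
(h_u,h_\beta)\;=\;\bigl(\mathcal{G}(A^{\mathrm{u}}\circ h),\,\mathcal{G}(B^{\mathrm{u}}\circ h)\bigr)
\]
in a suitable Banach space $\mathcal{X}_{\eta,\nu,\rho_1}$. From Theorem \ref{thm:stableparam} we have $T^{\mathrm{u}}\in\mathcal{X}_{1/3,1/2,\rho}$ with $\lVert T^{\mathrm{u}}\rVert\lesssim |I_{\mathrm{m}}|^{-3}$; combining this with the asymptotics of $r_{\mathrm{h}},y_{\mathrm{h}}$ (Lemma \ref{lem:uperturbedhomoclinic}), the hypothesis $|\delta^{\mathrm{u}}_\beta|\lesssim\epsilon|I_{\mathrm{m}}|^{-4}$, and the graded algebra property of Lemma \ref{lem:techlemmaspaces}, one checks that $A^{\mathrm{u}}$ and $B^{\mathrm{u}}$ lie in weighted spaces with norms $\lesssim |I_{\mathrm{m}}|^{-4}$ (the $|I_{\mathrm{m}}|^{-1}$ being explicit in the coefficient and the remaining $|I_{\mathrm{m}}|^{-3}$ coming from the derivatives of $T^{\mathrm{u}}$). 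Lemma \ref{lem:compositionangularvariabletechnical} controls the composition with $h$ in the angular variable; since the shift $h_u$ of the $\tilde u$ argument is small, analogous Taylor estimates handle the $u$-shift on the overlap. A standard contraction argument in a ball of radius $C|I_{\mathrm{m}}|^{-4}$ in the appropriate weighted space then yields the unique solution $(h_u,h_\beta)$.

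Paragraph 3 (sup-norm bounds and closing). The two quoted bounds are obtained by passing from the weighted norms to $\lVert\cdot\rVert_{0,0,\rho_1}$ on the overlap via the second item of Lemma \ref{lem:techlemmaspaces}, which costs a factor $|I_{\mathrm{m}}|^{3(\nu-0)}$. The component $h_u$ arises from $\mathcal{G}(A^{\mathrm{u}}\circ h)$ with weights that permit the direct bound $\lVert h_u\rVert_{0,0,\rho_1}\lesssim |I_{\mathrm{m}}|^{-4}$, while $h_\beta$ lives in a space whose $\nu$-index forces the loss $|I_{\mathrm{m}}|^{3/2}$ when passing to the sup norm, producing $\lVert h_\beta\rVert_{0,0,\rho_1}\lesssim |I_{\mathrm{m}}|^{-5/2}$; this asymmetry reflects the different behavior of $A^{\mathrm{u}}$ and $B^{\mathrm{u}}$ near the complex singularities $u=\pm i/3$ through the factors $y_{\mathrm{h}}^{-2}$ and $r_{\mathrm{h}}^{-2}$. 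The required form \eqref{eq:parametrizationflow} of $\eta_{I_{\mathrm{m}}}\circ\mathcal{W}^{\mathrm{u}}\circ h$ is automatic from the definition of $\eta_{I_{\mathrm{m}}}$ upon setting $R_{\mathrm{flow}}=I_{\mathrm{m}}^2(r_{\mathrm{h}}(\tilde u+h_u)-r_{\mathrm{h}}(\tilde u))$, $\Omega_{\mathrm{flow}}=h_\beta+\alpha_{\mathrm{h}}(\tilde u+h_u)-\alpha_{\mathrm{h}}(\tilde u)$, and reading off $Y_{\mathrm{flow}},J_{\mathrm{flow}},E_{\mathrm{flow}}$ from $\nabla T^{\mathrm{u}}\circ h$. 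The invariance \eqref{eq:invarianceparametrizationflow} is then just a restatement of the transport system solved above.

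The principal technical obstacle is twofold: first, handling the composition $A^{\mathrm{u}}\circ h,\,B^{\mathrm{u}}\circ h$ uniformly in the weighted spaces, since $A^{\mathrm{u}},B^{\mathrm{u}}$ carry the singular factors $y_{\mathrm{h}}^{-2},r_{\mathrm{h}}^{-2}$ and one must check the shifted arguments stay in the domain of analyticity; second, identifying the correct pair $(\eta,\nu)$ of weights for each of $h_u,h_\beta$ so that the conversion to sup norm produces the sharp polynomial dependence on $I_{\mathrm{m}}$ claimed in the lemma, rather than a coarser bound coming from a suboptimal choice of functional framework.
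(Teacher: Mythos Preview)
Your approach is correct and matches what the paper indicates (the paper itself does not prove this lemma in detail but refers to Theorem~5.16 in \cite{MR3455155}, where exactly this straightening-to-flow-box argument via a fixed point for the transport operator is carried out). One small correction: the induced vector field on the unstable manifold is $(1+2A^{\mathrm{u}},\,2B^{\mathrm{u}},\,I_{\mathrm{m}}^3)$ rather than $(1+A^{\mathrm{u}},\,B^{\mathrm{u}},\,I_{\mathrm{m}}^3)$ (see the paper's own computation $X^{\mathrm{u}}_{H,u}=1+2A^{\mathrm{u}}$, $X^{\mathrm{u}}_{H,\beta}=2B^{\mathrm{u}}$ in Section~\ref{sec: the difference between the wave maps}); this factor of $2$ is of course irrelevant for the estimates and the fixed-point argument.
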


The proof of this lemma follows the same lines as the proof of Theorem 5.16  in  \cite{MR3455155}. As commented above, we now extend the parametrization obtained in Lemma \ref{lem:firstlemmaflow} to the domain $D^{u,\mathrm{flow}}_ {\kappa_1}$. Notice that this parametrization will be well defined at $\tilde{u}=0$ since the vector field associated to the Hamiltonian \eqref{eq: Hamiltonian in polar coordinates} is not singular at $r=G_{0}^2r_{\mathrm{h}}(0)\neq 0$.

\begin{lem}\label{lem:secondlemmaflow}
The parametrization $
\phi_{\mathrm{h}}\circ \mathcal{W}^{\mathrm{u}}\circ h: (D_{\kappa_1}^{\mathrm{u},\mathrm{flow}}\cap D^{\mathrm{u}}_\kappa)\times\mathbb{T}_{\rho_1}\times \mathbb{T}_{\sigma_2}\to \mathbb{C}^6$ obtained in Lemma \ref{lem:firstlemmaflow} can be extended analytically to a parametrization $\mathtt{W}^{\mathrm{u}}: D^{u,\mathrm{flow}}_{\kappa_1}\times\mathbb{T}_{\rho_1}\times \mathbb{T}_{\sigma_1}\to\mathbb{C}^6$ of the form \eqref{eq:parametrizationflow} which satisfies \eqref{eq:invarianceparametrizationflow} and such that 
\[
G_{0}^2 (\ln(G_{0})^{-1}\lVert R_{\mathrm{flow}} \rVert_{0,0,\rho_1},\ G_{0}^{-1/2}\ \lVert \Omega_{\mathrm{flow}} \rVert_{0,0,\rho_1} ,\ G_{0}^{-1}\ \lVert Y_{\mathrm{flow}} \rVert_{0,0,\rho_1},\ G_{0}^{-3/2} \lVert J_{\mathrm{flow}} \rVert_{0,0,\rho_1} \lesssim G_{0}^{-3},
\] 
where the norm $\lVert \cdot \rVert_{0,0,\rho}$ is as in \eqref{eq:unstableBanachspace} but referred to the domain $D_{\kappa_1}^{\mathrm{u},\mathrm{flow}}$.
\end{lem}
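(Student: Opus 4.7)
The plan is to use the flow of the full Hamiltonian $H_{\mathrm{pol}}$ to propagate the parametrization obtained in Lemma \ref{lem:firstlemmaflow} from the (non-empty) overlap $D^{\mathrm{u},\mathrm{flow}}_{\kappa_1}\cap D^{\mathrm{u}}_\kappa$ to all of $D^{\mathrm{u},\mathrm{flow}}_{\kappa_1}$. Note that $D^{\mathrm{u},\mathrm{flow}}_{\kappa_1}$ is a bounded diamond-shaped region in $\mathbb{C}$ lying at distance of order $1$ from the singularities $u=\pm i/3$ of $r_{\mathrm{h}},\alpha_{\mathrm{h}},y_{\mathrm{h}}$, so in particular $\mathtt{W}^{\mathrm{u}}$ makes sense at $\tilde u=0$ (the parametrization \eqref{eq:parametrizationflow} is of non-Lagrangian graph type and the vector field of $H_{\mathrm{pol}}$ has no singularity at $r=I_{\mathrm{m}}^2 r_{\mathrm{h}}(0)\neq 0$). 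Concretely, for any $\tilde u\in D^{\mathrm{u},\mathrm{flow}}_{\kappa_1}$ we pick a complex path of length $O(1)$ connecting a point $\tilde u_0$ in the overlap to $\tilde u$ while remaining in $D^{\mathrm{u},\mathrm{flow}}_{\kappa_1}$, and define
\[
\mathtt{W}^{\mathrm{u}}(\tilde u,\tilde\beta,t)=\phi^{\tilde u-\tilde u_0}_{H_{\mathrm{pol}}}\!\bigl(\mathtt{W}^{\mathrm{u}}(\tilde u_0,\tilde\beta,t-I_{\mathrm{m}}^3(\tilde u-\tilde u_0))\bigr).
\]
Independence of the choice of $\tilde u_0$ and path follows from the invariance \eqref{eq:invarianceparametrizationflow} already satisfied on the overlap and the group property of the flow; analyticity in $(\tilde u,\tilde\beta,t)$ is inherited from $\phi_{H_{\mathrm{pol}}}$.

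The next step is to derive and estimate the system of non-autonomous ODEs satisfied by the new components $(R_{\mathrm{flow}},\Omega_{\mathrm{flow}},Y_{\mathrm{flow}},J_{\mathrm{flow}},E_{\mathrm{flow}})$ in the time $s=\tilde u-\tilde u_0$. Substituting the ansatz \eqref{eq:parametrizationflow} into the equation $\tfrac{\mathrm{d}}{\mathrm{d}s}\mathtt{W}^{\mathrm{u}}=I_{\mathrm{m}}^3\, X_{H_{\mathrm{pol}}}\circ\mathtt{W}^{\mathrm{u}}$ (the time rescaling $\dot t=I_{\mathrm{m}}^3$ is explicit in \eqref{eq:invarianceparametrizationflow}) and subtracting the corresponding identity for the unperturbed parabolic orbit yields a linear system of the schematic form
\[
\frac{\mathrm{d}}{\mathrm{d}s}\begin{pmatrix}R_{\mathrm{flow}}\\\Omega_{\mathrm{flow}}\\Y_{\mathrm{flow}}\\J_{\mathrm{flow}}\\E_{\mathrm{flow}}\end{pmatrix}
=A(\tilde u_0+s;I_{\mathrm{m}})\begin{pmatrix}R_{\mathrm{flow}}\\\Omega_{\mathrm{flow}}\\Y_{\mathrm{flow}}\\J_{\mathrm{flow}}\\E_{\mathrm{flow}}\end{pmatrix}
+\mathcal{N}(\tilde u_0+s,\tilde\beta,t-I_{\mathrm{m}}^3 s;I_{\mathrm{m}}),
\]
where $A$ is the linearization of $I_{\mathrm{m}}^3 X_{H_{\mathrm{pol}}}$ at the unperturbed parabolic orbit and $\mathcal{N}$ contains both the inhomogeneous term produced by the perturbative potential $V$ (of size $\lesssim |I_{\mathrm{m}}|^{-3}$ by Lemma \ref{lem:boundspotential}) and quadratic-and-higher terms in the components. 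Solving by variation of constants,
\[
(R_{\mathrm{flow}},\Omega_{\mathrm{flow}},Y_{\mathrm{flow}},J_{\mathrm{flow}},E_{\mathrm{flow}})(s)=\Phi(s)\,\mathrm{I.C.}+\int_0^s\Phi(s)\Phi(\tau)^{-1}\mathcal{N}(\tau,\cdots)\,\mathrm{d}\tau,
\]
where $\Phi$ is the fundamental matrix of $A$. The initial data at $\tilde u_0$ are controlled by Lemma \ref{lem:firstlemmaflow} after applying $\eta_{I_{\mathrm{m}}}$ to $\mathcal{W}^{\mathrm{u}}\circ h$ and reading off each coordinate.

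The core of the proof is then the explicit estimation of each component. The fundamental matrix $\Phi(s)$ can be built by taking derivatives with respect to initial conditions of the unperturbed $2$BP homoclinic from Lemma \ref{lem:uperturbedhomoclinic}, together with a tangent direction and its symplectic conjugate. The different powers of $|I_{\mathrm{m}}|$ in the statement reflect the anisotropic $I_{\mathrm{m}}$-scaling of these variational solutions along the parabolic orbit: $r\sim I_{\mathrm{m}}^2$, $y\sim I_{\mathrm{m}}^{-1}$, $G\sim I_{\mathrm{m}}$, so the conjugate-variable blocks carry matching factors, and the combination with the $|I_{\mathrm{m}}|^{-3}$ size of $V$ produces the stated bounds once we use that $s$ ranges over a bounded interval (uniformly in $I_{\mathrm{m}}$).

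The main technical difficulty — and the source of the anomalous $\ln|I_{\mathrm{m}}|$ factor in the $R_{\mathrm{flow}}$ bound — is that one of the variational directions (essentially the one conjugate to the energy) grows polynomially in $s$ along the parabolic orbit, and when integrated against inhomogeneities of size $|I_{\mathrm{m}}|^{-3}$ produces integrals of the form $\int \mathrm{d}\tau/(r_{\mathrm{h}}^2 y_{\mathrm{h}})$ near $\tau=0$ that yield the logarithmic factor. All other estimates are direct Gronwall bounds on the bounded path of length $O(1)$, so no exponential blow-up occurs; the delicate point is a careful bookkeeping of the $I_{\mathrm{m}}$-scalings of the fundamental matrix entries and the inhomogeneity, which produces exactly the weighted norm bounds stated in the lemma.
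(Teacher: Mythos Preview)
The paper does not give its own proof of this lemma: it simply states that ``the proof of this lemma follows the same lines as the proof of Proposition 5.20 in \cite{MR3455155}.'' Your proposal---propagate the parametrization from the overlap region by the (nonsingular) flow of $H_{\mathrm{pol}}$, write the corrections $(R_{\mathrm{flow}},\Omega_{\mathrm{flow}},Y_{\mathrm{flow}},J_{\mathrm{flow}},E_{\mathrm{flow}})$ as solutions of a forced linear system along the unperturbed parabolic orbit, and estimate them by variation of constants using the $I_{\mathrm{m}}$-scalings of the variational solutions together with the $|I_{\mathrm{m}}|^{-3}$ size of the perturbative potential---is precisely the strategy described in the text surrounding the lemma and is the standard argument used in the cited reference. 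So your approach coincides with the paper's.

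One small caution: your heuristic for the $\ln|I_{\mathrm{m}}|$ factor via an integral of the form $\int \mathrm{d}\tau/(r_{\mathrm{h}}^2 y_{\mathrm{h}})$ near $\tau=0$ is not quite right as stated, since that integral is actually divergent (not merely logarithmic) at the perihelion, and the whole point of passing to the flow parametrization \eqref{eq:parametrizationflow} is to avoid any $y_{\mathrm{h}}^{-1}$ factors. The logarithm arises instead from the structure of the fundamental matrix of the variational system along the parabolic orbit (one of the blocks associated with the $(r,y)$ directions has a secular component); this is worked out in the cited reference. The rest of your bookkeeping of the $I_{\mathrm{m}}$-weights is on the right track.
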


The proof of this lemma follows the same lines as the proof of Proposition 5.20  in  \cite{MR3455155}. Finally, we come back to the graph parametrization. To that end, for suitable $\kappa_2>\kappa_1, \rho_2<\rho_1, \sigma_2<\sigma_1$, we define the domain
\begin{equation}\label{eq:tildedomain}
\begin{split}
\tilde{D}_{\kappa_2}^{\mathrm{u}}=  \{u \in\mathbb{C}\colon& |\operatorname{Im} u|\leq - \tan \beta_1 \operatorname{Re}u +1/3-\kappa_2 G_{0}^{-3},\  |\operatorname{Im} u|\leq \tan \beta_2 \operatorname{Re}u +1/6-\kappa_2  G_{0}^{-3},\\
& |\operatorname{Im} u| \geq -\tan \beta_2 \operatorname{Re} u+1/6+ \kappa_2 G_{0}^{-3}\}
\end{split}
\end{equation}
which  is at distance $\sim\mathcal{O}(1)$ from $u=0$ and verifies $\tilde{D}_{\kappa_2}^{\mathrm{u}}\subset D_{\kappa_1}^{\mathrm{u},\mathrm{flow}}$ (see Figure \ref{fig:DominisFlux-Tilde}).

\begin{lem}
Let $\mathtt{W}^{\mathrm{u}}$ be the parametrization obtained in Lemma \ref{lem:secondlemmaflow}, which is of the form \eqref{eq:parametrizationflow}. Let $\lVert \cdot \rVert_{0,0,\rho}$ be as in \eqref{eq:unstableBanachspace} but referred to the domain $\tilde{D}^{\mathrm{u}}_{\kappa_2}$.  Then, there exists  an analytic change of coordinates $g=(\tilde{u}+g_u(\tilde{u},\tilde{\beta},t),\tilde{\beta}+g_\beta(\tilde{u},\tilde{\beta},t), t)$ such that 
\[
\lVert g_u\rVert _{0,0,\rho}\lesssim G_{0}^{-4}\qquad\qquad \lVert g_\beta\rVert_{0,0,\rho}\lesssim G_{0}^{-5/2},
\]
and such that $\phi_{\mathrm{h}}^{-1}\circ\mathtt{W}^{\mathrm{u}}\circ g$ constitutes the unique analytic extension,  to the domain $(u,\beta,t)\in \tilde{D}_{\kappa_2}^{\mathrm{u}}\times\mathbb{T}_{\rho_2}\times \mathbb{T}_\sigma$, of the Lagrangian graph parametrization $\mathcal{W}^{\mathrm{u}}$  associated to the function $T^{\mathrm{u}}$ obtained in Theorem \ref{thm:stableparam}.
\end{lem}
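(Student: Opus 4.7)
The plan is to invert the flow-adapted parametrization $\mathtt{W}^{\mathrm{u}}$ back into Lagrangian-graph form over $(u,\beta,t)$ by solving the inversion condition via a Banach fixed-point argument.

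Calling the input of $g$ again $(u,\beta,t)$, the requirement that the first two components of $\eta_{I_{\mathrm{m}}}^{-1}\circ \mathtt{W}^{\mathrm{u}}\circ g(u,\beta,t)$ equal $(u,\beta)$ unfolds into the coupled scalar equations
\begin{align*}
r_{\mathrm{h}}(u+g_u) + I_{\mathrm{m}}^{-2} R_{\mathrm{flow}}(u+g_u,\beta+g_\beta,t) &= r_{\mathrm{h}}(u),\\
g_\beta + \alpha_{\mathrm{h}}(u+g_u) - \alpha_{\mathrm{h}}(u) + \Omega_{\mathrm{flow}}(u+g_u,\beta+g_\beta,t) &= 0.
\end{align*}
By the design of $\tilde{D}_{\kappa_2}^{\mathrm{u}}$, the variable $u$ is uniformly bounded away both from $u=0$ (where $y_{\mathrm{h}}$ vanishes) and from $u=\pm i/3$ (where $r_{\mathrm{h}},y_{\mathrm{h}},\alpha_{\mathrm{h}}$ are singular); consequently $r_{\mathrm{h}}'=y_{\mathrm{h}}$ and $\alpha_{\mathrm{h}}'$ are bounded above and uniformly bounded away from zero on $\tilde{D}_{\kappa_2}^{\mathrm{u}}$. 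In particular, $r_{\mathrm{h}}$ admits a local analytic inverse there, and the system rearranges to the fixed-point equation $(g_u,g_\beta)=\mathcal{F}(g_u,g_\beta)$ in the product space $\mathcal{X}_{0,0,\rho_2}^{2}$ referred to $\tilde{D}_{\kappa_2}^{\mathrm{u}}$, with
\[
\mathcal{F}_u(g_u,g_\beta) = r_{\mathrm{h}}^{-1}\bigl(r_{\mathrm{h}}(u)- I_{\mathrm{m}}^{-2} R_{\mathrm{flow}}(u+g_u,\beta+g_\beta,t)\bigr) - u,
\]
and an analogous closed expression for $\mathcal{F}_\beta$ obtained by substituting $\mathcal{F}_u$ for $g_u$ in the second equation.

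Lemma \ref{lem:secondlemmaflow} delivers $\lVert R_{\mathrm{flow}}\rVert_{0,0,\rho_1}\lesssim |I_{\mathrm{m}}|^{-5}\ln|I_{\mathrm{m}}|$ and $\lVert \Omega_{\mathrm{flow}}\rVert_{0,0,\rho_1}\lesssim |I_{\mathrm{m}}|^{-5/2}$, so $\lVert \mathcal{F}_u(0,0)\rVert_{0,0,\rho_2} \lesssim |I_{\mathrm{m}}|^{-7}\ln|I_{\mathrm{m}}|$ and $\lVert \mathcal{F}_\beta(0,0)\rVert_{0,0,\rho_2} \lesssim |I_{\mathrm{m}}|^{-5/2}$. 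The shrinkage $\tilde{D}_{\kappa_2}^{\mathrm{u}}\subset D_{\kappa_1}^{\mathrm{u},\mathrm{flow}}$ by a distance $\sim(\kappa_2-\kappa_1)|I_{\mathrm{m}}|^{-3}$ in the $u$-direction costs a factor $|I_{\mathrm{m}}|^{3}$ per $\partial_u$-derivative through Cauchy's estimate, giving $\lVert \partial_u R_{\mathrm{flow}}\rVert_{0,0,\rho_2}\lesssim |I_{\mathrm{m}}|^{-2}\ln|I_{\mathrm{m}}|$ and $\lVert \partial_u \Omega_{\mathrm{flow}}\rVert_{0,0,\rho_2}\lesssim |I_{\mathrm{m}}|^{1/2}$, while $\partial_\beta$-losses are only $(\rho_1-\rho_2)^{-1}=\mathcal{O}(1)$. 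Endowing the product space with the weighted norm $\lVert(g_u,g_\beta)\rVert_*:=\max\bigl(|I_{\mathrm{m}}|^{4}\lVert g_u\rVert_{0,0,\rho_2},\,|I_{\mathrm{m}}|^{5/2}\lVert g_\beta\rVert_{0,0,\rho_2}\bigr)$ matched to the target sizes, a direct calculation via the mean-value theorem shows that $\mathcal{F}$ maps the $\lVert\cdot\rVert_*$-ball of a fixed large radius $C$ into itself and acts as a $\tfrac12$-contraction on it once $|I_{\mathrm{m}}|$ is large. Banach's fixed-point theorem then yields a unique $(g_u,g_\beta)$ satisfying the claimed sup-norm bounds.

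Uniqueness as an analytic extension follows from the identity principle: by \eqref{eq:invarianceparametrizationflow}, $\mathtt{W}^{\mathrm{u}}$ parametrizes the real analytic invariant manifold $W^{\mathrm{u}}_{I^{\mathrm{u}}}$, so $\eta_{I_{\mathrm{m}}}^{-1}\circ \mathtt{W}^{\mathrm{u}}\circ g$ is a Lagrangian graph over $\tilde{D}_{\kappa_2}^{\mathrm{u}}\times\mathbb{T}_{\rho_2}\times\mathbb{T}_\sigma$ which must agree on the non-empty open overlap $(\tilde{D}_{\kappa_2}^{\mathrm{u}}\cap D_\kappa^{\mathrm{u}})\times\mathbb{T}_{\rho_2}\times\mathbb{T}_\sigma$ with the graph parametrization from Theorem \ref{thm:stableparam}, hence provides its unique analytic continuation. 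The main technical subtlety is the interplay between the smallness of the flow corrections $(R_{\mathrm{flow}},\Omega_{\mathrm{flow}})$ and the derivative loss incurred in Cauchy's estimate, together with the need to use a weighted norm reflecting the strongly anisotropic target sizes in the $u$ and $\beta$ directions; it is precisely because Lemma \ref{lem:secondlemmaflow} delivers bounds comfortably better than the critical threshold $|I_{\mathrm{m}}|^{-3}$ that the contraction closes on a domain contracted by $(\kappa_2-\kappa_1)|I_{\mathrm{m}}|^{-3}$, which ultimately determines the final value of $\kappa_2$.
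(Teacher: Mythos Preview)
Your proposal is correct and follows precisely the approach the paper has in mind: the paper does not prove this lemma explicitly but defers to Proposition~5.21 of \cite{MR3455155}, which carries out exactly this kind of fixed-point inversion of the base components of the flow parametrization. Your setup of the two scalar equations, the use of a weighted product norm matched to the anisotropic target sizes, and the accounting for the $|I_{\mathrm{m}}|^{3}$ Cauchy loss from the $\mathcal{O}(|I_{\mathrm{m}}|^{-3})$ domain shrinkage are all the right ingredients; the contraction closes with room to spare. One small remark: the substitution of $\mathcal{F}_u$ for $g_u$ inside $\mathcal{F}_\beta$ is unnecessary, since even with the direct dependence the term $\alpha_{\mathrm{h}}'(u+g_u)(g_u-g_u')$ contributes only $\mathcal{O}(|I_{\mathrm{m}}|^{-3/2})$ in the weighted norm, but your formulation is not wrong.
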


The proof of this lemma follows the same lines as the proof of Proposition 5.21 in  \cite{MR3455155}. In conclusion, we have proven the existence of the analytic continuation of the unstable generating function $T^{\mathrm{u}}$ to the domain (see Figure \ref{fig:DominisBoomerang2})
\begin{equation}\label{eq:finalunstabledomain}
\begin{split}
D_{\kappa_2}=  \{u \in\mathbb{C}\colon& |\operatorname{Im} u|\leq - \tan \beta_1 \operatorname{Re}u +1/3-\kappa_2 G_{0}^{-3},\  |\operatorname{Im} u|\leq \tan \beta_1 \operatorname{Re}u+ 1/3-\kappa_2 G_{0}^{-3},\\
& |\operatorname{Im} u| \geq -\tan \beta_2 \operatorname{Re} u+1/6+\kappa_2 G_{0}^{-3}\}.
\end{split}
\end{equation}

\noindent Indeed, introducing the Banach spaces
\begin{equation}\label{eq:finalBanachsplitting}
\mathcal{Y}_{\nu,\rho}=\left\{ h=\{h^{[l]}\}_{l\in\mathbb{Z}}\colon h^{[l]}:D_{\kappa_2}\times \mathbb{T}_{\rho}\to\mathbb{C}\  \text{is analytic for all } l\in \mathbb{Z}\ \text{and}\   \lVert h\rVert_{\nu,\rho}<\infty\right\},
\end{equation}
where $\lVert \cdot \rVert_{\nu,\rho}$ is the Fourier sup norm
\begin{equation}\label{eq:finalnormsplitting}
\lVert h \rVert_{\nu,\rho}=\sum_{l\in\mathbb{Z}} \lVert h^{[l]} \rVert_{\nu,\rho,l} e^{-|l| \sigma}\quad\quad  \lVert h^{[l]} \rVert_{\nu,\rho,l}=\sup_{(u,\beta)\in D_{\kappa_2}\times \mathbb{T}_\rho} \left|  (u-i/3)^{\nu+l/2} (u+i/3)^{\nu-l/2} h^{[l]} (u,\beta)\right|
\end{equation}
 (notice that the weight $u^\eta$ becomes now meaningless since $D_{\kappa_2}$ is bounded), the following proposition, which extends the domain of definition of the function element $T^{\mathrm{u}}$ in Theorem  \ref{thm:stableparam}, holds.

\begin{figure}\label{fig:DominisBoomerang2}
\centering
\includegraphics[scale=0.60]{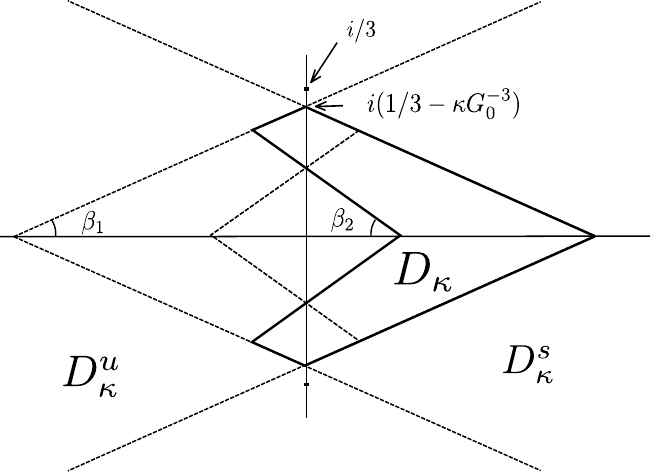}
\caption{The domain  $ D_{\kappa}$ defined in \eqref{eq:finalunstabledomain}. }
\end{figure}

\begin{prop}\label{prop:extensionunstableparam}
There exist $\kappa_2,\sigma_2, \rho_2>0$  such that for $G_0$ large enough, $0\leq \zeta\leq G_0^{-3}$ and  $G^{\mathrm{u}},G^{\mathrm{s}}\in\mathbb{G}_{\rho_2}(G_0)$, there exists  $T^{\mathrm{u}}\in \mathcal{Y}_{1/2,\rho_2}$ which constitutes the unique analytic continuation to  $(u,\beta,t)\in D_{\kappa_2}\times\mathbb{T}_{\rho_2}\times \mathbb{T}_{\sigma_2}$ of the function obtained in Theorem \ref{thm:stableparam}. Moreover, in this domain
\[
\lVert T^{\mathrm{u}} \rVert_{1/2,\rho_2}\lesssim G_{0}^{-4} \qquad\qquad\text{and}\qquad\qquad \lVert T^{\mathrm{u}}- L^{\mathrm{u}}\rVert_{1,\rho_2}\lesssim G_{0}^{-8},
\]
where $L^{\mathrm{u}}$ is the unstable half Melnikov potential defined in \eqref{eq:halfstableMelnikov}. In addition, we have that
\[
\lVert T^{\mathrm{u}}-T^{\mathrm{u}}_{\mathrm{circ}}-(L^{\mathrm{u}}-L^{\mathrm{u}}_{\mathrm{circ}})\rVert_{1,\rho_2} \lesssim \zeta G_{0}^{-8}
\]
where $T^{\mathrm{u}}_{\mathrm{circ}}$ is defined in \eqref{eq:generatingcircular} and $L^{\mathrm{u}}_{\mathrm{circ}}(u,t-\beta;G_{0})=L^{\mathrm{u}} (u,\beta,t; G_{0},0)$.
\end{prop}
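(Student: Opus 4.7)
The plan is to apply in sequence the three lemmas stated in Section~\ref{sec:Extensionflow}. Starting from the Lagrangian graph parametrization $\mathcal{W}^{\mathrm{u}}$ associated to $T^{\mathrm{u}}$ on $D^{\mathrm{u}}_\kappa \times \mathbb{T}_\rho \times \mathbb{T}_\sigma$ provided by Theorem~\ref{thm:stableparam}, I would first convert to the polar-coordinate flow parametrization $\mathtt{W}^{\mathrm{u}}$ on the overlap, via the change $h$ of Lemma~\ref{lem:firstlemmaflow}. Since in polar coordinates the vector field is not singular at $u=0$ (only the chart $\eta_{I_{\mathrm{m}}}$ is), I then use Lemma~\ref{lem:secondlemmaflow} to extend $\mathtt{W}^{\mathrm{u}}$ by the flow across the strip containing $u=0$ to the flow domain $D^{\mathrm{u},\mathrm{flow}}_{\kappa_1}$. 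Finally, the change $g$ recovers a Lagrangian graph parametrization on the extended domain $\tilde{D}^{\mathrm{u}}_{\kappa_2}$. Its union with $D^{\mathrm{u}}_\kappa$ is precisely the boomerang-shaped $D_{\kappa_2}$ of \eqref{eq:finalunstabledomain}, and uniqueness of analytic continuation identifies the resulting generating function with the unique extension of~$T^{\mathrm{u}}$.

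Next I would establish the quantitative bounds in $\mathcal{Y}_{1/2,\rho_2}$. Observe that this Banach space drops the $u^{\eta}$ weight (which is immaterial up to a constant on the bounded set $D_{\kappa_2}$) but retains the factor $(u-i/3)^{\nu+l/2}(u+i/3)^{\nu-l/2}$ that captures the singular behavior near $u = \pm i/3$. On the subdomain $D_{\kappa_2} \cap D^{\mathrm{u}}_\kappa$, which lies at distance at least $\kappa_2 |I_{\mathrm{m}}|^{-3}$ from the singularities, the bound $\lVert T^{\mathrm{u}} \rVert_{1/2,\rho_2} \lesssim |I_{\mathrm{m}}|^{-3}$ is an immediate consequence of Theorem~\ref{thm:stableparam}. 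For the complementary portion of $D_{\kappa_2}$, which lies uniformly far from $u = \pm i/3$, I would read $T^{\mathrm{u}}$ off the polar-coordinate components of $\mathtt{W}^{\mathrm{u}}$ using the uniform estimates on $R_{\mathrm{flow}}, \Omega_{\mathrm{flow}}, Y_{\mathrm{flow}}, J_{\mathrm{flow}}$ of Lemma~\ref{lem:secondlemmaflow} together with the smallness of the near-identity changes $h$ and $g$; the desired $|I_{\mathrm{m}}|^{-3}$ estimate then follows by integration of $\nabla T^{\mathrm{u}}$ along a suitable path.

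The comparison with the half Melnikov potential $L^{\mathrm{u}}$ is the main technical point, and I expect it to be the hardest step. The function $L^{\mathrm{u}}$ in \eqref{eq:halfstableMelnikov} admits a natural analytic continuation to $D_{\kappa_2}$ by deforming the contour of integration, since the integrand only develops singularities when $u+s = \pm i/3$, and these remain at uniformly bounded distance from an appropriately chosen path of integration thanks to the geometry of $D_{\kappa_2}$ and the decay of $V$ at infinity. On $D_{\kappa_2} \cap D^{\mathrm{u}}_\kappa$ the bound $\lVert T^{\mathrm{u}} - L^{\mathrm{u}} \rVert_{1,\rho_2} \lesssim |I_{\mathrm{m}}|^{-7}$ is already available from Theorem~\ref{thm:stableparam}. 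To propagate it across the complementary region of $D_{\kappa_2}$, I would exploit the flow invariance \eqref{eq:invarianceparametrizationflow}: a point in the boomerang can be joined by a short real time-$s$ trajectory to the subdomain where the estimate is known, and both $T^{\mathrm{u}}$ and $L^{\mathrm{u}}$ transform in controlled fashion under the flow (the former by its characterization as a generating function of an invariant Lagrangian submanifold, the latter by the explicit formula \eqref{eq:halfstableMelnikov}, which yields only a boundary correction of the required size). The same scheme, applied to $T^{\mathrm{u}} - T^{\mathrm{u}}_{\mathrm{circ}} - (L^{\mathrm{u}} - L^{\mathrm{u}}_{\mathrm{circ}})$ and using $\lVert V - V_{\mathrm{circ}}\rVert_{2,3/2,\rho_0} \lesssim \epsilon|I_{\mathrm{m}}|^{-3}$ from Lemma~\ref{lem:boundspotential}, produces the $\epsilon |I_{\mathrm{m}}|^{-7}$ refinement.

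The main obstacle I foresee is bookkeeping: one has to verify that the near-identity changes $h$ and $g$, whose derivatives are controlled only by the estimates of Lemmas~\ref{lem:firstlemmaflow}--\ref{lem:secondlemmaflow} (roughly of size $|I_{\mathrm{m}}|^{-4}$ and $|I_{\mathrm{m}}|^{-5/2}$), pull back the singular factors $(u \mp i/3)^{\nu \pm l/2}$ to themselves up to uniformly bounded multiplicative constants, so that the $\mathcal{Y}_{1/2,\rho_2}$ norm is indeed controlled by the polar-coordinate estimates. This is essentially routine but delicate, because the orders $\nu = 1/2$ and $\nu = 1$ appearing in the two estimates of the proposition must each survive the composition with $g$ without degradation, which forces one to work harmonic by harmonic and to use the precise form of the weights in \eqref{eq:finalnormsplitting}.
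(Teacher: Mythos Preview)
Your proposal is correct and follows the paper's approach exactly: the paper presents Proposition~\ref{prop:extensionunstableparam} as the direct conclusion of the three lemmas in Section~\ref{sec:Extensionflow} (Lemmas~\ref{lem:firstlemmaflow}, \ref{lem:secondlemmaflow}, and the unnumbered lemma on the change $g$), introducing the domain $D_{\kappa_2}$ and the space $\mathcal{Y}_{\nu,\rho}$ and then stating the proposition without further argument. Your outline of how to propagate the quantitative estimates across the flow-extended region and your remarks on the bookkeeping for the singular weights are in fact more detailed than what the paper itself provides.
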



\subsection{The difference $\Delta S$ between the generating functions of the invariant manifolds}\label{sec:differencegenfunctionsmanifolds}

In Theorem \ref{thm:stableparam}  we have proved that, for suitable $\kappa,\sigma>0$ and $\rho>0$, the formal Fourier series $T^{\mathrm{u}}$ (see Remark \ref{rem:formalseries})  in the parametrization \eqref{eq:stableparam} of the unstable manifold  of the torus $\mathcal{T}_{G^{\mathrm{u}}}$ is uniformly $\mathcal{O}(G_{0}^{-8})$ approximated in $\mathcal{X}_{1/3,1/2,\rho}$, by the half Melnikov potential $L^{\mathrm{u}}$ introduced in \eqref{eq:halfstableMelnikov}. Moreover, in Proposition \ref{prop:extensionunstableparam} we have shown that  $T^{\mathrm{u}}$ admits a unique analytic continuation to the domain $(u,\beta,t)\in D_{\kappa_2}\times\mathbb{T}_{\rho_2}\times \mathbb{T}_{\sigma_2}$  for suitable $\kappa_2>\kappa, \rho_2<\rho$ and $\sigma_2<\sigma$.

The very same argument in the proof of Theorem \ref{thm:stableparam} shows the stable counterpart for the formal Fourier series $T^{\mathrm{s}}$ on the domain $(u,\beta,t)\in D^{\mathrm{s}}_{\kappa}\times\mathbb{T}_{\rho}\times\mathbb{T}_\sigma$ where $D^{\mathrm{s}}_{\kappa}=\{u\in\mathbb{C}\colon -u\in D^{\mathrm{u}}_{\kappa}\}$. Moreover, denoting by $\mathcal{X}^{\mathrm{s}}_{1/3,1/2,\rho}$ the associated Banach space for formal Fourier series defined on $D^{\mathrm{s}}_\kappa\times\mathbb{T}_{\rho}\times\mathbb{T}_\sigma$, $T^{\mathrm{s}}$ is uniformly $\mathcal{O}( G_{0}^{-7})$ approximated  in $\mathcal{X}^{\mathrm{s}}_{1/3,1/2,\rho}$ by the stable half Melnikov potential 
\begin{equation}
L^{\mathrm{s}}(u,\beta,t;G_{0},\zeta)=\int_{+\infty}^0 V(u+s,\beta, t+ G_{0}^3 s;G_{0},\zeta) \mathrm{d}s.
\end{equation}

\begin{rem}\label{eq:remarkdomainsdef}
    The domains $D^{\mathrm{u}}$ and $D^{\mathrm{s}}$ which we introduced without definition at the beginning of Section ... can be now defined as 
    \[
    D^{\mathrm u}= D_\kappa^{\mathrm u}\cup D_{\kappa_2},\qquad\qquad    D^{\mathrm s}= D_\kappa^{\mathrm s}
    \]
\end{rem}

Since $D_{\kappa_2}\subset D^{\mathrm{s}}_\kappa$, we can now analyze the difference between the generating functions of the stable and unstable manifolds (see equation \eqref {eq:defngeneratingfunctmanif} and the discussion below it)
\begin{equation}\label{eq:diffgenfunct2}
\Delta S=S^{\mathrm{u}}-S^{\mathrm{s}}=\langle \boldsymbol\delta^{\mathrm{u}}-\boldsymbol\delta^{\mathrm{s}},z\rangle + T^{\mathrm{u}}-T^{\mathrm{s}}
\end{equation}
on the  common domain $z=(u,\beta,t)\in D_{\kappa_2}\times\mathbb{T}_{\rho_2}\times \mathbb{T}_{\sigma_2}$. For the sake of clarity in the forthcoming arguments,  we summarize in Theorem \ref{thm:differencegenfunctcommon} the previous discussion. We denote by 
\begin{equation}\label{eq:differencecircular}
\Delta S_{\mathrm{circ}}(u,t-\beta;G_{0})\equiv T_{\mathrm{circ}}^{\mathrm{u}}(u,t-\beta;G_{0})-T_{\mathrm{circ}}^{\mathrm{s}}(u,t-\beta;G_{0}).
\end{equation}

\begin{thm}\label{thm:differencegenfunctcommon}
There, there exist $\kappa_2,\sigma_2, \rho_2>0$ such that for $G_0$ large enough,  $0\leq \zeta \leq G_0^{-3}$, and $G^{\mathrm{u}},G^{\mathrm{s}}\in\mathbb{G}_{\rho_2}(G_0)$,  the difference $\Delta S=S^{\mathrm{u}}-S^{\mathrm{s}}$  defined in  \eqref{eq:defngeneratingfunctmanif} satisfies $\Delta S\in \mathcal{Y}_{1/2,\rho_2}$ and 
\[
\lVert \Delta S -\langle \boldsymbol\delta^{\mathrm{u}}-\boldsymbol\delta^{\mathrm{s}},z\rangle -\tilde{L} \rVert _{1/2,\rho_2}\lesssim G_{0}^{-8},
\]
where the norm $\lVert \cdot \rVert_{\nu,\rho}$ is defined in \eqref{eq:finalnormsplitting} and the Melnikov potential $\tilde{L}$ is defined in \eqref{eq:defnMelnikovPot}. Moreover, we have
\[
\lVert \Delta S- \Delta S_{\mathrm{circ}}-( \langle\boldsymbol\delta^{\mathrm{u}}-\boldsymbol\delta^{\mathrm{s}},z \rangle +\tilde{L}-\tilde{L}_{\mathrm{circ}}) \rVert_{1,\rho_2}\lesssim \zeta G_{0}^{-8},
\]
where  $\Delta S_{\mathrm{circ}}$ is defined in \eqref{eq:differencecircular} and $\tilde{L}_{\mathrm{circ}}$ is defined in \eqref{eq:Melnikovcircular}.
\end{thm}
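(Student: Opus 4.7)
The plan is to deduce Theorem \ref{thm:differencegenfunctcommon} as an almost immediate consequence of Proposition \ref{prop:extensionunstableparam}, combined with its symmetric stable counterpart and the elementary identity
\[
L^{\mathrm{u}}(u,\beta,t;I_{\mathrm{m}},\epsilon) - L^{\mathrm{s}}(u,\beta,t;I_{\mathrm{m}},\epsilon) = \tilde{L}(u,\beta,t;I_{\mathrm{m}},\epsilon).
\]
This identity follows at once from the definitions \eqref{eq:halfstableMelnikov} and \eqref{eq:defnMelnikovPot}: writing $L^{\mathrm{s}} = \int_{+\infty}^{0} V\,ds = -\int_{0}^{+\infty} V\,ds$, the difference $L^{\mathrm{u}} - L^{\mathrm{s}}$ recovers the full integral over $\mathbb{R}$, and a change of variables $s \mapsto s - u$ matches it to $\tilde{L}$. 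Given this, the whole content of the theorem reduces to a bookkeeping step on the overlap domain $D_{\kappa_2}$.

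First I would invoke the stable version of Proposition \ref{prop:extensionunstableparam}. This is obtained by running verbatim the Newton scheme of Section \ref{sec:continuationunstable} with the asymptotic condition $\lim_{\mathrm{Re}\,u \to +\infty} T^{\mathrm{s}} = 0$ on the symmetric domain $D^{\mathrm{s}}_\kappa = \{-u \in D^{\mathrm{u}}_\kappa\}$, followed by the flow-extension procedure of Section \ref{sec:Extensionflow} applied symmetrically. Since by construction $D_{\kappa_2} \subset D^{\mathrm{s}}_\kappa$, the analogous estimates
\[
\lVert T^{\mathrm{s}} - L^{\mathrm{s}} \rVert_{1,\rho_2} \lesssim |I_{\mathrm{m}}|^{-7}, \qquad \lVert T^{\mathrm{s}} - T^{\mathrm{s}}_{\mathrm{circ}} - (L^{\mathrm{s}} - L^{\mathrm{s}}_{\mathrm{circ}})\rVert_{1,\rho_2} \lesssim \epsilon |I_{\mathrm{m}}|^{-7}
\]
hold on $D_{\kappa_2} \times \mathbb{T}_{\rho_2} \times \mathbb{T}_{\sigma_2}$. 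Using the definition \eqref{eq:defngeneratingfunctmanif} together with the identity above, one then decomposes
\[
\Delta S - \langle \delta^{\mathrm{u}} - \delta^{\mathrm{s}}, q\rangle - \tilde{L} = (T^{\mathrm{u}} - L^{\mathrm{u}}) - (T^{\mathrm{s}} - L^{\mathrm{s}}),
\]
and the triangle inequality together with Proposition \ref{prop:extensionunstableparam} and its stable analog gives the first estimate in $\lVert \cdot \rVert_{1,\rho_2}$. The conclusion in the weaker $\lVert \cdot \rVert_{1/2, \rho_2}$ norm is then automatic: on the bounded domain $D_{\kappa_2}$ the weights $|u \pm i/3|$ are bounded above and below by constants, so $\lVert \cdot \rVert_{1/2, \rho_2} \lesssim \lVert \cdot \rVert_{1, \rho_2}$.

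For the circular estimate I would subtract the analogous decomposition applied with $\epsilon = 0$. Writing
\[
\Delta S - \Delta S_{\mathrm{circ}} - \langle \delta^{\mathrm{u}} - \delta^{\mathrm{s}}, q\rangle - (\tilde{L} - \tilde{L}_{\mathrm{circ}}) = \bigl[(T^{\mathrm{u}} - T^{\mathrm{u}}_{\mathrm{circ}}) - (L^{\mathrm{u}} - L^{\mathrm{u}}_{\mathrm{circ}})\bigr] - \bigl[(T^{\mathrm{s}} - T^{\mathrm{s}}_{\mathrm{circ}}) - (L^{\mathrm{s}} - L^{\mathrm{s}}_{\mathrm{circ}})\bigr],
\]
each bracket is controlled by $\epsilon |I_{\mathrm{m}}|^{-7}$ by the last estimate of Proposition \ref{prop:extensionunstableparam} and its stable twin, closing the argument. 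The only compatibility point to verify, which is not really an obstacle, is that the formal Fourier series manipulations are legitimate on the common domain: this is clear because the flow-extension step of Section \ref{sec:Extensionflow} has already promoted both $T^{\mathrm{u}}$ and $T^{\mathrm{s}}$ to genuinely analytic functions on $D_{\kappa_2}\times \mathbb{T}_{\rho_2}\times \mathbb{T}_{\sigma_2}$, and $D_{\kappa_2}$ is designed to lie simultaneously inside both domains of analyticity while staying at distance $\sim 1$ from the singularities $u=0, \pm i/3$. The genuine difficulty of the theorem has been entirely absorbed into the preceding Newton iteration and flow-extension work; this final statement is a pure assembly step.
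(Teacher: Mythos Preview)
Your assembly argument is exactly what the paper does: Theorem \ref{thm:differencegenfunctcommon} is presented there without a separate proof, as a direct summary of Proposition \ref{prop:extensionunstableparam} and its stable twin via the decomposition $(T^{\mathrm{u}}-L^{\mathrm{u}})-(T^{\mathrm{s}}-L^{\mathrm{s}})$ together with $L^{\mathrm{u}}-L^{\mathrm{s}}=\tilde L$.

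One technical slip: your passage from $\lVert\cdot\rVert_{1,\rho_2}$ to $\lVert\cdot\rVert_{1/2,\rho_2}$ is argued the wrong way round. The domain $D_{\kappa_2}$ in \eqref{eq:finalunstabledomain} reaches within $\kappa_2|I_{\mathrm m}|^{-3}$ of $u=\pm i/3$, so $|u\pm i/3|$ is \emph{not} bounded below by a constant independent of $I_{\mathrm m}$. In these weighted norms a \emph{larger} $\nu$ gives the smaller norm: one has $\lVert h\rVert_{1,\rho_2}\lesssim \lVert h\rVert_{1/2,\rho_2}$, while in the other direction only $\lVert h\rVert_{1/2,\rho_2}\lesssim |I_{\mathrm m}|^{3/2}\lVert h\rVert_{1,\rho_2}$ (compare the second item of Lemma \ref{lem:techlemmaspaces}). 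Hence from Proposition \ref{prop:extensionunstableparam} you get the first estimate in the $\nu=1$ norm with bound $|I_{\mathrm m}|^{-7}$, but in the $\nu=1/2$ norm only $|I_{\mathrm m}|^{-11/2}$. This is harmless downstream---the proof of Proposition \ref{prop:approxgenfunctionbyMelnikov} already bottlenecks at $|I_{\mathrm m}|^{-11/2}$ through the $\mathcal E_2$ term, and the index $1/2$ in the theorem's first displayed estimate is most likely a slip for $1$, matching both Proposition \ref{prop:extensionunstableparam} and the second estimate---but the specific justification you gave for that norm passage is incorrect.
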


We now recall that the aim of Section \ref{sec:generatingfunctsinvmanifolds} is to  show that the existence of nondegenerate critical points of the function $\langle (\boldsymbol\delta^{\mathrm{u}}-\boldsymbol\delta^{\mathrm{s}}),z\rangle +\tilde{L}$ implies the existence of critical points of the function $z\to \Delta S$. Namely, our goal is to prove Theorem \ref{thm:mainthmcriticalpoints}. As first step, we  provide a proof of Proposition \ref{prop:approxgenfunctionbyMelnikov}.  With that objective we  state the following lemma, whose proof is given in Appendix \ref{sec:Melnikov}.

\begin{lem}\label{lem:mainMelnikov}
Let $\rho_0>0$ be given in Lemma \ref{lem:boundspotential}. Then, there exists $G_0>0$  such that for $0\leq \zeta \leq G_0^{-3}$, the Melnikov potential $\tilde{L}$ defined in \eqref{eq:defnMelnikovPot} is a real-analytic function of all its arguments  and can be expressed as the absolutely convergent  series
\[
\tilde{L}(u,\beta,t;G_0,\zeta)=\sum_{l\in\mathbb{N}} \mathcal{L}_l (t-G_{0}^3u,\beta;G_{0},\zeta),
\]
where, writing $\sigma=t-G_{0}^3u$,
\begin{itemize}

\item $\displaystyle \mathcal{L}_0(\beta;G_{0})=\mu(1-\mu) \big(L_{0,0} (G_{0},\zeta)+ L_{0,1} (G_{0},\zeta)\cos \beta + E_0(\beta;G_{0},\zeta) \big)$ with 
\[
\begin{split}
L_{0,0}(G_{0},\zeta)=&\frac{\pi}{2 G_{0}^4} \left(1+ \mathcal{O} ( G_{0}^{-4},\zeta^{-2}) \right)\\
L_{0,1}(G_{0},\zeta)=&-(1-2\mu)\frac{15\pi\zeta}{8 G_{0}^6} \left(1+ \mathcal{O} ( G_{0}^{-4},\zeta^{-2}) \right)\\
|E_0(\beta;G_{0},\zeta)|\lesssim &\zeta^2  G_{0}^{-8},
\end{split}
\]

\item $\displaystyle \mathcal{L}_1(\sigma,\beta;G_{0},\zeta)= \mu(1-\mu) \big(2L_{1,1}(G_{0},\zeta)\cos (\sigma-\beta)+2L_{1,2}(G_{0},\zeta)\cos (\sigma-2\beta)+ E_1(\sigma,\beta;G_{0},\zeta) \big)$
with
\begin{equation}\label{eq:firstharmonicMelnikov}
\begin{split}
2L_{1,1}(G_{0},\zeta)=&(1-2\mu)\sqrt{\frac{\pi}{8 G_{0}^3}} \left(1+ \mathcal{O} ( G_{0}^{-1},\zeta^{-2}) \right) \exp(-G_{0}^3/3)\\
2L_{1,2}(G_{0},\zeta)=&-3\zeta \sqrt{2\pi G_0} \left(1+ \mathcal{O} ( G_{0}^{-1},\zeta^{-1}) \right) \exp(-G_{0}^3/3)\\
|E_1(\sigma,\beta;G_{0},\zeta)|\lesssim &\zeta ( G_{0}^{-5/2}+\zeta  G_{0}^{3/2} )\exp(-G_{0}^3/3),
\end{split}
\end{equation}

\item The sum of the higher coefficients 
\[
\mathcal{L}_{\geq 2}(u,\beta,t; G_{0})=\sum_{l\geq 2} \mathcal{L}_l(\sigma,\beta; G_{0},\zeta)
\]
satisfies the estimate
\[
|\mathcal{L}_{\geq 2}| \lesssim   G_{0}^{1/2} \exp(-2 G_{0}^3/3).
\]
\end{itemize}
\end{lem}\vspace{0.3cm}

\noindent Notice that the estimates in Theorem \ref{thm:stableparam} only imply
\[
|\partial_u \Delta (S-\tilde{L})|\lesssim  G_{0}^{-8}
\]
while 
\[
|\partial_u \tilde{L}|\sim  G_{0}^{3/2}\exp(-(G_{0}^{3}/3).
\]
The existence of critical points of $\Delta S$ as a consequence of the existence of nondegenerate critical points of the function $\langle(\boldsymbol\delta^{\mathrm{u}}-\boldsymbol\delta^{\mathrm{s}}),z\rangle + \tilde{L}$ is therefore not clear at the moment. This ``mismatch'' is caused by not looking at the problem in the right set of coordinates. In Lemma \ref{lem:straighteningcv} below, we prove the existence of a change of variables $(u,\beta,t)=\Phi(v,\theta,t)$ such that $\Delta \mathcal{S}=\Delta S\circ \Phi$ only depends on $v$ and $t$ through the difference $\sigma=t-G_{0}^3v$. This fact is equivalent to $\Delta \mathcal{S}\in \mathrm{Ker}\mathcal{L}$ where $\mathcal{L}$ is the linear operator
\[
\mathcal{L}=\partial_v+ G_{0}^3 \partial_t.
\]
Then, in Lemma \ref{lem:exponsmallness} it is shown that functions in $\mathcal{Y}_{\nu,\rho}\cap \mathrm{Ker} \mathcal{L}$  (see \eqref{eq:finalBanachsplitting}), present an exponential decay in the size of their Fourier coefficients. Finally, this last property, together with the approximation of $\Delta \mathcal{S}$ by $\langle (\boldsymbol\delta^{\mathrm{u}}-\boldsymbol\delta^{\mathrm{s}}),q\rangle +\tilde{L}$ in the norm \eqref{eq:finalnormsplitting}, given in Theorem \ref{thm:differencegenfunctcommon}, are used to complete the proof of Proposition \ref{prop:approxgenfunctionbyMelnikov}.

\begin{lem}\label{lem:straighteningcv}
There exists $\rho_3>0$ such that, for $G_0$ large enough, $0\leq \zeta\leq G_0^{-3}$ and  $G^{\mathrm{u}},G^{\mathrm{s}}\in\mathbb{G}_{\rho_3}(G_0)$,  there exists an analytic change of variables of the form 
\[
(u,\beta,t)=\Phi(v,\theta,t)=(v+\phi_v(v,\theta,t),\theta+\phi_\theta(v,\theta,t),t)
\]
with $\phi_v\in \mathcal{Y}_{0,\rho_3}$, $\phi_\theta\in \mathcal{Y}_{1/2,\rho_3}$ and $\lVert \phi_v\rVert_{0,\rho_3}\lesssim G_{0}^{-4}$,  $\lVert \phi_\theta \rVert_{1/2,\rho_3}\lesssim G_{0}^{-4}$, such that $\Delta \mathcal{S}=\Delta S\circ \Phi$ satisfies
\begin{equation}
\mathcal{L} \Delta \mathcal{S}= (\partial_v+G_{0}^3 \partial_t) \Delta \mathcal{S}=0.
\end{equation}
Moreover, under the same hypotheses, there exists an analytic change of variables 
\[
(u,t-\beta)=\Phi_{\mathrm{circ}}(v,t-\theta)=(v+\phi_{v,\mathrm{circ}}(v,t-\theta),\theta+\phi_{\theta,\mathrm{circ}}(v,t-\theta))
\]
with $\phi_{v,\mathrm{circ}}\in \mathcal{Y}_{0,\rho_3}$, $\phi_{\theta,\mathrm{circ}}\in \mathcal{Y}_{1/2,\rho_3}$ and $\lVert \phi_v-\phi_{v,\mathrm{circ}}\rVert_{0,\rho_3}\lesssim \zeta G_{0}^{-4}$,  $\lVert \phi_\theta-\phi_{\theta,\mathrm{circ}} \rVert_{1/2,\rho_3}\lesssim \zeta G_{0}^{-4}$, such that $\Delta \mathcal{S}_{\mathrm{circ}}=\Delta S_{\mathrm{circ}}\circ \Phi_{\mathrm{circ}}$ satisfies
\begin{equation}
\Delta  \mathcal{S}_{\mathrm{circ}}(v,t-\theta)= \Delta \widehat{\mathcal{S}}_{\mathrm{circ}}(t-\theta-G_{0}^3 v;G_{0}),
\end{equation}
for some periodic function $\Delta \widehat{\mathcal{S}}_{\mathrm{circ}}(t-\theta-G_{0}^3 v;G_{0})$.
\end{lem}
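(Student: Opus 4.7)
The strategy is to view $\Delta S = S^{\mathrm{u}} - S^{\mathrm{s}}$ as a solution of a linear first order transport equation and to construct $\Phi$ by straightening the corresponding characteristic vector field. Since both generating functions solve $H(q,\nabla S) = 0$, the fundamental theorem of calculus applied to $H(q,\nabla S^{\mathrm{u}})-H(q,\nabla S^{\mathrm{s}})=0$ yields
\[
Y(q)\cdot \nabla \Delta S(q)=0, \qquad Y(q)=\int_0^1 \nabla_p H\bigl(q,s\,\nabla S^{\mathrm{u}}+(1-s)\nabla S^{\mathrm{s}}\bigr)\,ds.
\]
Because $H$ depends linearly on $E$, the component $Y_t$ equals $I_{\mathrm{m}}^3$ identically. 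Using the explicit form \eqref{eq: Hamiltonian in perturbative coordinates} together with Proposition \ref{prop:extensionunstableparam} and its stable counterpart, the remaining components satisfy $Y_u = 1+\tilde A$ and $Y_\beta = \tilde B$ with $\lVert \tilde A\rVert_{0,\rho_2}+\lVert \tilde B\rVert_{1/2,\rho_2}\lesssim |I_{\mathrm{m}}|^{-4}$ on the common domain $D_{\kappa_2}\times\mathbb{T}_{\rho_2}\times\mathbb{T}_{\sigma_2}$.

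Looking for $\Phi(v,\theta,t)=(v+\phi_v,\theta+\phi_\theta,t)$, which automatically preserves both $t$ and the identity $Y_t=I_{\mathrm{m}}^3$, the condition $D\Phi\cdot(1,0,I_{\mathrm{m}}^3)^\top = Y\circ\Phi$ reduces to the coupled system
\[
\mathcal{L}\phi_v = \tilde A\circ\Phi, \qquad \mathcal{L}\phi_\theta = \tilde B\circ\Phi.
\]
Once this is solved, the transport identity $Y\cdot\nabla \Delta S=0$ pulls back to $\mathcal{L}(\Delta S\circ\Phi)=\mathcal{L}\Delta\mathcal{S}=0$, which is the desired conclusion. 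I would solve the system by the Banach contraction principle in a ball of radius $\sim |I_{\mathrm{m}}|^{-4}$ inside $\mathcal{Y}_{0,\rho_3}\times\mathcal{Y}_{1/2,\rho_3}$ for some $\rho_3<\rho_2$, using Lemma \ref{lem:compositionangularvariabletechnical} to bound compositions and a suitable right inverse $\widetilde{\mathcal{G}}$ of $\mathcal{L}$ for the linearized equations.

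The main obstacle is the construction of $\widetilde{\mathcal{G}}$ on the bounded domain $D_{\kappa_2}$. The operator $\mathcal{G}$ of Lemma \ref{lem:techlemoperator} was defined by integration from $s=-\infty$ along the characteristics of $\mathcal{L}$, a choice no longer available here. Instead, I would invert $\mathcal{L}$ harmonic by harmonic in the $t$ variable: the equation $(\partial_v + il I_{\mathrm{m}}^3)\phi_v^{[l]} = (\tilde A\circ\Phi)^{[l]}$ is solved by integrating along the $v$-characteristic from a boundary point of $D_{\kappa_2}$ chosen so that $|\exp(-ilI_{\mathrm{m}}^3(v-s))|$ stays bounded on the whole domain (different endpoints for $l>0$ and $l<0$; any antiderivative works for $l=0$ after fixing the average). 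Controlling the weighted sup norms \eqref{eq:finalnormsplitting}, with their $(u\pm i/3)^{\nu\pm l/2}$ factors, across this construction is the key technical step and accounts for the loss $\rho_2 \to \rho_3$ in the analyticity strip.

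Finally, in the circular case $V_{\mathrm{circ}}$ depends on $(u,t-\beta)$ alone, so $T^{\mathrm{u,s}}_{\mathrm{circ}}$ and hence $Y_{\mathrm{circ}}$ depend on $(u,t-\beta)$ only. The subspace of functions with this symmetry is preserved both by $\widetilde{\mathcal{G}}$ and by composition on the right with maps of the same symmetry type, so the fixed point iteration stays inside this subspace and produces $\phi_{v,\mathrm{circ}}, \phi_{\theta,\mathrm{circ}}$ of the required form. The bound $\lVert \phi_v-\phi_{v,\mathrm{circ}}\rVert+\lVert\phi_\theta-\phi_{\theta,\mathrm{circ}}\rVert\lesssim \epsilon|I_{\mathrm{m}}|^{-4}$ then follows by subtracting the two fixed point equations and using $\lVert T^{\mathrm{u,s}}-T^{\mathrm{u,s}}_{\mathrm{circ}}\rVert\lesssim \epsilon|I_{\mathrm{m}}|^{-3}$ from Proposition \ref{prop:extensionunstableparam}, which propagates to $\lVert \tilde A - \tilde A_{\mathrm{circ}}\rVert+\lVert\tilde B - \tilde B_{\mathrm{circ}}\rVert\lesssim \epsilon|I_{\mathrm{m}}|^{-4}$.
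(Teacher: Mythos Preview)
Your proposal is correct and essentially identical to the paper's proof. Because $H$ is quadratic in the momenta, your integral vector field $Y=\int_0^1 \nabla_p H(q,s\nabla S^{\mathrm{u}}+(1-s)\nabla S^{\mathrm{s}})\,ds$ reduces exactly to $\nabla_p H$ evaluated at the midpoint, giving $Y_u=1+A^{\mathrm{u}}+A^{\mathrm{s}}$ and $Y_\beta=B^{\mathrm{u}}+B^{\mathrm{s}}$, which is precisely the operator $\widetilde{\mathcal L}$ of \eqref{eq:semisumdiffoperator}; your harmonic-by-harmonic right inverse with endpoints $v_\pm$ according to the sign of $l$ is the operator \eqref{eq:leftinverseoperatorfinaldomain}, and your symmetry argument for the circular part coincides with the paper's use of $\xi=t-\beta$.
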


\begin{proof}
 Using that both $S^{\mathrm{u},\mathrm{s}}$ satisfy the same Hamilton-Jacobi equation $ 
H(q,\nabla S^{\mathrm{u},\mathrm{s}})=0$
it is an straightforward computation to check that $\Delta S$ is a solution to $
\mathcal{\widetilde{L}} \Delta S=0$
with
\begin{equation}\label{eq:semisumdiffoperator}
\mathcal{\widetilde{L}}=\left(1+(\hat A^{\mathrm{s}}+\hat A^{\mathrm{u}}) \right) \partial_u +(\hat B^{\mathrm{s}}+\hat B^{\mathrm{u}}) \partial_\beta +G_{0}^3 \partial_t
\end{equation}
where 
\begin{equation}\label{eq:definitionAusBushat}
\hat{A}^{\mathrm{u}}=A^{\mathrm{u}}+\frac{\delta^{\mathrm{u}} }{2y_{\mathrm{h}}^{2}r_{\mathrm{h}}^{2}},\qquad\qquad\hat{B}^{\mathrm{u}}=B^{\mathrm{u}}-\frac{\delta^{\mathrm{u}} }{y_{\mathrm{h}}^{2}r_{\mathrm{h}}^{3}}
\end{equation}
and $A^{\mathrm{u},\mathrm{s}},B^{\mathrm{u},\mathrm{s}}$ are defined as in \eqref{eq:definitionAusBus}. One can now check that $\Delta\mathcal{S}\in\mathrm{Ker}\mathcal{L}$, if and only if, $ \Phi$ satisfies
\begin{equation}\label{eq:proofstraighteningdiff}
\mathcal{L} \phi_v= (\hat A^{\mathrm{s}}+\hat A^{\mathrm{u}})\circ \Phi\qquad\qquad\text{and}\qquad\qquad \mathcal{L} \phi_\beta=(\hat B^{\mathrm{s}}+\hat B^{\mathrm{u}})\circ \Phi.
\end{equation}
In order to rewrite \eqref{eq:proofstraighteningdiff} as a fixed point equation for $\Phi$ we introduce the left inverse operator $\mathcal{G}$ for $\mathcal{L}$ defined by the expression (here $v_+$  and $v_-$ are the top and bottom points and $v_0$ is any real point in the domain $D_{\kappa_2}$ defined in \eqref{eq:finalunstabledomain}),
\begin{equation}\label{eq:leftinverseoperatorfinaldomain}
\mathcal{G}(h)=\sum_{l\in\mathbb{Z}} \mathcal{G}^{[l]}(h)
\end{equation}
with
\[
\begin{aligned}
\mathcal{G}^{[l]}(h)=& \int_{v_+-v}^0 h^{[l]}(v+s,\theta) e^{ilG_{0}^3 s}\mathrm{d}s &\qquad\qquad\text{for}\qquad l>0\\
\mathcal{G}^{[0]}(h)=&\int_{v_0-v}^0 h^{[0]}(v+s,\theta)\mathrm{d}s &\qquad\qquad\text{for}\qquad l=0\\
\mathcal{G}^{[l]}(h)=&\int_{v_--v}^0 h^{[l]}(v+s,\theta) e^{ilG_{0}^3 s}\mathrm{d}s &\qquad\qquad\text{for}\qquad l<0.\\
\end{aligned}
\]
Therefore, it is enough to look for $\Phi$ satisfying
\[
\phi_v= \mathcal{G}((\hat A^{\mathrm{s}}+\hat A^{\mathrm{u}})\circ \Phi) \qquad\qquad\text{and}\qquad\qquad \phi_\theta=\mathcal{G}((\hat B^{\mathrm{s}}+\hat B^{\mathrm{u}})\circ \Phi).
\]
The proof of the first part of the lemma now follows from a standard fixed point argument along the lines (but considerably simpler) of the proof of Lemma \ref{lem:straighteninglinoperator} (see also Theorem 6.3 in  \cite{MR3455155}). In particular, the proof is easily completed using the estimates
\[
\lVert \hat A^{\mathrm{u},\mathrm{s}} \rVert_{1/2,\rho}\lesssim G_{0}^{-4} \qquad \lVert \hat B^{\mathrm{u},\mathrm{s}} \rVert_{3/2,\rho} \lesssim G_{0}^{-4},
\]
which are obtained in an straightforward manner from Proposition \ref{prop:extensionunstableparam} and the discussion at the beginning of Section \ref{sec:differencegenfunctionsmanifolds} by taking, for example, $\rho_3\leq \rho_2/2$. To deal with compositions, we make use of a natural extension of Lemma \ref{lem:compositionangularvariabletechnical} which allows to treat also changes of variables in $v$ (details can be found in \cite{MR3455155}).

We now prove the second part of the lemma. Introduce the angle $\xi=t-\beta$, and write $\Delta S_{\mathrm{circ}}(u,\xi;G_{0})$. Therefore, $\Delta S_{\mathrm{circ}}$ is a solution to $
\mathcal{\widetilde{L}}_{\mathrm{circ}} \Delta S_{\mathrm{circ}}=0$
where 
\begin{equation}\label{eq:semisumdiffoperatorcircular}
\mathcal{\widetilde{L}}_{\mathrm{circ}}=\left(1+(A^{\mathrm{s}}_{\mathrm{circ}}+A^{\mathrm{u}}_{\mathrm{circ}}) \right) \partial_u +(G_{0}^3 -B^{\mathrm{s}}_{\mathrm{circ}}-B^{\mathrm{u}}_{\mathrm{circ}})\partial_\xi
\end{equation}
and $A^{\mathrm{u},\mathrm{s}}_{\mathrm{circ}},B^{\mathrm{u},\mathrm{s}}_{\mathrm{circ}}$ are defined in \eqref{eq:defnABcircular}. Thus, $\Delta\mathcal{S}_{\mathrm{circ}}\in\mathrm{Ker}\mathcal{L}$, if and only if, $\tilde{\Phi}_{\mathrm{circ}}(v,\tilde{\xi})\equiv (v+\phi_{v,\mathrm{circ}}(v,\tilde{\xi}),\tilde{\xi}-\phi_{\theta,\mathrm{circ}}(v,\tilde{\xi}))$ satisfies
\[
\mathcal{L} \phi_{v,\mathrm{circ}}= (A^{\mathrm{s}}_{\mathrm{circ}}+A^{\mathrm{u}}_{\mathrm{circ}})\circ \tilde{\Phi}_{\mathrm{circ}}\qquad\qquad\text{and}\qquad\qquad \mathcal{L} \phi_{\theta,\mathrm{circ}}=(B^{\mathrm{s}}_{\mathrm{circ}}+B^{\mathrm{u}}_{\mathrm{circ}})\circ \tilde{\Phi}_{\mathrm{circ}}.
\]
The lemma follows using the estimates (see the proof of Theorem \ref{thm:stableparam})
\[
\lVert A^{\mathrm{u},\mathrm{s}}-A^{\mathrm{u},\mathrm{s}}_{\mathrm{circ}} \rVert_{1/2,\rho}\lesssim \zeta G_{0}^{-4} \qquad \lVert B^{\mathrm{u},\mathrm{s}}-B^{\mathrm{u},\mathrm{s}}_{\mathrm{circ}}  \rVert_{3/2,\rho} \lesssim \zeta G_{0}^{-4}\qedhere\
\]

\end{proof}

The following lemma  gives the exponential decay of the Fourier coefficients for functions in $\mathcal{Y}_{\nu,\rho}\cap\mathrm{Ker}\mathcal{L}$ (see also Lemma 6.7  in \cite{MR3455155})

\begin{lem}\label{lem:exponsmallness}
Fix $\nu,\rho\geq 0$ and let $h\in \mathcal{Y}_{\nu,\rho}$ be such that $h\in\mathrm{Ker}\mathcal{L}$. Then $h$ can be written as
\[
h(v,\theta,t)=\sum_{l\in\mathbb{Z}} \Lambda^{[l]}(\theta) e^{il(t-G_{0}^3v)}
\]
and, for some $C>0$ independent of $\lVert h\rVert_{\nu,\rho}$ and $G_{0}$,
\[
\begin{split}
\sup_{\theta\in\mathbb{T}_{\rho}}|\Lambda^{[l]}(\theta)|\lesssim & \lVert h\rVert_{\nu,\rho}\  (CG_{0})^{3(\nu+|l|/2)}\exp(-|l| G_{0}^3/3).
\end{split}
\]
\end{lem}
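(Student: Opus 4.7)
The plan is to combine two observations: first, any analytic function that lies in the kernel of the transport operator $\mathcal{L}=\partial_v+I_{\mathrm{m}}^3\partial_t$ is automatically constant along the characteristics $t-I_{\mathrm{m}}^3 v=\text{const}$, which dictates the claimed Fourier structure; second, once this structure is isolated, the weighted sup norm defining $\mathcal{Y}_{\nu,\rho}$ forces the coefficients to decay exponentially in $|l|$ because one can evaluate on points of $D_{\kappa_2}$ that approach the singular points $\pm i/3$ to within $O(|I_{\mathrm{m}}|^{-3})$.

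Concretely, I would start by expanding $h=\sum_{l\in\mathbb{Z}} h^{[l]}(v,\theta)\, e^{ilt}$ and noting that $\mathcal{L}h=0$ reads $\partial_v h^{[l]}+ilI_{\mathrm{m}}^3 h^{[l]}=0$, so necessarily
\[
h^{[l]}(v,\theta)=\Lambda^{[l]}(\theta)\,e^{-ilI_{\mathrm{m}}^3 v}
\]
for some analytic $\Lambda^{[l]}:\mathbb{T}_\rho\to\mathbb{C}$. Substituting back proves the desired representation $h(v,\theta,t)=\sum_l\Lambda^{[l]}(\theta)e^{il(t-I_{\mathrm{m}}^3 v)}$. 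This step is immediate, the only minor point to check being that the separation of variables is consistent with the complex strip/Fourier sum interpretation inherent to the space $\mathcal{Y}_{\nu,\rho}$.

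For the quantitative bound, I would fix $l>0$ and evaluate $h^{[l]}$ at the ``near-top'' point $v_l^{+}=i\bigl(1/3-\kappa_2|I_{\mathrm{m}}|^{-3}\bigr)\in D_{\kappa_2}$, where the weighted norm controls $h^{[l]}$. A direct computation gives
\[
|v_l^{+}-i/3|=\kappa_2|I_{\mathrm{m}}|^{-3},\qquad |v_l^{+}+i/3|\asymp 2/3,\qquad |e^{-ilI_{\mathrm{m}}^3 v_l^{+}}|=\exp\!\bigl(-l\operatorname{Re}(I_{\mathrm{m}}^3)/3+O(l)\bigr),
\]
since for $I_{\mathrm{m}}\in\Lambda_{\rho,I}$ the quantity $\kappa_2|I_{\mathrm{m}}|^{-3}\operatorname{Re}(I_{\mathrm{m}}^3)$ is $O(1)$. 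Solving the inequality
\[
\bigl|(v_l^{+}-i/3)^{\nu+l/2}(v_l^{+}+i/3)^{\nu-l/2}\bigr|\,|\Lambda^{[l]}(\theta)|\,|e^{-ilI_{\mathrm{m}}^3 v_l^{+}}|\leq \lVert h^{[l]}\rVert_{\nu,\rho,l}\leq e^{|l|\sigma}\lVert h\rVert_{\nu,\rho}
\]
for $|\Lambda^{[l]}(\theta)|$ yields, after absorbing the constants $\kappa_2,\sigma$ and the $(2/3)^{\pm\nu\mp l/2}$ factors into a single constant $C$, the asserted estimate $|\Lambda^{[l]}(\theta)|\lesssim \lVert h\rVert_{\nu,\rho}(C|I_{\mathrm{m}}|)^{3(\nu+|l|/2)}\exp(-l\operatorname{Re}(I_{\mathrm{m}}^3)/3)$. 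The case $l<0$ is symmetric: evaluate instead at $v_l^{-}=-i\bigl(1/3-\kappa_2|I_{\mathrm{m}}|^{-3}\bigr)$, which exchanges the roles of $(v-i/3)$ and $(v+i/3)$ and produces the factor $\exp(-|l|\operatorname{Re}(I_{\mathrm{m}}^3)/3)$.

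The only delicate point is to verify that $v_l^{\pm}$ actually lie in $D_{\kappa_2}$ as defined in \eqref{eq:finalunstabledomain}: for purely imaginary $v$ at imaginary part $1/3-\kappa_2|I_{\mathrm{m}}|^{-3}$, the first two inequalities are saturated (with equality), while the lower bound $|\operatorname{Im}v|\geq 1/6+\kappa_2|I_{\mathrm{m}}|^{-3}$ is obviously satisfied for $I_{\mathrm{m}}$ large. Thus $v_l^{\pm}\in D_{\kappa_2}$ provided $I_*$ is chosen large enough, and no other step requires genuine work. The main ``obstacle,'' if any, is simply bookkeeping of the real/complex distinction in $I_{\mathrm{m}}^3$ (using $\operatorname{Re}(I_{\mathrm{m}}^3)=(\operatorname{Re}I_{\mathrm{m}})^3+O(|I_{\mathrm{m}}|)$ inside the strip $\Lambda_{\rho,I}$) so that the exponential gain $\exp(-|l|\operatorname{Re}(I_{\mathrm{m}}^3)/3)$ comes out cleanly.
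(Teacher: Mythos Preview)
Your proposal is correct and follows essentially the same route as the paper: reduce to $h^{[l]}(v,\theta)=\Lambda^{[l]}(\theta)e^{-ilI_{\mathrm{m}}^3 v}$ via the kernel condition, then evaluate at the extremal points $v_\pm=\pm i(1/3-\kappa_2|I_{\mathrm{m}}|^{-3})\in D_{\kappa_2}$ where the weight $(v\mp i/3)^{\nu\pm l/2}$ is smallest and the exponential $e^{\mp ilI_{\mathrm{m}}^3 v}$ contributes the factor $\exp(-|l|\mathrm{Re}(I_{\mathrm{m}}^3)/3)$. The paper's proof is slightly terser in its bookkeeping but the argument is the same.
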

\begin{proof}
Write 
\[
h(v,\theta,t)=\sum_{l\in\mathbb{Z}} h^{[l]}(v,\theta)e^{il t}.
\]
Since $h\in \mathrm{Ker}\mathcal{L}$ 
\[
h(v,\theta,t)=\sum_{l\in\mathbb{Z}} \Lambda^{[l]}(\theta)e^{il (t-G_{0}^3v)},
\]
where $\Lambda ^{[l]}(\theta)= h^{[l]} (v,\theta) e^{il G_{0}^3 v}$ is independent of $v$. For $l>0$, we evaluate at $v_+= i( 1/3- \kappa G_{0}^{-3})$ and use that 
\[
\lVert h^{[l]} \rVert_{\nu,\rho}\leq G_{0}^{3|l|/2} \lVert h \rVert_{\nu,\rho}
\]
to obtain that 
\[
\begin{split}
|\Lambda^{[l]}|\leq & G_{0}^{3\nu} \lVert h^{[l]}\rVert_{\nu,\rho}\  \exp(-|l| G_{0}^3 (1/3- \kappa G_{0}^{-3}) )\\
\leq &\lVert h\rVert_{\nu,\rho}\  (CG_{0})^{3(\nu+|l|/2)} \exp(-|l| G_{0}^3/3)\\
\end{split}
\]
for some $C>0$. The result for $l<0$ is obtained analogously evaluating at  $v_-=- i( 1/3- \kappa G_{0}^{-3})$.
\end{proof}

We now have all the ingredients to complete the proof of Proposition \ref{prop:approxgenfunctionbyMelnikov}.

\begin{rem}
In the following, we rename as $\rho$ the constant $\rho_3>0$ which was obtained in Lemma \ref{lem:straighteningcv}.
\end{rem}

\begin{proof}[Proof of Proposition \ref{prop:approxgenfunctionbyMelnikov}]
Recall that $\tilde{L}$, which was defined in \eqref{eq:defnMelnikovPot}, satisfies
\[
\tilde{L}(u,\beta,t; G_{0},\zeta)=L(t-G_{0}^3 u,\beta;G_{0},\zeta),
\]
where $L(\sigma,\beta;G_{0},\zeta)$ was defined in \eqref{eq:defnMelnikovPotential}. Let $\tilde{z}=(v,\theta,t)$. Since $\mathcal{E}=\Delta\mathcal{S}-\langle (\boldsymbol\delta^{\mathrm{u}}-\boldsymbol\delta^{\mathrm{s}}), \tilde{q}\rangle -\tilde{L}\in\mathrm{Ker}\mathcal{L}$, and $\mathcal{E}\in\mathcal{Y}_{1/2,\rho}$, it is enough to estimate $\lVert \mathcal{E}\rVert_{1/2,\rho}$ and apply Lemma \ref{lem:exponsmallness}. To that end, we write 
\[
\mathcal{E}=  \mathcal{E}_1+ \mathcal{E}_2
\]
with $\mathcal{E}_1=\Delta S-\langle \boldsymbol\delta^{\mathrm{u}}-\boldsymbol\delta^{\mathrm{s}},\tilde{z}\rangle-\tilde{L}$,  and $\mathcal{E}_2=\Delta \mathcal{S}-\Delta S$. Using that $|\delta^{\mathrm{u},\mathrm{s}}|\lesssim \zeta G_{0}^{-5}$, the estimate for $\lVert \phi_\theta\rVert_{0,\rho}$ in Lemma \ref{lem:straighteningcv} and the estimate for $\Delta S-\langle\delta^{\mathrm{u}}-\delta ^{\mathrm{s}},q\rangle -\tilde{L}$ in Theorem \ref{thm:differencegenfunctcommon}
we obtain
\[
\lVert \mathcal{E}_1 \rVert_{1/2,\rho} \lesssim G_{0}^{-8}.
\]
In order to bound $\mathcal{E}_2$, it follows from the mean value theorem, the estimates for $\lVert S^{\mathrm{u},\mathrm{s}} \rVert_{3/2,\rho}$,  which can be deduced from Proposition \ref{prop:extensionunstableparam} and the analogous version for $T^{\mathrm{s}}$ (see the discussion at Section \ref{sec:differencegenfunctionsmanifolds}), and the estimates for$\lVert \phi_v\rVert_{0,\rho}, \lVert \phi_\theta\rVert_{1/2,\rho}$ in Lemma \ref{lem:straighteningcv},  that
\[
\lVert \mathcal{E}_2 \rVert_{1/2,\rho}\lesssim G_{0}^{-13/2}.
\]
Applying Lemma \ref{lem:exponsmallness}, we obtain that 
\begin{equation}\label{eq:Asymptoticdifferenceharmonics}
\Delta \mathcal{S}-\langle \boldsymbol\delta^{\mathrm{u}}-\boldsymbol\delta^{\mathrm{s}},\tilde{z}\rangle-\tilde{L}=\sum_{l\in\mathbb{Z}} \mathcal{E}^{[l]}(\theta) e^{il(t-G_{0}^3v)}
\end{equation}
where, there exists some $C>0$, such that for $l\neq0$,
\[
\begin{split}
\sup_{\theta\in \mathbb{T}_{\rho}} |\mathcal{E}^{[l]} (\theta) |\lesssim & (CG_{0})^{3(1+|l|)/2} \lVert \mathcal{E} \rVert_{1/2,\rho} \exp(-|l|G_{0}^3/3)\\
\lesssim &(CG_{0})^{-4+3|l|/2} \exp(-|l| G_{0}^3/3)\\
\end{split}
\]
as was to be shown. 
\end{proof}

Finally, we also state the following lemma which will prove useful in the proof, in Section \ref{sec:cricitcalpointsgenfunct},  of Theorem \ref{thm:mainthmcriticalpoints}.

\begin{lem}\label{lem:summaryerrorsapprox}
Define the function $\mathcal{E}_{\mathrm{circ}}(\theta,\sigma;G^{\mathrm{u}},G^{\mathrm{s}})$ given by
\begin{equation}\label{eq:finalerrorapproxcircular}
\mathcal{E}_{\mathrm{circ}}=\Delta \mathcal{S}-\Delta \mathcal{S}_{\mathrm{circ}}-( (\delta^{\mathrm{u}}-\delta^{\mathrm{s}})\theta+ L-L_{\mathrm{circ}})
\end{equation}
where $\Delta \mathcal{S}$ and $\Delta \mathcal{S}_{\mathrm{circ}}$ are defined in  Lemma \ref{lem:straighteningcv}, $L$ is defined in \eqref{eq:defnMelnikovPotential} and $L_{\mathrm{circ}}(\sigma-\theta;G_{0})=L(\sigma,\theta;G_{0},0)$. Then, we have that 
\[
\lVert \mathcal{E}_{\mathrm{circ}} \rVert_{1/2,\rho} \lesssim \zeta G_{0}^{-13/2}.
\]
\end{lem}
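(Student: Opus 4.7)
The plan is to mirror the two-term decomposition used in the proof of Proposition \ref{prop:approxgenfunctionbyMelnikov}, now comparing the elliptic and circular versions of each piece so that an extra factor of $\epsilon$ surfaces. Following the notation of that proof, I introduce $\mathcal{E}_1 := \Delta S - \langle \delta^{\mathrm{u}}-\delta^{\mathrm{s}},q\rangle - \tilde{L}$ and $\mathcal{E}_2 := \Delta \mathcal{S} - \Delta S = \Delta S\circ\Phi - \Delta S$, together with their circular analogs $\mathcal{E}_1^{\mathrm{circ}} := \Delta S_{\mathrm{circ}} - \tilde{L}_{\mathrm{circ}}$ and $\mathcal{E}_2^{\mathrm{circ}} := \Delta S_{\mathrm{circ}}\circ\Phi_{\mathrm{circ}} - \Delta S_{\mathrm{circ}}$, so that
\[
\mathcal{E}_{\mathrm{circ}} = (\mathcal{E}_1 - \mathcal{E}_1^{\mathrm{circ}}) + (\mathcal{E}_2 - \mathcal{E}_2^{\mathrm{circ}}).
\]

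For the first summand, a direct computation shows $\mathcal{E}_1 - \mathcal{E}_1^{\mathrm{circ}} = (\Delta S - \Delta S_{\mathrm{circ}}) - \langle \delta^{\mathrm{u}}-\delta^{\mathrm{s}},q\rangle - (\tilde L - \tilde L_{\mathrm{circ}})$, which is exactly the quantity controlled by the second estimate of Theorem \ref{thm:differencegenfunctcommon} in $\mathcal{Y}_{1,\rho_2}$ at the level $\epsilon |I_{\mathrm{m}}|^{-7}$. Since on the bounded domain $D_{\kappa_2}$ the distance to the singularities $u=\pm i/3$ is at least $\kappa_2|I_{\mathrm{m}}|^{-3}$, a harmonic-by-harmonic comparison of the two weighted sup-norms produces the continuous inclusion $\mathcal{Y}_{1,\rho}\hookrightarrow \mathcal{Y}_{1/2,\rho}$ with operator norm bounded by $C|I_{\mathrm{m}}|^{3/2}$. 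Hence this contribution is controlled in $\mathcal{Y}_{1/2,\rho}$ by $\epsilon|I_{\mathrm{m}}|^{-11/2}$, exactly at the target exponent.

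For the second summand I would further split by adding and subtracting $\Delta S_{\mathrm{circ}}\circ \Phi$, writing
\[
\mathcal{E}_2 - \mathcal{E}_2^{\mathrm{circ}} = \bigl[(\Delta S - \Delta S_{\mathrm{circ}})\circ\Phi - (\Delta S - \Delta S_{\mathrm{circ}})\bigr] + \bigl[\Delta S_{\mathrm{circ}}\circ\Phi - \Delta S_{\mathrm{circ}}\circ\Phi_{\mathrm{circ}}\bigr].
\]
To the first bracket I apply the mean value theorem along the segment $\mathrm{Id}+s(\phi_v,\phi_\theta,0)$: the $\epsilon$-factor is supplied by the bound $\lVert\Delta S - \Delta S_{\mathrm{circ}}\rVert \lesssim \epsilon |I_{\mathrm{m}}|^{-3}$ (a consequence of the second estimate in Theorem \ref{thm:stableparam} and its stable counterpart together with the size of $\delta^{\mathrm{u},\mathrm{s}}$), while the $|I_{\mathrm{m}}|^{-4}$ factor comes from $\lVert\phi_v\rVert_{0,\rho}$, $\lVert\phi_\theta\rVert_{1/2,\rho}$ via Lemma \ref{lem:straighteningcv}. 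To the second bracket I apply the mean value theorem along the segment joining $\Phi$ and $\Phi_{\mathrm{circ}}$: here the $\epsilon$-factor is supplied by the second part of Lemma \ref{lem:straighteningcv}, which provides $\lVert\phi_v - \phi_{v,\mathrm{circ}}\rVert_{0,\rho}, \lVert\phi_\theta - \phi_{\theta,\mathrm{circ}}\rVert_{1/2,\rho} \lesssim \epsilon|I_{\mathrm{m}}|^{-4}$. In both brackets the remaining weight counting (including the Cauchy estimates needed to pass from $\Delta S - \Delta S_{\mathrm{circ}}$ and $\Delta S_{\mathrm{circ}}$ to their $u$- and $\beta$-derivatives) replicates the one carried out in the proof of Proposition \ref{prop:approxgenfunctionbyMelnikov} for $\mathcal{E}_2$, and hence yields the bound $\epsilon|I_{\mathrm{m}}|^{-11/2}$ in $\mathcal{Y}_{1/2,\rho}$.

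The hard part will be keeping the weight bookkeeping sharp throughout the second step. The distance from $D_{\kappa_2}$ to the singularities $\pm i/3$ is only $\kappa_2|I_{\mathrm{m}}|^{-3}$, so naive Cauchy estimates for the derivatives of $\Delta S - \Delta S_{\mathrm{circ}}$ and $\Delta S_{\mathrm{circ}}$ introduce losses of order $|I_{\mathrm{m}}|^{3}$. These losses have to be absorbed via the graded-algebra property of the $\mathcal{Y}_{\nu,\rho}$-norms in a suitable scale of weights, together with the norm comparison $\mathcal{Y}_{\nu,\rho}\hookrightarrow \mathcal{Y}_{1/2,\rho}$ with cost $|I_{\mathrm{m}}|^{3(\nu-1/2)}$, so that each contribution saturates exactly at $\epsilon|I_{\mathrm{m}}|^{-11/2}$ without further loss.
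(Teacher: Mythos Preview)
Your approach is essentially the same as the paper's: both rely on the second estimate of Theorem~\ref{thm:differencegenfunctcommon} for the ``elliptic minus circular'' piece and on the $\epsilon|I_{\mathrm m}|^{-4}$ bounds for $\Phi-\Phi_{\mathrm{circ}}$ from Lemma~\ref{lem:straighteningcv}, combined with mean-value estimates for the composition differences. The only difference is the bookkeeping: the paper splits $\mathcal{E}_{\mathrm{circ}}$ into four pieces $\mathcal{E}_{\mathrm{circ},1}=\big(\Delta S-\Delta S_{\mathrm{circ}}-(I^{\mathrm u}-I^{\mathrm s})\beta-(\tilde L-\tilde L_{\mathrm{circ}})\big)\circ\Phi$, $\mathcal{E}_{\mathrm{circ},2}=\Delta S_{\mathrm{circ}}\circ\Phi-\Delta\mathcal S_{\mathrm{circ}}$, $\mathcal{E}_{\mathrm{circ},3}=-(I^{\mathrm u}-I^{\mathrm s})\phi_\theta$, $\mathcal{E}_{\mathrm{circ},4}=(\tilde L-\tilde L_{\mathrm{circ}})\circ\Phi-(\tilde L-\tilde L_{\mathrm{circ}})$, whereas your three-term regrouping absorbs $\mathcal{E}_{\mathrm{circ},3}$ and $\mathcal{E}_{\mathrm{circ},4}$ into your first bracket $[(\Delta S-\Delta S_{\mathrm{circ}})\circ\Phi-(\Delta S-\Delta S_{\mathrm{circ}})]$ (just be careful that $\Delta S-\Delta S_{\mathrm{circ}}$ contains the non-periodic term $(I^{\mathrm u}-I^{\mathrm s})\beta$, so the bound you quote should be read as a bound on its derivatives, which is all the mean-value argument needs).
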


\begin{proof}
We write $\mathcal{E}_{\mathrm{circ}}=\mathcal{E}_{\mathrm{circ},1}+\mathcal{E}_{\mathrm{circ},2}+\mathcal{E}_{\mathrm{circ},3}+\mathcal{E}_{\mathrm{circ},4}$ with
\[
\begin{split}
\mathcal{E}_{\mathrm{circ},1}= &\left(\Delta S-\Delta S_{\mathrm{circ}}- (\delta^{\mathrm{u}}-\delta^{\mathrm{s}})\beta-(\tilde{L}-\tilde{L}_{\mathrm{circ}}) \right)\circ \Phi\\
\mathcal{E}_{\mathrm{circ},2}= & \Delta S_{\mathrm{circ}}\circ \Phi-\Delta \mathcal{S}_{\mathrm{circ}}\\
\mathcal{E}_{\mathrm{circ},3}=& -(\delta^{\mathrm{u}}-\delta^{\mathrm{s}})\phi_\theta\\
\mathcal{E}_{\mathrm{circ},4}=&-(\tilde{L}-\tilde{L}_\mathrm{circ})+(\tilde{L}-\tilde{L}_\mathrm{circ})\circ \Phi
\end{split}
\]
On one hand, Theorem \ref{thm:differencegenfunctcommon} implies that $\lVert \mathcal{E}_{\mathrm{circ},1}\rVert_{1,\rho}\lesssim \zeta G_{0}^{-8}$. On the other hand, the  estimates 
\[
\lVert \partial_v\Delta S_{\mathrm{circ}}\rVert_{3/2,\rho},\lVert \partial_\beta \Delta S_{\mathrm{circ}}\rVert_{1/2,\rho}\lesssim G_{0}^{-4},
\]
which can be deduced from Theorem \ref{thm:differencegenfunctcommon}, and the estimates $\lVert \phi_v-\phi_{v,\mathrm{circ}}\rVert_{0,\rho}, \lVert \phi_\theta-\phi_{\theta,\mathrm{circ}}\rVert_{1/2,\rho}\lesssim \zeta G_{0}^{-4}$ obtained in Lemma \ref{lem:straighteningcv} imply that $\lVert \mathcal{E}_{\mathrm{circ},2}\rVert_{1/2,\rho}\lesssim \zeta  G_{0}^{-13/2}$. For the third term, since $|G^{\mathrm{u}}-G^{\mathrm{s}}|\lesssim \zeta  G_{0}^{-4}$, the estimate $\lVert \phi_v\rVert_{0,\rho}\lesssim  G_{0}^{-4}$ in Lemma  \ref{lem:straighteningcv} shows that $\lVert \mathcal{E}_{\mathrm{circ},3}\rVert_{1/2,\rho}\lesssim \zeta  G_{0}^{-8}$. Finally, since $\lVert \partial_v (\tilde{L}-\tilde{L}_{\mathrm{circ}})\rVert_{3/2,\rho},\lVert \partial_\beta (\tilde{L}-\tilde{L}_{\mathrm{circ}})\rVert_{1/2,\rho}\lesssim \zeta  G_{0}^{-4}$, which can be deduced from   Theorem \ref{thm:differencegenfunctcommon}, we obtain that $\lVert \mathcal{E}_{\mathrm{circ},4}\rVert_{1/2,\rho}\lesssim \zeta  G_{0}^{-13/2}$.
\end{proof}


\subsection{The critical points of the function $\Delta \mathcal{S}$}\label{sec:cricitcalpointsgenfunct}

In this section we use Lemma \ref{lem:mainMelnikov} and  Proposition \ref{prop:approxgenfunctionbyMelnikov} to  provide a proof of Theorem \ref{thm:mainthmcriticalpoints} and Proposition \ref{prop:sndthmcriticalpoints}.

\begin{rem}
Since we always assume that $G^{\mathrm{u}}, G^{\mathrm{s}}\in\mathbb{G}_\rho(G_0)$, all the errors, which a priori depend on both $G^{\mathrm{u}},G^{\mathrm{s}}$ can be estimated in terms of the value of  $G_0$ alone
\end{rem}

\begin{proof}[Proof of Theorem \ref{thm:mainthmcriticalpoints}]
Throughout the proof we will use the following notation. Let 
\[
K=\{(\theta,\sigma,G^{\mathrm{u}},G^{\mathrm{s}})\in \mathbb{T}^2_\rho\times(\mathbb{G}_\rho(G_0))^2\}.
\]
We look for zeros of the function
\begin{equation}\label{eq:mapF}
F(\theta,\sigma,G^{\mathrm{u}},G^{\mathrm{s}})\equiv (\partial_\sigma \Delta \mathcal{S}, \partial_\theta \Delta \mathcal{S})(\theta,\sigma;G^{\mathrm{u}}, G^{\mathrm{s}}),
\end{equation}
which are of the form $(\theta,\sigma,G^{\mathrm{u}},G^{\mathrm{s}})=(\theta,\sigma_\pm(\theta,G^{\mathrm{u}}),G^{\mathrm{u}},G^{\mathrm{s}}(\theta,G^{\mathrm{u}}))$.

In order to obtain asymptotic formulas for the critical points, we divide the proof in two steps. First we study the existence of critical points $\sigma_{\pm,\mathrm{circ}}(\theta,G^{\mathrm{u}})$  of the function $\Delta \mathcal{S}_{\mathrm{circ}}(\sigma,\theta,G^{\mathrm{u}})=\Delta \mathcal{\widehat{S}}_{\mathrm{circ}}(\sigma-\theta;G^{\mathrm{u}})$,  and then prove the existence of critical points of the function $F$ which are $\zeta$ close to $(\sigma,G^{\mathrm{s}})=(\sigma_{\pm,\mathrm{circ}}(\theta,G^{\mathrm{u}}) ,G^{\mathrm{u}})$.  Since for all $(\theta,\sigma,G^{\mathrm{u}},G^{\mathrm{s}})\in K$ (see \eqref{eq:firstharmonicMelnikov})
\[
\partial_\sigma L_{\mathrm{circ}}(\sigma-\theta;G^{\mathrm{u}})=\mu(1-\mu)(1-2\mu)\sqrt{\frac{\pi}{2(G^{\mathrm{u}})^3}}\exp(-(G^{\mathrm{u}})^3/3) \sin (\sigma-\theta)+ \mathcal{O}(G_0^{-5/2} \exp(-G_0^3/3))
\]
and
\[
|\partial_\sigma \Delta S_{\mathrm{circ}}(\sigma,\theta;G^{\mathrm{u}})-\partial_\sigma L_{\mathrm{circ}}(\sigma-\theta; G^{\mathrm{u}}) |\lesssim G_0^{-7/2} \exp(-G_0^3/3),
\]
a direct application of the implicit function theorem shows that there exist nondegenerate critical points 
\begin{equation}\label{eq:sigmatildecirc}
\sigma_{+,\mathrm{circ}}(\theta,G^{\mathrm{u}})=\theta+\mathcal{O}(G_0^{-1})\qquad\qquad \sigma_{-,\mathrm{circ}}(\theta,G^{\mathrm{u}})=\theta+\pi+\mathcal{O}(G_0^{-1})
\end{equation}
of the function $\partial_\sigma \Delta \mathcal{S}_{\mathrm{circ}}(\sigma,\theta,I^ u)$. Therefore, to analyze the zeros of $F$, we write 
\[
\begin{split}
\partial_\sigma \Delta \mathcal{S}= &  \partial_\sigma \Delta \mathcal{S}_{\mathrm{circ}} +\mathcal{E}_\sigma\\
\partial_\theta \Delta \mathcal{S}=& G_0^{-1}(G^{\mathrm{u}}-G^{\mathrm{s}})+\partial_\theta (L-L_{\mathrm{circ}}) +\mathcal{E}_{\theta,1}+\mathcal{E}_{\theta,2}\\
\end{split}
\]
with
\begin{equation}\label{eq:CPnotationerrors}
\begin{split}
\mathcal{E}_\sigma=&\partial_\sigma ( \Delta \mathcal{S}- \Delta \mathcal{S}_{\mathrm{circ}})\\
\mathcal{E}_{\theta,1}=&\partial_\theta (\Delta \mathcal{S}- \Delta \mathcal{S}_{\mathrm{circ}}- ( G_0^{-1}(G^{\mathrm{u}}-G^{\mathrm{s}})\theta+ L-L_{\mathrm{circ}}))\\
\mathcal{E}_{\theta,2}=&\partial_\theta \Delta \mathcal{S}_{\mathrm{circ}}
\end{split}
\end{equation}
The existence of nondegenerate zeros of the function $F(\theta,\sigma,G^{\mathrm{u}},G^{\mathrm{s}})$ will be a direct consequence of the asymptotic formulas in Lemma \ref{lem:mainMelnikov}, the estimates in  Lemma \ref{lem:summaryerrorsapprox} and the implicit function theorem. The first step is to estimate the error terms $\mathcal{E}_\sigma, \mathcal{E}_{\theta,1}$ and  $\mathcal{E}_{\theta,2}$. We write $\mathcal{E}_\sigma=\partial_\sigma (L-L_{\mathrm{circ}})+\mathcal{E}_{\mathrm{circ}}$ where $\mathcal{E}_{\mathrm{circ}}$ has been defined in \eqref{eq:finalerrorapproxcircular}. Therefore, the  asymptotic formulas in Lemma \ref{lem:mainMelnikov}, the fact that $\mathcal{E}_{\mathrm{circ}} \in\mathrm{Ker}\mathcal{L}$ and the estimates in  Lemma \ref{lem:summaryerrorsapprox} imply that 
\[
|\mathcal{E}_\sigma|\lesssim \zeta G_0^{1/2}  \exp(-G_0^3/3).
\]
The estimate in  Lemma \ref{lem:summaryerrorsapprox} implies that 
\[
|\mathcal{E}_{\theta,1}|\lesssim \zeta G_0^{-13/2}.
\]
Moreover, since
\[
|\partial^2_{\sigma\theta} \Delta \mathcal{S}_{\mathrm{circ}} |\lesssim  |G^{\mathrm{u}}|^{-3/2} \exp(-G_0^3/3),
\]
if we define (for a sufficiently large, but fixed, $C>0$)
\[
K_\pm=\{(\theta,\sigma,G^{\mathrm{u}},G^{\mathrm{s}})\in K\colon |\sigma-\sigma_{\pm,\mathrm{circ}}(\theta,G^{\mathrm{u}})|\leq C \zeta G_0^{2} \}
\]
we obtain that 
\[
\sup_{(\theta,\sigma)\in K_\pm}|\mathcal{E}_{\theta,2}|\lesssim \zeta |G^{\mathrm{u}}|^{1/2} \exp(-G_0^3/3).
\]
Therefore, in view of the asymptotic expression in Lemma \ref{lem:mainMelnikov} 
\begin{equation}\label{eq:CPderivativesMelnikov}
\partial_\theta (L-L_\mathrm{circ})(\theta,G^{\mathrm{u}})=\mu(1-\mu)(1-2\mu) \frac{15\pi \zeta}{8(G^{\mathrm{u}})^6} \sin\theta +\mathcal{O}(\zeta G_0^{-13/2}),
\end{equation}
we take (we use that $G^{\mathrm{u}}\sim G_0)$
\begin{equation}\label{eq:CPfirstapprox}
\tilde{\sigma}_{\pm}(\theta,G^{\mathrm{u}})=\sigma_{\pm,\mathrm{circ}}(\theta,G^{\mathrm{u}})\qquad\qquad \hat{G}^{\mathrm{s}}_\pm(\theta,G^{\mathrm{u}})=G^{\mathrm{u}}+ G^{\mathrm{u}}\partial_\theta (L-L_\mathrm{circ})(\theta,\tilde{\sigma}_\pm(\theta,G^{\mathrm{u}}),G^{\mathrm{u}}),
\end{equation}
where $\sigma_{\pm,\mathrm{circ}}(\theta,G^{\mathrm{u}})$ are defined in \eqref{eq:sigmatildecirc}, as approximate solutions. Indeed, taking into account the estimates for $\mathcal{E}_\sigma,\mathcal{E}_{\theta,1}$ and $\mathcal{E}_{\theta,2}$ defined in\eqref{eq:CPnotationerrors}, for all $(\theta,\tilde{\sigma}_{\pm},G^{\mathrm{u}},\hat{I}^{\mathrm{s}}_\pm)\in K$
\begin{equation}\label{eq:CPerrorsfirstapprox}
F(\theta,\tilde{\sigma}_{\pm},G^{\mathrm{u}},\tilde{G}^{\mathrm{s}})=\left( \mathcal{O}(\zeta G_0^{-1}\exp(-G_0^3/3),\mathcal{O}(\zeta G_0^{-13/2})\right),
\end{equation}
and these estimates extend to $(\theta,\sigma,G^{\mathrm{u}},G^{\mathrm{s}})\in \tilde{K}_\pm\equiv \{(\theta,\sigma,G^{\mathrm{u}},G^{\mathrm{s}})\in K_\pm \colon |G^{\mathrm{s}}-\hat{G}^{\mathrm{s}}_\pm|\leq \zeta G_0^{-5} \}$.
Denote by $A\pm$ the differential of the map $(\sigma,G^{\mathrm{s}})\mapsto F(\theta,\sigma,G^{\mathrm{u}},G^{\mathrm{s}})$ evaluated at $(\theta,\tilde{\sigma}_{\pm},G^{\mathrm{u}},\hat{G}^{\mathrm{s}}_\pm)$. It is an straightforward but tedious computation to check that the asymptotic expression in Lemma \ref{lem:mainMelnikov}  and the estimates in \ref{lem:summaryerrorsapprox} imply
\[
\begin{split}
A_\pm= &\left( \begin{array} {cc}
\pm 2 \mu(1-\mu)L_{1,1} & 0\\
0 & -1\\ \end{array} \right) \\
&+ \left( \begin{array} {cc}
\mathcal{O}( (G_0^{-7/2}+\zeta G_0^{1/2})\exp(-G_0^3/3))& \mathcal{O}(G_0^{1/2}\exp(-G_0^3/3))\\
 \mathcal{O}( G_0^{-3/2}\exp(-G_0^3/3) )& \mathcal{O}(G_0^{-8})\\ \end{array} \right).
\end{split}
\] 
Therefore, a direct application of the Implicit function theorem, together with the fact that (see \eqref{eq:firstharmonicMelnikov})
\[
|L_{1,1}|\sim  G_{0}^{-3/2} \exp(-G_0^3/3)),
\]
yields the existence of $G_0$ and
\begin{equation}\label{eq:CPsolutions}
\sigma_\pm(\theta,G^{\mathrm{u}})=\tilde{\sigma}_\pm(\theta,G^{\mathrm{u}})+\mathcal{O}(\zeta G_0^2)\qquad\qquad \tilde{G}^{\mathrm{s}}_\pm(\theta,G^{\mathrm{u}})=\hat{G}^{\mathrm{s}}_\pm(\theta,G^{\mathrm{u}})+\mathcal{O}(\zeta G_{0}^{-13/2})
\end{equation}
such that, for all $(\theta,G^{\mathrm{u}})\in  \Lambda_{\rho}=\mathbb{T}_\rho\times \Lambda_{\rho,I}$, we have
\[
F(\theta,\sigma_\pm(\theta,G^{\mathrm{u}}),G^{\mathrm{u}},\tilde{G}_\pm^{\mathrm{s}}(\theta,G^{\mathrm{u}}))=0.\qedhere\
\]
\end{proof}

It will be convenient for the proofs of Theorems \ref{thm:mainthmscattmaps} and \ref{thm:mainthmscattmaps2}, which will be given in Section \ref{sec:asymptformscattmap}, to state now the following more technical version of Theorem \ref{thm:mainthmcriticalpoints}, which includes the asymptotic formulas for the functions $\sigma_\pm(\theta,G^{\mathrm{u}}),\tilde{G}^{\mathrm{s}}_\pm(\theta,G^{\mathrm{u}})$  obtained in the proof of Theorem \ref{thm:mainthmcriticalpoints} above.

\begin{lem}\label{lem:technicallemmacriticalpoints}
Let $G_0$ be large enough, let  $(\theta,G^{\mathrm{u}})\in \mathbb{T}_\rho\times\mathbb{G}_\rho(G_0)$ and let
\[
(\theta,G^{\mathrm{u}})\mapsto(\sigma_\pm(\theta,G^{\mathrm{u}}),\tilde{G}^{\mathrm{s}}_\pm(\theta,G^{\mathrm{u}}))
\]
be the real analytic functions satisfying
\[
\partial_\sigma \Delta \mathcal{S} (\sigma_\pm(\theta,G^{\mathrm{u}}),\theta;G^{\mathrm{u}},\tilde{G}^{\mathrm{s}}_\pm(\theta,G^{\mathrm{u}}))=0\qquad\qquad \partial_\theta \Delta \mathcal{S} (\sigma_\pm(\theta,G^{\mathrm{u}}),\theta;G^{\mathrm{u}},\tilde{G}^{\mathrm{s}}_\pm(\theta,G^{\mathrm{u}}))=0,
\]
which were obtained in Theorem \ref{thm:mainthmcriticalpoints}. Then, for all $(\theta,G^{\mathrm{u}})\in \mathbb{T}_\rho\times\mathbb{G}_\rho(G_0)$ we have
\[
\sigma_+=\theta+\mathcal{O}(G_0^{-1}),\qquad\sigma_-=\theta+\pi+\mathcal{O}(G_0^{-1})
\]
and
\[
\tilde{G}^{\mathrm{s}}_\pm(\theta,G^{\mathrm{u}})= G^{\mathrm{u}}+\partial_\theta \mathcal{L}_\pm(\theta,G^{\mathrm{u}})+ \mathcal{O}(\zeta G_0^{-13/2}),
\]
where $\mathcal{L}_\pm(\theta,G^{\mathrm{u}})$  is the Melnikov potential defined in \eqref{eq: DefinitionredMelnikovPotential}.
\end{lem}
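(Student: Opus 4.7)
The lemma is essentially a bookkeeping consequence of the proof of Theorem \ref{thm:mainthmcriticalpoints}, where the desired asymptotics for $\sigma_\pm$ and $\tilde{I}^{\mathrm{s}}_\pm$ were already produced as a byproduct of the implicit function theorem argument. My plan is therefore to simply extract and reassemble the relevant estimates, relying on the asymptotic formulas from Lemma \ref{lem:mainMelnikov} and Lemma \ref{lem:mainpropMelnikov} to identify the leading terms with the reduced Melnikov potentials $\mathcal{L}_\pm$.

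For the first assertion, I would combine the final correction \eqref{eq:CPsolutions}, the first approximation \eqref{eq:CPfirstapprox} and the explicit asymptotics \eqref{eq:sigmatildecirc} of the critical points of the circular difference $\Delta\mathcal{S}_{\mathrm{circ}}$. This immediately yields
\[
\sigma_+(\theta,I^{\mathrm{u}})=\theta+\mathcal{O}(|I^{\mathrm{u}}|^{-1})+\mathcal{O}(\epsilon |I^{\mathrm{u}}|^2),\qquad \sigma_-(\theta,I^{\mathrm{u}})=\theta+\pi+\mathcal{O}(|I^{\mathrm{u}}|^{-1})+\mathcal{O}(\epsilon |I^{\mathrm{u}}|^2).
\]
The only thing left is to observe that the hypothesis $\epsilon\leq I_*^{-3}$ together with the restriction $I^{\mathrm{u}}\leq \epsilon^{-1/3}$ imposed in $\Lambda_{\rho,I}$ implies $\epsilon|I^{\mathrm{u}}|^3\leq 1$, so $\epsilon|I^{\mathrm{u}}|^2\leq |I^{\mathrm{u}}|^{-1}$, allowing the two error terms to be absorbed into a single $\mathcal{O}(|I^{\mathrm{u}}|^{-1})$ remainder.

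For the second assertion, the approximation \eqref{eq:CPsolutions} gives $\tilde{I}^{\mathrm{s}}_\pm(\theta,I^{\mathrm{u}}) = \hat{I}^{\mathrm{s}}_\pm(\theta,I^{\mathrm{u}}) + \mathcal{O}(\epsilon |I^{\mathrm{u}}|^{-11/2})$, and by \eqref{eq:CPfirstapprox} one has
\[
\hat{I}^{\mathrm{s}}_\pm(\theta,I^{\mathrm{u}}) = I^{\mathrm{u}}+\partial_\theta (L-L_{\mathrm{circ}})\bigl(\tilde{\sigma}_\pm(\theta,I^{\mathrm{u}}),\theta;I^{\mathrm{u}}\bigr).
\]
So the task reduces to identifying this last quantity with $\partial_\theta \mathcal{L}_\pm(\theta,I^{\mathrm{u}})$ modulo an error of size $\mathcal{O}(\epsilon|I^{\mathrm{u}}|^{-11/2})$. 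Recalling $\mathcal{L}_\pm(\theta,I^{\mathrm{u}}) = L(\tilde{\sigma}_\pm(\theta),\theta;I^{\mathrm{u}},\epsilon)$ with $\tilde{\sigma}_\pm'(\theta)=1$, and using the invariance $L_{\mathrm{circ}}(\sigma,\theta)=L_{\mathrm{circ}}(\sigma-\theta)$ which gives $\partial_\sigma L_{\mathrm{circ}}+\partial_\theta L_{\mathrm{circ}}=0$, a direct calculation shows
\[
\frac{d}{d\theta}\mathcal{L}_\pm = (\partial_\sigma+\partial_\theta)(L-L_{\mathrm{circ}})\bigl|_{\sigma=\tilde{\sigma}_\pm(\theta)}.
\]
Comparison with \eqref{eq:CPderivativesMelnikov} and the asymptotic formula for $\partial_{\varphi^{\mathrm{u}}}\mathcal{L}_\pm$ given in Lemma \ref{lem:mainpropMelnikov} reveals that both quantities share the same leading term $\mu(1-\mu)(1-2\mu)\frac{15\pi\epsilon}{8(I^{\mathrm{u}})^5}\sin\theta$ and differ only by a remainder of order $\mathcal{O}(\epsilon|I^{\mathrm{u}}|^{-11/2})$, which gives the claimed identity.

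The main (minor) obstacle is purely notational: matching the various scalings and factors between $L$, $\tilde L$, $L_{\mathrm{circ}}$ and $\mathcal{L}_\pm$, and confirming that the residual term $\partial_\sigma(L-L_{\mathrm{circ}})|_{\sigma=\tilde{\sigma}_\pm}$ appearing in the discrepancy between $\partial_\theta(L-L_{\mathrm{circ}})|_{\tilde{\sigma}_\pm}$ and $d\mathcal{L}_\pm/d\theta$ is indeed absorbed by the error $\mathcal{O}(\epsilon|I^{\mathrm{u}}|^{-11/2})$. This is easily verified because at $\sigma=\sigma_{\pm,\mathrm{circ}}(\theta)$ the circular contribution $\partial_\sigma L_{\mathrm{circ}}$ vanishes by construction, so the residual is governed only by the elliptic correction $\partial_\sigma(L-L_{\mathrm{circ}})$, whose bound follows from the asymptotic formulas for $\partial_{\varphi^{\mathrm{u}}}(\mathcal{L}_+-\mathcal{L}_-)$ in Lemma \ref{lem:mainpropMelnikov} and Lemma \ref{lem:summaryerrorsapprox}.
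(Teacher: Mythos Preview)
Your proposal is correct and follows essentially the same approach as the paper: both simply read off the asymptotics from \eqref{eq:sigmatildecirc}, \eqref{eq:CPfirstapprox}, \eqref{eq:CPsolutions} in the proof of Theorem \ref{thm:mainthmcriticalpoints} and then identify the leading term of $\hat I^{\mathrm{s}}_\pm$ with $\partial_\theta\mathcal{L}_\pm$ using that the $\sigma$-dependent part of $L-L_{\mathrm{circ}}$ is exponentially small (Lemma \ref{lem:mainMelnikov}). One small clarification on your last paragraph: the vanishing of $\partial_\sigma L_{\mathrm{circ}}$ at $\sigma_{\pm,\mathrm{circ}}$ is not the relevant mechanism---the residual $\partial_\sigma(L-L_{\mathrm{circ}})\big|_{\tilde\sigma_\pm}$ is already $\mathcal{O}(\epsilon|I^{\mathrm{u}}|^{3/2}e^{-(I^{\mathrm{u}})^3/3})$ at \emph{any} point by Lemma \ref{lem:mainMelnikov}, which is exactly the estimate the paper invokes.
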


\begin{proof}
The result for $\sigma_\pm$ is deduced from \eqref{eq:sigmatildecirc} and \eqref{eq:CPsolutions}. We only have to prove the result for $\tilde{G}^{\mathrm{s}}_\pm$. In the proof of Theorem \ref{thm:mainthmcriticalpoints} above, we have obtained that (see \eqref{eq:CPfirstapprox} and \eqref{eq:CPsolutions})
\[
\tilde{G}^{\mathrm{s}}_\pm(\theta,G^{\mathrm{u}})= G^{\mathrm{u}}+G^{\mathrm{u}}\partial_\theta (L-L_{\mathrm{circ}})(\theta,\sigma_\pm(\theta,G^{\mathrm{u}});G^{\mathrm{u}})+ \mathcal{O}(\zeta G_0^{-13/2}).
\]
Let $\tilde{\sigma}_+(\theta)=\theta$ and $\tilde{\sigma}_-(\theta)=\theta+\pi$. On one hand, since 
\[
|\sigma_\pm- \tilde{\sigma}_\pm|\lesssim G_0^{-1},
\]
from the asymptotic expression in Lemma \ref{lem:mainMelnikov}, we obtain that, for all $(\theta,G^{\mathrm{u}})\in \Lambda_{\rho}$,
\[
|\partial_\theta  (L-L_{\mathrm{circ}})(\theta,\tilde{\sigma}_\pm(\theta);G^{\mathrm{u}})-\partial_\theta  (L-L_{\mathrm{circ}})(\theta,\sigma_\pm(\theta,G^{\mathrm{u}});G^{\mathrm{u}})|\lesssim \zeta G_0^{1/2} \exp(-G_0^3/3)
\]
and,
\[
|\partial_\sigma  (L-L_{\mathrm{circ}})\partial_\theta \tilde{\sigma}_\pm (\theta,\tilde{\sigma}_\pm(\theta);G^{\mathrm{u}}) |\lesssim \zeta G_0^{1/2} \exp(-G_0^3/3),
\]
and the lemma follows from the fact that $L_{\mathrm{circ}}(\tilde{\sigma}_\pm(\theta)-\theta;G^{\mathrm{u}})$ does not depend on $\theta$.
\end{proof}

We now finish this section with the proof of Proposition \ref{prop:sndthmcriticalpoints}.

\begin{proof}[Proof of Proposition  \ref{prop:sndthmcriticalpoints}]
We fix $G_*$ large enough, let $0\leq \zeta\leq (G_*+R)^{-3}$ and consider pairs $G^{\mathrm{u}},G^{\mathrm{s}}\in [G_*,G_*+R]$. For $G^{\mathrm{u}},G^{\mathrm{s}}$ such that
\[
|G^{\mathrm{s}}-G^{\mathrm{u}}|\leq \frac{\mu (1-\mu)(1-2\mu) 15\pi \zeta}{16 |G^{\mathrm{u}}|^{5}},
\]
we define
\[
\tilde{\theta} (G^{\mathrm{s}},G^{\mathrm{u}})=\sin^{-1} \left( \frac{8(G^{\mathrm{u}})^5(G^{\mathrm{s}}-G^{\mathrm{u}}) } {\mu (1-\mu)(1-2\mu) 15\pi \zeta }\right)\qquad\qquad\tilde{\sigma}_{\pm}(G^{\mathrm{u}},G^{\mathrm{s}})=\sigma_{\pm,\mathrm{circ}}(\tilde{\theta}(G^{\mathrm{s}},G^{\mathrm{u}}),G^{\mathrm{u}}).
\]
where $\sigma_{\pm,\mathrm{circ}}$ are defined in \eqref{eq:sigmatildecirc}. Denoting by $\widetilde{A}_\pm$ the differential of the map $(\sigma,\theta)\mapsto F(\theta,\sigma,G^{\mathrm{u}},G^{\mathrm{s}})$, where $F$ is defined in \eqref{eq:mapF}, evaluated at $( \tilde{\theta}(G^{\mathrm{u}},G^{\mathrm{s}}),\tilde{\sigma}_\pm(G^{\mathrm{u}},G^{\mathrm{s}}),G^{\mathrm{u}},G^{\mathrm{s}})$,
we obtain that 
\[
\begin{split}
\widetilde{A}_\pm=& \left( \begin{array} {cc}
\pm \mu(1-\mu)L_{1,1}& 0\\
0 &  \mu(1-\mu)(1-2\mu) \frac{15\pi \zeta}{8G^5} \cos\tilde{\theta}\\ \end{array} \right) \\
&+ \left( \begin{array} {cc}
\mathcal{O}( (G_0^{-7/2}+\zeta G_0^{3/2})\exp(-G_0^3/3))& \mathcal{O}( G_0^{-3/2}\exp(-G_0^3/3))\\
 \mathcal{O}( G_0^{-3/2}\exp(-G_0^3/3))& \mathcal{O}(\zeta G_0^{-13/2})\\ \end{array} \right),
\end{split}
\] 
and again, it follows from a direct application of the Implicit function theorem the existence of a value $I_*$ (which might be different from the one obtained in the proof of Theorem \ref{thm:mainthmcriticalpoints}) and functions
\[
\hat{\sigma}_\pm(G^{\mathrm{u}},G^{\mathrm{s}})=\tilde{\sigma}_{\pm}(\theta,G^{\mathrm{u}})=\mathcal{O}( \zeta G_0^{2})\qquad\qquad \hat{\theta}_\pm(G^{\mathrm{u}},G^{\mathrm{s}})=\tilde{\theta}(G^{\mathrm{u}},G^{\mathrm{s}})+\mathcal{O}(\zeta G_0^{-13/2})
\]
such that 
\[
F(\theta_\pm(G^{\mathrm{u}},G^{\mathrm{s}}),\hat{\sigma}_\pm(G^{\mathrm{u}},G^{\mathrm{s}}),G^{\mathrm{u}},G^{\mathrm{s}})=0
\]
for all 
\[
(G^{\mathrm{u}},G^{\mathrm{s}})\in \left\{(G^{\mathrm{u}},G^{\mathrm{s}})\in [G_*,G_*+R]^2\ |G^{\mathrm{s}}-G^{\mathrm{u}}|\leq \frac{\mu (1-\mu)(1-2\mu) 15\pi \zeta }{16 |G^{\mathrm{u}}|^{5}}\right\}.\qedhere\
\]
\end{proof}


\section{The generating functions of the scattering maps}\label{sec:proofgeneratingscatt}

As explained in Section \ref{sec:constrscattmaps}, Theorem \ref{thm:mainthmcriticalpoints} implies the existence of two scattering maps $\mathbb{P}_\pm :\mathbb{A}_\pm\subset\mathcal{P}_\infty^*\to \mathcal{P}_\infty^*$ (see \eqref{eq:defnscatteringmaps}). In this section, we provide the rather technical proof of Theorem \ref{prop:generatingfunctscattmap}, in which we prove the existence (and obtain an explicit expression) of a generating function for each of the scattering maps $\mathbb{P}_\pm$.

\begin{proof}[Proof of Theorem \ref{prop:generatingfunctscattmap}]
Consider the time $T$ map $\phi^T_{H_{\mathrm{pol}}}$ of the Hamiltonian $H_{\mathrm{pol}}$ introduced in \eqref{eq: Hamiltonian in polar coordinates}. The transformation $\phi^T_{H_{\mathrm{pol}}}$ is exact symplectic and therefore there exists a function $P^T:M_{\mathrm{pol}}\to\mathbb{R}$ such that $\mathrm{d} P^T=(\phi^T_{H_{\mathrm{pol}}})^*\lambda-\lambda_{\mathrm{pol}}$. The function (it is defined modulo constants) $P^T$ is known as the \textit{primitive function} associated to the exact symplectic map $\phi^T_{H_{\mathrm{pol}}}$. It is a standard computation (see the proof of Theorem 13 in \cite{MR2383896}) that (up to a constant)
\[
P^T=\int_{0}^T (i_{H_{\mathrm{pol}}} \lambda_{\mathrm{pol}} +H_{\mathrm{pol}})\circ \phi^\tau_{H_{\mathrm{pol}}} \mathrm{d}\tau,
\]
where $i_{H_{\mathrm{pol}}}\lambda_{\mathrm{pol}}$ denotes the contraction of the one form $\lambda_{\mathrm{pol}}=y\mathrm{d}r+G\mathrm{d}\alpha+E\mathrm{d}t$  with the vector field associated to the Hamiltonian $H_{\mathrm{pol}}$. Now we obtain an expression for the primitive function associated to the (exact symplectic) scattering maps $\mathbb{P}_\pm$. The natural candidate to consider as primitive function of the scattering map $\mathbb{P}_\pm$  defined in \eqref{eq:defnscatteringmaps} would be (see Theorem 13 in \cite{MR2383896}) to consider the function $P^T$ restricted to $\Gamma_\pm$, which is given by
\begin{equation}\label{eq:improperprimitive}
\tilde{P}_\pm (\alpha^{\mathrm{u}},G^{\mathrm{u}})= \lim_{T\to\infty} \int_{-T}^T i_{H_{\mathrm{pol}}} \lambda_{\mathrm{pol}}\circ \phi^\tau_{H_{\mathrm{pol}}}\circ (\Omega^{\mathrm{u}})_\pm^{-1} (\alpha^{\mathrm{u}}, G^{\mathrm{u}})\ \mathrm{d}\tau,
\end{equation}
where we have already taken into account that $H_{\mathrm{pol}}\circ \phi^\tau_{H_{\mathrm{pol}}}\circ (\Omega^{\mathrm{u}})_\pm^{-1} =0$ and that the dynamics in $\mathcal{P}_\infty$ is trivial. However, the improper integral \eqref{eq:improperprimitive} is not convergent (Theorem 13 in \cite{MR2383896} is proved for scattering maps associated to normally hyperbolic invariant manifolds, however, in the present case the rate of contraction/expansion along the stable/unstable leaves of $\mathcal{P}_\infty$ is only polynomial). Indeed, for $\tau\to\pm\infty$ (see Lemma \ref{lem:uperturbedhomoclinic} and Proposition \ref{prop:extensiondecaygenfunctions})
\[
\begin{split}
i_{H_{\mathrm{pol}}} \lambda_{\mathrm{pol}}\circ \phi^\tau_{H_{\mathrm{pol}}}\circ (\Omega^{\mathrm{u}})_\pm^{-1} (\alpha^{\mathrm{u}}, G^{\mathrm{u}}) = & \left(y^2+\frac{G^2}{r^2}\right)\circ\phi^\tau_{H_{\mathrm{pol}}}\circ (\Omega^{\mathrm{u}})_\pm^{-1} (\alpha^{\mathrm{u}}, G^{\mathrm{u}})\\
\sim & y_{\mathrm{h}}^2(\tau)+\frac{1}{(G^{\mathrm{u}})^2r_{\mathrm{h}}^2(\tau)}= \frac{2}{r_{\mathrm{h}}(\tau)}\sim \tau^{-2/3}.
\end{split}
\] 
Therefore, we consider instead the renormalized primitive function $P_\pm:  \mathcal{P}_\infty^* \to \mathbb{R}$, defined as
\begin{equation}\label{eq:primitive}
\begin{split}
 P_\pm (\alpha^{\mathrm{u}},G^{\mathrm{u}})=& \int_{\mathbb{R}} \left(i_{H_{\mathrm{pol}}} \lambda_{\mathrm{pol}}\circ \phi^\tau_{H_{\mathrm{pol}}}\circ (\Omega^{\mathrm{u}})_\pm^{-1} (\alpha^{\mathrm{u}}, G^{\mathrm{u}})- G^{\mathrm{u}}Q'(\tau)\right) \mathrm{d}\tau,\\
\end{split}
\end{equation}
where $Q(u)$ is any function satisfying $Q'(u)=2/r_{\mathrm{h}}(u)$. We now want to express the integrand in \eqref{eq:primitive} in terms of the parametrizations \eqref{eq:stableparam} and \eqref{eq:unstableparam}.  To that end we notice that
\begin{align*}
\begin{split}
i_{H_{\mathrm{pol}}} \lambda_{\mathrm{pol}} \circ \phi^\tau_{H_{\mathrm{pol}}}\circ (\Omega^{\mathrm{u}})_\pm^{-1}=& i_{H_{\mathrm{pol}}} \lambda_{\mathrm{pol}} \circ \phi^\tau_{H_{\mathrm{pol}}}\circ\eta_{G^{\mathrm{u}}}\circ \Phi_{\mathrm{h}}\circ (\Omega^{\mathrm{u}}_\pm\circ \eta_{G^{\mathrm{u}}}\circ\Phi_{\mathrm{h}})^{-1}\\
=&i_{H_{\mathrm{pol}}} \lambda_{\mathrm{pol}} \circ\eta_{G^{\mathrm{u}}}\circ \Phi_{\mathrm{h}}\circ \phi_{\widehat{H}}^\tau\circ (\Omega^{\mathrm{u}}_\pm\circ \eta_{G^{\mathrm{u}}}\circ\Phi_{\mathrm{h}})^{-1}.\\
\end{split}
\end{align*}
where we have written $\widehat{H}=(G^{\mathrm{u}})^{-2}H$. Then, it follows from the definition of $Q$ and the change of variables $\Phi_h$ and the scaling $\eta_{G^{\mathrm{u}}}$ that ,
\[
i_{H_{\mathrm{pol}}} \lambda_{\mathrm{pol}} \circ \phi^\tau_{H_{\mathrm{pol}}}\circ (\Omega^{\mathrm{u}})_\pm^{-1}=G^{\mathrm{u}}\  i_{\widehat{H}} (\lambda +\mathrm{d}Q+\mathrm{d}\beta)\circ \phi_{\widehat{H}}^\tau\circ (\Omega^{\mathrm{u}}_\pm\circ\eta_{G^{\mathrm{u}}}\circ \Phi_{\mathrm{h}})^{-1}\\
\]
where  $\lambda=Y\mathrm{d}u+G\mathrm{d}\beta+E\mathrm{d}t$. Yet, the parametrization \eqref{eq:stableparam} is not defined at $u=0$ so $\phi_{H}^\tau\circ (\Omega^{\mathrm{u}}_\pm\circ\eta_{G^{\mathrm{u}}}\circ \Phi_{\mathrm{h}})^{-1}$ might not be defined for all $\tau\in\mathbb{R}$ . The rather simple solution to this annoyance goes as follows. By Cauchy's initial value theorem,   the function $\phi^\tau_{H_{\mathrm{pol}}}\circ (\Omega^{\mathrm{u}})_\pm^{-1}$  can be extended analytically to a real analytic function, which, by abuse of notation, we denote as $\tau\mapsto \phi^\tau_{H_\mathrm{pol}}\circ (\Omega^{\mathrm{u}})_\pm^{-1}$,  defined in a complex neighborhood of $\mathbb{R}$ and such that
\[
\frac{\mathrm{d}}{\mathrm{d}\tau}\left(  \phi^\tau_{H_\mathrm{pol}}\circ (\Omega^{\mathrm{u}})_\pm^{-1} \right) =X_{H_\mathrm{pol}}\circ  \phi^\tau_{H_\mathrm{pol}}\circ (\Omega^{\mathrm{u}})_\pm^{-1},
\]
where $X_{H_\mathrm{pol}}$ is the vector field associated to the Hamiltonian \eqref{eq: Hamiltonian in polar coordinates}. Therefore, we can change the integration path in the definition of $P_\pm$ to a complex path $\gamma\subset \mathbb{C}$ on the domain of analyticity of the function $\tau\mapsto \phi^\tau_{H_{\mathrm{pol}}}\circ (\Omega^{\mathrm{u}})_\pm^{-1}$ and such that $0\notin\gamma$. Moreover, we can choose $\gamma$ to also satisfy that ($\pi_u$ denotes the projection onto the $u$ component)
\begin{equation}\label{eq:unotingamma}
u(\tau;\alpha^{\mathrm{u}},G^{\mathrm{u}})\equiv \pi_u \left(\eta^{-1}_{G^{\mathrm{u}}}\circ\Phi_{\mathrm{h}}^{-1}\circ\phi^\tau_{H_{\mathrm{pol}}}\circ (\Omega^{\mathrm{u}})_\pm^{-1}(\alpha^{\mathrm{u}},G^{\mathrm{u}}) \right)\neq 0\qquad\qquad \forall \tau\in \gamma .
\end{equation}
This is possible since  away from $u\neq0$
\[
\frac{\mathrm{d}}{\mathrm{d}\tau} \pi_u \circ\phi^\tau_{H} (u,\beta,t)=\partial_Y H\circ \phi^\tau_{H}(u,\beta,t)=1+\mathcal{O}( (G^{\mathrm{u}})^{-5}),
\]
so by taking $\gamma$ which does not enter a $\mathcal{O}( (G^{\mathrm{u}})^{-5})$ neighborhood of $\tau=0$ we can guarantee that \eqref{eq:unotingamma} holds. Then, 
\begin{equation}\label{eq:primitivemodifiedpath}
\begin{split}
 P_\pm(\alpha^{\mathrm{u}},G^{\mathrm{u}})=& \int_{\gamma} i_{H_{\mathrm{pol}}} \lambda_{\mathrm{pol}} \circ \phi^\tau_{H_{\mathrm{pol}}}\circ (\Omega^{\mathrm{u}})_\pm^{-1} (\alpha^{\mathrm{u}}, G^{\mathrm{u}})-G^{\mathrm{u}}Q'(\tau)\ \mathrm{d}\tau \\
=&G^{\mathrm{u}}\int_{\gamma}i_{\widehat{H}} (\lambda +\mathrm{d}Q+\mathrm{d}\beta)\circ \phi^\tau_{\widehat{H}}\circ (\Omega^{\mathrm{u}}_\pm\circ\eta_{G^{\mathrm{u}}}\circ \Phi_{\mathrm{h}})^{-1} (\alpha^{\mathrm{u}}, G^{\mathrm{u}})-Q'(\tau) \mathrm{d}\tau
\end{split}
\end{equation}
is well defined. Moreover,
\[
\begin{split}
\lim_{T\to\infty}  \int_{-T}^T i_{\widehat{H}} \mathrm{d}Q\ &\circ \phi^\tau_{\widehat{H}}\circ (\Omega^{\mathrm{u}}_\pm\circ\eta_{G^{\mathrm{u}}}\circ \Phi_{\mathrm{h}})^{-1}(\alpha^{\mathrm{u}},G^{\mathrm{u}})-Q'(\tau)\ \mathrm{d}\tau\\
=&\lim_{T\to\infty}  \int_{-T}^T  \frac{\mathrm{d}}{\mathrm{d}\tau}( Q\circ \phi^\tau_{H}\circ (\Omega^{\mathrm{u}}_\pm\circ\eta_{G^{\mathrm{u}}}\circ \Phi_{\mathrm{h}})^{-1})(\alpha^{\mathrm{u}},G^{\mathrm{u}}) -Q'(\tau)\ \mathrm{d}\tau\\
=&\lim_{T\to\infty} (Q(u(T;\alpha^{\mathrm{u}}, G^{\mathrm{u}}))-Q(T))+ (Q(-T)-Q(u(-T;\alpha^{\mathrm{u}}, G^{\mathrm{u}}))).
\end{split}
\]
We claim that this limit is zero. Indeed, from the definition of the Hamiltonian $H$ and Proposition \ref{prop:extensiondecaygenfunctions} we observe, that for large values of $u$,
\[
\begin{split}
\frac{\mathrm{d}}{\mathrm{d}\tau} u(\tau;\alpha^{\mathrm{u}},G^{\mathrm{u}})=&\frac{\mathrm{d}}{\mathrm{d}\tau} \pi_u \circ\phi^\tau_ H \circ(\Omega^{\mathrm{u}}_\pm\circ\eta_{G^{\mathrm{u}}}\circ \Phi_{\mathrm{h}})^{-1}(\alpha^{\mathrm{u}},G^{\mathrm{u}})=\partial_Y H\circ \phi^\tau_{H}(\Omega^{\mathrm{u}}_\pm\circ\eta_{G^{\mathrm{u}}}\circ \Phi_{\mathrm{h}})^{-1}(\alpha^{\mathrm{u}},G^{\mathrm{u}})\\
=&1+\mathcal{O}(|u(\tau;\alpha^{\mathrm{u}},G^{\mathrm{u}})|^{-2/3}).
\end{split}
\]
So for large $T$ we have
\[
|u(\pm T;\alpha^{\mathrm{u}},G^{\mathrm{u}}) \mp T)|=\mathcal{O}(T^{1/3}).
\]
Moreover, $Q'(\pm T)\sim T^{-2/3}$ for $T\to \infty$ so, by application of the mean value theorem
\[
|Q(u( \pm T;\alpha^{\mathrm{u}}, G^{\mathrm{u}}))-Q( \pm T)|\lesssim Q'(\pm T) |u( \pm T;\alpha^{\mathrm{u}},G^{\mathrm{u}})\mp T|\leq \mathcal{O}(T^{-1/3}).
\]
Therefore, expression \eqref{eq:primitivemodifiedpath} reduces to
\begin{equation}\label{eq:primitive2}
 P_\pm(\alpha^{\mathrm{u}},G^{\mathrm{u}})= G^{\mathrm{u}}\int_{\gamma} (i_{H} \lambda+\mathrm{d}\beta) \circ \phi^\tau_{H}\circ (\Omega^{\mathrm{u}}_\pm\circ\eta_{G^{\mathrm{u}}}\circ \Phi_{\mathrm{h}})^{-1} (\alpha^{\mathrm{u}}, G^{\mathrm{u}}) \mathrm{d}\tau .
\end{equation}
Let now $\gamma^{\mathrm{u}}=\gamma|_{\tau\leq0 }$, $\gamma^{\mathrm{s}}=\gamma|_{\tau\geq 0}$  and introduce the functions
\[
\begin{split}
P^{\mathrm{u}}(u,\beta,t;G^{\mathrm{u}},G^{\mathrm{s}})=& G^{\mathrm{u}}\int_{\gamma^{\mathrm{u}}} i_{H}( \lambda+\mathrm{d}\beta) \circ \phi^\tau_{H}\circ \mathcal{W}^{\mathrm{u}}(u,\beta,t;G^{\mathrm{u}},G^{\mathrm{s}})\mathrm{d}\tau\\
P^{\mathrm{s}}(u,\beta,t;G^{\mathrm{u}},G^{\mathrm{s}})=&G^{\mathrm{u}}\int_{\gamma^{\mathrm{s}}} i_{H}( \lambda+\mathrm{d}\beta) \circ \phi^\tau_{H}\circ\mathcal{W}^{\mathrm{s}}(u,\beta,t;G^{\mathrm{u}},G^{\mathrm{s}})\mathrm{d}\tau,
\end{split}
\]
where $\mathcal{W}^{\mathrm{u},\mathrm{s}}$ are the parametrizations of the invariant manifolds introduced in \eqref{eq:stableparam} and \eqref{eq:unstableparam}. Therefore, recalling the definition of $S^{\mathrm{u,s}}$ in ...
\begin{equation}\label{eq:primitivefunctionsubetat}
\begin{split}
P^{\mathrm{u}}(u,\beta,t; G^{\mathrm{u}},G^{\mathrm{s}})=& G^{\mathrm{u}}\int_{\gamma^{\mathrm{u}}}  \frac{\mathrm{d}}{\mathrm{d}\tau}\left((S^{\mathrm{u}}+ \beta)\circ \phi^\tau_{H}\circ \mathcal{W}^{\mathrm{u}} \right)(u,\beta,t; G^{\mathrm{u}}, G^{\mathrm{s}}) \mathrm{d}\tau\\
=&G^{\mathrm{u}} S^{\mathrm{u}}(u,\beta,t;G^{\mathrm{u}},G^{\mathrm{s}})+ G^{\mathrm{u}}\beta-G^{\mathrm{u}}\alpha^{\mathrm{u}}(u,\beta,t;G^{\mathrm{u}},G^{\mathrm{s}})\\
P^{\mathrm{s}}(u,\beta,t; G^{\mathrm{u}}, G^{\mathrm{s}})=&G^{\mathrm{u}}\int_{\gamma^{\mathrm{s}}}\frac{\mathrm{d}}{\mathrm{d}\tau}\left((S^{\mathrm{s}}+\beta)\circ \phi^\tau_{H}\circ \mathcal{W}^{\mathrm{s}} \right)(u,\beta,t;  G^{\mathrm{u}}, G^{\mathrm{s}}) \mathrm{d}\tau\\
=&-G^{\mathrm{u}}S^{\mathrm{s}}(u,\beta,t;G^{\mathrm{u}},G^{\mathrm{s}})-G^{\mathrm{u}}\beta+ G^{\mathrm{s}}\alpha^{\mathrm{s}}(u,\beta,t; G^{\mathrm{u}},G^{\mathrm{s}}),
\end{split}
\end{equation}
where $\alpha^{\mathrm{u},\mathrm{s}}(u,\beta,t;G^{\mathrm{u}},G^{\mathrm{s}},\zeta)$ denotes the asymptotic value of the $\alpha$ coordinate along the unstable or stable leave of a point in $W^{\mathrm{u}}$ or $W^{\mathrm{s}}$ given by the parametrizations  \eqref{eq:stableparam} and \eqref{eq:unstableparam}. Notice that, in particular,
\[
(P^{\mathrm{u}}+P^{\mathrm{s}})(u,\beta,t;G^{\mathrm{u}},G^{\mathrm{s}})= G^{\mathrm{u}}\Delta S(u,\beta,t;G^{\mathrm{u}},G^{\mathrm{s}})+G^{\mathrm{s}}\alpha^{\mathrm{s}}(u,\beta,t;G^{\mathrm{u}},G^{\mathrm{s}})-G^{\mathrm{u}}\alpha^{\mathrm{u}}(u,\beta,t;G^{\mathrm{u}},G^{\mathrm{s}}).
\]
Let now $(G^{\mathrm{u}},G^{\mathrm{s}})\in\mathcal{R}_G$ (see \eqref{eq:domainactions}) and denote by $\alpha_\pm^{\mathrm{u},\mathrm{s}}(G^{\mathrm{u}},G^{\mathrm{s}})$ the backwards and forward asymptotic value of the $\beta$ component along the heteroclinic orbit which passes through the heteroclinic point $x_\pm=(u,\beta,t,Y,J,E)$ given by 
\[
\begin{split}
x_\pm(G^{\mathrm{u}},G^{\mathrm{s}})=&\mathcal{W}^{\mathrm{u}}\circ\Phi (-(G^{\mathrm{u}})^3\hat{\sigma}_\pm (G^{\mathrm{u}},G^{\mathrm{s}}),\hat{\theta}_\pm(G^{\mathrm{u}},G^{\mathrm{s}}),0;G^{\mathrm{u}}, G^{\mathrm{s}})\\
=&\mathcal{W}^{\mathrm{s}}\circ\Phi (-(G^{\mathrm{u}})^3\hat{\sigma}_\pm (G^{\mathrm{u}},G^{\mathrm{s}}),\hat{\theta}_\pm(G^{\mathrm{u}},G^{\mathrm{s}}),0;G^{\mathrm{u}}, G^{\mathrm{s}}),\\
\end{split}
\]
where $\Phi(v,\theta,t;G^{\mathrm{u}},G^{\mathrm{s}},\zeta)$ is  the change of variables obtained in Proposition \ref{prop:approxgenfunctionbyMelnikov} and $\hat{\sigma}_\pm,\hat{\theta}_\pm$ were obtained in Proposition \ref{prop:sndthmcriticalpoints}. That is, 
\[
\mathbb{P}_\pm (\alpha_\pm^{\mathrm{u}}(G^{\mathrm{u}},G^{\mathrm{s}}),G^{\mathrm{u}})=(\alpha_\pm^{\mathrm{s}}(G^{\mathrm{u}},G^{\mathrm{s}}),G^{\mathrm{s}}).
\]
Then, using expression \eqref{eq:primitivefunctionsubetat}, we obtain that the primitive function in \eqref{eq:primitive2} can be expressed as
\[
\begin{split}
P_\pm (\alpha^{\mathrm{u}}_\pm (G^{\mathrm{u}},G^{\mathrm{s}}), G^{\mathrm{u}})=& (P^{\mathrm{u}}+P^{\mathrm{s}})\circ\Phi_\pm (\hat{\theta}_\pm(G^{\mathrm{u}},G^{\mathrm{s}});G^{\mathrm{u}})\\
=&\mathtt{S}_\pm (G^{\mathrm{u}},G^{\mathrm{s}})+G^{\mathrm{s}}\alpha^{\mathrm{s}}_\pm(G^{\mathrm{u}},G^{\mathrm{s}})-G^{\mathrm{u}} \alpha^{\mathrm{u}}_\pm(G^{\mathrm{u}},G^{\mathrm{s}}),
\end{split}
\]
where $\mathtt{S}_\pm$ is the function defined in \eqref{eq:difngenfunctactions}. The proposition plainly follows from the definition of primitive function of an exact symplectic map. Indeed 
\[
\mathrm{d} \mathtt{S}_\pm=\mathrm{d} P_\pm-G^{\mathrm{s}}\mathrm{d}\alpha_\pm^{\mathrm{s}}-\alpha^{\mathrm{s}}_\pm \mathrm{d}G^{\mathrm{s}}+G^{\mathrm{u}}\mathrm{d}\alpha_\pm^{\mathrm{u}}+\alpha^{\mathrm{u}}_\pm \mathrm{d}G^{\mathrm{u}}=\alpha^{\mathrm{u}}_\pm \mathrm{d}G^{\mathrm{u}}-\alpha^{\mathrm{s}}_\pm \mathrm{d}G^{\mathrm{s}}.\qedhere\
\]
\end{proof}


\section{Asymptotic analysis of the scattering maps}\label{sec:asymptformscattmap}

In this section we prove Theorems \ref{thm:mainthmscattmaps} and \ref{thm:mainthmscattmaps2}. Namely, we establish an asymptotic formula for the scattering maps defined in \eqref{eq:defnscatteringmaps} and for their difference in terms of the reduced Melnikov potentials $\mathcal{L}_\pm$ defined in \eqref{eq: DefinitionredMelnikovPotential}. Let $\Phi_{\mathrm{h}}$ be the change of variables defined in \eqref{eq:Mathieutransf}, consider the function $\tilde{G}^{\mathrm{s}}_\pm(\theta,G^{\mathrm{u}})$ obtained in Theorem \ref{thm:mainthmcriticalpoints} (see also Lemma \ref{lem:technicallemmacriticalpoints}), let $\Phi_\pm$ be the map defined in \eqref{eq:definitionPhi+-} and let $\Omega_\pm^{\mathrm{u}}$ be the wave maps introduced in \eqref{eq:restrwavemaps}.  By the expressions \eqref{eq:usefulexpressionswavemaps} for the wave maps, it follows that for all $(\alpha^{\mathrm{u}},G^{\mathrm{u}})\in \mathcal{P}_\infty^*$, the $G$ coordinate of the scattering map $\mathbb{P}_\pm$ is given by
\[
G^{\mathrm{s}}_\pm(\alpha^{\mathrm{u}},G^{\mathrm{u}})=\tilde{G}^{\mathrm{s}}_\pm\circ  (\Omega^{\mathrm{u}}_\pm\circ\eta_{G_0}\circ \Phi_{\mathrm{h}}\circ \Phi_\pm)^{-1}(\alpha^{\mathrm{u}},G^{\mathrm{u}}).
\]


\subsection{The wave maps and their difference}\label{sec: the difference between the wave maps}

Let 
\begin{equation}\label{eq:straightenunstablefoliation}
\alpha^{\mathrm{u}}=\Theta(u,\beta,t; G^{\mathrm{u}},G^{\mathrm{s}},\zeta)\equiv\beta+\vartheta (u,\beta,t; G^{\mathrm{u}},G^{\mathrm{s}},\zeta)
\end{equation}
be the map which to  $z=(u,\beta,t)$ associates the backward asymptotic $\beta$ component along the leave of the unstable foliation which passes through the point $\mathcal{W}^{\mathrm{u}}(z;G^{\mathrm{u}},G^{\mathrm{s}},\zeta)$. The map \eqref{eq:straightenunstablefoliation} is indeed the inverse of the map $\Psi_\infty=\mathrm{Id}+\psi_\infty$ in  \eqref{eq:finalstraightening} and therefore  (the norm $\llbracket \cdot \rrbracket_{\eta,\nu,\rho}$ is defined in \eqref{eq:bracketnorm})
\begin{equation}\label{eq:estimatevartheta}
\llbracket \vartheta \rrbracket_{1/3,1/2,\rho}\lesssim G_0^{-15/4}.
\end{equation}

\begin{rem}
A more refined analysis of the first step in the iterative process carried out in Sections \ref{sec:firststepiterative} and \ref{sec:iterativeargument} shows that the map $\Psi_\infty=\mathrm{Id}+\psi_\infty$ in  \eqref{eq:finalstraightening} satisfies indeed $\llbracket \psi_\infty \rrbracket_{1/3,1/2,\rho_n}\lesssim G_0^{-4}$ and consequently $\llbracket \vartheta \rrbracket_{1/3,1/2,\rho}\lesssim G_0^{-4}$. Performing this extra step would complicate unnecessarily the iterative process in Sections \ref{sec:firststepiterative} and \ref{sec:iterativeargument}. Therefore we continue our discussion making use of the rougher estimate \eqref{eq:estimatevartheta} which is sufficient for our purposes.
\end{rem}

Let now $\Phi_\pm$ be the change of coordinates defined in \eqref{eq:definitionPhi+-} and $\sigma_\pm, \tilde{G}^{\mathrm{s}}_\pm$ be the functions obtained in Theorem \ref{thm:mainthmcriticalpoints}.  Define now the maps 
\begin{equation}\label{eq:modifiedwavemaps}
\widetilde{\Omega}^{\mathrm{u}}_\pm=\Omega^{\mathrm{u}}_\pm\circ\eta_{G_0}\circ\Phi_{\mathrm{h}}\circ\Phi_\pm
\end{equation}
where $\Omega_\pm^{\mathrm{u}}$ are the backward wave maps introduced in \eqref{eq:restrwavemaps}. By construction
\begin{equation}
\widetilde{\Omega}_{\pm}^{\mathrm{u}}(\theta,G^{\mathrm{u}})=\left(\theta+(\vartheta\circ\Phi_\pm)(\theta,G^{\mathrm{u}})+\phi_\theta(-G_{0}^{3}\sigma_\pm(\theta,G^{\mathrm{u}})),\theta,0;G^{\mathrm{u}}, \tilde{G}^{\mathrm{s}}_\pm(\theta,G^{\mathrm{u}})), \ \ G^{\mathrm{u}}\right)
\end{equation}
where $\Phi=(v+\phi_v,\theta+\phi_\theta,t)$ was defined in Lemma \ref{lem:straighteningcv}. In this section we show that $\widetilde{\Omega}^{\mathrm{u}}_\pm$ is a $\mathcal{O}( G_{0}^{-15/4})$-close to identity map and show that the difference between the map $\widetilde{\Omega}^{\mathrm{u}}_+$ and $\widetilde{\Omega}^{\mathrm{u}}_-$ is exponentially small. To do so we will show that the function 
\begin{equation}\label{eq:Upsilon}
\Upsilon=\vartheta\circ\Phi+ \phi_\theta
\end{equation}
is the sum of a function $\Upsilon_{\mathrm{hom}}\in\mathrm{Ker}\mathcal{L}$ and a function which vanishes when evaluated at $(\theta,\sigma_\pm(\theta,G^{\mathrm{u}}),G^{\mathrm{u}},\tilde{G}^{\mathrm{s}}_\pm(\theta,G^{\mathrm{u}}))$.

By construction, if we denote by $X_{H}$ the vector field associated to the Hamiltonian in \eqref{eq:HJfinal} and write
\[
X_{H}^{\mathrm{u}}=(X_{H,u}^{\mathrm{u}},X_{H,\beta}^{\mathrm{u}}, X_{H,t}^{\mathrm{u}})=(X_{H,u}\circ\mathcal{W}^{\mathrm{u}},X_{H,\beta}\circ\mathcal{W}^{\mathrm{u}}, G_{0}^3),
\]
then $\Theta$, defined in \eqref{eq:straightenunstablefoliation},  conjugates the vector field 
\[
\dot{u}=X_{H,u}^{\mathrm{u}}\circ \Theta^{-1}\qquad\qquad \dot{\alpha}=0\qquad\qquad \dot{t}=G_{0}^3
\]
to the vector field $X^{\mathrm{u}}_{\mathrm{h}}$. That is, $\Theta^{-1}$ straightens the dynamics in the $\beta$ component. It is straightforward to check that this conjugacy is equivalent to the fact that, $\vartheta$ defined in \eqref{eq:straightenunstablefoliation}, solves
\begin{equation}\label{eq:dynunstablefoliation1}
\mathcal{L}^{\mathrm{u}}\vartheta=-X_{H,\beta}^{\mathrm{u}}
\end{equation}
with
\[
\mathcal{L}^{\mathrm{u}}= X_{H,u}^{\mathrm{u}} \partial_u+ X_{H,\beta}^{\mathrm{u}} \partial_\beta+G_{0}^3 \partial_t
\]
Notice now that 
\[
X^{\mathrm{u}}_{H,u}=1+2 \hat{A}^{\mathrm{u}}\qquad\qquad  X^{\mathrm{u}}_{H,\beta}=2 \hat{B}^{\mathrm{u}},
\]
where $\hat A^{\mathrm{u}}$ and $\hat B^{\mathrm{u}}$ are the functions defined in \eqref{eq:definitionAusBushat}. Therefore, denoting by $\widetilde{\mathcal{L}}$ the differential operator defined in \eqref{eq:semisumdiffoperator} one can rewrite \eqref{eq:dynunstablefoliation1} as
\[
\begin{split}
\mathcal{\widetilde{L}}\vartheta=&-2\hat{B}^{\mathrm{u}}+ \left(\hat{A}^{\mathrm{s}}-\hat{A}^{\mathrm{u}}\right)\partial_u \vartheta+ \left(\hat{B}^{\mathrm{s}}-\hat{B}^{\mathrm{u}}\right)\partial_\beta \vartheta.\\
\end{split}
\]
 It now follows from the definition of $\Phi$ in Lemma \ref{lem:straighteningcv} that $\Upsilon$, defined in \eqref{eq:Upsilon}, solves
\begin{equation}\label{eq:sinnombre}
\begin{split}
\mathcal{L}\Upsilon=&\bigg((\hat A^{\mathrm{u}}-\hat A^{\mathrm{s}})\partial_u \vartheta+\left(\hat B^{\mathrm{s}}-\hat B^{\mathrm{u}}\right)(1+\partial_\beta \vartheta)\bigg)\circ\Phi.
\end{split}
\end{equation}
Write $\Phi^{-1}=(u+\tilde{\phi}_u, \beta+\tilde{\phi}_\beta, t)$. Thus, from the definition of $\hat A^{\mathrm{u},\mathrm{s}}$ and $\hat B^{\mathrm{u},\mathrm{s}}$,  expression \eqref{eq:sinnombre} can be rewritten as
\[
\mathcal{L}\Upsilon= F \partial_v\Delta \mathcal{S}+G \partial_\theta \Delta \mathcal{S}
\]
where $\Delta \mathcal{S}$ is defined in \eqref{eq:deltascali},
\[
F=f(1+\partial_v\tilde{\phi}_u)+g\partial_v \tilde{\phi}_\beta \qquad\qquad G=g(1+\partial_\beta \tilde{\phi}_\beta) +f\partial_\beta \tilde{\phi}_v
\]
and
\[
f=-\frac{1}{2y_{\mathrm{h}}^2r_{\mathrm{h}}^2} \left(  1+ \partial_\beta \vartheta-r_{\mathrm{h}}^2\partial_u \vartheta \right)\qquad\qquad g=\frac{1}{y_{\mathrm{h}}^2r_{\mathrm{h}}^3 } \left(  1+\partial_\beta \vartheta -r_{\mathrm{h}}^{-1} \partial_u \vartheta \right)
\]
Let $\mathcal{G}$ be the left inverse operator for $\mathcal{L}$ defined in \eqref{eq:leftinverseoperatorfinaldomain}. Since $\Delta \mathcal{S}\in\mathrm{Ker}\mathcal{L}$ (and $\partial_v \Delta \mathcal{S}, \partial_\theta\Delta \mathcal{S}$ too),
\[
\mathcal{L}\left( \mathcal{G} (F) \partial_v\Delta \mathcal{S} +\mathcal{G}(G)\partial_\theta \Delta \mathcal{S}\right) =  F \partial_v\Delta \mathcal{S}+G \partial_\theta \Delta \mathcal{S}
\]
and hence,
\[
\Upsilon=\Upsilon_{\mathrm{hom}}+\mathcal{G}(F)\partial_v \Delta \mathcal{S}+\mathcal{G}(G)\partial_\theta \Delta \mathcal{S}
\]
for some function $\Upsilon_{\mathrm{hom}}\in\mathrm{Ker}\mathcal{L}$. Define now 
\begin{equation}\label{eq:upsilonpm}
\Upsilon_\pm(\theta,G^{\mathrm{u}})=\Upsilon(-G_{0}^{-3}\sigma_\pm(\theta,G^{\mathrm{u}}),\theta,0; G^{\mathrm{u}},\tilde{G}^{\mathrm{s}}_\pm(\theta,G^{\mathrm{u}}))
\end{equation}
where $\sigma_\pm(\theta,G^{\mathrm{u}}),\tilde{G}^{\mathrm{s}}_\pm(\theta,G^{\mathrm{u}})$ are the functions obtained in Theorem \ref{thm:mainthmcriticalpoints}. Then, the functions $\widetilde{\Omega}_\pm$ defined in \eqref{eq:modifiedwavemaps} satisfy 
\begin{equation}\label{eq:omegatildeupsilon}
\widetilde{\Omega}_\pm(\theta,G^{\mathrm{u}})=(\theta+\Upsilon_\pm(\theta,G^{\mathrm{u}}), G^{\mathrm{u}}).
\end{equation}

\begin{lem}\label{lem:differencewavemaps}
For all $(\theta,G^{\mathrm{u}})\in \Lambda_{\rho}$,
\[ 
| \Upsilon_\pm | \lesssim G_0^{-15/4} \qquad\qquad\text{and}\qquad\qquad | \Upsilon_+- \Upsilon_-|\lesssim G_0^{-5/8}  \exp(-G_0^3/3).
\]
\end{lem}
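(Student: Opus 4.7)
The plan is to exploit the decomposition
\[
\Upsilon = \Upsilon_{\mathrm{hom}} + \mathcal{G}(F)\,\partial_v \Delta\mathcal{S} + \mathcal{G}(G)\,\partial_\theta\Delta\mathcal{S},\qquad \mathcal{L}\Upsilon_{\mathrm{hom}} = 0,
\]
derived just above the statement. The first observation is that when we evaluate at the critical points $(v_\pm,\theta,0;I^{\mathrm{u}},\tilde I^{\mathrm{s}}_\pm)$ of $\Delta\mathcal{S}$ identified in Theorem \ref{thm:mainthmcriticalpoints}, both correction terms vanish. Indeed $\partial_\sigma\Delta\mathcal{S}=\partial_\theta\Delta\mathcal{S}=0$ there, and since $\Delta\mathcal{S}\in\mathrm{Ker}\mathcal{L}$ is a function of $\sigma=t-I_{\mathrm{m}}^3 v$ and $\theta$, one also has $\partial_v\Delta\mathcal{S}=-I_{\mathrm{m}}^3\partial_\sigma\Delta\mathcal{S}=0$ at those points. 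Therefore $\Upsilon_\pm = \Upsilon_{\mathrm{hom}}$ evaluated along the two critical families, and the whole lemma reduces to controlling the homogeneous part $\Upsilon_{\mathrm{hom}}$.

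The first estimate $|\Upsilon_\pm|\lesssim|I^{\mathrm{u}}|^{-29/8}$ is then purely a uniform bound. From $\Upsilon=\vartheta\circ\Phi+\phi_\theta$, the dominant term is $\vartheta\circ\Phi$ with $\llbracket\vartheta\rrbracket_{1/3,1/2,\rho}\lesssim|I^{\mathrm{u}}|^{-29/8}$ by \eqref{eq:estimatevartheta}, and the $\mathcal{O}(|I_{\mathrm{m}}|^{-4})$ contribution from $\phi_\theta$ (Lemma \ref{lem:straighteningcv}) is smaller. The corrections $\mathcal{G}(F)\partial_v\Delta\mathcal{S}$ and $\mathcal{G}(G)\partial_\theta\Delta\mathcal{S}$ are controlled by combining the bounds on $F,G$ (deduced from the explicit formulas together with $\llbracket\vartheta\rrbracket$) with the bounds on $\partial_{v,\theta}\Delta\mathcal{S}$ coming from Proposition \ref{prop:approxgenfunctionbyMelnikov} and from the properties of the left-inverse $\mathcal{G}$ in \eqref{eq:leftinverseoperatorfinaldomain}; this yields the same polynomial bound $|I^{\mathrm{u}}|^{-29/8}$ on $\Upsilon_{\mathrm{hom}}$.

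The core of the lemma is the exponentially small difference estimate. Since $\Upsilon_{\mathrm{hom}}\in\mathrm{Ker}\mathcal{L}$, it can be written as $\tilde\Upsilon_{\mathrm{hom}}(\sigma,\theta;I^{\mathrm{u}},I^{\mathrm{s}})$, and Lemma \ref{lem:exponsmallness} (applied with $\nu=1/2$, which is the natural weight inherited from $\mathcal{Y}_{1/2,\rho}$) gives a Fourier expansion
\[
\tilde\Upsilon_{\mathrm{hom}}(\sigma,\theta;I^{\mathrm{u}},I^{\mathrm{s}}) = \sum_{l\in\mathbb{Z}}\Lambda^{[l]}(\theta;I^{\mathrm{u}},I^{\mathrm{s}})\,e^{il\sigma},\qquad |\Lambda^{[l]}|\lesssim |I^{\mathrm{u}}|^{-29/8}(C|I^{\mathrm{u}}|)^{3(1/2+|l|/2)}e^{-|l|\mathrm{Re}((I^{\mathrm{u}})^3)/3}.
\]
The contributions from $l=\pm 1$ are precisely of the stated size $|I^{\mathrm{u}}|^{-29/8+3}e^{-\mathrm{Re}((I^{\mathrm{u}})^3)/3}=|I^{\mathrm{u}}|^{-5/8}e^{-\mathrm{Re}((I^{\mathrm{u}})^3)/3}$, while harmonics with $|l|\geq 2$ are further suppressed by the exponential. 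Since $\sigma_+ - \sigma_- = \pi + \mathcal{O}(|I^{\mathrm{u}}|^{-1})$ by Lemma \ref{lem:technicallemmacriticalpoints}, the phase differences $e^{il\sigma_+}-e^{il\sigma_-}$ are $\mathcal{O}(1)$ and do not spoil this bound.

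The main obstacle is therefore the $l=0$ mode, whose coefficient $\Lambda^{[0]}(\theta;I^{\mathrm{u}},I^{\mathrm{s}})$ is only polynomially small in $|I^{\mathrm{u}}|$ and contributes $\Lambda^{[0]}(\theta;I^{\mathrm{u}},\tilde I^{\mathrm{s}}_+)-\Lambda^{[0]}(\theta;I^{\mathrm{u}},\tilde I^{\mathrm{s}}_-)$ to $\Upsilon_+-\Upsilon_-$. The plan to extract the missing exponential smallness is to combine two facts: on the one hand, in the circular limit $\epsilon=0$, conservation of the Jacobi constant forces $T^{\mathrm{u,s}}_{\mathrm{circ}}$ and the change of variables $\Phi_{\mathrm{circ}}$ produced in Lemma \ref{lem:straighteningcv} to depend on $I^{\mathrm{u}},I^{\mathrm{s}}$ only through $I_{\mathrm{m}}$, so that the genuine $I^{\mathrm{s}}$-dependence of $\Upsilon_{\mathrm{hom}}$ at fixed $I^{\mathrm{u}}$ is entirely carried by the $\epsilon$-part of the perturbation (Theorem \ref{thm:stableparam} and Lemma \ref{lem:boundspotential}); on the other hand, from Lemma \ref{lem:technicallemmacriticalpoints} and the asymptotics of $\partial_\theta(\mathcal{L}_+-\mathcal{L}_-)$ in Lemma \ref{lem:mainpropMelnikov}, one has $|\tilde I^{\mathrm{s}}_+-\tilde I^{\mathrm{s}}_-|\lesssim \mu(1-\mu)\epsilon|I^{\mathrm{u}}|^{3/2}e^{-\mathrm{Re}((I^{\mathrm{u}})^3)/3}$. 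A careful Cauchy estimate along the $I^{\mathrm{s}}$-direction in the analyticity strip where the $\epsilon$-smoothness of $\Lambda^{[0]}$ holds will then convert this exponentially small width into the required bound for the zero mode. Extracting the full polynomial gain of $|I^{\mathrm{u}}|^{3}$ from this mechanism is the most delicate point; if needed, it can be circumvented by refining the decomposition so that the $l=0$ harmonic of $\Upsilon_{\mathrm{hom}}$ is absorbed into a modified correction term that still vanishes at critical points, leaving only purely oscillating and thus exponentially small contributions.
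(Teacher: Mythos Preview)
Your overall strategy coincides with the paper's: use the decomposition $\Upsilon=\Upsilon_{\mathrm{hom}}+\mathcal{G}(F)\,\partial_v\Delta\mathcal{S}+\mathcal{G}(G)\,\partial_\theta\Delta\mathcal{S}$, observe that the last two terms vanish at the critical points so that $\Upsilon_\pm$ is just $\Upsilon_{\mathrm{hom}}$ evaluated there, bound $\lVert\Upsilon_{\mathrm{hom}}\rVert_{1/2,\rho}$ via $\lVert\Upsilon\rVert_{1/2,\rho}\lesssim|I^{\mathrm{u}}|^{-29/8}$ plus the correction estimates (the paper gets $\lVert\mathcal{G}(F)\partial_v\Delta\mathcal{S}\rVert_{1/2,\rho},\lVert\mathcal{G}(G)\partial_\theta\Delta\mathcal{S}\rVert_{1/2,\rho}\lesssim|I^{\mathrm{u}}|^{-4}$), and invoke Lemma~\ref{lem:exponsmallness} for the nonzero harmonics. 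The paper then simply writes $|\Upsilon_+-\Upsilon_-|\leq 2|(\mathrm{Id}-\pi_0)\Upsilon_{\mathrm{hom}}|$ and applies Lemma~\ref{lem:exponsmallness}; this line suppresses exactly the point you raise about the $l=0$ mode.

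You are right that the displayed inequality, taken literally, ignores the contribution $\Lambda^{[0]}(\theta;I^{\mathrm{u}},\tilde I^{\mathrm{s}}_+)-\Lambda^{[0]}(\theta;I^{\mathrm{u}},\tilde I^{\mathrm{s}}_-)$ coming from evaluating $\Upsilon_{\mathrm{hom}}$ at two different values of $I^{\mathrm{s}}$. However, your proposed fix via the circular limit is more convoluted than necessary and not quite correct: the $I^{\mathrm{s}}$-dependence at fixed $I^{\mathrm{u}}$ enters through $I_{\mathrm{m}}$ \emph{and} through $\delta^{\mathrm{u}}_\beta$, and the latter is not an ``$\epsilon$-only'' effect as a formal parameter of the construction. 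The clean resolution is much simpler. By Lemmas~\ref{lem:technicallemmacriticalpoints} and~\ref{lem:mainpropMelnikov} one has $|\tilde I^{\mathrm{s}}_+-\tilde I^{\mathrm{s}}_-|\lesssim\epsilon|I^{\mathrm{u}}|^{3/2}\exp(-\mathrm{Re}((I^{\mathrm{u}})^3)/3)$, which is already exponentially small; the zero-mode contribution is then controlled by the mean value theorem in $I^{\mathrm{s}}$ once any polynomial bound on $\partial_{I^{\mathrm{s}}}\Upsilon_{\mathrm{hom}}$ is available. The whole construction depends on $I^{\mathrm{s}}$ only through $I_{\mathrm{m}}\in\Lambda_{\rho,I}$ (analytic in a strip of width $\rho\sim 1$) and through $\delta^{\mathrm{u}}_\beta$, which enters the Hamilton--Jacobi scheme only in the combination $\delta^{\mathrm{u}}_\beta+\partial_\beta T^{\mathrm{u,s}}$; the estimates of Theorem~\ref{thm:stableparam} and Lemma~\ref{lem:straighteningcv} therefore persist for $|\delta^{\mathrm{u}}_\beta|$ well beyond $\epsilon|I_{\mathrm{m}}|^{-4}$, and a Cauchy estimate in this enlarged disk gives the needed derivative bound. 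No ``refinement of the decomposition'' is required.
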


\begin{proof}
Taking into account the estimate for $\phi_\theta$ in Lemma \ref{lem:straighteningcv} and the estimate \eqref{eq:estimatevartheta} (the norm $\lVert \cdot \rVert_{\nu,\rho}$ is defined in \eqref{eq:finalnormsplitting})
\[
\lVert \Upsilon \rVert_{1/2,\rho}\leq \lVert \vartheta \rVert_{1/2,\rho} + \lVert \phi_\theta \rVert_{1/2,\rho} \lesssim G_0^{-15/4},
\]
which implies the first estimate. In order to prove the result for the difference we only need to estimate
\[
\left\lVert \Upsilon_{\mathrm{hom}} \right\rVert_{1/2,\rho} =\left\lVert \Upsilon -\mathcal{G}(F)\partial_v \Delta \mathcal{S}-\mathcal{G}(G)\partial_\theta \Delta \mathcal{S}\right\rVert_{1/2,\rho}.
\]
Indeed, since $\partial_v\Delta \mathcal{S}( \sigma_\pm(\theta,G^{\mathrm{u}}),\theta;G^{\mathrm{u}},\tilde{G}^{\mathrm{s}}_\pm(\theta,G^{\mathrm{u}}),\zeta)=\partial_\theta \Delta \mathcal{S}( \sigma_\pm(\theta,G^{\mathrm{u}}),\theta;G^{\mathrm{u}},\tilde{G}^{\mathrm{s}}_\pm(\theta,G^{\mathrm{u}}),\zeta)=0$, it follows from Lemma \ref{lem:exponsmallness}
\[
|\Upsilon_+-\Upsilon_-|\leq 2 \left|(\mathrm{Id}-\pi_0) \Upsilon_{\mathrm{hom}} \right| \lesssim G_0^{3}\left\lVert \Upsilon_{\mathrm{hom}} \right\rVert_{1/2,\rho}  \exp(-G_0^3 /3).
\] 
To estimate $\left\lVert \Upsilon_{\mathrm{hom}} \right\rVert_{1/2,\rho} $, one can check from the definition of $F,G$ and the estimates for $\lVert \Delta S \rVert_{1/2,\rho}$ which can be deduced from  Theorem \ref{thm:differencegenfunctcommon} that
\[
\begin{split}
\lVert \mathcal{G}(F) \partial_v \Delta \mathcal{S}\rVert_{1/2,\rho} \lesssim & G_0^{-1} \lVert \partial_v \Delta \mathcal{S}\rVert_{3/2,\rho} \lesssim G_0^{-4}\\
\lVert \mathcal{G}(G) \partial_\theta \Delta \mathcal{S}\rVert_{1/2,\rho} \lesssim & G_0^{-1} \lVert \partial_\theta \Delta \mathcal{S}\rVert_{1,\rho} \lesssim G_0^{-4}\\
\end{split}
\]
and the proof is completed.
\end{proof}


\begin{rem}
From now on we will decrease the value of $\rho>0$ without mentioning.
\end{rem}

\subsection{Proof of Theorem \ref{thm:mainthmscattmaps}}\label{sec:proof38}
From the definition  of the scattering maps $\mathbb{P}_\pm(\alpha^{\mathrm{u}},G^{\mathrm{u}})=(\alpha^{\mathrm{s}}_\pm,G^{\mathrm{s}}_\pm)$ in \eqref{eq:defnscatteringmaps}, the expressions \eqref{eq:usefulexpressionswavemaps} for the wave maps $\Omega_\pm^{\mathrm{u}}$ and $\Omega^{\mathrm{s}}_\pm$ ,  and the definition of $\widetilde{\Omega}_\pm$ in \eqref{eq:modifiedwavemaps}
\[
G_\pm^{\mathrm{s}}=\tilde{G}^{\mathrm{s}}_\pm \circ (\widetilde{\Omega}_\pm)^{-1}.
\]
We then write 
\begin{align*}
G_\pm^{\mathrm{s}}= G^{\mathrm{u}}+\partial_\theta \mathcal{L}_\pm+ (\tilde{G}^{\mathrm{s}}_\pm-( G^{\mathrm{u}}+\partial_\theta \mathcal{L}_\pm)) \circ (\widetilde{\Omega}_\pm)^{-1} +  (( G^{\mathrm{u}}+\partial_\theta \mathcal{L}_\pm)\circ (\widetilde{\Omega}_\pm)^{-1}-  (G^{\mathrm{u}}+\partial_\theta \mathcal{L}_\pm)).
\end{align*}
Therefore, from Lemma \ref{lem:technicallemmacriticalpoints}, we obtain that for all $(\alpha^{\mathrm{u}},G^{\mathrm{u}})$ in a complex $\rho$-neighborhood of $\mathbb{T}\times\mathbb{R}$
\[
| (\tilde{G}^{\mathrm{s}}_\pm-( G^{\mathrm{u}}+\partial_\theta \mathcal{L}_\pm)) \circ (\widetilde{\Omega}_\pm)^{-1}|\lesssim \zeta 
|G^{\mathrm{u}}|^{-11/2}.
\]
Also, from the estimates of the Melnikov potential $L$ given in Lemma \ref{lem:mainMelnikov}, the expression \eqref{eq:omegatildeupsilon} for $\widetilde{\Omega}_\pm$ and the estimate for $\Upsilon_\pm$ in Lemma \ref{lem:differencewavemaps}, we deduce that, for all $(\alpha^{\mathrm{u}},G^{\mathrm{u}})$ in a complex $\rho$-neighborhood of $\mathbb{T}\times\mathbb{R}$
\[
|( G^{\mathrm{u}}+\partial_\theta \mathcal{L}_\pm)\circ (\widetilde{\Omega}_\pm)^{-1}-  (G^{\mathrm{u}}+\partial_\theta \mathcal{L}_\pm)|\lesssim \zeta |G^{\mathrm{u}}|^{-69/8}.
\]
Combining both estimates
\[
|G_\pm^{\mathrm{s}}-(G^{\mathrm{u}}+\partial_\theta \mathcal{L}_\pm)|\lesssim \zeta |G^{\mathrm{u}}|^{-11/2}.
\]
The result 
\[
|\alpha^{\mathrm{s}}_\pm-(\alpha^{\mathrm{u}}-\partial_{G^{\mathrm{u}}}\mathcal{L}_\pm)|\lesssim |G^{\mathrm{u}}|^{-7}
\]
has already been proved in \cite{MR3583476} (see also the proof of Proposition \ref{prop:Asymptoticformulasgenfunct}).
\begin{rem}
In \cite{MR3583476} the authors consider the case $0\leq \zeta\leq \exp (-G_0^3/3))$, however, since both the main term in the asymptotic expansion and the error come from the circular part, the result holds for $0\leq \zeta<1$).
\end{rem}


\subsection{Proof of Theorem \ref{thm:mainthmscattmaps2}}\label{sec:proof39}

We now derive asymptotic formulas for the difference between the components of each of the maps $\mathbb{P}_+$ and $\mathbb{P}_-$ defined in \eqref{eq:defnscatteringmaps}, thus completing the proof of Theorem   \ref{thm:mainthmscattmaps2}.

Let $\tilde{G}^{\mathrm{s}}_\pm(\theta,G^{\mathrm{u}})$ be the functions obtained in Theorem \ref{thm:mainthmcriticalpoints}, let $\hat{\sigma}_\pm(G^{\mathrm{u}},G^{\mathrm{s}}),\hat{\theta}_\pm(G^{\mathrm{u}},G^{\mathrm{s}})$ be the functions obtained in Proposition  \ref{prop:sndthmcriticalpoints}, denote by $\Xi_\pm$ be the maps
\begin{equation}
(G^{\mathrm{u}}, G^{\mathrm{s}})\mapsto \Xi_\pm(G^{\mathrm{u}}, G^{\mathrm{s}})= (\hat{\theta}_\pm(G^{\mathrm{u}},G^{\mathrm{s}}),G^{\mathrm{u}})
\end{equation}
and define the function (see Proposition \ref{prop:generatingfunctscattmap})
\[
\begin{split}
\mathtt{S}_\pm(G^{\mathrm{u}}, G^{\mathrm{s}})=& G^{\mathrm{u}}\Delta \mathcal{S}(\hat{\sigma}_\pm(G^{\mathrm{u}},G^{\mathrm{s}}),\hat{\theta}_\pm(G^{\mathrm{u}},G^{\mathrm{s}}); G^{\mathrm{u}},G^{\mathrm{s}}).\\
\end{split}
\]
Then, it follows from Proposition \ref{prop:generatingfunctscattmap} that, for $(\alpha^{\mathrm{u}},G^{\mathrm{u}})\in \mathcal{P}^*_{\mathrm{vert}} =\mathcal{P}^*_\infty\cap \{\pi/8\leq \alpha^{\mathrm{u}}\leq \pi/4\}$ (see Remark \ref{rem:verticalstriprestriction}) the scattering maps $\mathbb{P}_\pm :(\alpha^{\mathrm{u}},G^{\mathrm{u}})\mapsto (\alpha^{\mathrm{s}}_\pm, G^{\mathrm{s}}_\pm)$   are given by the implicit expression
\begin{equation}\label{eq:closedscattmapsec4}
\begin{split}
(\alpha^{\mathrm{u}},G^{\mathrm{u}})\mapsto (\alpha^{\mathrm{u}}+(\partial_ {G^{\mathrm{u}}} \mathtt{S}_\pm+\partial_ {G^{\mathrm{s}}} \mathtt{S}_\pm)\circ (\Omega^{\mathrm{u}}_\pm\circ \Phi_{\mathrm{h}}\circ \Phi_\pm\circ \Xi_\pm)^{-1},\ \tilde{G}^{\mathrm{s}}_\pm\circ (\Omega^{\mathrm{u}}_\pm \circ \Phi_{\mathrm{h}}\circ \Phi_\pm)^{-1}).
\end{split}
\end{equation}

\begin{prop}\label{prop:Asymptoticformulasgenfunct}
Let $G_*$ be sufficiently large and consider $0\leq \zeta\leq (G_*+R)^{-3}$. Let $\mathcal{L}_\pm(\theta,G^{\mathrm{u}})$ be the reduced Melnikov potentials introduced in \eqref{eq: DefinitionredMelnikovPotential}. Then, 
\begin{itemize}
\item For all $(\theta,G^{\mathrm{u}})\in \mathbb{T}\times[G_*,G_*+R]$
\begin{equation}\label{eq:differenceItildes}
 |(\tilde{G}_+^{\mathrm{s}}-\tilde{G}_-^{\mathrm{s}})(\theta,G^{\mathrm{u}})- \partial_\theta( \mathcal{L}_+-\mathcal{L}_-)(\theta,G^{\mathrm{u}})|\lesssim \zeta (G^{\mathrm{u}})^{-5/2} \exp(-(G^{\mathrm{u}})^3)/3)
\end{equation}
\item For all $(\theta, G^{\mathrm{u}})\in  [\pi/8,\pi/4]\times[G_*,G_*+R]\subset \mathrm{Dom}(\Xi_\pm^{-1})$, we have
\begin{equation}\label{eq:differencegeneratingfunctions}
\begin{split}
 |(\partial_ {G^{\mathrm{u}}} \mathtt{S}_+ +\partial_ {G^{\mathrm{s}}} \mathtt{S}_+ -\partial_{G^{\mathrm{u}}}  \mathcal{L}_+)\circ \Xi_+^{-1} - &(\partial_{G^{\mathrm{u}}} \mathtt{S}_-+\partial_{G^{\mathrm{s}}} \mathtt{S}_--\partial_{G^{\mathrm{u}}}\mathcal{L}_-)\circ \Xi_-^{-1}|\\
& \lesssim (G^{\mathrm{u}})^{-1/2} \exp(-(G^{\mathrm{u}})^3)/3) .
\end{split}
\end{equation}
\end{itemize}
\end{prop}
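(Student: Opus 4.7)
The plan is to exploit the decomposition provided by Lemma \ref{lem:summaryerrorsapprox},
\[
\Delta\mathcal{S} = \Delta\mathcal{S}_{\mathrm{circ}} + (I^{\mathrm{u}}-I^{\mathrm{s}})\theta + (L - L_{\mathrm{circ}}) + \mathcal{E}_{\mathrm{circ}},
\]
where $\lVert\mathcal{E}_{\mathrm{circ}}\rVert_{1/2,\rho} \lesssim \epsilon |I_{\mathrm{m}}|^{-11/2}$ and $\mathcal{E}_{\mathrm{circ}}\in\mathrm{Ker}\mathcal{L}$. Three structural features will do the heavy lifting: $\Delta\mathcal{S}_{\mathrm{circ}}$ and $L_{\mathrm{circ}}$ depend on the angles only through $\sigma-\theta$ (so they are annihilated by $\partial_\sigma+\partial_\theta$) and on the actions only through $I_{\mathrm{m}}=(I^{\mathrm{u}}+I^{\mathrm{s}})/2$; the piece $(I^{\mathrm{u}}-I^{\mathrm{s}})\theta$ is annihilated by $\partial_{I^{\mathrm{u}}}+\partial_{I^{\mathrm{s}}}$; and by Lemma \ref{lem:exponsmallness} any element of $\mathrm{Ker}\mathcal{L}$ decomposes as $\sum_l \Lambda^{[l]}(\theta;I^{\mathrm{u}},I^{\mathrm{s}})e^{il(t-I_{\mathrm{m}}^3 v)}$ with $|\Lambda^{[l]}|$ exponentially small in $|l|\mathrm{Re}(I_{\mathrm{m}}^3)/3$ for $l\neq0$. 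Combined with the harmonic-by-harmonic asymptotics of $L$ in Lemma \ref{lem:mainMelnikov} (zero mode polynomial, higher modes exponentially small), this reduces both statements to a bookkeeping exercise.

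For part (1), I would sum the critical-point identities $\partial_\sigma\Delta\mathcal{S}|_{\mathrm{crit}_\pm}+\partial_\theta\Delta\mathcal{S}|_{\mathrm{crit}_\pm}=0$ and use the first structural feature to eliminate $\Delta\mathcal{S}_{\mathrm{circ}}$, obtaining
\[
\tilde{I}^{\mathrm{s}}_\pm - I^{\mathrm{u}} = (\partial_\sigma+\partial_\theta)(L-L_{\mathrm{circ}})\bigl|_{(\sigma_\pm,\theta;\bar{I}^{\mathrm{s}}_\pm)} + (\partial_\sigma+\partial_\theta)\mathcal{E}_{\mathrm{circ}}\bigl|_{\mathrm{crit}_\pm},
\]
where $\bar{I}^{\mathrm{s}}_\pm=(I^{\mathrm{u}}+\tilde{I}^{\mathrm{s}}_\pm)/2$. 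On the other hand, because $\mathcal{L}_\pm(\theta,I^{\mathrm{u}})=L(\tilde\sigma_\pm,\theta;I^{\mathrm{u}})$ with $\partial_\theta\tilde\sigma_\pm=1$ and $(\partial_\sigma+\partial_\beta)L_{\mathrm{circ}}\equiv 0$, the chain rule gives $\partial_\theta\mathcal{L}_\pm=(\partial_\sigma+\partial_\theta)(L-L_{\mathrm{circ}})|_{(\tilde\sigma_\pm,\theta;I^{\mathrm{u}})}$. Subtracting and then taking the $+$ versus $-$ difference, the discrepancy between $\tilde{I}^{\mathrm{s}}_+-\tilde{I}^{\mathrm{s}}_-$ and $\partial_\theta(\mathcal{L}_+-\mathcal{L}_-)$ splits as a Taylor remainder (controlled by $\sigma_\pm-\tilde\sigma_\pm=\mathcal{O}(|I^{\mathrm{u}}|^{-1})$ and $\bar{I}^{\mathrm{s}}_\pm-I^{\mathrm{u}}=\mathcal{O}(\epsilon|I^{\mathrm{u}}|^{-11/2})$) plus the $\mathcal{E}_{\mathrm{circ}}$ contribution. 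In the $\pm$-difference the polynomial $\sigma$-zero modes of $(\partial_\sigma+\partial_\theta)(L-L_{\mathrm{circ}})$ and its $I_{\mathrm{m}}$-derivatives are $\sigma$-independent, hence cancel, while the non-zero modes are exponentially small by Lemma \ref{lem:mainMelnikov}; the $\mathcal{E}_{\mathrm{circ}}$ piece is handled analogously via Lemma \ref{lem:exponsmallness}, using that $\partial_\sigma$ kills the $\sigma$-zero mode and that the remaining zero-mode contribution of $\partial_\theta\mathcal{E}_{\mathrm{circ}}$ only enters through the exponentially small quantity $\tilde{I}^{\mathrm{s}}_+-\tilde{I}^{\mathrm{s}}_-$ itself.

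For part (2), I would invoke the envelope theorem for $\mathtt{S}_\pm(I^{\mathrm{u}},I^{\mathrm{s}})=\Delta\mathcal{S}(\hat\sigma_\pm,\hat\theta_\pm;I^{\mathrm{u}},I^{\mathrm{s}})$ to get $\partial_{I^{\mathrm{u}}}\mathtt{S}_\pm+\partial_{I^{\mathrm{s}}}\mathtt{S}_\pm=(\partial_{I^{\mathrm{u}}}+\partial_{I^{\mathrm{s}}})\Delta\mathcal{S}|_{\mathrm{crit}_\pm}$. Post-composing with $\Xi_\pm^{-1}$ evaluates this at $(\sigma_\pm(\theta,I^{\mathrm{u}}),\theta;I^{\mathrm{u}},\tilde{I}^{\mathrm{s}}_\pm(\theta,I^{\mathrm{u}}))$; applying the decomposition and using that $(\partial_{I^{\mathrm{u}}}+\partial_{I^{\mathrm{s}}})[(I^{\mathrm{u}}-I^{\mathrm{s}})\theta]=0$ while $(\partial_{I^{\mathrm{u}}}+\partial_{I^{\mathrm{s}}})$ acts as $\partial_{I_{\mathrm{m}}}$ on $\Delta\mathcal{S}_{\mathrm{circ}}$ and on $L-L_{\mathrm{circ}}$, together with the identity $\partial_{I^{\mathrm{u}}}\mathcal{L}_\pm=\partial_{I_{\mathrm{m}}}L|_{(\tilde\sigma_\pm,\theta;I^{\mathrm{u}})}$, I obtain
\[
[\partial_{I^{\mathrm{u}}}\mathtt{S}_\pm+\partial_{I^{\mathrm{s}}}\mathtt{S}_\pm-\partial_{I^{\mathrm{u}}}\mathcal{L}_\pm]\circ\Xi_\pm^{-1} = \partial_{I_{\mathrm{m}}}(\Delta\mathcal{S}_{\mathrm{circ}}-L_{\mathrm{circ}})\bigl|_{\mathrm{crit}_\pm} + \bigl[\partial_{I_{\mathrm{m}}}L|_{\mathrm{crit}_\pm}-\partial_{I_{\mathrm{m}}}L|_{\tilde\sigma_\pm,\theta,I^{\mathrm{u}}}\bigr] + (\partial_{I^{\mathrm{u}}}+\partial_{I^{\mathrm{s}}})\mathcal{E}_{\mathrm{circ}}|_{\mathrm{crit}_\pm}.
\]
Each of the three summands is a function whose non-zero Fourier modes in $\sigma$ are exponentially small; the first two are moreover of the form $F(\sigma-\theta;I_{\mathrm{m}})$ evaluated near $\sigma-\theta\approx 0$ for the $+$ branch and $\sigma-\theta\approx \pi$ for the $-$ branch, so that the $+$ minus $-$ difference picks out exactly twice the sum of the odd Fourier modes of $F$; combined with the estimates $|\sigma_\pm-\tilde\sigma_\pm|\lesssim|I^{\mathrm{u}}|^{-1}$ and $|\bar{I}^{\mathrm{s}}_\pm-I^{\mathrm{u}}|\lesssim\epsilon|I^{\mathrm{u}}|^{-11/2}$ and with Lemmas \ref{lem:mainMelnikov} and \ref{lem:summaryerrorsapprox}, this delivers \eqref{eq:differencegeneratingfunctions}.

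The main obstacle is the careful separation of polynomial from exponential contributions: each individual summand above is only polynomially small in $|I^{\mathrm{u}}|^{-1}$, and only under the $\pm$-difference (driven by the reflection $\tilde\sigma_+-\tilde\sigma_-=\pi$) does the polynomial $\sigma$-zero mode drop out. Tracking this cancellation Fourier harmonic by Fourier harmonic, while absorbing the Taylor remainders produced by the various evaluation-point mismatches into uniformly exponentially small errors, is what makes the technical core of the proof; the rest is the bookkeeping summarised above.
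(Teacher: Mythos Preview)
Your proposal is correct and follows essentially the same strategy as the paper: for part (1) both you and the paper exploit that $\Delta\mathcal{S}_{\mathrm{circ}}$ depends on the angles only through $\sigma-\theta$ (you via the operator $\partial_\sigma+\partial_\theta$, the paper by converting $\partial_\theta((\mathrm{Id}-\pi_0)\Delta\mathcal{S}_{\mathrm{circ}})$ into $\partial_\sigma$-terms and cancelling against $\partial_\sigma\Delta\mathcal{S}|_{\mathrm{crit}}=0$), and for part (2) both use the envelope theorem followed by the Melnikov approximation with harmonic-by-harmonic control of the remainder. The only cosmetic difference is that for part (2) the paper passes back to $\Delta S$ and invokes Proposition~\ref{prop:approxgenfunctionbyMelnikov} directly (writing $\partial_{I^{\mathrm{u}}}\mathtt{S}_\pm+\partial_{I^{\mathrm{s}}}\mathtt{S}_\pm=(\partial_{I^{\mathrm{u}}}\Delta S+\partial_{I^{\mathrm{s}}}\Delta S)\circ\Phi_\pm\circ\Xi_\pm$ and splitting off $\tilde L$), whereas you stay with $\Delta\mathcal{S}$ and Lemma~\ref{lem:summaryerrorsapprox}; since $\partial_u\Delta S=\partial_\beta\Delta S=0$ at the critical point these are equivalent.
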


\begin{proof}
We first check \eqref{eq:differenceItildes}. To do so, we write
\[
\begin{split}
\partial_\sigma \Delta \mathcal{S}= & \partial_\sigma \Delta \mathcal{S}_{\mathrm{circ}}+ \partial_\sigma (L-L_{\mathrm{circ}})+\mathcal{E}_\sigma \\
\partial_\theta \Delta \mathcal{S}= &\partial_\theta \Delta \mathcal{S}^{[0]}+\partial_{\theta} \left( (\mathrm{Id}-\pi_0) (L-L_{\mathrm{circ}})\right)+ \partial_{\theta} \left( (\mathrm{Id}-\pi_0) \Delta \mathcal{S}_{\mathrm{circ}}\right)+ \mathcal{E}_\theta,\\
\end{split}
\]
where $\mathcal{E}_\sigma=\partial_\sigma\mathcal{E}_{\mathrm{circ}}$, $\mathcal{E}_\theta=\partial_\theta  \left( (\mathrm{Id}-\pi_0)\mathcal{E}_{\mathrm{circ}}\right)$ and  $\mathcal{E}_{\mathrm{circ}}$ has been defined in \eqref{eq:finalerrorapproxcircular}. Let now $\hat{\sigma}_\pm(\theta,G^{\mathrm{u}},G^{\mathrm{s}})$ and $\hat{I}^{\mathrm{s}}(\theta,G^{\mathrm{u}})$ be such that 
\[
\partial_\sigma \Delta \mathcal{S}(\theta,\tilde{\sigma}_\pm(\theta,G^{\mathrm{u}},G^{\mathrm{s}}),G^{\mathrm{u}},G^{\mathrm{s}})=0\qquad\qquad   (\partial_\theta \Delta\mathcal{S})^{[0]} (\theta,G^{\mathrm{u}},\hat{I}^{\mathrm{s}}(\theta,G^{\mathrm{u}}))=0.
\]
One expects that the solution $(\sigma,G^{\mathrm{s}})=(\sigma_\pm(\theta,G^{\mathrm{u}}),\tilde{G}^{\mathrm{s}}_\pm(\theta,G^{\mathrm{u}}))$ to $(\partial_\sigma \Delta \mathcal{S}, \partial_\theta \Delta \mathcal{S})=0$ is close to $(\sigma,G^{\mathrm{s}})=(\hat{\sigma}_\pm(\theta,G^{\mathrm{u}},\hat{I}^{\mathrm{s}}(\theta,G^{\mathrm{u}})),\hat{I}^{\mathrm{s}}(\theta,G^{\mathrm{u}}))$. The main term in the correction of the solution to the second equation of the system $(\partial_\sigma \Delta\mathcal{S}, \partial_\theta \Delta \mathcal{S})=0$ is given by the term
\begin{equation}\label{eq:proofproposition621}
\partial_{\theta} \left( (\mathrm{Id}-\pi_0) (L-L_{\mathrm{circ}})\right)(\theta,\hat{\sigma}_\pm(\theta,G^{\mathrm{u}},G^{\mathrm{s}});G^{\mathrm{u}},\hat{I}^{\mathrm{s}}(\theta,G^{\mathrm{u}}),\zeta)+ \partial_{\theta} \left( (\mathrm{Id}-\pi_0) \Delta \mathcal{S}_{\mathrm{circ}}\right)(\theta,\hat{\sigma}_\pm(\theta,G^{\mathrm{u}},\hat{I}^{\mathrm{s}}(\theta,G^{\mathrm{u}})); G^{\mathrm{u}},G^{\mathrm{s}}).
\end{equation}
Therefore, using the fact that $\Delta \mathcal{S}_{\mathrm{circ}}(\theta,\sigma;G^{\mathrm{u}},G^{\mathrm{s}})= \Delta \widehat{\mathcal{S}}_{\mathrm{circ}}(\sigma-\theta;G^{\mathrm{u}},G^{\mathrm{s}})$ and the definition of $\hat{\sigma}_{\pm}(\theta,G^{\mathrm{u}},G^{\mathrm{s}})$, the term \eqref{eq:proofproposition621} can be expressed as
\[
\begin{split}
\partial_{\theta} \left( (\mathrm{Id}-\pi_0) (L-L_{\mathrm{circ}})\right)(\theta,\hat{\sigma}_\pm(\theta,G^{\mathrm{u}},G^{\mathrm{s}});G^{\mathrm{u}},\hat{I}^{\mathrm{s}}(\theta,G^{\mathrm{u}}),\zeta)&+ \partial_{\sigma} (L-L_{\mathrm{circ}})(\theta,\hat{\sigma}_\pm(\theta,G^{\mathrm{u}},\hat{I}^{\mathrm{s}}(\theta,G^{\mathrm{u}}));G^{\mathrm{u}},G^{\mathrm{s}},\zeta)\\
&+\mathcal{E}_\sigma(\theta,\hat{\sigma}_\pm(\theta,G^{\mathrm{u}},G^{\mathrm{s}});G^{\mathrm{u}},\hat{I}^{\mathrm{s}}(\theta,G^{\mathrm{u}}),\zeta).
\end{split}
\]
It  follows from the fact that $L_{\mathrm{circ}}=L_{\mathrm{circ}}(\sigma-\theta,G^{\mathrm{u}})$ and the definitions of $L(\theta,G^{\mathrm{u}};\varepsilon)$ and $\mathcal{L}_\pm(\theta,G^{\mathrm{u}};\zeta)$ that
\[
(G^{\mathrm{u}})^{-1}\partial_\theta \mathcal{L}_\pm (\theta,G^{\mathrm{u}};\zeta)= \partial_{\theta}\left( (\mathrm{Id}-\pi_0) (L-L_{\mathrm{circ}})\right) (\theta,\tilde{\sigma}_\pm(\theta);G^{\mathrm{u}},\zeta)+ \partial_\sigma (L-L_{\mathrm{circ}})(\theta,\tilde{\sigma}_\pm(\theta);G^{\mathrm{u}},\zeta),
\]
where
\[
\tilde{\sigma}_+(\theta)=\theta\qquad \tilde{\sigma}_+(\theta)=\theta+\pi.
\]
Then, the asymptotic formula \eqref{eq:differenceItildes} follows from the estimates
\[
|\mathcal{E}_\sigma|,|\mathcal{E}_\theta|\lesssim \zeta |G^{\mathrm{u}}|^{-5/2}\exp(-G_0^3/3))
\]
the fact that 
\[
|\tilde{\sigma}(\theta)-\hat{\sigma}(\theta,G^{\mathrm{u}},\hat{I}^{\mathrm{s}}(\theta,G^{\mathrm{u}})|\lesssim |G^{\mathrm{u}}|^{-1},
\]
and Lemma \ref{lem:mainMelnikov}. 

We now prove the asymptotic formula \eqref{eq:differencegeneratingfunctions}. Let $\Phi_\pm$ be defined in \eqref{eq:definitionPhi+-} and $\Xi_\pm$ be defined in \eqref{eq:mapsXi+-}. Then, using that 
\[
(\partial_\sigma \Delta \mathcal{S})\circ\Phi_\pm\circ \Xi_\pm=(\partial_\theta \Delta \mathcal{S})\circ\Phi_\pm\circ \Xi_\pm=0
\]
we have
\[
\begin{split}
\partial_{G^{\mathrm{u}}} \mathtt{S}_\pm+\partial_{I^{\mathrm{s}}} \mathtt{S}_\pm= & \partial_{G^{\mathrm{u}}} (G^{\mathrm{u}}\Delta \mathcal{S}\circ\Phi_\pm\circ \Xi_\pm)+\partial_{G^{\mathrm{s}}} (G^{\mathrm{u}}\Delta \mathcal{S}\circ\Phi_\pm\circ \Xi_\pm)\\
=&  ( \partial_{G^{\mathrm{u}}} \left(G^{\mathrm{u}} \Delta S\right)+\partial_{G^{\mathrm{s}}} \left(G^{\mathrm{u}}\Delta S\right))\circ \Phi_\pm\circ \Xi_\pm\\
=& (\partial_{G^{\mathrm{u}} } (G^{\mathrm{u}}\tilde L)+ \partial_ {G^{\mathrm{s}}} (G^{\mathrm{u}}\tilde L) )\circ\Phi_\pm\circ \Xi_\pm + \mathcal{E}_I
\end{split}
\]
where $\tilde{L}$ is the Melnikov potential defined in \eqref{eq:defnMelnikovPot},  and
\[
\begin{split}
\mathcal{E}_I= ( \partial_{G^{\mathrm{u}}}(G^{\mathrm{u}}\Delta S -G^{\mathrm{u}}\tilde{L})+ \partial_{G^{\mathrm{s}}}(G^{\mathrm{u}}\Delta S-G^{\mathrm{u}}\tilde{L}) )\circ\Phi_\pm\circ \Xi_\pm.
\end{split}
\]
It follows from the estimate
\[
|(\mathrm{Id}-\pi_0) (\Delta S-\tilde{L})(\theta,\sigma; G^{\mathrm{u}},G^{\mathrm{s}},\zeta) |\lesssim (G^{\mathrm{u}})^{-7/2}  \exp(-(G^{\mathrm{u}})^3/3)
\]
 in Proposition \ref{prop:approxgenfunctionbyMelnikov}, that for all $(\theta,G^{\mathrm{u}})\in \Lambda$
\[
|(\mathrm{Id}-\pi_0)\partial_{G^{\mathrm{u},\mathrm{s}}} (\Delta S-\tilde{L})(\theta,\sigma; G^{\mathrm{u}},G^{\mathrm{s}},\zeta) |\lesssim (G^{\mathrm{u}})^{-3/2}  \exp(-(G^{\mathrm{u}})^3/3).
\]
so 
\[
|\mathcal{E}_I(\theta,\sigma; G^{\mathrm{u}},G^{\mathrm{s}},\zeta) |\lesssim  (G^{\mathrm{u}})^{-1/2} \exp(-(G^{\mathrm{u}})^3/3).
\]
Therefore, it follows from the definition of $\Phi_\pm(\theta,G^{\mathrm{u}})$ that 
\[
|\mathcal{E}_I\circ\Phi_+-\mathcal{E}_I\circ \Phi_-|\lesssim (G^{\mathrm{u}})^{-1/2} \exp(-(G^{\mathrm{u}})^3/3),
\]
 for all $(\theta,G^{\mathrm{u}})\in \Lambda$  and the asymptotic formula \eqref{eq:differencegeneratingfunctions} is inmediate.
\end{proof}

Finally, we complete the proof of Theorem \ref{thm:mainthmscattmaps2}.

\begin{proof}[Proof of Theorem \ref{thm:mainthmscattmaps2}]

 We write
\[
G^{\mathrm{s}}_+-G^{\mathrm{s}}_-= (\tilde{G}_+^{\mathrm{s}}-\tilde{G}_-^{\mathrm{s}})\circ (\widetilde{\Omega}_+)^{-1}+ \mathcal{E}_1\qquad\qquad\text{with}\qquad\qquad\mathcal{E}_1=\tilde{G}^{\mathrm{s}}_-\circ (\widetilde{\Omega}_+)^{-1}-\tilde{G}^{\mathrm{s}}_-\circ (\widetilde{\Omega}_-)^{-1}
\]
and the result for the $G$ component follows using that $\widetilde{\Omega}_\pm(\theta,G^{\mathrm{u}})=(\theta+\Upsilon_\pm(\theta,G^{\mathrm{u}}), G^{\mathrm{u}})$ and the estimate $| \Upsilon_+-\Upsilon_-|\lesssim |G^{\mathrm{u}}|^{-5/8}\exp(-G^{\mathrm{u}})^3/3)$ given in Lemma \ref{lem:differencewavemaps}.  Indeed by the mean value theorem
\[
\begin{split}
|\mathcal{E}_1|= &|\tilde{G}^{\mathrm{s}}_-\circ (\widetilde{\Omega}_+)^{-1}-\tilde{G}^{\mathrm{s}}_-\circ (\widetilde{\Omega}_-)^{-1}|
\lesssim \sup_{\theta\in\mathbb{T}_\rho} |\partial_{\theta} \tilde{G}^{\mathrm{s}}_-| | \Upsilon_+-\Upsilon_-| 
\lesssim \zeta (G^{\mathrm{u}})^{-45/8}\exp (-(G^{\mathrm{u}})^3/3),
\end{split}
\]
where we have used that for all $(\theta,G^{\mathrm{u}})\in \Lambda_\rho$
\[
|\partial_{\theta} \tilde{G}^{\mathrm{s}}_-|\lesssim |\partial^2_{\theta\theta} \mathcal{L}_-|\lesssim \zeta (G^{\mathrm{u}})^{-5}.
\]
We now study the angular component, which for $(\alpha^{\mathrm{u}},G^{\mathrm{u}})\in \Lambda_{\mathrm{vert}}$, is given by
\[
\alpha^{\mathrm{s}}_+-\alpha^{\mathrm{s}}_-= ( (\partial_{G^{\mathrm{u}}} \mathtt{S}_++ \partial_{G^{\mathrm{s}}} \mathtt{S}_+)\circ \Xi_+^{-1} - (\partial_{G^{\mathrm{u}}} \mathtt{S}_-+ \partial_{G^{\mathrm{s}}} \mathtt{S}_-) \circ \Xi_-^{-1})\circ \widetilde{\Omega}_+^{-1}+ \mathcal{E}_2,
\]
where
\[
\begin{split}
\mathcal{E}_2= &(\partial_{G^{\mathrm{u}}} \mathtt{S}_-+ \partial_{G^{\mathrm{s}}} \mathtt{S}_-)\circ (\widetilde{\Omega}_+\circ \Xi_-)^{-1}-(\partial_{G^{\mathrm{u}}} \mathtt{S}_-+ \partial_{G^{\mathrm{s}}} \mathtt{S}_-)\circ (\widetilde{\Omega}_-\circ \Xi_-)^{-1}.\\
\end{split}
\]
The asymptotic formulas for the Melnikov potential given in  Lemma \ref{lem:mainMelnikov} and the uniform estimates in Proposition \ref{prop:Asymptoticformulasgenfunct} imply that
\[
 |\partial_\theta ((\partial_{G^{\mathrm{u}}} \mathtt{S}_-+ \partial_{G^{\mathrm{s}}} \mathtt{S}_-)\circ \Xi_-^{-1}) |\lesssim \zeta (G^{\mathrm{u}})^{-6}.
\]
Since
\[
|\Upsilon_+-\Upsilon_-|\lesssim  (G^{\mathrm{u}})^{-5/8}  \exp(-(G^{\mathrm{u}})^3)/3),
\]
we obtain that, for all $(\theta,G^{\mathrm{u}})\in \Lambda_{\mathrm{vert}}$,
\[
|\mathcal{E}_2|\lesssim \zeta  (G^{\mathrm{u}})^{-53/8} \exp(-(G^{\mathrm{u}})^3/3).
\]
Theorem \ref{thm:mainthmscattmaps2} now follows combining these estimates with the ones  given in Proposition \ref{prop:Asymptoticformulasgenfunct}.
\end{proof}

\section{Nonexistence of common invariant curves}\label{sec:abstractresultscattmaps}
We now present the proof of Theorem \ref{thm:scattmapmain}, which might be of interest on its own and its independent of the previous sections. It consists of three parts. First, we make use of a result by Kuksin and P\"{o}schel (\cite{MR1279392}) which produces a time periodic Hamiltonian whose time-one map coincides with the map $g_+$ in \eqref{eq:firstcondmapmoeckel} (this choice is completely arbitrary, one can choose the map $g_-$ instead). Then, in Lemma \ref{lem: Neishtadt's thm}, we perform several steps of averaging to eliminate the dependence on time up to an exponentially small term. Finally, we check that the condition \eqref{eq:suffcondmoeckel} guarantees the non-existence of common essential invariant curves.

Before elaborating on our argument, the introduction of some notation is in order. Given a domain $D\subset \mathbb{T}\times \mathbb{R}$ and $\rho>0$ we write $D_\rho$ to denote its $\rho$-neighborhood in $(\mathbb{C}/2\pi\mathbb{Z})\times\mathbb C$. We write $|\cdot|_\rho$ for the sup norm for functions $f:D_\rho\to \mathbb{C}$ and use $\lVert \cdot \rVert_\rho$ for the case where $f$ is vector valued. We abuse notation and also use $|\cdot|_\rho$ (respectively $\lVert \cdot\rVert_{\rho}$) for functions (vector fields) defined on $D_\rho\times\mathbb T_\rho$.

\begin{thm}[Theorem 1 in \cite{MR1279392}]\label{thm:Kuksin}
Fix $\rho>0$ and let $g:D_{\rho}\subset\mathbb{T}_{\rho}\times\mathbb{C}\to \mathbb{T}_{\rho}\times\mathbb{C}$ be a real-analytic exact symplectic map of the form $g=\tilde g+\hat g$ where
\[
\tilde g(\alpha,G)=(\alpha+\partial_G h(G), G)
\]
for some $h: \mathbb{R}\to \mathbb{R}$ and 
\[
\lVert \hat g\rVert_{\rho} \leq \delta.
\]
Then, there exists $\delta_0 (\rho,|h|_\rho,|Dh|_\rho,|D^2h|_\rho)>0$ such that for all $0\leq\delta\leq\delta_0$, there exists a non-autonomous time periodic real-analytic Hamiltonian $K(\alpha,G,\tau):D_{\rho}\times \mathbb{T}_{\rho}\to \mathbb{C}$ 
such that the time-one map $\phi_K$ associated to the flow of the Hamiltonian $K$ satisfies
\[
g=\phi_K.
\]
Moreover,
\[
|K-h|_{\rho}\lesssim \delta
\]
\end{thm}

\begin{rem}
    For the sake of clarity, we have just stated Theorem \ref{thm:Kuksin} for maps of the cylinder. However, in \cite{MR1279392}, Theorem \ref{thm:Kuksin} is stated and proved in any dimension. 
\end{rem}

We notice that the maps $g_\pm$ in \eqref{eq:firstcondmapmoeckel} satisfy the hypotheses of Theorem \ref{thm:Kuksin} with $D=\mathbb{A}=[a,b]\times\mathbb T$ and
\[
\partial_G h(G)=\varepsilon\omega(G),\qquad\qquad \delta=\delta (\varepsilon)
\]
Hence, Theorem \ref{thm:Kuksin} yields a real analytic Hamiltonian function $K_+$ such that 
\[
g_+=\phi_{K_+} 
\]
and 
\begin{equation}\label{eq:estimatekplush}
|K_+-h|_\rho\lesssim  \delta(\varepsilon),
\end{equation}
where $\rho= \rho_0/2$ . Writing $X_{K_+}$ for the vector field generated by $K_+$ and expanding its time-one map in Taylor series we get that
for all $(\alpha,G)\in D_{\rho}$,
\begin{equation}\label{eq:vfieldapprox}
X_{K_+}-(g_+-\mathrm{Id}) = \left( \mathcal{O}( \varepsilon^2 +\varepsilon\delta(\varepsilon)),\ \mathcal{O} (\varepsilon\delta(\varepsilon)) \right).
\end{equation}
Hence, we observe that the Hamiltonian vector field $X_{K_+}$ is a slow fast system on $(\alpha,G)\in D$,  $\tau\in\mathbb T$ since $\dot{\tau}=1$ while $\dot{\alpha}=\mathcal{O}(\varepsilon)$ and $\dot{G}=\mathcal{O}(\delta(\varepsilon))$. We now obtain a normal form similar to that obtained by Neishtadt in (\cite{MR802878}) for the Hamiltonian function $K_+$ to push the $\tau$ dependence to an exponentially small remainder.\vspace{0.4cm}

\begin{lem}\label{lem: Neishtadt's thm}
There exists a real analytic change of variables $\psi:  D_{\rho/8}\to D_{\rho/2}$ with 
\begin{equation}\label{eq:Neishtadtchangevar}
 \lVert \mathrm{Id}-\psi\rVert_{\rho/8} \lesssim \varepsilon \delta(\varepsilon)
\end{equation}
and a real-analytic autonomous Hamiltonian function $\mathcal{K}_+:D_{\rho/8}\to \mathbb{C}$ such that the map 
\[
\mathtt{g}_+= \psi^{-1}\circ g_+  \circ \psi
\]
and the time-one map $\phi_{\mathcal{K}_+}$ associated to the Hamiltonian function $\mathcal{K}_+$ satisfy
\begin{equation}\label{eq: exponentially small remainder interpolation}
\lVert \mathtt{g}_+ - \phi_{\mathcal{K}_+}\rVert_{\rho/8}\lesssim \delta(\varepsilon) \exp\left(-c/\varepsilon\right)
\end{equation}
for some $c=c (\rho)>0$.
\end{lem}

\begin{proof}

The proof follows the ideas developed in \cite{MR802878} but in a Hamiltonian setting. We only sketch the proof  in order to keep track of the dependence of the error terms on $\delta$.

We  look for a symplectic change of variables as the time-one map of the Hamiltonian flow $\phi_{F_1}$ induced by a function $F_1$ to be determined. We write $K_+=K_0+R_0$ where $K_0=\langle K_+\rangle$. Notice that, by \eqref{eq:vfieldapprox}
\[
 \lVert X_{K_0} \rVert_{\rho/2}\lesssim   \varepsilon \qquad\qquad \lVert X_{R_0} \rVert_{\rho/2} \lesssim \varepsilon \delta(\varepsilon)\equiv \tilde{\varepsilon}.
\]
By Taylor's formula we find that
\[
\begin{split}
K_+\circ \phi_{F_1}=& K_0+\partial_\tau F_1+ R_0 +P_0
\end{split}
\]
where (here the bracket $\{\cdot,\cdot\}$ only involves the derivatives with respect to $\alpha$ and $G$)
\[
P_0=\left\{K_0,F_1\right\}+\int_0^1\left\{R_0+(1-s)\left\{K_0,F_1\right\},F_1\right\}\circ \phi_{F_1}^{\mathrm{s}}\mathrm{d}s\\.
\]
Since $\langle R_0 \rangle=0$ we can choose $F_1$ given by $F_1=-\int_0^\tau R_0\mathrm{d}s$. Hence,
\[
\left\lVert X_{F_1}\right\rVert_{\rho/2}\leq \tilde{\varepsilon}.
\]
Now we write  $K_+\circ\phi_{F_1}=K_1+R_1$ where $K_1=K_0+ \langle P_0\rangle$ and $R_1=P_0-\langle P_0\rangle$. Write $\tilde{\rho}=\rho/2$, then, the estimates 
\begin{equation}\label{eq:firstiterationNeishtadt}
\left\lVert X_{F_1}\right\rVert_{\tilde{\rho}}\lesssim \tilde{\varepsilon},\qquad\qquad |R_1|_{\tilde{\rho}-r}\lesssim \tilde{\varepsilon} \varepsilon, \qquad\qquad \left\lVert X_{K_1}-X_{K_0}\right\rVert_{\tilde{\rho}-2r}\lesssim \tilde{\varepsilon} r^{-1}\varepsilon
\end{equation}
for any $0<\varepsilon<r<\tilde{\rho}$ are straightforward. Indeed, 
\[
|\{K_0,F_1\}|_{\tilde{\rho}}\lesssim \lVert X_{K_+}\rVert_{\tilde{\rho}} \lVert X_{F_1}\rVert_{\tilde{\rho}} \lesssim \varepsilon\tilde{\varepsilon}\qquad\qquad |\{R_0,F_1\}|_{\tilde{\rho}}\lesssim \lVert X_{R_0}\rVert_{\tilde{\rho}} \lVert X_{F_1}\rVert_{\tilde{\rho}}\lesssim \tilde{\varepsilon}^2 
\]
and
\[
|\{\{K_0,F_1\},F_1\}|_{\tilde{\rho}-r}\lesssim r^{-2} \lVert X_{K_+}\rVert_{\tilde{\rho}} \lVert X_{F_1}\rVert_{\tilde{\rho}}  \lVert X_{F_1}\rVert_{\tilde{\rho}-r}\lesssim \varepsilon \tilde{\varepsilon}^2 r^{-2}\leq \varepsilon r^{-1} \tilde{\varepsilon}
\]
from where the second and third inequalities in \eqref{eq:firstiterationNeishtadt} plainly follow.  Assume now that we are able to carry on the process iteratively and find $n$ functions $F_i,\ i=1,\dots,n$ such that
\[
K\circ\phi_{F_1}\circ\cdots\circ\phi_{F_n}=K_n+R_n
\]
with 
\[
\left\lVert X_{F_n}\right\rVert_{\tilde{\rho}-2(n-1)r}\lesssim \tilde{\varepsilon} r^{-n+1} \varepsilon^{n-1},\quad\quad |R_n|_{\tilde{\rho}-(2n-1)r}\lesssim \tilde{\varepsilon} r^{-n+1} \varepsilon^n, \quad\quad \left\lVert X_{K_n}-X_{K_{n-1}}\right\rVert_{\tilde{\rho}-2nr}\lesssim \tilde{\varepsilon}r^{-n}\varepsilon^{n}
\]
where the symbol $a\lesssim b$ means that there exists $C>0$ which does not depend on $n,\varepsilon,\tilde{\varepsilon}$ and $r$ such that $a\leq C b$.

Then, if $r^{-1}\varepsilon <1$  and $\tilde{\rho}-2(n+1)r>0$ is an easy computation to show that we can perform one averaging step more to obtain a new function $F_{n+1}$ such that 
\[
K\circ\phi_{F_1}\circ\cdots\circ\phi_{F_{n+1}}=K_{n+1}+R_{n+1}
\]
with 
\[
\left\lVert X_{F_{n+1}}\right\rVert_{\tilde{\rho}-2nr}\lesssim \tilde{\varepsilon} r^{-n} \varepsilon^n,\qquad\qquad |R_{n+1}|_{\tilde{\rho}-(2n+1)r}\lesssim\tilde{\varepsilon} r^{-n}\varepsilon^{n+1}, 
\]
and 
\[
\left\lVert X_{K_{n+1}}-X_{K_{n}}\right\rVert_{\tilde{\rho}-2(n+1)r}\lesssim \tilde{\varepsilon}r^{-(n+1)}\varepsilon^{n+1}.
\]
Therefore, taking $r=2 \varepsilon$, after a number $N= \left[ \tilde{\rho} r^{-1}\right]/4$ of averaging steps we get that $\tilde{\rho}-2Nr\geq \tilde{\rho}/2=\rho/4$ and the reminder has size
\[
 |R_{N}|_{\rho/4}\lesssim\tilde{\varepsilon}r \left(\varepsilon/r\right)^N= 2\tilde{\varepsilon} \varepsilon 2^{-N}=2 \tilde{\varepsilon} \varepsilon \exp\left(\frac{-[\tilde{\rho}r^{-1}]\ln 2}{4}\right)
\]
so it follows, using the definition of $r$, that, for some $c>0$ depending only on $\varepsilon$,
\[
|R_N|_{\rho/4}\lesssim \varepsilon \delta(\varepsilon) \exp(-c/\varepsilon).
\]
Define now $\mathcal{K}_+=K_N$ and let $\psi=\phi_{F_1}\circ\cdots\circ\phi_{F_{N}}$. Then, it follows from our construction that 
\[
\mathtt g_+=\psi^{-1}\circ\phi_K\circ\psi=\phi_{K_N+R_N}=\phi_{\mathcal{K}_+}+ (\phi_{K_N+R_N}-\phi_{\mathcal{K}_+})
\]
and the estimate \eqref{eq: exponentially small remainder interpolation} follows.
On the other hand, standard Cauchy estimates show that
\[
\left\lVert \mathrm{Id}-\psi\right\rVert_{\rho/8}\lesssim \left\lVert X_{F_1}\right\rVert_{\rho/2}\lesssim \tilde\varepsilon=\varepsilon\delta(\varepsilon).\qedhere\
\]
\end{proof}

From the previous lemma, we observe that the curves $\{ \mathcal{K}_+=\mathrm{const} \}$ are almost invariant for the map 
\begin{equation}\label{eq:modifiedgmap+}
\mathtt{g}_+= \psi^{-1} \circ g_+\circ\psi.
\end{equation}
Moreover, standard KAM techniques show that, under a suitable nondegeneracy condition on $\omega(G;\varepsilon)$, a large part of the cylinder $\mathbb{T}\times[a,b]$ is filled by essential invariant curves, which are close to the level sets of the Hamiltonian $\mathcal{K}_+$, and  which leave small gaps between them. Since any other essential invariant curve must be confined between two KAM curves, we obtain the following result.

\begin{prop}\label{prop:KAM}
Let $\mathtt{g}_+$ be the map defined in \eqref{eq:modifiedgmap+}, let 
$
\mathbb{A}_\varepsilon=\mathbb{T}\times [a+\varepsilon^{1/4},b-\varepsilon^{1/4}],
$
let  
\[
\mathcal{A}_+=\{\gamma\subset \mathbb{A}_\varepsilon\colon \gamma\ \text{is an essential invariant curve for the map}\ \mathtt{g}_+\}
\]
and let  $c(\rho)>0$ be the constant obtained in Lemma \ref{lem: Neishtadt's thm}. Then, for any $\gamma\in\mathcal{A}_+$,
    \[
    \max\{ |\mathcal{K}_+(z_2)-\mathcal{K}_+(z_1)|,\ z_1,z_2\in\gamma\}\lesssim \frac{\sqrt{\delta(\varepsilon) \exp(-c(\rho)/\varepsilon)}}{\rho^2\varepsilon\omega'(G;\varepsilon) }.
    \]
\end{prop}

\begin{rem}
    Notice that Proposition \ref{prop:KAM} applies to all essential curves, independently of their inner dynamics.
\end{rem}

\begin{proof}
Let $K, h$ be the functions obtained in Theorem \ref{thm:Kuksin} and let $\mathcal{K}_+$ be the function obtained in Lemma \ref{lem: Neishtadt's thm}. Writing $h(G)=\varepsilon \int_0^G \omega(s)\mathrm{d}s$, we have 
\[
|\mathcal{K}_+-h|_{\rho}\lesssim \delta(\varepsilon),
\]
so there exists a real-analytic $\mathcal{O}(\delta(\varepsilon))$-close to identity canonical transformation $\chi:(\theta,I)\mapsto (\alpha,G)$ and a function $\widetilde{\mathcal{K}}_+$ such that $\widetilde{\mathcal{K}}_+(I)=\mathcal{K}_+\circ\chi(I,\theta)$. Namely, $\mathcal{\chi}$ puts the Hamiltonian $\mathcal{K}_+$ in action-angle variables. Moreover,
\[
\partial_I \widetilde{\mathcal{K}}_+(I)=\varepsilon \omega(I)+\mathcal{O}(\delta(\varepsilon)).
\]
Now, we notice that  $K\circ \vartheta \circ\chi (\lambda,I,t)$ satisfies that 
\[
|K\circ \vartheta\circ\chi -\widetilde{\mathcal{K}}_+|_{\rho}\lesssim \delta(\varepsilon) \exp(-c(\rho)).
\]
Then, standard KAM techniques, see for instance \cite{MR1858551}, show that there is a collection $\mathcal{C}_+\subset\mathcal{A}_+$ of essential invariant curves for the map $\mathtt{g}_+$ whose dynamics is conjugated to a Diophantine rotation. Moreover, for any $\gamma\in \mathcal{C}_+$  we have
    \[
    \max\{ |\mathcal{K}_+(z_2)-\mathcal{K}_+(z_1)|\colon\ z_1,z_2\in\gamma\}\lesssim \frac{\sqrt{\delta(\varepsilon) \exp(-c(\rho)/\varepsilon)}}{\rho^2\varepsilon },
    \]
    and
    \[
    \sup\{ \inf|\mathcal{K}_+(z_2)-\mathcal{K}_+(z_1)|:\ z_1\in\gamma,\ z_2\in\gamma',\ \gamma'\neq \gamma,\ \gamma'\in\mathcal{A}_+\}\lesssim \frac{\sqrt{\delta(\varepsilon) \exp(-c(\rho)/\varepsilon)}}{\rho^2\varepsilon \omega'(G;\varepsilon) }.
    \]
The conclusion in the statement follows since the KAM curves form a codimension one lamination of $\mathbb{A}$. Indeed, any essential invariant curve is either a KAM curve or is contained between two different KAM curves.\qedhere\
\end{proof}

In the following proposition we obtain quantitative estimates which will guarantee that the invariant curves of the map 
\begin{equation}\label{eq:modifiedgmap-}
\mathtt g_-=\psi^{-1}\circ g_-\circ\psi
\end{equation}
must be transverse to those of $\mathtt{g}_+$ if conditions \eqref{eq:secondcondmapmoeckel}, \eqref{eq:suffcondmoeckel}
 and \eqref{eq:suffcondmoeckel2} are satisfied. In the statement and proof of Proposition \ref{prop:transversality}, $\lVert \cdot \rVert$ stands for the Euclidean sup norm in $\mathbb{R}^2$.

\begin{prop}\label{prop:transversality}
Let $\mathtt{g}_-$ be the map defined in  \eqref{eq:modifiedgmap-}. Then, 
\begin{equation}\label{eq:finaltransversality}
\mathcal{K}_+\circ \mathtt{g}_- -\mathcal{K}_+=\langle \mathcal{J}(g_+-\mathrm{Id}),\ g_--g_+\rangle+\mathcal O(\varepsilon \delta(\varepsilon)\lVert g_--g_+\rVert)
\end{equation}
where $\mathcal{J}$ denotes the standard complex structure in $\mathbb{R}^2$.
\end{prop}

\begin{proof}
 Let $\mathcal{K}_+$ be the autonomous Hamiltonian obtained in Lemma \ref{lem: Neishtadt's thm} and write 
\[
\mathcal K_+\circ\mathtt g_--\mathcal K_+=(\mathcal{K}_+\circ \mathtt{g}_- -\mathcal{K}_+\circ \mathtt{g}_+) +(\mathcal{K}_+\circ \mathtt{g}_+ -\mathcal{K}_+)
\]
and we expand in Taylor series 
\begin{align*}
\mathcal{K}_+\circ\mathtt{g}_- -\mathcal{K}_+\circ \mathtt{g}_=&\langle \nabla \mathcal{K}_+\circ \mathtt{g}_+, \ \mathtt{g}_--\mathtt{g}_+\rangle\\
&+ \int_{0}^1 (1-t) \langle D^{2}\mathcal{K}_+\circ \mathtt{g}_t\  (\mathtt{g}_--\mathtt{g}_+), \ \mathtt{g}_--\mathtt{g}_+\rangle \mathrm{d}t,
\end{align*}
where we have written $\mathtt{g}_t=t\mathtt{g}_++(1-t)(\mathtt{g}_--\mathtt{g}_+)$. On one hand, denoting by $\mathcal{J}$ the usual complex structure in $\mathbb{R}^2$ and using inequalities  \eqref{eq:vfieldapprox} and \eqref{eq:Neishtadtchangevar}, we have that 
\[
\begin{split}
 \nabla \mathcal{K}_+= &  \nabla K_+ +\nabla (\mathcal{K}_+-K_+)= \mathcal{J}( g_+-\mathrm{Id}) +\left( \mathcal{O}\left(\varepsilon\delta(\varepsilon)\right),\ \mathcal{O}\left( \varepsilon^2\right) \right)^\top.
\end{split}
\]
On the other hand, since $\psi$ is a $\mathcal{O}(\delta(\varepsilon))$-close to identity real analytic transformation defined in a complex neighborhood of size $\rho/8\sim 1$, one easily checks that $\tilde{\psi}\equiv\psi^{-1}-\mathrm{Id} =\mathcal{O}(\delta(\varepsilon))$ and
\[
\begin{split}
\mathtt{g}_--\mathtt{g}_+=&\psi^{-1}\circ g_-\circ\psi- \psi^{-1}\circ g_+\circ\psi=\left(\psi^{-1}\circ g_--\psi^{-1}g_+\right)\circ\psi =\left( g_--g_+ +\tilde{\psi}\circ g_- -\tilde{\psi}\circ g_+ \right)\circ\psi\\
=& \left(( g_- -g_+)+ \left(\int_0^1 D \tilde{\psi} \left(g_- +s(g_+-g_-)\right)\mathrm{d}s\right)  ( g_- -g_+) \right)\circ \psi\\
=&  g_- -g_+  + \mathcal{O}\left(\delta(\varepsilon)\lVert g_- -g_+\rVert \right).
\end{split}
\]
Therefore, since $\mathtt{g}_+=\mathrm{Id}+\mathcal{O}(\varepsilon)$ (see \eqref{eq:firstcondmapmoeckel} and \eqref{eq:Neishtadtchangevar})
\begin{align*}
\langle \nabla \mathcal{K}_+\circ \mathtt{g}_+, \mathtt{g}_- -\mathtt{g}_+ \rangle=&  \big\langle \mathcal{J}( g_+-\mathrm{Id}) +\left(\mathcal{O}(\varepsilon\delta(\varepsilon)), \mathcal{O}(\varepsilon^2)\right)^\top,\ g_- -g_+ + \mathcal{O}\left(\delta(\varepsilon) \lVert g_- -g_+\rVert \right)    \big\rangle\\
=&  \big\langle \mathcal{J}( g_+-\mathrm{Id}) ,\ g_- -g_+    \big\rangle+ \mathcal O(\varepsilon \delta(\varepsilon)\lVert g_--g_+\rVert)
\end{align*}
and the conclusion follows. 
\end{proof}

The proof of  Theorem \ref{thm:scattmapmain} is complete since  \eqref{eq:finaltransversality} implies a bound from below for the maximal variation of $\mathcal{K}_+$ along any orbit of the map $\mathtt{g}_-$. Indeed, for a map satisfying \eqref{eq:secondcondmapmoeckel} and \eqref{eq:suffcondmoeckel2},  inequality \eqref{eq:finaltransversality} implies that, for any $(\alpha,G)\in\mathbb{I}$,
\begin{equation}\label{eq:jumporbitg-}
|\mathcal{K}_+\circ \mathtt g_-(\alpha,G)-\mathcal{K}_+(\alpha,G)|\geq \eta(\varepsilon)>0.
\end{equation}
Then, if \eqref{eq:suffcondmoeckel} is satisfied, the estimate in \eqref{eq:jumporbitg-} and Proposition \eqref{prop:KAM} imply that the variation $|\mathcal{K}_+\circ \mathtt g_-(\alpha,G)-\mathcal{K}_+(\alpha,G)|$ for $(\alpha,G)\in\mathbb I$ is larger than the maximum variation of $\mathcal{K}_+$ along a essential invariant curve of the map $\mathtt{g}_+$. Thus, the maps $\mathtt g_+$ and $\mathtt g_-$ cannot have essential invariant curves in common.


\appendix

\section{The 2-body problem}\label{sec:perturbregime}

In this section we recall a few classical facts about the 2-body problem which are used throughout the text. The 2-body problem (2BP) in polar coordinates is the  Hamiltonian system associated to
\begin{equation}\label{eq:Ham2bp}
H_{\mathrm{2BP}}(r,y,G)= \frac{y^2}{2}+ \frac{G^2}{2r^2}-\frac{1}{r}
\end{equation}
on the phase space $(r,\alpha,y,G)\in \mathbb{R}_+\times\mathbb{T}\times \mathbb{R}^2$. Since the Hamiltonian $H_{\mathrm{2BP}}$ does not depend on the angle $\alpha$, the angular momentum $G$ is a first integral for the 2BP. Moreover, it is functionally independent and commutes with the energy $H_{\mathrm{2BP}}$, what makes the 2BP integrable. The dynamics of the 2BP is completely understood: positive energy levels correspond to hyperbolic motions, negative energy levels to elliptic motions and the zero energy level corresponds to parabolic motions.

\subsection{The parabolic homoclinic manifold of the 2BP}

Of special interest for us are the parabolic motions. Denote by $\mathcal{P}_{\infty}^{\mathrm{2BP}}=\{(\infty,\alpha,0,G)\in \mathbb{R}_+\times\mathbb{T}\times \mathbb{R}^2 \}=\mathcal{P}_\infty\cap \{t=E=0\}$ the parabolic infinity in the reduced phase space (see the extended phase space in polar coordinates in Section \ref{sec:Hamiltonianandeqsofmotion}), which is a 2 dimensional TNHIC. Then,  the set of points leading to parabolic motions, that is, the set $\{H_{\mathrm{2BP}}=0\}$, is a 3 dimensional submanifold $W^{\mathrm{h}}_{\mathrm{2BP}}$  homoclinic to $\mathcal{P}_{\infty}^{\mathrm{2BP}}$. Let $\phi^\tau_{H_{\mathrm{2BP}}}$ be the flow associated to the Hamiltonian \eqref{eq:Ham2bp}, then (note that for the $r$ component $\pi_r \circ \phi^\tau_{H_{\mathrm{2BP}}}(x)\to \infty$ as $\tau\to\pm\infty$)
\begin{equation}\label{eq:homman2BP}
W^{\mathrm{h}}_{\mathrm{2BP}}=\{x\in  \mathbb{R}_+\times\mathbb{T}\times \mathbb{R}^2\colon \exists z\in\mathcal{P}_{\infty}^{\mathrm{2BP}}\quad\text{for which}\quad \lim_{\tau\pm\infty} |\phi^\tau_{H_{\mathrm{2BP}}}(x)-\phi^\tau_{H_{\mathrm{2BP}}}(z)|=0\}.
\end{equation}

The following lemma gives a parametrization of the homoclinic manifold $W^{\mathrm{h}}_{\mathrm{2BP}}$. A proof can be found in \cite{MR1284416}.

\begin{lem}\label{lem:uperturbedhomoclinic}
There exist real analytic functions $\tilde r_{\mathrm{h}}(u;G),\tilde\alpha_{\mathrm{h}}(u;G)$ and $\tilde y_{\mathrm{h}}(u;G)$ such that 
\[
W^{\mathrm{h}}_{\mathrm{2BP}}=\{  \Gamma_{\mathrm{2BP}}(u,\beta)=(G^2 \tilde r_{\mathrm{h}}(u;G), \beta+\tilde\alpha_{\mathrm{h}}(u;G), G^{-1} \tilde y_{\mathrm{h}}(u;G), G)\in \mathbb{R}_+\times\mathbb{T}\times \mathbb{R}^2\colon u\in\mathbb{R},\ \beta\in\mathbb{T},\ G\in\mathbb{R}\setminus\{0\} \}
\]
and, if we denote by $X_{\mathrm{2BP}}$  the vector field associated to the Hamiltonian \eqref{eq:Ham2bp},
\[
X_{\mathrm{2BP}}\circ \Gamma_{\mathrm{2BP}}=D \Gamma_{\mathrm{2BP}}\  \Upsilon\qquad\qquad\text{with}\qquad\qquad \Upsilon=(G^{-3},0).
\]
The functions 
\[
r_{\mathrm{h}}(u;G)=\tilde r_{\mathrm{h}}(G^{-3}u;G),\qquad\qquad y_{\mathrm{h}}(u;G)=\tilde y_{\mathrm{h}}(G^{-3}u;G),\qquad\qquad \alpha_{\mathrm{h}}(u;G)=\tilde \alpha_{\mathrm{h}}(G^{-3}u;G)
\]
and  admit a unique analytic extension to  $\mathbb{C}\setminus \{u= is \colon s\in(-\infty,-1/3]\cup[1/3,\infty)\}$  and satisfy the asymptotic behavior
\[
r_{\mathrm{h}}(u)\sim u^{2/3}\qquad\qquad  \exp(i\alpha_{\mathrm{h}}(u))\sim 1 \qquad\qquad y_{\mathrm{h}}(u)\sim u^{-1/3}\qquad\qquad\text{as}\quad u\to \pm \infty
\]
and 
\[
r_{\mathrm{h}}(u)\sim (u\pm i/3)^{1/2}\qquad\quad \exp(i\alpha_{\mathrm{h}}(u))\sim \left(\frac{u\pm i/3}{u\mp i/3} \right)^{1/2} \qquad\quad y_{\mathrm{h}}(u)\sim (u\pm i/3)^{-1/2}\quad\qquad\text{as}\quad u\to \pm i/3.
\]
Moreover, $y_{\mathrm{h}}(u)=0$ if and only if $u=0$ and  $r_{\mathrm{h}}(u)\geq 1/2$ for all $u\in\mathbb{R}$.
\end{lem}

\section{The perturbative potential $V$  and the Melnikov potential $L$}\label{sec:Melnikov}

In this appendix we provide we provide the proofs of Lemma \ref{lem:boundspotential}, which describes the behavior of the perturbative potential $V$ defined in \eqref{eq:dfnperturbativepotential}, Lemma \ref{lem:mainMelnikov} which states the main properties of the Melnikov potential $L$ defined in \eqref{eq:defnMelnikovPot} and Lemma \ref{lem:nondegdiffmelnikov} concerning the reduced Melnikov potentials $\mathcal{L}_\pm$ introduced in \eqref{eq: DefinitionredMelnikovPotential}. We start by recalling the following well known result, a proof of which can be found in \cite{MR1284416}.

\begin{lem}\label{lem: propunperthomoclinic}
Let $r_{\mathrm{h}}(u)$ and $\alpha_{\mathrm{h}}(u)$ the functions defined in Lemma \ref{lem:uperturbedhomoclinic}. Then, under the real analytic change of variables $u=(\tau+\tau^3/3)/2$, and using the same notation  $r_{\mathrm{h}}(\tau)$ and $\alpha_{\mathrm{h}}(\tau)$, 
we have that
\[
r_{\mathrm{h}}(\tau)=\frac{\tau^2+1}{2}\qquad\qquad e^{i\alpha_{\mathrm{h}}(\tau)}=\frac{\tau-i}{\tau+i}.
\]
\end{lem}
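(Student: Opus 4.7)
The plan is to recognise the lemma as a standard reparametrisation of the Keplerian parabolic orbit in the normalisation $G=1$ (which suffices thanks to the explicit $G$-scaling already built into the parametrisation in Lemma \ref{lem:uperturbedhomoclinic}). The new variable $\tau$ is, up to an affine renormalisation, the classical \emph{parabolic anomaly}, whose virtue is that it trades the transcendental time parametrisation of the Kepler parabola for polynomial/rational expressions at the cost of a cubic relation between $\tau$ and $u$.

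First I would write the zero energy solutions of the $2$BP Hamiltonian $H_{\mathrm{2BP}}(r,y,G)=y^2/2+G^2/(2r^2)-1/r$ with $G=1$ in polar form as the conic $r=1/(1+\cos(\alpha-\alpha_0))$. The asymptotic condition $e^{i\alpha_{\mathrm{h}}(u)}\to 1$ as $u\to\pm\infty$ of Lemma \ref{lem:uperturbedhomoclinic} pins down $\alpha_0=\pi$, and the condition $r_{\mathrm{h}}\ge 1/2$ with $y_{\mathrm{h}}(0)=0$ places the perihelion at $\tau=0$. Introducing the tangent half angle $\tau=\tan\bigl((\alpha-\pi)/2\bigr)$, a direct trigonometric computation gives
\[
r_{\mathrm{h}}(\tau)=\frac{1+\tau^2}{2},\qquad e^{i\alpha_{\mathrm{h}}(\tau)}=e^{i\pi}\,e^{2i\arctan\tau}=-\frac{(1-\tau^2)+2i\tau}{1+\tau^2}=\frac{\tau-i}{\tau+i},
\]
which are the claimed formulas.

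Next I would produce the time reparametrisation. Lemma \ref{lem:uperturbedhomoclinic} gives $\dot u=G^{-3}=1$, and Hamilton's equation yields $\dot\alpha=G/r^2=1/r_{\mathrm{h}}^2$, so $du/d\alpha=r_{\mathrm{h}}^2$. Combining with $d\alpha/d\tau=2/(1+\tau^2)$ produces
\[
\frac{du}{d\tau}=r_{\mathrm{h}}(\tau)=\frac{1+\tau^2}{2},
\]
and integrating from the perihelion, using the normalisation $u(0)=0$ fixed by $y_{\mathrm{h}}(0)=0$, one obtains $u=(\tau+\tau^3/3)/2$. A consistency check on the asymptotics is automatic: $u\sim \tau^3/6$ as $\tau\to\pm\infty$ gives $r_{\mathrm{h}}\sim u^{2/3}$ and $y_{\mathrm{h}}=dr_{\mathrm{h}}/du=2\tau/(1+\tau^2)\sim u^{-1/3}$, matching Lemma \ref{lem:uperturbedhomoclinic}.

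There is no real obstacle here. The only point that requires some care is the sign/phase convention: the asymptotic normalisation $e^{i\alpha_{\mathrm{h}}}\to 1$ places perihelion at $\alpha=\pi$ rather than at $\alpha=0$, which is precisely what is responsible for the minus sign hidden in the identity $(\tau-i)/(\tau+i)=-e^{2i\arctan\tau}$; once this convention (together with $u(0)=0$ at perihelion) is fixed, the lemma reduces to the classical Barker parametrisation of Keplerian parabolic motion.
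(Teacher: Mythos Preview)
Your argument is correct. The paper does not actually give its own proof of this lemma: it states the result as well known and refers the reader to \cite{MR1284416}. Your derivation via the parabolic conic $r=1/(1-\cos\alpha)$, the half-angle substitution $\tau=\tan((\alpha-\pi)/2)$, and Barker's relation $du/d\tau=r_{\mathrm{h}}(\tau)$ is exactly the classical computation one finds in that reference, including the phase convention forced by $e^{i\alpha_{\mathrm{h}}}\to 1$ at infinity.
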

\subsection{Proof of Lemma  \ref{lem:boundspotential}}

From the definition of $V(u,\beta,t;G_{0})$ in \eqref{eq:dfnperturbativepotential} and straightforward manipulations we obtain that
\begin{equation}\label{eq:Potentialtauvars}
\begin{split}
U(\tau,\beta,t;G_{0})=&V(u(\tau),\beta,t;G_{0})\\
=& \frac{\mu} {r_{\mathrm{h}}(\tau)\left(1+\frac{2\left(1-\mu\right)\varrho\left(t\right)}{G_{0}^2 r_{\mathrm{h}}(\tau)}e^{i(\beta+\alpha_{\mathrm{h}}(\tau)-f(t))}\right)^{1/2}\left(1+\frac{2\left(1-\mu\right)\varrho\left(t)\right)}{G_{0}^2 r_{\mathrm{h}}(\tau)}e^{-i(\beta+\alpha_{\mathrm{h}}(\tau)-f(t))}\right)^{1/2}}\\
&+ \frac{(1-\mu)} {r_{\mathrm{h}}(\tau)\left(1-\frac{2\mu\varrho\left(t\right)}{G_{0}^2 r_{\mathrm{h}}(\tau)}e^{i(\beta+\alpha_{\mathrm{h}}(\tau)-f(t))}\right)^{1/2}\left(1-\frac{2\mu\varrho\left(t\right)}{G_{0}^2 r_{\mathrm{h}}(\tau)}e^{-i(\beta+\alpha_{\mathrm{h}}(\tau)-f(t))}\right)^{1/2}}- \frac{1}{r_{\mathrm{h}}(\tau)}.\\
\end{split}
\end{equation}
As we need to bound the Fourier coefficients of  $V(u,\beta,t;G_{0})$ for $u\in D_{\kappa}^{\mathrm{u}}$, we will use the transformation in Lemma \ref{lem: propunperthomoclinic} and bound the potential in these variables, where we have the explicit expressions of $r_{\mathrm{h}}$ and $\alpha_{\mathrm{h}}$. 
Important in the sequel is that when $u\in D^{\mathrm{u}}_\kappa $
we know that $|\tau^2+1|\geq \kappa  G_{0}^{-3/2}$. 
We now define the Fourier coefficients of $t\to U(\tau,\beta,t;G_{0})$ as the integral expression 
\begin{equation}\label{eq:FourierU}
U^{[l]}(\tau,\beta;G_{0})=\frac{1}{2\pi}\int_{0}^{2\pi} U(\tau,\beta,t;G_{0}) e^{-ilt}\mathrm{d}t.
\end{equation}
In this proof we will perform several changes of variables in this integral but we will keep the same notation for the functions $\varrho$ and $f$. In order to analyze this integral, we change the integration variable to the eccentric anomaly $\xi$ by means of Kepler equation $t=\xi-\zeta\sin\xi$ so that \eqref{eq:FourierU} reads
\begin{equation}\label{eq:FourierU2}
U^{[l]}(\tau,\beta;G_{0})=\frac{1}{2\pi}\int_{0}^{2\pi} (1-\zeta\cos\xi) U(\tau,\beta,\xi-\zeta\sin\xi;G_{0}) e^{-il(\xi-\zeta\sin\xi)}\mathrm{d}t.
\end{equation}
In this way, we have the explicit formulas
\begin{equation}\label{eq:anomalies}
\varrho(\xi)=1-\zeta\cos\xi\qquad\qquad \varrho(\xi)e^{if(\xi)}=a^2 e^{i\xi}-\zeta+\frac{\zeta^2}{4a^2} e^{-i\xi},
\end{equation}
where $a=(\sqrt{1+\zeta}+\sqrt{1-\zeta})/2$. Changing the integration contour in \eqref{eq:FourierU2} to the line $\{\xi\in \mathbb{C}/2\pi\mathbb{Z}\colon \xi=\alpha_{\mathrm{h}}(\tau)+s,\ s\in[0,2\pi] \}$ we obtain that,
\[
U^{[l]}(\tau,\beta;G_{0})=\frac{e^{-il\alpha_{\mathrm{h}}(\tau)}}{2\pi}\int_{0}^{2\pi} (1-\zeta\cos(\alpha_{\mathrm{h}}(\tau)+s))U(\tau,\beta, \alpha_{\mathrm{h}}(\tau)+s-\zeta\sin(\alpha_{\mathrm{h}}(\tau)+s);G_{0})e^{-il(s-\zeta\sin(\alpha_{\mathrm{h}}(\tau)+s))} \mathrm{d}s ,
\]
and
\begin{equation}\label{eq:anomaliesens}
\varrho(s)=(1-\zeta\cos(\alpha_{\mathrm{h}}(\tau)+s)) \qquad \varrho(s)e^{if(s)}=
e^{i\alpha_{\mathrm{h}}(\tau)}\left(
a^2 e^{is}-\zeta
e^{-i\alpha_{\mathrm{h}}(\tau)}
+\frac{\zeta^2}{4a^2} e^{-i(2\alpha_{\mathrm{h}}(\tau)+s))}\right)
\end{equation}
Now, the main observation is that, using the assumption $\zeta\lesssim G_{0}^{-2}$,
for fixed $\kappa,\sigma>0$ and all 
$(\tau,\beta)\in \{|\tau^2+1|\geq \kappa  G_{0}^{-3/2}\}\times \mathbb{T}_\sigma$ 
one can easily see that
\[
\left|
\zeta e^{\pm i\alpha_{\mathrm{h}}(\tau)}\right| \lesssim
G_{0}^{-\frac12}
\]
and, therefore,
\[
\left| \varrho\left(s\right)\right| \lesssim 1 \qquad
\left| \varrho\left(s\right)e^{\pm i(\beta+\alpha_{\mathrm{h}}(\tau)-f(t))}\right|\lesssim 1 .
\]
Using these inequalities, as well as the fact that
\[
\left| \frac{1}{G_{0}^2 r_{\mathrm{h}}(\tau)}\right|\lesssim G_{0}^{-\frac12},
\]
we obtain,
\[
\left| \frac{2\left(1-\mu\right)\varrho\left(t\right)}{G_{0}^2 r_{\mathrm{h}}(\tau)}e^{\pm i(\beta+\alpha_{\mathrm{h}}(\tau)-f(t))}\right|\lesssim \frac{1}{G_{0}^{2} r_{\mathrm{h}}(\tau)} \lesssim G_{0}^{-\frac12}
\qquad\qquad \left| \frac{2\mu\varrho\left(t\right)}{G_{0}^2 r_{\mathrm{h}}(\tau)}e^{\pm i(\beta+\alpha_{\mathrm{h}}(\tau)-f(t))}\right|\lesssim \frac{1}{G_{0}^{2} r_{\mathrm{h}}(\tau)}\lesssim G_{0}^{-\frac12} .
\]
This justifies that we can use the Taylor formula
\[
(1+x)^{-\frac12}=1-\frac12 x+\mathcal{O}(x^2),
\]
to bound the Fourier coefficients of the potential.  Using the cancellations of the order $0$ and $1$ terms we get, for a certain $\rho_0,\tilde{\sigma}>0$ small enough but independent of $ G_{0}$, and for $(\tau,\beta)\in \{|\tau^2+1|\geq \kappa  G_{0}^{-3/2}\}\times \mathbb{T}_{\rho_0}$
\[
|U^{[l]}(\tau,\beta)|\lesssim  G_{0}^{-4} |r_{\mathrm{h}}(\tau)|^{-3} |e^{-il\alpha_{\mathrm{h}}(\tau)}| e^{-|l|\tilde{\sigma}}.
\]
Equivalenty, for $(u,\beta) \in D_{\kappa}^{\mathrm{u}}\times \mathbb{T}_{\rho_0} $
\begin{equation}\label{eq:boundsV}
|V^{[l]}(u,\beta)|\lesssim  G_{0}^{-4} |r_{\mathrm{h}}(u)|^{-3} |e^{-il\alpha_{\mathrm{h}}(u)}| e^{-|l|\tilde{\sigma}},
\end{equation}
that taking into account Lemma \ref{lem:uperturbedhomoclinic} gives the desired bound for the norm of $V^{[l]}$ and $V$ and completes the proof of the first estimate in Lemma \ref{lem:boundspotential}. The estimate for the difference $V-V_{\mathrm{circ}}$ is obtained from the fact that $V$ depends analytically on $\zeta$ and a straightforward application of Schwarz's lemma.

\subsection{Proof of Lemmas   \ref{lem:nondegdiffmelnikov} and \ref{lem:mainMelnikov}}
The estimates \eqref{eq:boundsV} are enough to bound the associated Fourier coefficients $L^{[l]}(\beta;G_{0})$ of the Melnikov potential $\tilde{L}(u,\beta,t;G_{0})$ defined in \eqref{eq:defnMelnikovPot}. In fact
\begin{equation}
\tilde{L}(u,\beta,t;G_{0},\zeta)=\sum e^{i l (t-G_{0}^3 u)}L^{[l]}(\beta;G_{0},\zeta), \quad L^{[l]}(\beta;G_{0})=\int_{-\infty}
^{\infty} V^{[l]}(s,\beta;G_{0},\zeta)e^{il G_{0}^3 s} ds,
\end{equation}
so we can write
\[
\tilde{L}(u,\beta,t;G^{\mathrm{u}},G^{\mathrm{s}},\zeta)=\sum_{l\in\mathbb{N}} \mathcal{L}_{l}(t-G_{0}^3u,\beta;G_{0},\zeta)
\]
where
\[
\mathcal{L}_{l}(t-G_{0}^3u,\beta;G_{0},\zeta)=e^{i l (t-G_{0}^3 u)}L^{[l]}(\beta;G_{0},\zeta)+e^{-i l (t-G_{0}^3 u)}L^{[-l]}(\beta;G_{0},\zeta).
\]
Then, for $l \ge 1$, it is enough to change the path of integration to $\Im u=\frac13 - G_{0}^3$ to bound $|L^{[l]}|$, use the bounds   
\eqref{eq:boundsV}, use that
\[
\left|
e^{\pm i\alpha_{\mathrm{h}}(u)}\right| \lesssim
 G_{0}^{\frac32}
\]
and the fact that $L^{[-l]}=\overline{L^{[l]}}$, to obtain, writing $\sigma=t-G_{0}^3u$,
\[
|\mathcal{L}_l(\sigma,\beta;G_{0})| \le  G_{0}^{\frac{3l}{2}+\frac{3}{2}}\exp(-l \operatorname{Re}(G_{0}^3)/3)
\]
and, therefore, for the sum,
\[
|\mathcal{L}_{\geq 2}(\sigma,\beta;G_{0})| \le  G_{0}^{\frac92}\exp (-2 \operatorname{Re}(G_{0}^3)/3).
\]
The coefficients $L^{[0]}$ and $L^{[1]}$ can be computed expanding the potential $U$ up to order four in powers of $1/r_{\mathrm{h}}(\tau)$ and bounding the remainder in an analogous way. We do not do the computations here because they can be found in
Lemmas 31 and 36   in \cite{MR3927089}.
Define  the  coefficients
\[
c^{k,n}_{l}(\mu)=\frac{1}{2\pi} ((1-\mu)^{k-1}-(-\mu)^{k-1})\int_{0}^{2\pi} \varrho^{k}(t)e^{-inf(t))}e^{-ilt}\mathrm{d}t.
\]
Then, one has
\[
\begin{split}
\mathcal{L}_0(\beta; G_{0})= &\mu(1-\mu) \left( c^{2,0}_0(0)\frac{\pi}{2 G_{0}^3} +(1-2\mu) c^{3,1}_0(0)  \frac{3\pi}{4G_{0}^5} \cos\beta+ \mathcal{O}(\zeta^2 G_{0}^{-7}) \right),\\
\end{split}
\]
and
\[
\begin{split}
\mathcal{L}_1(\sigma,\beta; G_{0},\zeta)=\mu(1-\mu)& \left(2 L_{1,1}(G_{0},\zeta)\cos(\sigma-\beta)+2L_{1,2}(G_{0},\zeta)\cos(\sigma-2\beta) + \mathcal{O}(\zeta  G_{0}^{-3/2},  |c^{3,3}_1 G_{0}^4|) \right),
\end{split}
\]
with
\[
\begin{split}
L_{1,1}(G_{0},\zeta)=& (1-2\mu) \left( c^{3,1}_1(0) \sqrt{\frac{\pi}{8G_{0}}}+ \mathcal{O}( G_{0}^{-2}) \right)\exp(-G_{0}^3/3)\\
L_{1,2}(G_{0},\zeta)= & \left(c_1^{2,2}(0 )\sqrt{\frac{\pi G_{0}^{3}}{2}}+\mathcal{O}(\zeta) \right)\exp(-G_{0}^3/3).
\end{split}
\]
The proof of Lemma \ref{lem:mainMelnikov}  is now completed by making use of  Lemma 28 in \cite{MR3927089}  where the coefficients  $c^{k,n}_l(0)$ are computed.   An analogous computation is done in \cite{https://doi.org/10.48550/arxiv.2207.14351}.
\\
Finally, the proof of Lemma \ref{lem:nondegdiffmelnikov} is straightforward after noticing that 
\[
\mathcal{L}_+(\beta;G_{0})= \sum_{l\in\mathbb Z} \tilde {\mathcal L}_l(\beta,G)=\sum_{l\in \mathbb{Z}}e ^{il\sigma(\beta;G_0)} L^{[l]}(\beta;G_{0})
\]
and
\[
\mathcal{L}_-(\beta;G_{0})= \sum_{l\in\mathbb Z}(-1)^l\tilde{\mathcal L}_l(\beta;G_0)=\sum_{l\in \mathbb{Z}} (-1)^l e ^{il\sigma(\beta;G_0)} L^{[l]}(\beta;G_{0})
\]
where $\sigma(\beta;G_0)$ is one critical point of the map $\sigma \to \partial_\sigma \mathcal L$ and we have used that the other critical point of this function is given by $\sigma(\beta;G_0)+\pi$. Indeed, notice that the first order term in the bracket $\{\mathcal L_+,\mathcal L_-\}$ is given by $2\{\mathcal {\tilde{L}}_0,\mathcal {\tilde{L}}_1\}$.

\bibliographystyle{alpha}
\bibliography{biblioMelnikov}

\end{document}